\newtheorem{thm}{Theorem}[section]
\newtheorem{prop}[thm]{Proposition}
\newtheorem{cor}[thm]{Corollary}
\newtheorem{lem}[thm]{Lemma}
\theoremstyle{definition}
\newtheorem{rem}[thm]{Remark}
\newtheorem{def1}[thm]{Definition}
\newcommand{\bk}{\backslash}
\newcommand{\mc}{\mathcal}
\newcommand{\mb}{\mathbb}
\newcommand{\mf}{\mathfrak}
\newcommand{\cond}[1]{\text{cond}(#1)}
\newcommand{\condH}[1]{\textnormal{cond}_H(#1)}
\renewcommand{\deg}[1]{\textnormal{deg}(#1)}
\newcommand{\He}{\mb{H}}
\newcommand{\llf}{\left\lfloor}
\newcommand{\lla}{\left\langle}
\newcommand{\e}{\varepsilon}
\newcommand{\rrf}{\right\rfloor}
\newcommand{\rra}{\right\rangle}
\newcommand{\mbf}{\boldsymbol}
\renewcommand{\ss}{\substack}
\newcommand{\ra}{\to}
\newcommand{\asum}{\sideset{}{^{\ast}}\sum}
\renewcommand{\bar}{\overline}
\renewcommand{\bmod}{\textnormal{ mod }}
\renewcommand{\Re}{\textnormal{Re}}
\begin{document}
\title[Correlations of multiplicative functions in function fields]{Correlations of multiplicative functions in function fields}

\author
{Oleksiy Klurman}
\address{School of Mathematics,
University of Bristol, Woodland Road, Bristol, BS8 1UG, UK}
\email{lklurman@gmail.com}

\author{Alexander P. Mangerel}
\address{Department of Mathematical Sciences, Durham University, Upper Mountjoy Campus, Stockton Road, Durham, DH1 3LE, UK}
\email{smangerel@gmail.com}

\author{Joni Ter\"av\"ainen}
\address{Department of Mathematics and Statistics \\
University of Turku,  20014 Turku\\
Finland}
\email{joni.p.teravainen@gmail.com}

\begin{abstract}
We develop an approach to study character sums, weighted by a multiplicative function $f\colon \mathbb{F}_q[t]\to S^1$, of the form
\begin{align*}
\sum_{\ss{\deg{G} = N \\ G \text{ monic}}}f(G)\chi(G)\xi(G),   
\end{align*}
where $\chi$ is a Dirichlet character and $\xi$ is a short interval character over $\mathbb{F}_q[t].$
We then deduce versions of the Matom\"aki--Radziwi\l{}\l{} theorem and Tao's two-point logarithmic Elliott conjecture over function fields $\mathbb{F}_q[t]$, where $q$ is fixed. The former of these improves on work of Gorodetsky, and the latter extends the work of Sawin--Shusterman on correlations of the M\"{o}bius function for various values of $q$.

Compared with the integer setting, we encounter a different phenomenon, specifically a low characteristic issue in the case that $q$ is a power of $2$.
% as well as the need for a class of ``pretentious" functions called \emph{Hayes characters} that involve two different $\mb{F}_q[t]$-analogues of Archimedean characters.

As an application of our results, we give a short proof of the function field version of a conjecture of K\'atai on classifying multiplicative functions with small increments, with the classification obtained and the proof being different from the existing one in the integer case.

In a companion paper, we use these results to characterize the limiting behavior of partial sums of multiplicative functions in function fields and in particular to solve a ``corrected" form of the Erd\H{o}s discrepancy problem over $\mathbb{F}_q[t]$.

\end{abstract}

\subjclass[2020]{11T55, 11N37}

\maketitle

\section{Introduction and Results}

In the integer setting, there has been a lot of progress in understanding short sums
\begin{align}\label{eq_shortsum}
\sum_{x < n\leq x+H}f(n),\quad \text{ with } 1 \leq H \leq x
\end{align}
of multiplicative functions $f\colon \mathbb{N}\to \mathbb{C}$, as well as their correlations
\begin{align}\label{eq_correlation}
\frac{1}{x}\sum_{n\leq x}f_1(n)f_2(n+h),\quad  \text{ for } h \geq 1.   
\end{align}
See~\cite{mr-annals},~\cite[Theorem A.1]{mrt},~\cite{mrII} for some papers dealing with~\eqref{eq_shortsum} and~\cite{mrt},~\cite{tao},~\cite{klurman},~\cite{teravainen} for some papers dealing with~\eqref{eq_correlation}. These results have also led to a number of applications, including a solution by Tao~\cite{TaoEDP} to the famous Erd\H{o}s discrepancy problem.

Let $q$ be a fixed prime power and denote by $\mathbb{F}_q[t]$ the ring of polynomials in $t$ over $\mb{F}_q$. Our focus in this paper is on analogues of~\eqref{eq_shortsum} and~\eqref{eq_correlation} over $\mb{F}_q[t]$. These results have applications, in particular, to the Erd\H{o}s discrepancy problem over $\mathbb{F}_q[t]$, which we study in our follow-up paper~\cite{kmt-edp}. In the course of the proofs of our main results, we develop a substantial amount of pretentious number theory over $\mathbb{F}_q[t].$

Let $\mathcal{M}$ denote the set of monic polynomials in $\mathbb{F}_q[t].$ Also, denote by $\mathcal{M}_{\leq N}$ and $\mathcal{M}_N$ the sets of monic polynomials of degree $\leq N$ or $=N$, respectively. Let $\mathcal{P}$ be the set of irreducible monic polynomials in $\mathbb{F}_q[t].$ Again, define $\mathcal{P}_{\leq N}$ and $\mathcal{P}_N$ similarly. Finally, let $\mathbb{U}$ stand for the unit disc of the complex plane. 

By a Dirichlet character $\chi \colon \mathbb{F}_q[t]\to \mathbb{C}$ modulo $M\in \mathcal{M}$ we mean a multiplicative homomorphism $\chi \colon (\mathbb{F}_q[t]/M\mathbb{F}_q[t])^{\times}\to \mathbb{C}\setminus \{0\}$, extended to all of $\mathbb{F}_{q}[t]$ by setting $\chi(G)=0$ whenever $G$ and $M$ are not coprime.

We first describe our result on short sums of multiplicative functions. This provides an analogue of the celebrated Matom\"aki--Radziwi\l{}\l{} theorem~\cite{mr-annals} in function fields.

Matom\"aki and Radziwi\l{}\l{} showed that, for any bounded, real-valued multiplicative function $f\colon \mb{N} \to [-1,1]$, one has
\begin{align*}
\frac{1}{X}\int_X^{2X} \Big|\frac{1}{H}\sum_{x < n \leq x + H} f(n) - \frac{1}{X}\sum_{X < n \leq 2X} f(n)\Big|^2\, dx = o(1),
\end{align*}
as soon as $H=H(X)\to \infty$ with $X$. Thus the short sums of $f$ over $[x,x+H]$ are almost always asymptotic to the corresponding long sum of $f$ over $[X,2X]$, which can either be understood asymptotically or upper bounded non-trivially by Hal\'{a}sz's theorem (see~\cite[Section III.4.3]{Ten} for further details).

In function fields, the role of a short interval is played by\footnote{For ease of comparison with prior function field literature, specifically the work of Keating and Rudnick~\cite{kea_rud},~\cite{kea_rud2}, we note that our short interval $I_H(G_0)$ corresponds to $I(G_0;H-1)$ in the notation of~\cite{kea_rud}.}
\begin{align*}
I_H(G_0):=\{G\in \mathcal{M} \colon \,\, \deg{G-G_0}<H\}, \quad G_0 \in \mc{M}.
\end{align*}
We prove a function field version of the aforementioned result for sums over such short intervals, following a line of approach which differs somewhat from the result over the integers. We state this as follows. 

\begin{thm}[Matom\"{a}ki--Radziwi\l \l \ theorem for function fields, real case] \label{thm_mr_real}
Let $f\colon \mc{M} \to [-1,1]$ be a multiplicative function. Let $N$ be large and let $1 \leq H \leq N-N^{3/4}$ with $H = H(N) \to \infty$ as $N\ra \infty$. \\
(i) If $q$ is odd, we have
\begin{align*}q^{-N}\sum_{G_0 \in \mc{M}_N} \Big|q^{-H} \sum_{\substack{G \in I_H(G_0)}} f(G) - q^{-N} \sum_{G \in \mc{M}_N} f(G)\Big|^2 \ll \frac{\log H}{H} + N^{-1/18+o(1)}.
\end{align*}
(ii) If $q$ is even, we have
\begin{align*}
&q^{-N}\sum_{G_0 \in \mc{M}_N} \Big|q^{-H} \sum_{\substack{G \in I_H(G_0)}} f(G) -  q^{-N}\sum_{G \in \mc{M}_N} f(G)\overline{\chi_1^{\ast}}(G)\Big|^2\ll \frac{\log H}{H} + N^{-1/18+o(1)},
\end{align*}
where $\chi_1 \bmod{t^{N-H+1}}$ is a real character that minimizes the map 
$$
\chi \mapsto \min_{\theta \in [0,1]} \sum_{P \in \mc{P}_{\leq N}} \frac{1-\text{Re}(f(P)\bar{\chi}(P))}{q^{\deg{P}}}, \quad \chi \bmod{t^{N-H+1}},
$$
and where $\chi_1^{\ast}$ is the completely multiplicative function satisfying $\chi_1^{\ast}(t) = 1$ and $\chi_1^{\ast}(G) := \chi_1(t^{\deg{G}}G(1/t))$ for all $G$ coprime to $t$. 
\end{thm}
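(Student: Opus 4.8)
The plan is to reduce the left-hand side, via Parseval over short interval characters, to a sum of squares of twisted sums $q^{-N}\sum_{G\in\mc{M}_N}f(G)\overline\xi(G)$, and then to control these sums by a Hal\'asz-type pretentious analysis combined with a Matom\"aki--Radziwi\l\l-style factorization argument over $\mathbb{F}_q[t]$. First I would record the elementary dictionary: a short interval $I_H(G_0)\cap\mc{M}_N$ is a coset of the space of monic polynomials whose top $N-H$ coefficients vanish, and under the reversal map $G\mapsto G^{\ast}:=t^{\deg{G}}G(1/t)$ it corresponds to a residue class modulo $t^{N-H+O(1)}$ among the units that are $\equiv 1\pmod{t}$. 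Hence the indicator of $I_H(G_0)$, as a function of $G\in\mc{M}_N$, is a linear combination of the $\asymp q^{N-H}$ short interval characters $\xi$ of conductor dividing $t^{N-H+O(1)}$, evaluated at $G^{\ast}$ and $G_0^{\ast}$; substituting this and applying Plancherel in the $G_0$ variable, the left-hand side of (i) collapses exactly to $\sum_{\xi\neq\xi_0}\big|q^{-N}\sum_{G\in\mc{M}_N}f(G)\overline\xi(G)\big|^2$, the principal character $\xi_0$ producing precisely the subtracted long sum. In case (ii) one carries out the same reduction after first absorbing the exceptional real short interval character $\chi_1^{\ast}$ into $f$; see the remarks on the even case below.

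The crux is therefore the estimate
\[
\sum_{\xi}\Big|q^{-N}\sum_{G\in\mc{M}_N}f(G)\overline\xi(G)\Big|^2\ll \frac{\log H}{H}+N^{-1/36+o(1)},
\]
where $\xi$ runs over all the relevant short interval characters other than the exceptional one(s). This I would deduce from a pretentious/Hal\'asz-type bound for such twisted sums (the main technical result of the paper, established separately): $q^{-N}\sum_{G\in\mc{M}_N}f(G)\overline\xi(G)$ is $\ll N^{-1/72+o(1)}$ unless $f$ pretends to be $\xi$ (times an archimedean factor $|G|^{it}$, which on $\mc{M}_N$ is the harmless constant $q^{itN}$), in which case it is governed by a Hal\'asz-type formula. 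The engine behind the power saving is the familiar Matom\"aki--Radziwi\l\l\ one, transplanted to $\mathbb{F}_q[t]$: factor a typical $G\in\mc{M}_N$ through an irreducible divisor $P$ with $\deg{P}$ in a suitable slowly growing window $[\alpha,\beta]$; use the function-field Tur\'an--Kubilius inequality (equivalently, a weighted Ramar\'e identity) to pass, with $L^2$ control of the error, to bilinear sums $\sum_{\deg{P}\in[\alpha,\beta]}\sum_{\deg{E}=N-\deg{P}}f(P)f(E)\overline\xi(PE)$; and then invoke large-value and mean-value estimates for the short Dirichlet polynomials $\sum_{\deg{P}=j}f(P)\overline\xi(P)$, splitting the $\xi$-sum according to whether these polynomials are large (a rare event, controlled by a function-field large sieve and the prime polynomial theorem) or small. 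Optimizing the window and the exponents in these estimates produces the $N^{-1/36+o(1)}$ saving, improving on the corresponding bound of Gorodetsky, while the $\tfrac{\log H}{H}$ term is the residual contribution of the short Dirichlet polynomial over irreducibles of degree up to (a power of) $H$; a separate and softer Hal\'asz-type estimate for Hayes characters handles the transition range of conductors of $\xi$ that are neither bounded nor comparable to $t^{N-H}$.

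Finally, the even characteristic case. The new feature here, absent when $q$ is odd, is that the group of units $\equiv 1\pmod{t}$ in $\mathbb{F}_q[t]/t^{m}$ is a $2$-group, so it carries nonprincipal \emph{real} characters of bounded conductor; pulled back through the reversal map these give nonprincipal real short interval characters $\chi_1^{\ast}$, and a real-valued $f$ can be genuinely biased towards such a $\chi_1^{\ast}$. (When $q$ is odd the corresponding group has odd order, so the only real short interval character of bounded conductor is principal; and a real $f$ biased towards a complex $\xi$ would be equally biased towards $\overline\xi$, forcing $\xi^{2}$ to be pretentious-close to the principal character and hence, in the bounded-conductor regime, $\xi$ itself to be real---so no correction is needed.) The plan for (ii) is therefore to let $\chi_1$ be the real short interval character that $f$ is most biased towards, if any, to replace $f$ by $f\overline{\chi_1^{\ast}}$ (which is again real-valued and now carries no such bias) throughout the argument, and to read off the stated normalization from this substitution. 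The step I expect to be the main obstacle is the large-value/prime-polynomial analysis underlying the power saving: with $q$ fixed one cannot afford losses that would be harmless as $q\to\infty$, so the estimates for $\sum_{\deg{P}=j}f(P)\overline\xi(P)$, and the attendant bookkeeping of the pretentious locus---in particular the interaction between the archimedean component and the wide family of Hayes characters---must be carried out with care.
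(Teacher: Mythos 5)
Your proposal follows essentially the same route as the paper: orthogonality/Plancherel over the short-interval (Hayes) character group to turn the variance into an $L^2$ sum of twisted sums $q^{-N}\sum_{G\in\mc{M}_N}f(G)\bar{\xi}(G)$, the Matom\"aki--Radziwi\l{}\l{} machinery (Ramar\'e identity, bilinear decomposition, large-value and mean-value/large-sieve estimates, Hal\'asz plus repulsion driven by Rhin's GRH for Hayes $L$-functions) to bound it outside the at most one pretentious character, and the observation that in odd characteristic a real $f$ cannot pretend to any non-principal short-interval character, so the exceptional twist only survives when $q$ is even. The one (cosmetic) difference is that you dispose of real non-principal short-interval characters by noting the character group has odd $p$-power order, while the paper proves the equivalent fact for primitive characters modulo $t^{k}$ via a $q$-th-power periodicity argument and transfers it through the reversal involution.
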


\textbf{Remarks.}

\begin{itemize}
    \item The long sum $\sum_{G\in \mc{M}_N}f(G)$ appearing in Theorem~\ref{thm_mr_real} is very well-understood, as in the integer setting. This is thanks to a version of Hal\'asz's theorem over function fields, established by Granville, Harper and Soundararajan~\cite{GrHaSoFF}.

\item The savings $(\log H)/H$ obtained is of the same quality as the $h$-dependence found in~\cite[Theorems 1-2]{mr-annals} (replacing $H$ by $\log h$ there). This term arises from a sieve-theoretic bound that allows us to restrict the support of the intervening sums in our analysis to polynomials $G$ having prime factors with degrees in specified intervals, depending on $H$ (see Lemma~\ref{RestricToS} below). This term is not expected to be optimal in general, and standard ``square-root cancellation'' heuristics for sufficiently pseudo-random multiplicative functions (such as the Liouville function $\lambda  \colon  \mc{M} \ra \{-1,1\}$, the completely multiplicative function defined at all prime polynomials $P$ by $\lambda(P) = -1$) suggests a bound of the shape $q^{-H(1/2-\e)}$. In this connection it is worth noting that in the integer setting, Chinis~\cite[Theorem 1.2]{Chi} has shown that, assuming the Riemann Hypothesis, the short sums $h^{-1}\sum_{x < n \leq x+h} \lambda(n)$ of the Liouville function $\lambda$ exhibit a corresponding error term of the quality $h^{-1/2+\e}$ in mean square whenever $h \geq (\log X)^{A}$ with $A = A(\e) > 0$. It might be interesting to pursue a similar result in the $\mb{F}_q[t]$ setting.
    
    \item Note that, interestingly, a low-characteristic issue emerges in the Matom\"aki--Radziwi\l{}\l{} theorem: in $\mathbb{F}_2[t]$, for instance, a real-valued multiplicative function can indeed have different mean values on short and long intervals. This is the reason why we have stated the cases of $q$ odd and even separately in Theorem~\ref{thm_mr_real}. Functions of the form $\chi_1^{\ast}$, where $\chi_1$ is a character modulo a power of $t$, are examples of \emph{short interval characters}; see Definition~\ref{defn3} below, as well as Subsections~\ref{sec_involution} and~\ref{HayesSubSec} for further details relating to the transformation $\chi_1\mapsto \chi_1^{\ast}$.
    
    \item Theorem~\ref{thm_mr_real} can be viewed as generalizing and strengthening the work of Gorodetsky~\cite[Theorem 1.3]{Gor}, who proved that for any \emph{factorization function}\footnote{A function $f(G)$ is called a factorization function if it only depends on the values of $\deg{P}$ and $v_P(G)$, where $P$ runs through the irreducible divisors of $G$, and $v_P(G)$ denotes the largest integer $k$ with $P^{k}\mid G$.} $f$ and for $H = H(N)$ satisfying $H\log \log N/\log N\to \infty$, the sum of $f$ over a short interval $I_H(G_0)$ is almost always asymptotic to the corresponding long sum. Neither the class of factorization functions nor the class of multiplicative functions contains the other, but their intersection contains several interesting number theoretic functions; for example, one of the most important functions in both classes is the M\"obius function $\mu \colon \mathcal{M} \to \{-1,0,+1\}$ (defined as $\mu(G) := (-1)^s$ if $G$ is squarefree and has $s$ irreducible factors, and $\mu(G) := 0$ otherwise). In Theorem~\ref{thm_mr_real}, we do not have any lower bound on how quickly the length $H$ of the interval must grow, which is vital when we use this result to deduce Theorem~\ref{LogEllFF1}.  
    
\end{itemize}

We in fact establish a slightly more general version of Theorem~\ref{thm_mr_real} (namely, Theorem~\ref{MRFF}) that applies to bounded complex-valued multiplicative functions as well, but omit the more complicated statement here for the sake of simplicity.

It is also natural to study the variance of multiplicative functions in arithmetic progressions; see~\cite{hooley},~\cite{harper-sound} for some works on this topic. In the integer setting, an estimate for the variance of a multiplicative function in arithmetic progressions that is of comparable strength to the Matom\"aki--Radziwi\l{}\l{}  theorem was established in~\cite{klu_man_ter}. Here, we generalize this result to function fields, obtaining in fact a stronger version that does not involve exceptional\footnote{It should be noted that if one assumes GRH (generalized Riemann hypothesis) in the integer setting then~\cite[Theorem 1.4]{klu_man_ter} also gives non-trivial estimates for the corresponding variance for \emph{all} moduli $q$ without exception, at least as long as $q$ does not have too many ``small'' prime factors. Note crucially that such a constraint on $Q$ in our setting is not needed for our result as it is stated, simply because our savings are given relative to the (possibly worse-than-trivial) bound $q^{2N-\deg{Q}}$, rather than the sharper $\phi(Q)q^{2(N-\deg{Q})}$. It is the factor $\phi(Q)q^{-\deg{Q}}$, ignored here, that is affected by the primes of small degree.} moduli. For multiplicative factorization functions, this also improves on a corresponding result of Gorodetsky~\cite[Theorem 1.3]{Gor}.
\begin{thm}[Variance of multiplicative functions in arithmetic progressions] \label{VarAPs}
Let $1 \leq H \leq N-N^{3/4}$, such that $H= H(N) \ra \infty$ as $N \ra \infty$. Let $f\colon \mc{M} \ra \mb{U}$ be a multiplicative function. For every $Q \in \mc{M}_{N-H}$ there is a character $\chi_1$ modulo $Q$ such that
$$
\asum_{A \bmod{Q}} \Big|\sum_{\substack{G \in \mc{M}_N \\ G\equiv A \bmod{Q}}} f(G) - \frac{\chi_1(A)}{\phi(Q)} \sum_{G \in \mc{M}_N} f(G)\bar{\chi_1}(G)\Big|^2 \ll \Big(\frac{\log H}{H} + N^{-1/18+o(1)}\Big) q^{2N-\deg{Q}}.
$$
Precisely, $\chi_1$ is any character modulo $Q$ that minimizes the map
$$
\chi \mapsto \min_{\theta \in [0,1]} \sum_{P \in \mc{P}_{\leq N}} q^{-\deg{P}}\Big(1-\textnormal{Re}(f(P)\bar{\chi}(P) e^{-2\pi i\theta \deg{P})}\Big).
$$
\end{thm}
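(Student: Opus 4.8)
The plan is to reduce the variance over arithmetic progressions modulo $Q$ to a variance over short intervals, and then apply the short interval result (Theorem \ref{thm_mr_real}, or rather its complex-valued generalization Theorem \ref{MRFF}) after twisting by a suitable Dirichlet character. The key structural fact over $\mathbb{F}_q[t]$ is that, via the involution $G \mapsto G^{\ast}(t) := t^{\deg{G}}G(1/t)$ (see Subsection \ref{sec_involution}), residue classes modulo $Q \in \mc{M}_{N-H}$ correspond, after this transformation, to short intervals of length $H = N - \deg{Q}$: congruence modulo $Q$ becomes, on the ``reversed'' polynomials, a congruence modulo $t^{\deg{Q}}$, which cuts out the bottom $\deg{Q}$ coefficients and leaves the top $H$ coefficients free. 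Thus the inner sum $\sum_{G \equiv A \pmod Q} f(G)$ can be rewritten, after applying the involution to both $f$ and the variable, as a sum of a related multiplicative function over a short interval $I_H(G_0)$ with $G_0$ determined by $A$. First I would make this dictionary precise, tracking how multiplicativity is (almost) preserved under $G \mapsto G^{\ast}$ — the only defect being at the prime $t$, which is harmless since $Q \in \mc{M}_{N-H}$ and we can restrict to $G$ coprime to $t$ up to negligible error.

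Second, I would expand the square and use orthogonality of Dirichlet characters modulo $Q$ to pass from progressions to a sum over characters: writing $\mathbf{1}_{G \equiv A} = \phi(Q)^{-1}\sum_{\psi \bmod Q}\bar\psi(A)\psi(G)$, the left-hand side becomes $\phi(Q)^{-1}\sum_{\psi \neq \psi_0}\big|\sum_{G \in \mc{M}_N} f(G)\psi(G)\big|^2$ after the principal character $\psi_0$ is matched against the main term $\frac{\chi_1(A)}{\phi(Q)}\sum f(G)\bar\chi_1(G)$ — here a short computation shows the main term is exactly the $\psi = \chi_1$ contribution, which is why subtracting it is the right normalization. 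So it suffices to bound $\phi(Q)^{-1}\sum_{\psi \neq \chi_1}\big|\sum_{G\in\mc{M}_N}f(G)\psi(G)\big|^2$, i.e. the average over nonprincipal twists. The twisted sum $\sum_{G\in\mc{M}_N} f(G)\psi(G)$ is itself a long sum of the multiplicative function $f\psi$, and the point of choosing $\chi_1$ to be the minimizer of the pretentious distance displayed in the theorem is that $f\bar\chi_1$ is the ``least pretentious-to-an-Archimedean-twist'' correction, so that for every other $\psi$ the function $f\psi$ is far (in the sense of $\sum_{P} q^{-\deg P}(1 - \Re(f(P)\psi(P)e^{-2\pi i \theta \deg P}))$ being large) from $n \mapsto n^{i\theta}$; by Halász over function fields (Granville--Harper--Soundararajan \cite{GrHaSoFF}) each such twisted long sum is then small.

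The heart of the matter, however, is not bounding each individual twist but bounding the \emph{average} over all $\psi$, and this is precisely where the short-interval input enters: the character-sum/short-interval duality converts $\asum_{A}\big|\sum_{G\equiv A}f(G) - \text{main}\big|^2$ into an $L^2$ average of short-interval sums of (a twist of) $f$ against their long-interval mean, and this is exactly the quantity controlled by Theorem \ref{thm_mr_real}/\ref{MRFF}, yielding the $\frac{\log H}{H} + N^{-1/18 + o(1)}$ savings. I expect the \textbf{main obstacle} to be bookkeeping the interaction between the involution $G \mapsto G^{\ast}$ and the choice of the optimizing character $\chi_1$: one must check that the minimizer of the pretentious distance for $f$ corresponds, under reversal, to the short-interval character that Theorem \ref{thm_mr_real} subtracts off (the function $\chi_1^{\ast}$ in the even-$q$ case), and that the low-characteristic phenomenon appearing there is correctly accounted for — in particular that the single character $\chi_1$ in the statement genuinely absorbs both the Halász main term and the $q$-even correction simultaneously. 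A secondary technical point is handling the prime $t$ and the discrepancy between $\mc{M}_N$ with the involution landing in $\mc{M}_{\leq N}$ rather than $\mc{M}_N$; both should be absorbable into the error term, but require care to ensure the exponent $N^{-1/18+o(1)}$ (rather than $N^{-1/36+o(1)}$) is what survives after squaring out the cross terms.
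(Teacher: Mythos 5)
Your opening step is fine and matches the paper: by orthogonality the variance equals $\phi(Q)^{-1}\sum_{\chi\neq\chi_1}\big|\sum_{G\in\mc{M}_N}f(G)\bar{\chi}(G)\big|^2$, and the subtracted main term is exactly the $\chi=\chi_1$ contribution. The gap is in the central reduction. The involution $G\mapsto G^{\ast}$ does \emph{not} turn residue classes modulo a general $Q\in\mc{M}_{N-H}$ into short intervals: Lemma \ref{ShortIntToAP} only identifies short intervals with progressions modulo powers of $t$, because the condition $G\equiv A\pmod{t^k}$ is precisely the statement that the bottom $k$ coefficients of $G$ are fixed, and reversal exchanges bottom and top coefficients. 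For an arbitrary $Q$ of degree $N-H$, the condition $G\equiv A\pmod{Q}$ is not a condition on the low-order coefficients at all, and under $G\mapsto G^{\ast}$ it becomes (up to units and the prime $t$) a congruence modulo the reversed polynomial $Q^{\ast}$ -- another arithmetic-progression condition, not a short-interval condition. Equivalently, in Fourier space the progressions mod $Q$ are governed by the Dirichlet characters $\mc{X}_Q$, while short intervals are governed by the short interval characters $\mc{X}_{1,N-H}$; these are different character groups, related by the involution only in the special case $Q=t^{N-H+1}$. So after your orthogonality step there is no ``character-sum/short-interval duality'' that lets you invoke Theorem \ref{thm_mr_real} or \ref{MRFF} for general $Q$, and the heart of your argument does not go through. (Your fallback, pointwise Hal\'asz plus repulsion for each nonexceptional twist, is also insufficient by itself: it gives $\ll q^{2N}N^{-1/2+o(1)}$, which exceeds the target $q^{N+H}(\log H/H+N^{-1/18+o(1)})$ when $H$ grows slowly.)

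The paper's route is essentially the reverse of yours: Theorem \ref{VarAPs} is proved \emph{directly} in character space by running the Matom\"aki--Radziwi\l{}\l{} machinery on the family $\mc{X}_Q\setminus\{\chi_1\}$ -- restriction to $\mc{S}_{\mbf{P},\mbf{Q}}$ (Lemma \ref{RestricToS}), Ramar\'e's identity and the Dirichlet-polynomial decomposition (Lemmas \ref{Ramare}, \ref{DirDecomp}), the $L^2$ mean value theorem and Hal\'asz--Montgomery estimates (Lemmas \ref{L2MVT}, \ref{HalMonInt}, \ref{HalMonPrim}), the large-values and moment bounds (Lemmas \ref{LargeValues}, \ref{Lengthen}), and the Hal\'asz-plus-repulsion pointwise bound (Lemma \ref{QuasiMultFF}), with Rhin's GRH (Lemma \ref{thm_GRH_hayes}) removing any exceptional moduli. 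The short-interval theorem \ref{MRFF} is then deduced afterwards by repeating the same argument with the group $\mc{X}_{1,N-H}$ in place of $\mc{X}_Q$, and the involution enters only there (and in handling $\chi_1^{\ast}$ in the real-valued case), not in the proof of Theorem \ref{VarAPs}. If you want to salvage your plan you would need either to prove \ref{MRFF} independently and find a genuine transference to arbitrary moduli (which the involution does not provide), or simply run the character-space argument for $\mc{X}_Q$ directly as the paper does.
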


Next, we turn to our result on two-point correlations of multiplicative functions in function fields, with the objective of analogizing Tao's groundbreaking work in~\cite{tao}. Tao's result states that if $f_1,f_2\colon \mathbb{N}\to \mathbb{U}$ are multiplicative functions such that at least one of $f_1$ and $f_2$, say $f_1$, satisfies the non-pretentiousness assumption
\begin{align*}
\inf_{|t|\leq x}\sum_{p\leq x}\frac{1-\Re(f_1(p)\bar{\chi}(p)p^{-it})}{p}\to \infty  \text{ as $x \ra \infty$} 
\end{align*}
for any fixed Dirichlet character $\chi$, then we have
\begin{align*}
\frac{1}{\log x}\sum_{n\leq x}\frac{f_1(n)f_2(n+h)}{n}=o(1)    
\end{align*}
for any fixed $h\neq 0$. The analogue of the logarithmic weight $n\mapsto 1/n$ in function fields is $G\mapsto q^{-\deg{G}}$. 

Tao's result implies that if $f_1$ does not pretend to be a twisted Dirichlet character $n \mapsto \chi(n)n^{it}$, then the autocorrelations of $f_1$ are small. It turns out that in the function field setting there are two collections of Archimedean characters that play a role similar to $n\mapsto n^{it}$, namely the  characters $G \mapsto e^{2\pi i \theta \deg{G}}$ as well as the \emph{short interval characters}, to be defined presently (the group they generate will be discussed in further detail in Section~\ref{sec_prelim1}). Dirichlet characters twisted by at least one of these functions provide obstructions to $f_1\colon \mathcal{M}\to \mathbb{U}$ having small autocorrelations in the setting of $\mb{F}_q[t]$. While in terms of phenomenology this is consistent with the integer setting, some of the arguments in the function field setting require some additional care to address both types of twists. 

\begin{def1}\label{defn3}
A multiplicative function $\xi\colon \mathcal{M}\to \mathbb{C}$ which is not identically zero is called a \emph{short interval character} if there exists $\nu$ such that $\xi(A)=\xi(B)$ whenever the $\nu+1$ highest degree coefficients of $A$ and $B$ agree (that is, $A/t^{\deg{A}}-B/t^{\deg{B}}$ is a rational function of degree $<-\nu$). If $\nu$ is the smallest positive integer with this property then we refer to $\nu$ as the \emph{length} of $\xi$, and write $\text{len}(\xi) = \nu$. 
\end{def1}

\begin{thm} [Two-point logarithmic Elliott conjecture in function fields]\label{LogEllFF1}
Let $A,B \in \mb{F}_q[t] \bk \{0\}$ be fixed, with $A$ monic. Let $f_1,f_2\colon \mathcal{M} \to \mb{U}$ be multiplicative functions. Assume that $f_1$ satisfies the non-pretentiousness assumption
\begin{align}\label{nonpret}
\min_{M \in \mc{M}_{\leq W}} \, \min_{\psi \bmod{M}} \min_{\substack{\xi\,\, \textnormal{short} \\ \textnormal{len}(\xi) \leq N}}\min_{\theta\in [0,1]} \sum_{P \in \mc{P}_{\leq N}} \frac{1-\textnormal{Re}(f_1(P)\bar{\psi}(P)\bar{\xi}(P)e^{-2\pi i\theta\deg{P}})}{q^{\deg{P}}}\to \infty,
\end{align}
as $N \to \infty$ for every fixed $W\geq 1$. Then
\begin{align}\label{correlation2}
\frac{1}{N}\sum_{G \in \mc{M}_{\leq N}} q^{-\deg{G}} f_1(G)f_2(AG+B) = o(1).
\end{align}
as $N \to \infty$.

Moreover, if $f_1$is real-valued and $q$ is odd, then the same conclusion follows provided only that
\begin{align}\label{nonpret-b}
\min_{M \in \mc{M}_{\leq W}} \, \min_{\psi \bmod{M}} \min_{\theta\in \{0,1/2\}} \sum_{\substack{P \in \mc{P}_{\leq N} }} \frac{1-\textnormal{Re}(f_1(P)\bar{\psi}(P)e^{-2\pi i\theta\deg{P}})}{q^{\deg{P}}} \to \infty
\end{align}
as $N\to \infty$.
\end{thm}

\begin{rem}
Observe that if $\xi$ is a short interval character of length $\nu$, $m \geq 2\nu$ and $\deg{B} < \nu$ then $\xi(AG+B) = \xi(A)\xi(G)$ for any $G \in \mc{M}_m$. As $\xi(A) \in S^1$, it follows that as $N \ra \infty$,
$$
\Big|\frac{1}{N} \sum_{G \in \mc{M}_{\leq N}} q^{-\deg{G}}\xi(G)\bar{\xi}(AG+B)\Big| \geq 1 - \frac{2\nu}{N},
$$
so short interval characters clearly present a class of functions with large two-point correlations. This explains why our non-pretentiousness assumption must rule out significant correlations of $f_1$ with such characters.
\end{rem}

Since the M\"obius function $\mu\colon \mathcal{M} \to \{-1,0,+1\}$ is non-pretentious in the sense of~\eqref{nonpret} (by an application of Lemma~\ref{thm_GRH_hayes} below), this result has the following corollary regarding Chowla's conjecture in function fields. 

\begin{cor}[Two-point logarithmic Chowla conjecture in function fields]\label{cor:chowla}
Let $B \in \mb{F}_q[t] \bk \{0\}$ be fixed. Let $\mu\colon \mathbb{F}_q[t]\to \{-1,0,+1\}$ be the M\"obius function. Then as $N \ra \infty$,
\begin{align*}
\frac{1}{N}\sum_{G \in \mc{M}_{\leq N}} q^{-\deg{G}} \mu(G)\mu(G+B) = o(1).
\end{align*}
\end{cor}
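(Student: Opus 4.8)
The plan is to invoke Theorem~\ref{LogEllFF1} with $f_1=f_2=\mu$ to deduce Corollary~\ref{cor:chowla}; everything then reduces to verifying that the M\"obius function satisfies the non-pretentiousness hypothesis \eqref{nonpret}, after which specializing Theorem~\ref{LogEllFF1} to the given $B$ and to $f_1=f_2=\mu$ yields the conclusion at once.

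To check \eqref{nonpret} for $\mu$, the starting point is that $\mu(P)=-1$ for every irreducible $P$. Hence, for a Dirichlet character $\psi\pmod{M}$ with $M\in\mc{M}_{\le W}$, a short interval character $\xi$ with $\textnormal{len}(\xi)\le N$, and $\theta\in[0,1]$, writing $\Xi:=\bar{\psi}\,\bar{\xi}$ for the associated Hayes character one has
\[
\sum_{P\in\mc{P}_{\le N}}\frac{1-\textnormal{Re}\big(\mu(P)\bar{\psi}(P)\bar{\xi}(P)e^{-2\pi i\theta\deg{P}}\big)}{q^{\deg{P}}}=\sum_{P\in\mc{P}_{\le N}}\frac{1+\textnormal{Re}\big(\Xi(P)e^{-2\pi i\theta\deg{P}}\big)}{q^{\deg{P}}},
\]
and every summand on the right is nonnegative. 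By the prime polynomial theorem the diagonal part is $\sum_{P\in\mc{P}_{\le N}}q^{-\deg{P}}=\log N+O_q(1)$, so it suffices to bound the oscillatory part $\textnormal{Re}\sum_{n\le N}e^{-2\pi i\theta n}q^{-n}\sum_{P\in\mc{P}_n}\Xi(P)$ below by $-\log N+\omega(N)$ with $\omega(N)\to\infty$, uniformly over all admissible $M,\psi,\xi,\theta$.

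When the conductor of $\Xi$, of size $\textnormal{len}(\xi)+\deg{M}$, is small relative to $\log N$ this is immediate from Lemma~\ref{thm_GRH_hayes}, the function-field Riemann Hypothesis for Hayes characters. If $\psi\xi$ is non-principal it provides square-root cancellation in $\sum_{\deg{F}=n}\Lambda(F)\Xi(F)$, hence in $\sum_{P\in\mc{P}_n}\Xi(P)$ after discarding the negligible prime-power contribution, and summing against $q^{-n}$ bounds the oscillatory part by $O_q(\textnormal{len}(\xi)+\deg{M})=o(\log N)$; if $\psi\xi$ is principal (so $\psi$ is principal and $\xi$ is trivial) the oscillatory part equals $\textnormal{Re}\sum_{n\le N}e^{-2\pi i\theta n}/n+O_q(1)$, which is $\ge-O_q(1)$ by partial summation. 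In both situations the inner sum in \eqref{nonpret} is $\ge(1-o(1))\log N$, which tends to infinity.

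The genuinely delicate case---and the reason we must use Lemma~\ref{thm_GRH_hayes} in its full strength rather than merely the Hal\'asz-type theorem over function fields---is that $\xi$ is permitted to have length comparable to $N$, so that the crude estimate for the oscillatory sum is too lossy. I would handle this by splitting the primes according to the size of $\deg{P}$ relative to $\textnormal{len}(\xi)$: for $\deg{P}>\textnormal{len}(\xi)$, $\xi(P)$ depends only on the top $\textnormal{len}(\xi)$ coefficients of $P$, which equidistribute as $P$ runs over irreducibles of a fixed degree (again by Lemma~\ref{thm_GRH_hayes} applied to suitable twists), so $\Xi(P)e^{-2\pi i\theta\deg{P}}$ cannot stay close to $-1$ on a positive proportion of such primes; for $\deg{P}\le\textnormal{len}(\xi)$ the contribution is nonnegative and is bounded below using the structure of short interval characters (in particular $\xi(t)=1$, together with equidistribution of irreducibles in short intervals at the relevant scales). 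Making this dichotomy quantitative, with explicit dependence on $\textnormal{len}(\xi)$, is where the work lies; the reduction to \eqref{nonpret} and the manipulations above are routine.
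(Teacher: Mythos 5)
Your high-level route is the paper's: Corollary \ref{cor:chowla} follows by applying Theorem \ref{LogEllFF1} with $f_1=f_2=\mu$ once one checks that $\mu$ satisfies \eqref{nonpret}, and your handling of the principal case and of Hayes characters $\bar{\psi}\bar{\xi}$ of conductor $o(\log N)$ is correct. The gap is precisely the case you flag as ``delicate,'' namely $\textnormal{len}(\xi)$ comparable to $N$ --- which is the case that carries all the content, since \eqref{nonpret} minimizes over all $\xi$ with $\textnormal{len}(\xi)\le N$. Your proposed dichotomy in $\deg{P}$ versus $\textnormal{len}(\xi)$ cannot work as described: if $\nu:=\textnormal{len}(\xi)\asymp N$, the primes with $\deg{P}>\nu$ contribute only $\sum_{\nu<d\le N}d^{-1}=O(1)$ to the diagonal term $\log N$, so no equidistribution statement on that range alone can force the sum in \eqref{nonpret} to infinity; and for $\deg{P}\le\nu$ you offer only nonnegativity together with an unquantified appeal to ``the structure of short interval characters,'' which gives nothing beyond the trivial bound. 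So the required uniformity in the conductor is not established.

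The missing idea is simpler than your dichotomy and is exactly the paper's Lemma \ref{lem_hayesdist} (deduced from Lemma \ref{thm_GRH_hayes}): split the prime sum at degree $D_0\asymp\log N$, not at $\textnormal{len}(\xi)$. For $d\le D_0$ bound trivially, $\sum_{d\le D_0}q^{-d}|\mc{P}_d|\ll\log\log N$; for $d>D_0$, insert $\Lambda(G)/d$ (the prime-power error is $O(1)$) and apply Rhin's bound $\sum_{G\in\mc{M}_d}\Lambda(G)\tilde{\chi}(G)\ll\textnormal{cond}_H(\tilde{\chi})\,q^{d/2}$, so that with $\textnormal{cond}_H(\bar{\psi}\bar{\xi})\le W+N\le 2N$ and $D_0=10\log N$ this range contributes $\ll N\sum_{d>D_0}q^{-d/2}/d\ll N^{-1}$. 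Hence the oscillatory term is $O(\log\log N)$ uniformly over all non-principal Hayes characters occurring in \eqref{nonpret}, the inner sum there is $\ge\log N-O(\log\log N)$, and \eqref{nonpret} holds for $\mu$; the principal case is handled as you do. With this replacement of your ``delicate case'' argument, the proof is complete and coincides with the paper's.
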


\textbf{Remarks.}

\begin{itemize}

\item Theorem~\ref{LogEllFF1} indicates that functions $f$ that pretend to be twisted products of Dirichlet and short interval characters  $\chi\xi e_{\theta}(G)$ (where $e_{\theta}(G):=e^{2\pi i \theta \deg{G}}$) are obstructions to the autocorrelations of $f$ being small. This shows a different phenomenon compared to \emph{mean values} of multiplicative functions in function fields, wherein the only obstructions to the mean value being small are functions pretending to be $e_{\theta}$ (see for instance Lemma~\ref{HalThmFF} below); this is not necessarily unexpected since the problem of estimating mean values is not one that relates to short interval averages.

    \item Theorem~\ref{LogEllFF1} and Corollary~\ref{cor:chowla} compare to previous results as follows. A recent groundbreaking result of Sawin and Shusterman~\cite{sawin-shusterman} established the Chowla conjecture in function fields in the form
\begin{align*}
\frac{1}{q^N}\sum_{G \in \mc{M}_{\leq N}} \mu(G+B_1)\cdots\mu(G+B_k) = o(1),
\end{align*}
as $N \ra \infty$ for any $k\geq 1$ and any distinct $B_1,\ldots, B_k\in \mathbb{F}_q[t]$
in the large field case $q>p^2k^2e^2$, where $p=\textnormal{char}(\mathbb{F}_q)$. In particular, if $q=p^{a}$, then we must have $a\geq 3$ for this condition to hold. Theorem~\ref{LogEllFF1} is somewhat orthogonal to this result in the sense that, despite being limited to two-point correlations, it works for \emph{any} non-pretentious multiplicative functions, unlike the theorem in~\cite{sawin-shusterman} which is specific to the M\"obius function, and Theorem~\ref{LogEllFF1} works in \emph{any} finite field $\mathbb{F}_q$, which will be important for us. We also point out that the 1-point case ($f_2\equiv 1$) of Theorem~\ref{LogEllFF1} is (a logarithmic version of) Hal\'asz's theorem in function fields, proved by Granville, Harper and Soundararajan in~\cite{GrHaSoFF}.

In a different direction, when $N$ is fixed and $q \ra \infty$, Gorodetsky and Sawin~\cite[Theorem 3]{GorSaw} obtained cancellation in two-point correlations $q^{-N} \sum_{G \in \mc{M}_N} \alpha(G) \beta(G+B)$, where $\alpha$ and $\beta$ are factorization functions; in the $q$-limit this yields cancellation for the unweighted sums $q^{-N} \sum_{G \in \mc{M}_{\leq N}} \mu(G)\mu(G+B)$, for example (see~\cite[Theorem 2]{GorSaw} for the precise statement).
\end{itemize}

Lastly, we describe a short application of our results on Elliott's conjecture to the function field analogue of a question of K\'atai.  K\'atai~\cite{katai} conjectured in 1983 that if $f\colon \mathbb{N}\to S^1$ is completely multiplicative and the consecutive values of $f$ are close to each other on average, in the sense that
\begin{align}\label{small}
 \sum_{n\leq x}|f(n+1)-f(n)|=o(x),   
\end{align}
then $f(n)=n^{it}$ for some real number $t.$ This was proved in~\cite{klurman} by the first author. Later, the result was generalized by K\'atai and Phong~\cite{kp} who proved that if $f,g\colon \mathbb{N}\to S^1$ are completely multiplicative and
\begin{align}\label{small2}
 \sum_{n\leq x}|g(2n+1)-zf(n)|=o(x)   \end{align}
for some complex number $z$, then $f(n)=g(n)=n^{it}$. Since in the function field setting there are two varieties of Archimedean characters, namely $e_{\theta}$ and short interval characters $\xi$, our classification of completely multiplicative functions satisfying~\eqref{small} (and in fact more generally~\eqref{small2}) in function fields takes a slightly different form.

\begin{thm}[K\'atai's conjecture in function fields]\label{thm_katai} Let $f\colon \mc{M}\to S^1$ be completely multiplicative, and let $Q\in \mc{M}$. Let $z\in S^1$. Suppose that
\begin{align}\label{katai}
\sum_{G\in \mc{M}_{\leq N}}|f(QG+1)+zf(G)|=o(q^N)    
\end{align}
as $N\to \infty$. Then there exist $\theta\in [0,1)$ and a short interval character $\xi\colon \mc{M}\to \mathbb{U}$ such that $f(G)=\xi(G)e^{2\pi i\theta \deg{G}}$. Conversely, any function of this form satisfies~\eqref{katai} for some $z$. 
\end{thm}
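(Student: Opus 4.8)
The plan is to deduce Theorem~\ref{thm_katai} from the two-point logarithmic Elliott conjecture in function fields (Theorem~\ref{LogEllFF1}) combined with the function field Hal\'asz-type input (Lemma~\ref{HalThmFF}), following the strategy pioneered in the integer case in \cite{klurman} but adapting it to account for the new class of short interval characters. First, I would show that the hypothesis \eqref{katai} forces $f$ to be \emph{pretentious}: if $f$ were non-pretentious in the sense of \eqref{nonpret}, then since $f$ is completely multiplicative and unimodular we can apply Theorem~\ref{LogEllFF1} with $f_1 = f$, $f_2 = \bar{f}$ and a suitable shift (after rescaling $G \mapsto QG+1$ versus $G$, one arranges the correlation $\sum_{G} q^{-\deg G} f(QG+1)\overline{f(G)}$ to be $o(1)$, using that the map $G\mapsto QG+1$ is affine and behaves well with respect to degree and the logarithmic weight). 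But \eqref{katai} together with the triangle inequality and the fact that $|f|=1$ gives that this correlation is close to $-\bar z$ in absolute value, i.e. bounded away from $0$; this contradiction shows $f$ must correlate with some twisted product $\psi \xi e_\theta$ of a Dirichlet character, a short interval character, and an Archimedean character.

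Next, I would upgrade ``pretends to be $\psi\xi e_\theta$'' to ``equals $\xi' e_\theta$'' for some (possibly different) short interval character $\xi'$, exploiting complete multiplicativity. The key point is that a completely multiplicative unimodular function that pretends to be a fixed completely multiplicative function $g$ (here $g = \psi\xi e_\theta$) and additionally satisfies an exact functional relation like \eqref{katai} must in fact equal a function of the same shape: one considers the completely multiplicative function $h := f \bar{g}$, which then has $\mathbb{D}(h,1;N)$ bounded, and shows via \eqref{katai} that $\sum_{G}|h(QG+1) - z' h(G)|=o(q^N)$ for an appropriate constant $z'$; iterating or using a rigidity argument (as in \cite{kp}) forces $h$ to be constant on each $\mathcal{M}_n$, which combined with multiplicativity pins down $h$ to be of the form $e_{\theta'}$ times a short interval character of bounded length. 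Here one must be careful that the Dirichlet character part $\psi$ gets absorbed: since $f$ is completely multiplicative and unimodular it cannot vanish, so $\psi$ must actually be principal or fold into $\xi$ --- this is exactly the phenomenon flagged in the low-characteristic remarks, and the transformation $\chi_1 \mapsto \chi_1^\ast$ from Theorem~\ref{thm_mr_real} shows how a character modulo a power of $t$ produces a short interval character. The converse direction is a direct computation: if $f(G) = \xi(G)e^{2\pi i\theta\deg G}$ then for $\deg G$ large and $\deg(QG+1) = \deg Q + \deg G$, one has $\xi(QG+1)$ determined by the top coefficients of $QG+1$, which agree with those of $QG$, hence $f(QG+1) = \xi(Q)\xi(G) e^{2\pi i\theta(\deg Q + \deg G)} \cdot(1+o(1))$ in an averaged sense, so choosing $z = -\xi(Q)e^{2\pi i\theta\deg Q}$ makes \eqref{katai} hold.

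The main obstacle I anticipate is the rigidity step: passing from the approximate multiplicative-type relation \eqref{katai} and pretentiousness to the \emph{exact} identification $f = \xi e_\theta$. In the integer case \cite{klurman, kp} this uses the structure of $\mathbb{Z}$ and finite Euler products in a fairly delicate way; over $\mathbb{F}_q[t]$ one has the advantage that ``short'' objects are governed by finitely many top coefficients, so the relevant ``local'' constraints live in a finite group, but one must correctly handle the interaction between the affine substitution $G \mapsto QG+1$ and the filtration by degree. Concretely, I would first reduce to the case $Q = 1$ by a change of variables absorbing $Q$ into redefinitions of $f$ and $z$ (legitimate because $f$ is completely multiplicative: $f(QG+1)$ relates to $f(Q)f(G + Q^{-1})$ only formally, so instead one works directly with $f(QG+1)$ and tracks a short interval character twist coming from $Q$), and then run a Tur\'an--Kubilius / large-sieve style argument over $\mathcal{M}_{\leq N}$ to show that the ``defect'' $f(QG+1) + zf(G)$ being small on average forces $f$ restricted to a full-density subset to agree with a short interval character, whence by complete multiplicativity it agrees everywhere. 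A secondary subtlety is verifying that $\mu$ is non-pretentious in the sense of \eqref{nonpret} so that Corollary~\ref{cor:chowla} indeed follows --- but that is handled by the cited Lemma~\ref{thm_GRH_hayes} and does not affect the proof of Theorem~\ref{thm_katai} itself.
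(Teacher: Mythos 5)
Your first half matches the paper: convert \eqref{katai} into the statement that the logarithmic correlation $\sum_{G}q^{-\deg G}f(G)\bar f(QG+1)$ has size $(-\bar z+o(1))N$, and apply Theorem~\ref{LogEllFF1} in the contrapositive to conclude that $f$ pretends to be some $\psi\xi e_\theta$ (the paper then fixes $\psi,\xi,\theta$ uniformly in $N$ by pigeonhole/compactness, a point you should also make explicit). Your converse computation via the top coefficients of $QG+1$ is also the paper's.

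The genuine gap is in the rigidity step, which is where the real content of the theorem lies. Two specific problems. First, your elimination of the Dirichlet component is not a proof: ``$f$ is unimodular, hence nonvanishing, so $\psi$ must be principal or fold into $\xi$'' does not follow --- a nonvanishing completely multiplicative $f$ can perfectly well pretend to be (or agree at coprime arguments with) a non-principal character, and the involution $\chi\mapsto\chi^\ast$ only converts characters modulo powers of $t$ into short interval characters, not characters to a general modulus $M$. In the paper the principality of $\chi$ is forced by the functional relation itself: one restricts \eqref{func1} to polynomials $G=W'F$ with $F$ in a prescribed residue class modulo $W'=\prod_{P\in\mathcal P_{\leq w'}}P^{\max\{1,v_P(M)\}}$, so that $\chi_1(G)f_1(G)$ is essentially $\chi(C)$ times a fixed constant while $QG+1$ is coprime to all small primes, and concludes $\chi(C)$ is independent of $C$. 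Second, your upgrade from ``$h=f\bar g$ pretends to be $1$ and satisfies an approximate relation'' to ``$h$ is constant on each $\mathcal M_n$'' is only asserted, with a pointer to the integer-case arguments of \cite{klurman}, \cite{kp}; the paper explicitly notes its proof differs from the integer setting, and what replaces those arguments is a concentration inequality (Lemma~\ref{lem_concentration}, a Tur\'an--Kubilius statement for the linear forms $G\mapsto WG+B$) applied on progressions such as $G\equiv 1\pmod{\prod_{P\in\mathcal P_{\leq w}\setminus\{P_0\}}P^w}$, $G\equiv P_0\pmod{P_0^2}$, which shows successively that $f_1(P)$ equals a fixed value (necessarily $1$) for all but finitely many irreducibles $P$, that $\chi$ is principal, and finally that $f_1\equiv 1$. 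Without an argument at this level of precision --- in particular one that handles the interaction of the affine map $G\mapsto QG+1$ with the chosen progressions (your proposed reduction to $Q=1$ does not work, as you partly acknowledge) --- the proposal does not yet yield the exact identification $f=\xi e_\theta$.
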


The proof we give for this result is different\footnote{While the method of proof in~\cite[Section 5]{klurman} could in principle be adapted to the function field setting, it would require a function field derivation of binary correlation formulae for multiplicative functions, as in~\cite[Corollary 3.4]{klurman}, which would likely lengthen this paper even further.} in various aspects from the proof in the integer setting in~\cite{klurman}, and could be translated to produce a new proof of K\'{a}tai's result over the integers.

\subsection{Proof ideas}

The proof of Matom\"aki and Radziwi\l{}\l{}~\cite{mr-annals} in the integer setting uses harmonic analysis methods that do not translate directly to function fields. In particular, the characters that control the short sum behavior in function fields are not the Archimedean characters $n^{it}$ as in the integer setting, but rather the short interval characters from Definition~\ref{defn3}.  For our result on the variance in arithmetic progressions, in turn, the set of characters that controls it are the Dirichlet characters. Thus, in order to deal with both theorems simultaneously, we study character sums weighted by $f$ of the form
\begin{align}
\sum_{G\in \mathcal{M}_N}f(G)\chi(G)\xi(G),   
\end{align}
where $\chi$ is a Dirichlet character and $\xi$ is a short interval character. Products of Dirichlet characters and short interval characters are called \emph{Hayes characters} (the same terminology is used in~\cite{Gor} and stems from the fact that Hayes introduced these characters in~\cite{Hayes}). Roughly speaking, we are able to follow the proof strategy of~\cite{klu_man_ter} with this set of characters rather than Dirichlet characters alone. In~\cite{klu_man_ter}, however, our results only applied to characters whose modulus lies outside a small set of exceptional moduli, because of our incomplete understanding of zero-free regions for Dirichlet $L$-functions. In the function field setting, however, we can make use of a consequence of Weil's Riemann hypothesis due to Rhin~\cite{Rhin} that shows that the $L$-functions corresponding to Hayes characters satisfy GRH, which implies that there are no exceptional moduli in this setting.

We gain some information in passing from the physical to the Fourier space versions of the problem by applying an involution (which we learned from the work of Keating and Rudnick~\cite{kea_rud},~\cite{kea_rud2} and which appears earlier in the work of Hayes~\cite[e.g., pp. 115-116]{Hayes}) that relates short interval sums to sums over arithmetic progressions, that is,
$$
\sum_{\substack{\deg{G} = N \\ G \in I_H(G_0)}} f(G) \leftrightarrow \sum_{\substack{\deg{G} = N \\ G \equiv A(G_0) \bmod{t^{N-H+1}}}} f^{\ast}(G),
$$
where $A(G_0)$ is a residue class modulo $t^{N-H+1}$ determined by $G_0$, and $f^{\ast}$ is a kind of dual to $f$ under the correspondence\footnote{Strictly speaking, one needs to restrict to $G$ with $G(0) = 1$ for this to work.}, see Subsection~\ref{sec_involution} for further details (as well as~\cite[Section 5]{kea_rud} for a nice exposition of this idea). For example, this allows us to gain some insight, in the case that $q$ is even in Theorem~\ref{thm_mr_real}, about the nature of the main term in the variance.

For proving our two-point Elliott result, we in fact need a generalized version of our Matom\"aki--Radziwi\l{}\l{} theorem in function fields, where we twist the multiplicative function by an additive character, thus looking at the short exponential sum
\begin{align}\label{eq_shortexpsum}
\sum_{\substack{G\in \mc{M}_N \cap I_H(G_0)}}f(G)e_{\mathbb{F}}(\alpha G)  
\end{align}
for almost all $G_0$ (see Section~\ref{sec:notation} for the relevant notation). This exponential sum is analyzed by adapting the approach of Matom\"aki--Radziwi\l{}\l{}--Tao from~\cite{mrt} to function fields (see Theorem~\ref{ShortExpSumFFComplex}). In particular, this involves performing the circle method in function fields, which is perhaps of independent interest.

To complete the proof, we develop a version of Tao's entropy decrement argument from~\cite[Section 3]{tao} that allows us to express the two-point correlation as a two-variable correlation. By a bit of Fourier analysis, we can reduce the necessary estimate for this two-variable correlation sum to the estimate for~\eqref{eq_shortexpsum} that we proved. 

We use our correlation results in the proof of Theorem~\ref{thm_katai} in order to reduce our classification of functions $f$ to functions that are pretentious to a Hayes character $\chi \xi e_{\theta}$. We eschew the need for correlation formulas, as found in~\cite{klurman}, by instead appealing to a concentration inequality that forces $f\bar{\chi \xi} e_{-\theta}$ to be close to $1$ along structured sequences of polynomials. A judicious construction of such sequences leads to Theorem~\ref{thm_katai}.

\subsection{Structure of the Paper}
The paper is organized as follows. In Section~\ref{sec_prelim1}, we present some preliminary lemmas on the pretentious distance, the involution mentioned above, and Hayes characters. In Section~\ref{sec_prelim2} we introduce the remaining relevant preliminaries relating especially to mean square and pointwise estimates for character sums that will be needed in the proofs of Theorems~\ref{VarAPs},~\ref{MRFF} and~\ref{LogEllFF1}. In Section~\ref{VarSec}, we prove Theorem~\ref{VarAPs} using these lemmas. The proof of the Matom\"aki--Radziwi\l{}\l{} theorem (Theorem~\ref{thm_mr_real}) proceeds completely analogously 
and is described in Section~\ref{MRTHMSec}. In Section~\ref{sec_shortexp}, we establish cancellation in exponential sums over short intervals weighted by any non-pretentious multiplicative function. Finally, in Section~\ref{LogEllSec} we adapt the entropy decrement argument of~\cite{tao} to the function field setting and apply the short exponential sum estimate for multiplicative functions from Section~\ref{sec_shortexp} to establish Theorem~\ref{LogEllFF1}. Section~\ref{KataiSec} is then devoted to the proof of our application, Theorem~\ref{thm_katai}, on K\'atai's conjecture.

\subsection{Acknowledgments} This work began when the authors were in residence for the ``Probability in Number Theory'' Workshop at CRM in the spring of 2018, and continued in particular at the ``Sarnak's Conjecture'' workshop at AIM that winter. We would like to thank both institutions for their hospitality and for excellent working conditions. 

We are grateful to the anonymous referee for a very careful reading of the paper and for numerous comments leading to improvements to the exposition. We would also like to thank Andrew Granville and Maksym Radziwi\l{}\l{}  for their encouragement, and Ofir Gorodetsky for pointing out a correction to a smooth number estimate referenced in an earlier version of this paper. The third author was supported by a Titchmarsh Fellowship and Academy of Finland grant no. 340098.

\section{Notation}\label{sec:notation}

Throughout the paper, $p$ is the characteristic of $\mb{F}_q$, and $q = p^k$ for some $k \geq 1$. 

We denote by $\mc{M}$ the set of monic polynomials in $\mb{F}_q[t]$ (we do not denote $q$ dependence in $\mathcal{M}$, since it will always be clear from the context), and $\mc{P}$ the set of monic irreducible (prime) polynomials in $\mb{F}_q[t]$. For $N \in \mb{N}$, we write $\mc{M}_N$, $\mc{M}_{\leq N}$ and $\mc{M}_{< N}$ to denote, respectively, the set of monic polynomials of degree exactly $N$, less than or equal $N$ and strictly less than $N$. Analogously, we define $\mc{P}_N$, $\mc{P}_{\leq N}$ and $\mc{P}_{<N}$ to be the corresponding sets of monic irreducible polynomials. We denote the degree of $M \in \mb{F}_q[t]$ by $\text{deg}(M)$.

Given two polynomials $F,G\in \mathcal{M}$, not both zero, we define their greatest common divisor $(F,G)$ as the unique polynomial $D\in \mathcal{M}$ such that $D\mid F, D\mid G$ and such that for any $D'\in \mathcal{M}$ satisfying $D'\mid F, D'\mid G$ we have $D'\mid D$. The least common multiple $[F,G]$ of $F$ and $G$ is in turn defined by $[F,G]:=FG/(F,G)$.

Typically, $G$ will be used to denote an element of $\mc{M}$, whereas $R$ or $P$ denotes an element of $\mc{P}$ and $M$ denotes an element of $\mb{F}_q[t]$, monic or otherwise.

Given two polynomials $G_0,G \in \mc{M}$ and a parameter $H \geq 1$, we write 
$$
I_H(G_0):= \{G \in \mc{M}  \colon  \deg{G-G_0} < H\}
$$ 
to denote the short interval centred at $G_0$ of size $H$. 

As usual, given $t \in \mb{R}$ we write $e(t) := e^{2\pi i t}$. Given a parameter $\theta \in [0,1]$ and a polynomial $G \in \mb{F}_q[t]$, we also write $e_{\theta}(G) := e(\theta \deg{G})$. Finally, given an element $\alpha \in \mb{K}_{\infty}(t)$ (see Section~\ref{sec_shortexp}) with formal Laurent series $\alpha = \sum_{k = N}^{\infty} a_{-k}(\alpha) t^{-k}$, we define $e_{\mb{F}}(\alpha) := e(\text{tr}_{\mb{F}_q/\mb{F}_p} a_{-1}(\alpha)/p)$, where $\text{tr}_{\mb{F}_q/\mb{F}_p}$ denotes the usual field trace. We also define $\lla \alpha \rra := q^{-N}$. 

Throughout the paper, we write $\mb{U} := \{z \in \mb{C}  \colon  |z| \leq 1\}$ and $S^1 := \{z \in \mb{U}  \colon  |z| = 1\}$. We say that $f\colon \mathcal{M}\to \mathbb{C}$ is multiplicative if $f(G_1G_2)=f(G_1)f(G_2)$ whenever $G_1,G_2$ are coprime. Given multiplicative functions $f,g\colon \mc{M} \to \mb{U}$, we define the pretentious distance between them by
\begin{align}\label{eqq36}
\mb{D}(f,g;N) := \Big(\sum_{P \in \mc{P}_{\leq N}} q^{-\deg{P}}(1-\text{Re}(f(P)\bar{g}(P)))\Big)^{1/2},
\end{align}
and define $\mb{D}(f,g;M,N)$ similarly, but with the summation being over $P\in \mc{P}_{\leq N}\setminus \mc{P}_{\leq M}$. We also set
$$
\mc{D}_f(N) := \min_{\theta \in [0,1]} \mb{D}(f, e_{\theta};N)^2.
$$

Given a monic polynomial $G \in \mc{M}$ with $G(0) = 1$, we put $G^{\ast}(t) := t^{\deg{G}}G(1/t)$ (see Subsection~\ref{sec_involution} for further discussion). For a multiplicative function $f\colon \mc{M} \to \mb{U}$, we define the associated multiplicative function $f^{\ast} \colon \mc{M} \to \mb{U}$ as $f^{\ast}(G) := f(G^{\ast})$, whenever $G(0) = 1$, and set $f^{\ast}(G)=0$ otherwise.

Given a Dirichlet character $\chi$ modulo $Q$ (defined above), we define its conductor\footnote{This is strictly speaking an abuse of notation/terminology, as the conductor of a function field Dirichlet character $\chi$ ought to be a polynomial of least degree that is a period for $\chi$. Here, we use it as an integer-valued measure of complexity of the character, which will be convenient for us in various estimates in the sequel.} as $\cond{\chi}:=\deg{Q'}$ if $Q'\mid Q$ is such that $\chi(M)$ agrees with a primitive Dirichlet character $\chi' \bmod{Q'}$ for all $M$ coprime to $Q$. In this case, we say that $\chi'$ induces $\chi$. We write $\mc{X}_Q$ to denote the set of Dirichlet characters modulo $Q$. 

A \emph{Hayes character} is a character of the form $\tilde{\chi} = \psi_Q \xi_{\nu}$, where $\psi_Q$ is a Dirichlet character to modulus $Q$ induced by a primitive character to some modulus $Q'$ and $\xi_{\nu}$ is a length $\nu$ short interval character for some $\nu\geq 0$ (in Section~\ref{sec_prelim1} we first give a different definition and then note that it is equivalent to this one). We define the conductor of $\tilde{\chi}$ by $\textnormal{cond}_H(\tilde{\chi}) := \deg{Q'} + \nu$. We say that $\tilde{\chi}$ is \emph{non-principal} if $\textnormal{cond}_H(\tilde{\chi})>0$. We further say that $\tilde{\chi}'$ \emph{induces} $\tilde{\chi}$ if $\tilde{\chi}'=\chi'\xi'$ and $\tilde{\chi}=\chi\xi$, with the Dirichlet character $\chi'$ inducing $\chi$ and $\xi'=\xi$. We also write $\mc{X}_{Q,\nu}$ to denote the collection of Hayes characters of the form $\psi_Q\xi_{\nu}$, where $\psi$ has modulus $Q$ and $\xi$ has length $\nu$. See Subsection~\ref{HayesSubSec} for further discussion.

We will sometimes write $\mu_k$ to denote the set of roots of unity of order $k$, where $k \in \mb{N}$. 

The functions $\Lambda$,  $\omega$, $\lambda$, $\mu$, $\phi$, $\textnormal{rad}$ and $\nu_P$, defined on $\mc{M}$, are the analogues of the corresponding arithmetic functions in the number field setting. Thus 
\begin{itemize}
    \item $\Lambda(G)= \deg{P}$ if $G=P^k$ for some $k\geq 1$ and  $P\in \mathcal{P}$, and $\Lambda(G)=0$ otherwise. 
    \item $\omega(G)$ is the number of distinct irreducible divisors of $G$.
    \item $\lambda\colon \mathcal{M}\to \{-1,+1\}$ is the completely multiplicative function with $\lambda(P)=-1$ for all $P\in \mathcal{P}$.
    \item $\mu\colon \mathcal{M}\to \{-1,0,+1\}$ is given by $\mu(G)=(-1)^{\omega(G)}$ for $G$  not divisible by $P^2$ for any $P\in \mathcal{P}$, and $\mu(G)=0$ otherwise. 
    \item $\phi(G)$ is the size of the finite multiplicative group $(\mathbb{F}_q[t]/G\mathbb{F}_q[t])^{\times}$.
    \item $\textnormal{rad}(G)=1$ if $G=1$ and $\textnormal{rad}(G)=P_1\cdots P_k$ if $P_1,\ldots, P_k$ are the distinct irreducible factors of $G$.
    \item $\nu_P(G)$, for $P\in \mathcal{P}$, is the largest integer $k$ such that $P^k\mid G$.
\end{itemize}

Throughout this paper, the cardinality $q$ of the underlying finite field $\mb{F}_q$ is fixed. For the sake of convenience we have chosen to omit mention of dependencies of implicit constants in our estimates on $q$. In particular, the implicit constants in any estimate may depend on $q$ throughout this paper.

\section{Preliminaries I: Multiplicative Functions and Hayes Characters} \label{sec_prelim1}

In this section we establish some auxiliary lemmas, specifically related to multiplicative functions, that will be necessary in the proofs of Theorems~\ref{thm_mr_real} and~\ref{LogEllFF1}. Recall the definition of Hayes characters from Section~\ref{sec:notation}. 

\subsection{Lemmas on character sums}

When working over $\mathbb{F}_q[t]$, we have the generalized Riemann hypothesis at our disposal, arising from an application of Weil's Riemann hypothesis for curves over finite fields (see~\cite[p. 134]{Wei}).\footnote{Even though GRH is useful for us in certain parts of our arguments, specifically Lemmas~\ref{lem_hayesdist} and~\ref{hayesbound}, we emphasize that it is not the main driving force behind the proofs of our results. As noted implicitly in~\cite{mr-annals} and explicitly in~\cite{klu_man_ter}, obtaining non-trivial bounds on the variance in short intervals and arithmetic progressions, respectively, in the integer setting only requires the existence of sufficiently wide zero-free regions, for example of Korobov-Vinogradov type, for Dirichlet $L$-functions to the left of $\text{Re}(s) = 1$ (and for $L$-functions of Hayes characters in our setting).} 

\begin{lem}[Rhin]\label{thm_GRH_hayes}
Let $N \geq 1$. Let $\tilde{\chi}$ be a non-principal Hayes character. Then
\begin{equation}\label{HayesRH}
\sum_{G \in \mc{M}_N} \tilde{\chi}(G)\Lambda(G) \ll \textnormal{cond}_{H}(\tilde{\chi})q^{N/2}.
\end{equation}
\end{lem}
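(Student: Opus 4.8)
The plan is to reduce the estimate \eqref{HayesRH} to the classical explicit formula for $L$-functions attached to Hayes characters, combined with the Riemann hypothesis for such $L$-functions (which is what the attribution to Rhin and the reference to Weil supply). First I would recall that to a primitive Hayes character $\tilde{\chi}' = \chi'\xi'$ of conductor $\mathfrak{q} = \deg{Q'} + \nu$ one attaches an $L$-function $L(u,\tilde{\chi}') = \sum_{G \in \mc{M}} \tilde{\chi}'(G) u^{\deg{G}} = \prod_{P} (1 - \tilde{\chi}'(P)u^{\deg{P}})^{-1}$, which by Hayes's theory (as recalled in Section \ref{sec_prelim1}) is a polynomial in $u$ of degree $\mathfrak{q} - 1$ (for $\tilde\chi'$ non-principal), whose inverse roots all have absolute value $q^{1/2}$ by the Riemann hypothesis for curves. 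I would then take the logarithmic derivative: writing $u\frac{d}{du}\log L(u,\tilde{\chi}') = \sum_{N \geq 1} \left(\sum_{G \in \mc{M}_N} \tilde{\chi}'(G)\Lambda(G)\right) u^N$ on one side, and $\sum_{j}\frac{-\gamma_j u}{1 - \gamma_j u} = -\sum_{N\geq 1}\left(\sum_j \gamma_j^N\right)u^N$ on the other, where $\gamma_1,\dots,\gamma_{\mathfrak q - 1}$ are the inverse roots. Comparing coefficients gives the exact identity
\begin{align*}
\sum_{G \in \mc{M}_N} \tilde{\chi}'(G)\Lambda(G) = -\sum_{j=1}^{\mathfrak{q}-1} \gamma_j^N,
\end{align*}
and hence $\big|\sum_{G \in \mc{M}_N}\tilde\chi'(G)\Lambda(G)\big| \le (\mathfrak{q}-1) q^{N/2} \le \condH{\tilde\chi}\, q^{N/2}$.

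Next I would pass from the primitive character $\tilde{\chi}'$ to the given (possibly imprimitive) $\tilde{\chi}$. Here I would use that $\tilde{\chi}(G) = \tilde{\chi}'(G)$ for all $G$ coprime to $Q$, so that the two weighted prime-power sums differ only by the contribution of $G = P^k$ with $P \mid Q$. Since $\Lambda(G) = \deg{P}$ on such terms and $|\tilde\chi'| \le 1$, and since the number of prime powers $P^k \in \mc{M}_N$ with $P \mid Q$ is $O(N)$ once one sums $\deg{P}$ over $k$ with $k\deg{P} = N$ (there are at most $\omega(Q) \le \deg{Q} \le \mathfrak q$ such primes, each contributing $\deg{P} \le N$), this discrepancy is $O(\mathfrak{q}\cdot N)$; combined with the trivial bound this is absorbed into the claimed $O(\condH{\tilde\chi} q^{N/2})$ after possibly adjusting the implied constant (for small $N$ the trivial bound $\sum_{G\in\mc M_N}\Lambda(G) = q^N$ suffices, while the $N$-factor is dwarfed by $q^{N/2}$). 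Alternatively, and more cleanly, one can first prove the bound for the completed/primitive object and then note the induced character satisfies the same bound directly because the Euler factors at $P\mid Q$ only remove inverse roots (of the primitive $L$-function) without adding any, so $L(u,\tilde\chi)$ is still a polynomial whose inverse roots have modulus $\le q^{1/2}$ and whose degree is at most $\mathfrak q - 1$.

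The main obstacle — really the only substantive input — is establishing that $L(u,\tilde{\chi}')$ is a polynomial of degree $\mathfrak q - 1$ with all inverse roots of absolute value exactly $q^{1/2}$; this is precisely the Riemann hypothesis for the curve/cover attached to the Hayes character, and I would cite it (via Weil \cite{Wei}, or Rhin \cite{Rhin} for the formulation covering short-interval, i.e. "at infinity", ramification data) rather than prove it. Everything else — the Euler product expansion, the logarithmic-derivative coefficient extraction, and the imprimitive-to-primitive bookkeeping — is routine formal manipulation of power series. I would also double-check the edge cases: when $\tilde\chi$ is non-principal but has small conductor the polynomial degree could be $0$, giving the sum equal to $0$, consistent with the bound; and the hypothesis $\condH{\tilde\chi} > 1$ guarantees we are never in the principal case where $L$ has a pole and the estimate would fail.
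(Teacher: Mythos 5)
Your proposal is correct in substance, but it takes a different (longer) route than the paper: the paper's proof of this lemma is literally a citation of Rhin's Theorem~3, which is already stated as the bound \eqref{HayesRH} on the $\Lambda$-weighted character sum, whereas you re-derive that bound from the more structural input that $\mathcal{L}(u,\tilde{\chi})$ is a polynomial of degree $\condH{\tilde{\chi}}-1$ whose inverse roots obey the Riemann hypothesis, via the logarithmic derivative/explicit formula. That deduction is the standard one (and is essentially how the paper itself uses the $L$-function factorization later, in the proof of Lemma~\ref{hayesbound}), so nothing essential is missing; what your route buys is that it makes transparent exactly which analytic input is being imported, at the cost of redoing bookkeeping that the citation already packages.

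Two details need repair, though neither is a fatal gap. First, it is not true that all inverse roots have modulus exactly $q^{1/2}$: as the paper notes in the proof of Lemma~\ref{hayesbound}, the $\alpha_i$ have modulus either $1$ or $q^{1/2}$ (trivial zeros occur), but since you only need the upper bound $|\gamma_j|\leq q^{1/2}$ this only helps you. Second, your ``cleaner'' treatment of imprimitivity is backwards: if $\tilde{\chi}\in\mc{X}_{Q,\nu}$ is induced by the primitive $\tilde{\chi}'$, then $\mathcal{L}(u,\tilde{\chi})=\mathcal{L}(u,\tilde{\chi}')\prod_{P\mid Q,\,P\nmid Q'}\bigl(1-\tilde{\chi}'(P)u^{\deg{P}}\bigr)$, so passing to the imprimitive character \emph{adds} inverse roots (all of modulus $1$) and raises the degree above $\condH{\tilde{\chi}}-1$; it does not remove any. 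Relatedly, your bound ``$\omega(Q)\leq\deg{Q}\leq\mathfrak{q}$'' is only legitimate if $\condH{\tilde{\chi}}$ is read as $\deg{Q}+\nu$ (the modulus-based convention of Subsection~\ref{HayesSubSec}); with the primitive-conductor convention $\deg{Q'}+\nu$ of Section~\ref{sec:notation}, $\deg{Q}$ can vastly exceed $\mathfrak{q}$ and a trivial bound on the discarded prime powers $P^k$ with $P\mid Q$ does not suffice. The correct elementary bookkeeping is to bound the discrepancy by $\sum_{P\mid Q,\ \deg{P}\mid N}\deg{P}\leq\deg{\rad(Q)}\leq\deg{Q}$, which is $\ll\condH{\tilde{\chi}}q^{N/2}$ under the modulus convention (the one actually used when the paper applies the lemma, e.g.\ in Lemma~\ref{HalMonPrim}). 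With these adjustments your argument goes through.
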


\begin{proof}
This is~\cite[Theorem 3]{Rhin}.
\end{proof}

A useful corollary of Lemma~\ref{thm_GRH_hayes} is the following. 

\begin{lem}[A pretentious distance bound] \label{lem_hayesdist}
Let $N \geq 3$, $A\geq 1$. Let $\tilde{\chi}$ be a non-principal Hayes character of conductor $\text{cond}_H(\tilde{\chi}) \leq N^{A}$. Then
$$
\max_{\theta\in [0,1]}\Big|\sum_{P \in \mc{P}_{\leq N}} \tilde{\chi}(P)e_{\theta}(P) q^{-\deg{P}} \Big| \ll_A \log\log N.
$$
\end{lem}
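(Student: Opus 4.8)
The plan is to deduce Lemma \ref{lem_hayesdist} from Lemma \ref{thm_GRH_hayes} by the standard partial-summation / Mellin-type argument that converts a prime-power sum weighted by $\Lambda$ into the sum $\sum_{P \in \mc{P}_{\leq N}} \tilde\chi(P) e_\theta(P) q^{-\deg{P}}$. First I would observe that for fixed $\theta \in [0,1]$ the character $G \mapsto \tilde\chi(G) e_\theta(G)$ is itself a ``generalized character'' whose prime-power sums are controlled: indeed $e_\theta(G) = e(\theta \deg G)$ is completely multiplicative and depends only on $\deg G$, so $\tilde\chi e_\theta$ has an associated $L$-function whose partial sums over $\mc{M}_n$ are, by Lemma \ref{thm_GRH_hayes} applied appropriately (or by re-running Rhin's bound with the extra unimodular twist $e(\theta n)$ pulled out of the degree-$n$ sum), bounded by $\ll \condH(\tilde\chi) q^{n/2}$ for each $n \leq N$, uniformly in $\theta$. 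Concretely, since $e_\theta(G)$ is constant $= e(\theta n)$ on $\mc{M}_n$, we have $\sum_{G \in \mc{M}_n} \tilde\chi(G) e_\theta(G)\Lambda(G) = e(\theta n)\sum_{G \in \mc{M}_n}\tilde\chi(G)\Lambda(G) \ll \condH(\tilde\chi) q^{n/2}$.

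Next I would pass from the $\Lambda$-weighted sum to the prime sum. Writing $\sum_{G \in \mc{M}_n} \tilde\chi(G)\Lambda(G) = \sum_{\ell \mid n} \ell \sum_{P \in \mc{P}_\ell} \tilde\chi(P)^{n/\ell}$, the contribution of prime powers $P^k$ with $k \geq 2$ to $\sum_{n \leq N}$ is $O(q^{N/2})$ in absolute value (the number of such prime powers of degree $n$ is $\ll q^{n/2}$, and they are weighted by $q^{-n}$), hence negligible after the normalization below. So it suffices to estimate $\sum_{n \leq N} q^{-n} e(\theta n)\sum_{P \in \mc{P}_n}\tilde\chi(P) \deg P = \sum_{n \leq N} n q^{-n} e(\theta n) \sum_{P \in \mc{P}_n}\tilde\chi(P)$. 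Let $S(n) := \sum_{P \in \mc{P}_n} \tilde\chi(P) \deg P = \sum_{P \in \mc{P}_n}\tilde\chi(P) n$; the displayed prime sum in the lemma is $\sum_{n \leq N} q^{-n} e(\theta n) S(n)/1$ up to the $e_\theta$ factor being carried through, i.e. essentially $\sum_{n\le N} n^{-1}\cdot(n q^{-n} e(\theta n)\sum_{P\in\mc P_n}\tilde\chi(P))$, and I would write $\sum_{P \in \mc{P}_n}\tilde\chi(P) = \tfrac1n\big(\sum_{G \in \mc{M}_n}\tilde\chi(G)\Lambda(G) - (\text{prime power terms})\big)$. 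Combining with the GRH bound $\sum_{G \in \mc{M}_m}\tilde\chi(G)\Lambda(G) \ll \condH(\tilde\chi) q^{m/2}$ for all $m \le N$ and performing an Abel summation in $n$ to handle the $\tfrac1n$ weight, one gets
\begin{align*}
\Big|\sum_{P \in \mc{P}_{\leq N}} \tilde\chi(P) e_\theta(P) q^{-\deg P}\Big| &\ll \sum_{n \leq N} \frac{q^{-n}}{n}\Big|\sum_{P \in \mc{P}_n}\tilde\chi(P)\Big| \cdot n + O(1)\\
&\ll \sum_{n \leq N} \frac{q^{-n}}{n}\, \condH(\tilde\chi)\, q^{n/2} + O(1) \ll \condH(\tilde\chi)\sum_{n \leq N} \frac{q^{-n/2}}{n} + O(1).
\end{align*}
This last sum converges, so this naive estimate actually gives $O_A(1)$, which is even stronger than $\log\log N$ — but the catch is the range of $n$ where $q^{n/2} \geq q^n / \condH(\tilde\chi)$, i.e. $n \lesssim 2\log_q \condH(\tilde\chi) \lesssim 2A\log_q N$, is exactly where the trivial bound $\sum_{P\in\mc P_n}\tilde\chi(P)\ll q^n/n$ beats the GRH bound; there one uses the trivial bound instead, contributing $\sum_{n \lesssim 2A\log_q N} \tfrac1n \ll_A \log\log N$. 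Splitting the sum at $n_0 := \lceil 2A\log_q N\rceil$ — trivial bound for $n \leq n_0$, GRH bound for $n > n_0$ — yields the claimed $\ll_A \log\log N$, uniformly in $\theta$, so taking the max over $\theta$ costs nothing.

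The main obstacle, and the only genuinely non-routine point, is making sure the GRH input of Lemma \ref{thm_GRH_hayes} can be applied \emph{uniformly in the Archimedean twist} $e_\theta$: one must either check that $\tilde\chi(G)e_\theta(G)$, while not literally a Hayes character for irrational $\theta$, still has its degree-$n$ prime-power sums bounded by $\condH(\tilde\chi) q^{n/2}$ — which here is immediate since $e_\theta$ is constant on each $\mc M_n$ and can be pulled out — or, more robustly, absorb the $e_\theta$ factor by noting that the sum over $n$ of $q^{-n}e(\theta n)(\cdots)$ is handled term-by-term in $n$ so the modulus $|e(\theta n)| = 1$ makes $\theta$ irrelevant before any summation. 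Everything else (the prime-power vs. prime comparison, the Abel summation against $1/n$, the splitting of the range at $n_0 \asymp_A \log N$) is standard function-field analytic number theory. I would therefore organize the write-up as: (1) reduce to $\sum_n n q^{-n}e(\theta n)\sum_{P\in\mc P_n}\tilde\chi(P)$, discarding higher prime powers; (2) split at $n_0 = \lceil 2A \log_q N \rceil$; (3) trivial bound below $n_0$ giving $\ll_A \log\log N$; (4) Lemma \ref{thm_GRH_hayes} above $n_0$ giving $\ll \condH(\tilde\chi) q^{-n_0/2} \ll 1$; (5) take the maximum over $\theta$, noting uniformity.
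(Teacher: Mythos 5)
Your proposal is correct and follows essentially the same route as the paper: pull the degree-dependent twist $e(\theta d)$ out of each $\mc{P}_d$-sum, bound the range $d \ll_A \log N$ trivially by the prime polynomial theorem to get the $\log\log N$, and handle larger degrees by inserting $\Lambda/d$ (higher prime powers costing $O(1)$) and applying Lemma \ref{thm_GRH_hayes}, which makes that tail $O(1)$. The only differences are cosmetic (your cutoff $2A\log_q N$ versus the paper's $10A\log N$, and some redundant bookkeeping around the $1/n$ weight).
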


\begin{proof}
Splitting the sum according to degree, then separating the contribution of $\deg{P} \leq 10A\log N$ from its complement, we get
\begin{align*}
&\sum_{P \in \mc{P}_{\leq N}} \tilde{\chi}(P)e_{\theta}(P)q^{-\deg{P}} \\
&= \sum_{d \leq 10A\log N} e(\theta d)q^{-d}\sum_{P \in \mc{P}_d} \tilde{\chi}(P) + \sum_{10A\log N < d \leq N} e(\theta d)q^{-d}\sum_{P \in \mc{P}_d} \tilde{\chi}(P) =: T_1 + T_2.
\end{align*}
We bound the first sum trivially using the prime polynomial theorem, yielding
$$
T_1 \ll \sum_{d \leq 10A\log N} q^{-d} |\mc{P}_d| \ll \sum_{d \leq 10A\log N} \frac{1}{d} = \log \log N + O_A(1).
$$
We now consider $T_2$. Replacing $\sum_{P \in \mc{P}_d} \tilde{\chi}(P)$ by $\sum_{G \in \mc{M}_d} \tilde{\chi}(G)\Lambda(G)/d$ in the inner sum over primes in $T_2$ incurs an error of size $O(\sum_{d \leq N}q^{-d/2}) = O(1)$ from terms $P^k$ with $k \geq 2$. This sum can thus be expressed as
$$
T_2 = \sum_{10A\log N < d \leq N} \frac{e(\theta d)}{dq^d} \sum_{G \in \mc{M}_d} \Lambda(G) \tilde{\chi}(G) + O(1).
$$
By Lemma~\ref{thm_GRH_hayes}, we can bound this as
\begin{align*}
|T_2| &\leq \sum_{10A\log N < d \leq N} \frac{1}{dq^d} \Big|\sum_{G \in \mc{M}_d} \Lambda(G)\tilde{\chi}(G)\Big| + O(1) \ll \sum_{10A\log N < d \leq N}\frac{N^A}{dq^{d/2}} + O(1) \\
&\ll N^{A}\cdot 2^{-5A\log N} + 1 \ll 1.
\end{align*}
Combining the contributions from $T_1$ and $T_2$, we obtain the claim.
\end{proof}

We will also need a bound on sums of Hayes characters over $\mathcal{M}$ (as opposed to $\mathcal{P}$). 

\begin{lem}[Pointwise bound for character sums over monics]\label{hayesbound} Let $M>N\geq 1$. Let $\tilde{\chi}$ be either a non-principal Dirichlet character or a non-principal short interval character of conductor $M$. Then we have
\begin{align}
 \sum_{G\in \mc{M}_N}\tilde{\chi}(G)\ll q^{N/2}\binom{M-1}{N}.
\end{align}
\end{lem}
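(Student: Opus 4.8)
The plan is to pass from the sum $\sum_{G\in\mc{M}_N}\tilde{\chi}(G)$ to a coefficient of the $L$-function of $\tilde{\chi}$ and then invoke the Riemann hypothesis over $\mathbb{F}_q[t]$ (the same ingredient underlying Lemma \ref{thm_GRH_hayes}). First I would introduce the generating series $L(u,\tilde{\chi}) := \sum_{G \in \mc{M}} \tilde{\chi}(G) u^{\deg{G}}$, which for $|u| < q^{-1}$ equals the Euler product $\prod_{P \in \mc{P}}(1 - \tilde{\chi}(P) u^{\deg{P}})^{-1}$ by unique factorization (legitimate since $\tilde{\chi}$ is multiplicative with values in $S^1$). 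Writing $M := \textnormal{cond}_H(\tilde{\chi})$, the function $L(u,\tilde{\chi})$ is a polynomial in $u$ of degree $M-1$ all of whose inverse roots have modulus $q^{1/2}$: for a non-principal Dirichlet character of conductor $M$ this is classical (Weil's Riemann hypothesis for curves over finite fields), and for a short interval character of length $M$ it follows from the realization of these characters in Section \ref{sec_prelim1} (cf. also \cite{Rhin}). Thus $L(u,\tilde{\chi}) = \prod_{j=1}^{M-1}(1 - \gamma_j u)$ with $|\gamma_j| = q^{1/2}$ for every $j$.

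Comparing the coefficients of $u^N$ in the two expressions for $L(u,\tilde{\chi})$ then gives
\[
\sum_{G \in \mc{M}_N} \tilde{\chi}(G) = (-1)^N e_N(\gamma_1, \dots, \gamma_{M-1}),
\]
where $e_N$ is the $N$-th elementary symmetric polynomial, interpreted as $0$ when $N > M-1$ (recovering, in particular, the vanishing of the sum once $N \geq \textnormal{cond}_H(\tilde{\chi})$). Since $e_N(\gamma_1,\dots,\gamma_{M-1})$ is a sum of $\binom{M-1}{N}$ products of $N$ of the $\gamma_j$, each of modulus $(q^{1/2})^N$, the triangle inequality yields $\Big|\sum_{G \in \mc{M}_N}\tilde{\chi}(G)\Big| \leq \binom{M-1}{N} q^{N/2}$, which is exactly the asserted estimate.

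The one step that genuinely needs care is the uniform claim that $L(u,\tilde{\chi})$ is a polynomial of degree $\textnormal{cond}_H(\tilde{\chi})-1$ with all inverse roots of modulus $q^{1/2}$, across both families of characters; for Dirichlet characters this is standard, while for short interval characters it relies on their concrete construction in Section \ref{sec_prelim1} (where their $L$-functions are identified with those of primitive Dirichlet characters to prime-power moduli), together with Weil's theorem as used by Rhin in \cite{Rhin}. An alternative that stays closer to the tools already in place is to differentiate the Euler product logarithmically to obtain $\sum_{G \in \mc{M}_n} \Lambda(G)\tilde{\chi}(G) = -\sum_{j} \gamma_j^{\,n}$, bound the left-hand side via Lemma \ref{thm_GRH_hayes}, and recover control of the $e_N$ through Newton's identities; this route still requires the count of inverse roots and only reproduces the bound up to a slightly larger implied constant, so the direct $L$-function argument above is preferable.
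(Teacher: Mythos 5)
Your argument is essentially the paper's own proof: both express the character sum as the coefficient of $z^N$ in the $L$-function, which by Rhin's Riemann hypothesis is a polynomial of degree $\textnormal{cond}_H(\tilde{\chi})-1$, and then bound the resulting elementary symmetric polynomial of the inverse roots trivially by $\binom{\textnormal{cond}_H(\tilde{\chi})-1}{N}q^{N/2}$. The only slight imprecision is your claim that every inverse root has modulus exactly $q^{1/2}$ — in general some have modulus $1$ — but since only the upper bound $|\gamma_j|\leq q^{1/2}$ enters the estimate, this does not affect the proof.
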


\begin{rem}
This lemma will be applied in particular when $M \leq (1+o(1))N$. For Dirichlet characters, we could instead have appealed to the P\'{o}lya-Vinogradov inequality 
(see~\cite[Proposition 2.1]{hsu}) to produce a sharper bound in this range, rather than applying Weil's RH; however, a corresponding result for general Hayes characters does not exist in the literature. For this reason, we have resorted to appealing to RH instead. 

Note that the same quality bound, with an essentially identical proof, appears as~\cite[Lemma 2.1]{FaifRud} in the context of Dirichlet characters; for the sake of completeness we include the short proof of the general case.
\end{rem}

\begin{proof}
By the GRH for $L$-functions corresponding to Hayes characters (\cite{Rhin}), we can write the $L$-function
\begin{align*}
\mathcal{L}(z,\tilde{\chi})=\sum_{G\in \mc{M}}\tilde{\chi}(G)z^{\deg{G}}    
\end{align*}
as
\begin{align}\label{lfunction}
\mathcal{L}(z,\tilde{\chi})=\prod_{j=1}^{M-1}(1-\alpha_iz)    
\end{align}
for some $\alpha_i=\alpha_i(\tilde{\chi})$ that all have modulus either $1$ or $q^{1/2}$. Now, the sum in question is the coefficient of $z^N$ on the right of~\eqref{lfunction}, which by Vieta's formulae is equal to
\begin{align*}
\sum_{\substack{S\subseteq [1,M-1]\cap \mathbb{N}\\|S|=N}}\prod_{j\in S}(-\alpha_j).    
\end{align*}
This is trivially bounded in absolute value by $q^{N/2}\binom{M-1}{N}$, which yields the claim.
\end{proof}

\subsection{Multiplicative Functions in Function Fields}
Let $f\colon \mc{M} \to \mb{U}$ be a 1-bounded multiplicative function. Define the Dirichlet series corresponding to $f$ by
\begin{align}\label{eqq19}
L(s,f) := \sum_{N \geq 0}\sum_{G \in \mc{M}_N} f(G)q^{-\deg{G}s} = \prod_{P \in \mc{P}} \sum_{k \geq 0} f(P^k)q^{-k\deg{P}s},
\end{align}
for $\Re(s)>1$; in this region both expressions converge absolutely. 

Recall the pretentious distance
$$
\mb{D}(f,g;N) := \Big(\sum_{P \in \mc{P}_{\leq N}} q^{-\deg{P}}\Big(1-\text{Re}(f(P)\bar{g(P)})\Big)\Big)^{\frac{1}{2}}.
$$
One can show~\cite{klurman2017mean} that $\mb{D}$ satisfies a triangle inequality of the shape
$$
\mb{D}(f,h;N) \leq \mb{D}(f,g;N) + \mb{D}(g,h;N),
$$
for any $f,g,h \colon \mathcal{M}\to \mathbb{U}$ multiplicative. Define also 
$$
\mc{D}_f(N) := \min_{\theta \in [0,1]} \mb{D}(f,e_{\theta};N)^2.
$$ 
The following variant of Hal\'{a}sz's theorem then holds:
\begin{thm}[Hal\'{a}sz's theorem in function fields]\label{HalThmFF}
Let $N \geq 1$. Let $f\colon\mc{M} \to \mb{U}$ be multiplicative. Then
$$\frac{1}{q^N} \sum_{G \in \mc{M}_N} f(G) \ll (1+\mc{D}_f(N))e^{-\mc{D}_f(N)}.$$
\end{thm}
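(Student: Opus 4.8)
The plan is to mimic the classical proof of Halász's theorem, transporting it to the function field setting where the Dirichlet series $L(s,f)$ is a power series in $z = q^{-s}$ and hence the analysis is essentially that of a single complex variable on a disc. First I would set $z = q^{-s}$ and write $\mathcal F(z) := \sum_{N \geq 0} \big(\sum_{G \in \mc M_N} f(G)\big) z^N = \prod_{P} \sum_{k \geq 0} f(P^k) z^{k\deg P}$, convergent for $|z| < 1/q$. The coefficient we want, $\sum_{G \in \mc M_N} f(G)$, is extracted by Cauchy's integral formula as $\frac{1}{2\pi i}\oint \mathcal F(z) z^{-N-1}\,dz$ over a circle $|z| = r$ with $r$ slightly less than $1/q$; the natural choice is $r = q^{-1-1/N}$ (so that $r^{-N} \asymp q^N$), and the task reduces to bounding $\max_{|z|=r} |\mathcal F(z)|$ together with controlling the integral via a mean-value/$L^2$ estimate on the circle. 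The key analytic input is a bound of the form $|\mathcal F(z)| \ll (1+\mc D) q^{N}\, e^{-\mc D}\cdot(\text{something tame})$ near the relevant arc, where $\mc D = \mc D_f(N)$, combined with the observation that the only place $\mathcal F$ can be large on $|z| = r$ is near $z = r e^{2\pi i \theta}$ for the optimal $\theta$ realizing $\mc D_f(N)$.

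The main steps, in order, are: (1) take logarithms, $\log \mathcal F(z) = \sum_{P} \sum_{k\geq 1} \tfrac{f(P^k)}{k} z^{k \deg P}$ (up to the usual harmless adjustment for the $k \geq 2$ prime-power terms, which contribute $O(1)$ since $\sum_d |\mc P_d| q^{-d \cdot 2\cdot(1+1/N)\cdot}$-type sums converge), so that $\Re \log \mathcal F(q^{-1-1/N}e^{2\pi i\phi}) = \sum_{d} \tfrac{q^{-d/N}}{d}\sum_{P \in \mc P_d}\Re\big(f(P) e(\phi d)\big) + O(1)$; (2) relate this to the pretentious distance: $\sum_{d \leq N} \tfrac{q^{-d/N}}{d}\big(|\mc P_d| - \sum_{P\in\mc P_d}\Re(f(P)e(\phi d))\big)$ is comparable to $\mb D(f, e_\phi; N)^2 \geq \mc D_f(N)$, using the prime polynomial theorem $|\mc P_d| = q^d/d + O(q^{d/2}/d)$ and the fact that $q^{-d/N}$ is bounded above and below by absolute constants for $d \leq N$; hence $|\mathcal F(r e^{2\pi i\phi})| \ll q^N e^{-c\,\mb D(f,e_\phi;N)^2}$ for each $\phi$, with the worst case $\ll q^N e^{-c\,\mc D_f(N)}$; (3) upgrade this pointwise bound to the needed integral bound by a standard argument — split the circle $|z|=r$ into the ``major arc'' near the optimal $\phi_0$ and its complement; on the complement $\mb D(f,e_\phi;N)^2$ is much larger so $\mathcal F$ is tiny, while on the major arc one either uses a crude length estimate or, to recover the extra factor $(1+\mc D)$ rather than just a constant, invokes a Plancherel/$L^2$ bound $\frac{1}{2\pi}\int_0^{2\pi}|\mathcal F(re^{2\pi i\phi})|^2\,d\phi = \sum_N |\sum_{G\in\mc M_N}f(G)|^2 r^{2N} \ll q^{N}\cdot q^{N}\cdot$ (geometric in $N$), combined with the pointwise bound, exactly as in the integer case. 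The factor $(1+\mc D_f(N))$ emerges, as usual, from integrating $e^{-c\,\mb D(f,e_\phi;N)^2}$ against $d\phi$ near $\phi_0$, where $\mb D(f,e_\phi;N)^2 - \mc D_f(N) \gg \|\phi - \phi_0\|^2 \cdot (\text{number of primes})$-type growth gives a Gaussian-type integral contributing $\asymp (1 + \mc D_f(N))^{-1}$... wait, more precisely it contributes the extra polynomial factor in $\mc D$ that appears in the statement; I would follow Granville–Harper–Soundararajan \cite{GrHaSoFF} here for the precise bookkeeping.

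The step I expect to be the main obstacle is (3), specifically extracting the clean final bound $(1+\mc D_f(N))e^{-\mc D_f(N)}$ with the correct polynomial factor — this is the delicate part of Halász's theorem in every setting, requiring one to understand how $\mb D(f,e_\phi;N)^2$ grows as $\phi$ moves away from the optimal $\phi_0$ and to balance the pointwise bound against the $L^2$ bound. A secondary subtlety specific to function fields is that the ``frequencies'' $\theta \in [0,1]$ parametrizing $e_\theta$ are genuinely continuous but $e_\theta(G) = e(\theta \deg G)$ only sees $\deg G \bmod$ nothing — i.e., $e_\theta$ for $\theta$ and $\theta + 1$ coincide and there is no analogue of large $|t|$, so the minimum in $\mc D_f(N)$ is over a compact interval and attained; this actually simplifies matters compared to the integer case, and I would remark on it. Since Theorem \ref{HalThmFF} is quoted from \cite{GrHaSoFF}, the cleanest route is simply to cite that paper, but the self-contained argument above via Cauchy's formula on $|z| = q^{-1-1/N}$ plus the $L^2$ identity on the circle is the proof I would write out if a proof were required.
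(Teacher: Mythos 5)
Your primary, self-contained argument (Cauchy's formula on $|z|=q^{-1-1/N}$, a pointwise bound via the Euler product, then a major/minor-arc plus Plancherel patch) has a genuine gap at exactly the step you flag as step (3), and it is not mere bookkeeping: it is the entire content of Hal\'asz's theorem. On that circle one has $|\mc{F}(z)|\ll N$ trivially, and your step (2) improves this to $|\mc{F}(re^{2\pi i\phi})|\ll N e^{-c\,\mb{D}(f,e_\phi;N)^2}$ (note it is $N$, not $q^N$, that belongs here; the $q^N$ only enters through $|z|^{-N-1}$ in Cauchy's formula, and at radius $q^{-1-1/N}$ you also only get an exponent constant $c<1$ rather than the full $e^{-\mc{D}_f(N)}$ in the statement). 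Feeding this sup-norm bound into the contour integral gives $q^{-N}\sum_{G\in\mc{M}_N}f(G)\ll N e^{-c\,\mc{D}_f(N)}$, off from the claim by a factor of roughly $N$; and the proposed repair via Parseval cannot recover it, since $\int_0^1|\mc{F}(re^{2\pi i\phi})|^2\,d\phi=\sum_{n\ge 0}\big|\sum_{G\in\mc{M}_n}f(G)\big|^2 r^{2n}$ is only $O(N)$ trivially, and improving the individual coefficients is precisely the statement being proved. The heuristic that $\mb{D}(f,e_\phi;N)^2-\mc{D}_f(N)$ grows quadratically in $\phi-\phi_0$ and yields the factor $1+\mc{D}_f(N)$ by a Gaussian integral is also unjustified; that factor does not arise this way in any standard proof. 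What is missing is the actual Hal\'asz mechanism (in Granville--Harper--Soundararajan an iterated convolution/Perron-type identity; in Montgomery--Tenenbaum a bound through $\int\sup_t|F(\sigma+it)|\,d\sigma/\sigma$), and you explicitly defer it to \cite{GrHaSoFF}, so the blind argument does not close.

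Your fallback -- ``simply cite \cite{GrHaSoFF}'' -- is indeed what the paper does, but the theorem as stated, with $\mc{D}_f(N)$, is not literally in that reference, and the paper's proof consists exactly of the missing translation: it truncates $f$ to $\tilde f_N$ (discarding prime powers $P^k$ with $k\deg{P}>N$), applies \cite[Corollary 1.2]{GrHaSoFF}, which gives $\ll(1+M)e^{-M}$ with $M=\min_{\Re(s)=1}\log\big(2N/|L(s,\tilde f_N)|\big)$, and then shows $M=\mc{D}_f(N)+O(1)$ by expanding $\log|L(s_0,\tilde f_N)|$ over irreducibles and invoking the prime polynomial theorem $|\mc{P}_d|=q^d/d+O(q^{d/2})$. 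Your step (2) computation is essentially this identification (carried out at radius $q^{-1-1/N}$ instead of on $\Re(s)=1$ with truncation, a harmless difference up to $O(1)$ except for the exponent-constant issue noted above). So if you drop steps (1)--(3) as a proof strategy and instead present the citation together with your step (2) translation done at $\Re(s)=1$ for the truncated function, you recover the paper's argument; as written, the proposal leaves the quantitatively decisive step unproved.
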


\begin{proof}
We will reduce this to the Granville--Harper--Soundararajan formulation of Hal\'asz's inequality in~\cite{GrHaSoFF}. Define the multiplicative function $\tilde{f}_N$ on prime powers by\footnote{This is technically different from the definition of $f^{\perp}$ used in~\cite{GrHaSoFF}. However, it is always true that, in the notation there, $\Lambda_{\tilde{f}_N}(P) = \Lambda_{f^{\perp}}(P)$, and the difference lies only in values at powers $P^k$ with $k \geq 2$. It is easy to check, then, that $|L(s,f^{\perp})|$ and $|L(s,\tilde{f}_N)|$ differ in at most a factor of an absolute constant whenever $f$ is 1-bounded and $\Re(s)=1$.}
$$
\tilde{f}_N(P^k) := \begin{cases} f(P^k) &\text{ if $\deg{P^k} \leq N$} \\ 0 &\text{ otherwise.} \end{cases}$$
Then,~\cite[Corollary 1.2]{GrHaSoFF} (in the case $\kappa = 1$) shows that
$$\frac{1}{q^N} \sum_{G \in \mc{M}_N} f(G) \ll (1+M)e^{-M},$$
where $M := \min_{\Re(s)=1} \log\Big(2N/|L(s,\tilde{f}_N)|\Big)$. Now, the prime polynomial theorem gives
\begin{align*}
&\sum_{P \in \mc{P}_{\leq N}} q^{-\deg{P}} = \sum_{d \leq N} q^{-d} |\mc{P}_d| = \sum_{d \leq N} q^{-d}\Big(\frac{q^d}{d} + O(q^{d/2})\Big)\\
&= \sum_{d \leq N} \frac{1}{d} + O\Big(\sum_{d \leq N} q^{-d/2}\Big) = \log N + O(1).
\end{align*}
Moreover, if $s_0$ maximizes $|L(s,\tilde{f}_N)|$ on $\Re(s)=1$ and $q^{-s_0}=e(\theta)/q$ for some $\theta \in [0,1]$, then
\begin{align*}
\log|L(s_0,\tilde{f}_N)| &= \log\Big|\prod_{P \in \mc{P}_{\leq N}} \Big(1+f(P)e(\theta \deg{P})q^{-\deg{P}} + O\Big(\sum_{k \geq 2} q^{-k\deg{P}}\Big)\Big)\Big| \\
&= \sum_{P \in \mc{P}_{\leq N}} \text{Re}(f(P)e_{\theta}(P))q^{-\deg{P}} + O(1).
\end{align*}
It follows that
\begin{align*}
&M = \min_{\Re(s)=1} \log(2N/|L(s,\tilde{f}_N)|) = \min_{\theta \in [0,1]} \sum_{P \in \mc{P}_{\leq N}} q^{-\deg{P}}\Big(1-\text{Re}(f(P)e_{\theta}(P))\Big) +O(1)\\
&= \mc{D}_f(N) + O(1).
\end{align*}
The claim follows immediately.
\end{proof}
Using Lemma~\ref{thm_GRH_hayes}, we can also show that for any $N \geq 3$, there is at most one Hayes character $\tilde{\chi}$ with $\text{cond}_H(\tilde{\chi}) \leq N$ for which $\mc{D}_{f\bar{\tilde{\chi}}}(N)$ can be ``small'' in some sense. In what follows, we denote $\tilde{\chi}_1\sim \tilde{\chi}_2$ if $\tilde{\chi}_1$ and $\tilde{\chi}_2$ are induced by the same Hayes character, and otherwise write $\tilde{\chi}_1\nsim \tilde{\chi}_2$.

\begin{lem}[Repulsion of pretentious distance]\label{RepulsionFF}
Let $N \geq 3$.  Let $f\colon \mc{M} \to \mb{U}$ be multiplicative. Let $\tilde{\chi}_1\nsim \tilde{\chi}_2$ be two Hayes characters of conductors $\leq N$. Then  
$$\max\{\mc{D}_{f\bar{\tilde{\chi}}_1}(N),\mc{D}_{f\bar{\tilde{\chi}}_2}(N)\} \geq \Big(\frac{1}{4}-o(1)\Big) \log N.$$
\end{lem}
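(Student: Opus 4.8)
The plan is to exploit the near-orthogonality of distinct Hayes characters via the pretentious triangle inequality together with the GRH-type bound from Lemma~\ref{thm_GRH_hayes}. First I would argue by contradiction: suppose both $\mc{D}_{f\bar{\tilde{\chi}}_1}(N)$ and $\mc{D}_{f\bar{\tilde{\chi}}_2}(N)$ are smaller than $(\tfrac14-\e)\log N$ for some fixed $\e>0$. By definition of $\mc{D}$, there are $\theta_1,\theta_2\in[0,1]$ with $\mb{D}(f\bar{\tilde{\chi}}_1,e_{\theta_1};N)^2$ and $\mb{D}(f\bar{\tilde{\chi}}_2,e_{\theta_2};N)^2$ both $<(\tfrac14-\e)\log N$. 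Applying the triangle inequality for $\mb{D}$ (stated in the excerpt), one gets
$$
\mb{D}\big(\tilde{\chi}_1\bar{\tilde{\chi}}_2,\, e_{\theta_1-\theta_2};N\big) \leq \mb{D}\big(f\bar{\tilde{\chi}}_2, e_{\theta_2};N\big) + \mb{D}\big(f\bar{\tilde{\chi}}_1, e_{\theta_1};N\big) < 2\Big(\tfrac14-\e\Big)^{1/2}(\log N)^{1/2},
$$
using that $\mb{D}(g_1,h_1;N)=\mb{D}(\bar g_1,\bar h_1;N)$ and that $e_{\theta_1}\overline{e_{\theta_2}}=e_{\theta_1-\theta_2}$ is completely multiplicative of modulus $1$, so multiplying both arguments of a $\mb{D}$ by $\bar{\tilde{\chi}}_2 e_{\theta_1}$ (an $S^1$-valued completely multiplicative function) leaves the distance invariant. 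Squaring, $\mb{D}(\tilde{\chi}_1\bar{\tilde{\chi}}_2, e_{\theta_1-\theta_2};N)^2 < (1-4\e)\log N$.

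Next I would get a lower bound on $\mb{D}(\tilde{\chi}_1\bar{\tilde{\chi}}_2, e_{\theta};N)^2$ that contradicts this. Expanding,
$$
\mb{D}(\tilde{\chi}_1\bar{\tilde{\chi}}_2, e_{\theta};N)^2 = \sum_{P\in\mc{P}_{\leq N}} q^{-\deg{P}} - \Re\sum_{P\in\mc{P}_{\leq N}} \tilde{\chi}_1\bar{\tilde{\chi}}_2(P)\,\overline{e_\theta(P)}\, q^{-\deg{P}}.
$$
By the prime polynomial theorem the first sum equals $\log N + O(1)$ (as computed in the proof of Theorem~\ref{HalThmFF}). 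The key point is that $\psi:=\tilde{\chi}_1\bar{\tilde{\chi}}_2$ is itself a Hayes character (the product of a Dirichlet character of modulus dividing $\mathrm{lcm}$ of the two moduli, and a short interval character whose length is at most the max of the two lengths), and since $\tilde{\chi}_1$ and $\tilde{\chi}_2$ are \emph{not} induced by the same Hayes character, $\psi$ is non-principal; its conductor is $\leq 4N \leq N^2$ for $N$ large. Hence Lemma~\ref{lem_hayesdist} (with $A=2$) applies to $\psi\, e_\theta$ — note $\psi e_\theta$ runs through the same class of character-times-$e_{\theta'}$ objects handled there — giving
$$
\Big|\sum_{P\in\mc{P}_{\leq N}} \psi(P)\,\overline{e_\theta(P)}\, q^{-\deg{P}}\Big| \ll \log\log N.
$$
Therefore $\mb{D}(\tilde{\chi}_1\bar{\tilde{\chi}}_2, e_\theta;N)^2 \geq \log N - O(\log\log N)$, uniformly in $\theta$. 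Combined with the upper bound $(1-4\e)\log N$, this forces $\log N - O(\log\log N) < (1-4\e)\log N$, i.e. $4\e\log N < O(\log\log N)$, which is false for $N$ large. This contradiction establishes the claim (the constant $\tfrac14$ emerging precisely because two copies of the square-root distance are being combined).

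\textbf{Main obstacle.} The only genuinely delicate point is verifying that the product $\tilde{\chi}_1\bar{\tilde{\chi}}_2$ is again a non-principal Hayes character with conductor $\leq 2N$ (or at worst $\leq 4N$), and that Lemma~\ref{lem_hayesdist} can be applied to it with an extra Archimedean twist $e_\theta$ absorbed into the statement — this requires unpacking the definition of Hayes characters, the behavior of conductors under multiplication (conductor of a product of Dirichlet characters divides the lcm of conductors; lengths of short interval characters add at most), and the observation that $\tilde{\chi}_1\nsim\tilde{\chi}_2$ is exactly the condition guaranteeing non-principality of the quotient. Everything else is a routine combination of the triangle inequality, the $S^1$-invariance of $\mb{D}$, the prime polynomial theorem, and the cited GRH consequence. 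Note also that the conclusion only needs the ``$\geq$'' direction, so no sharpness in Lemma~\ref{lem_hayesdist} is required.
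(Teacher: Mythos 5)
Your proposal is correct, and its core is the same as the paper's argument: reduce to a lower bound for $\mb{D}(\tilde{\chi}_1\bar{\tilde{\chi}}_2, e_{\theta};N)^2$ via the pretentious triangle inequality, then expand this distance, note $\sum_{P\in\mc{P}_{\leq N}}q^{-\deg{P}}=\log N+O(1)$ by the prime polynomial theorem, and kill the twisted character sum over irreducibles with Lemma \ref{lem_hayesdist} (the Rhin/GRH input), which is exactly how the constant $\tfrac14$ arises; your conductor bookkeeping ($\condH{\tilde{\chi}_1\bar{\tilde{\chi}}_2}\leq 4N$, even sharper than the $4N^2$ the paper allows itself) and your observation that non-principality of the quotient is really the content of the hypothesis (i.e.\ $\tilde{\chi}_1\nsim\tilde{\chi}_2$) match the paper. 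The one place where you genuinely diverge is the treatment of non-unimodular $f$: you apply the triangle inequality with the disc-valued $f$ as the middle element, which is legitimate given that the paper states the triangle inequality for all multiplicative $f,g,h:\mc{M}\to\mb{U}$ (and this general form is indeed true, via the Hilbert-space embedding behind the Granville--Soundararajan inequality). The paper instead first proves the bound for unimodular $f$ and then handles general $f:\mc{M}\to\mb{U}$ by introducing a stochastic completely multiplicative function $\mbf{f}$ with values in $S^1$ satisfying $f(P)=\mb{E}\,\mbf{f}(P)$, using linearity of expectation ($\mb{D}(f,g;N)^2=\mb{E}\,\mb{D}(\mbf{f},g;N)^2$) and Cauchy--Schwarz to transfer the unimodular estimate; this costs them a slightly different intermediate constant in that case but avoids invoking the triangle inequality for disc-valued middle elements. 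So your route is marginally more direct, at the price of leaning on the full strength of the stated triangle inequality; otherwise the two proofs are the same. One cosmetic caveat: your ``multiply both arguments by the unimodular function $\bar{\tilde{\chi}}_2 e_{\theta_1}$'' step should acknowledge that Hayes characters vanish at the finitely many irreducibles dividing their moduli; those primes contribute $O(\log\log N)$ to the squared distances, which is harmlessly absorbed into the error terms you already carry.
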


\begin{proof}
For each $j = 1,2$, let $\theta_j$ be an angle for which $\mc{D}_{f\bar{\tilde{\chi}}_j}(N) = \mb{D}(f,\tilde{\chi}_je_{\theta_j};N)^2$. Suppose first that $f$ is unimodular. Then, by the triangle inequality, we have
\begin{equation}\label{MaxChi}
2\max\{\mc{D}_{f\bar{\tilde{\chi}}_1}(N)^{1/2},\mc{D}_{f\bar{\tilde{\chi}}_2}(N)^{1/2}\} \geq \mb{D}(f\bar{\tilde{\chi}}_1,e_{\theta_1};N) + \mb{D}(f\bar{\tilde{\chi}}_2,e_{\theta_2};N) \geq \mb{D}(\tilde{\chi}_1,\tilde{\chi}_2e_{\theta_2-\theta_1};N),
\end{equation}
where the unimodularity of $f$ was used in the second inequality to write $(f\bar{\tilde{\chi}}_1)\overline{(f\bar{\tilde{\chi}}_2)} = \bar{\tilde{\chi}}_1\tilde{\chi}_2$. Now, by definition we have
$$
\mb{D}(\tilde{\chi}_1,\tilde{\chi}_2e_{\theta_2-\theta_1};N)^2 = \log N - \text{Re}\Big(\sum_{P \in \mc{P}_{\leq N}} \tilde{\chi}_1\bar{\tilde{\chi}_2}(P)e((\theta_1-\theta_2)\deg{P})q^{-\deg{P}}\Big) + O(1).
$$

Since $\tilde{\chi}_1\bar{\tilde{\chi}}_2$ has conductor $\leq N^2$ and it is non-principal, Lemma~\ref{lem_hayesdist} (with $\theta := \theta_1-\theta_2$ and $\tilde{\chi} := \tilde{\chi}_1\bar{\tilde{\chi}}_2$) yields
$$
\text{Re}\Big(\sum_{P \in \mc{P}_{\leq N}} \tilde{\chi}_1\bar{\tilde{\chi}_2}(P)e((\theta_1-\theta_2)\deg{P})q^{-\deg{P}}\Big) \ll \log\log N,
$$
and so it follows that
\begin{align}\label{eqq10}
\mb{D}(\tilde{\chi}_1,\tilde{\chi}_2e_{\theta_2-\theta_1};N)^2 \geq \log N - O(\log \log N).
\end{align}
Squaring both sides of~\eqref{MaxChi}, then inserting this last estimate into the result yields
$$
\max\{\mc{D}_{f\bar{\tilde{\chi}}_1}(N),\mc{D}_{f\bar{\tilde{\chi}}_2}(N)\} \geq \Big(\frac{1}{4}-o(1)\Big) \log N.
$$
Suppose then that $f$ is not unimodular. Define a random completely multiplicative function $\mbf{f} \colon  \mc{M} \to S^1$ (on some associated probability space) at irreducibles $P$ in such a way that $f(P) = \mb{E} \mbf{f}(P)$ for every irreducible $P$. By linearity of expectation it follows that for any multiplicative function $g$, we have
\begin{equation}\label{ExpStoch}
\mb{D}(f,g;N)^2 = \mb{E}\mb{D}(\mbf{f},g;N)^2.
\end{equation}
It follows from this and~\eqref{eqq10} that for any $\theta \in [0,1]$, we have
\begin{align*}
2\max\{\mathcal{D}_{f\bar{\tilde{\chi}}_1}(N),\mathcal{D}_{f\bar{\tilde{\chi}}_2}(N)\}&\geq 
\mb{D}(f\bar{\tilde{\chi}}_1,e_{\theta_1};N)^2 + \mb{D}(f\bar{\tilde{\chi}}_2,e_{\theta_2};N)^2\\
&\geq \frac{1}{2}(\mb{D}(f\bar{\tilde{\chi}}_1,e_{\theta_1};N) + \mb{D}(f\bar{\tilde{\chi}}_2,e_{\theta_2};N))^2\\
&=\frac{1}{2}(\mathbb{E}(\mb{D}(\mbf{f}\bar{\tilde{\chi}}_1,e_{\theta_1};N) + \mb{D}(\mbf{f}\bar{\tilde{\chi}}_2,e_{\theta_2};N)))^2\\
&\geq \Big(\frac{1}{2}-o(1)\Big)\log N, 
\end{align*}
and the claim follows.
\end{proof}

Combining Theorem~\ref{HalThmFF} with Lemma~\ref{RepulsionFF} immediately produces the following.
\begin{cor}[Sup norm estimate for weighted character sums]\label{cor_halrep}
Let $N \geq 3$. Let $f\colon\mc{M} \to \mb{U}$ be multiplicative. Let $\tilde{\chi}_1$ be the Hayes character of conductor $\leq N$ that minimizes\footnote{If there are several minimizers, we choose one of them arbitrarily.} the map $\tilde{\chi} \mapsto \mc{D}_{f\bar{\tilde{\chi}}}(N)$. Then
\begin{equation}\label{SmallCorrels}
\max_{\substack{\textnormal{cond}_H(\tilde{\chi})\leq N\\\tilde{\chi}\nsim \tilde{\chi_1}}} \Big|\frac{1}{q^N}\sum_{G \in \mc{M}_N} f(G)\bar{\tilde{\chi}}(G)\Big| \ll N^{-1/4+o(1)}.
\end{equation}
\end{cor}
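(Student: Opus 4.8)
The plan is to combine the two ingredients stated just before the corollary: Hal\'asz's theorem over function fields (Theorem \ref{HalThmFF}) applied to the twisted functions $f\bar{\tilde{\chi}}$, and the repulsion estimate (Lemma \ref{RepulsionFF}) which guarantees that at most one Hayes character of bounded conductor can make the pretentious distance to $f$ small. First I would fix a Hayes character $\tilde{\chi}$ with $\condH{\tilde{\chi}} \leq N$ and $\tilde{\chi}\nsim \tilde{\chi}_1$, and observe that $f\bar{\tilde{\chi}}$ is again multiplicative and $\mb{U}$-valued (using that Hayes characters are unimodular, per the remark following Definition \ref{defn3}). Applying Theorem \ref{HalThmFF} to $f\bar{\tilde{\chi}}$ gives
\begin{align*}
\Big|\frac{1}{q^N}\sum_{G\in \mc{M}_N} f(G)\bar{\tilde{\chi}}(G)\Big| \ll (1+\mc{D}_{f\bar{\tilde{\chi}}}(N))e^{-\mc{D}_{f\bar{\tilde{\chi}}}(N)}.
\end{align*}
So the task reduces to showing $\mc{D}_{f\bar{\tilde{\chi}}}(N) \geq (\tfrac14 - o(1))\log N$ uniformly over all such $\tilde{\chi}$.

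Next I would invoke Lemma \ref{RepulsionFF} with the pair $\tilde{\chi}_1$ (the minimizer of $\tilde{\chi}\mapsto \mc{D}_{f\bar{\tilde{\chi}}}(N)$ among conductor $\leq 2N$ Hayes characters) and $\tilde{\chi}$. Since $\tilde{\chi}\nsim \tilde{\chi}_1$, these are genuinely distinct (not induced by a common Hayes character), and both have conductor $\leq 2N$, so the hypotheses of Lemma \ref{RepulsionFF} are met, yielding
\begin{align*}
\max\{\mc{D}_{f\bar{\tilde{\chi}}_1}(N), \mc{D}_{f\bar{\tilde{\chi}}}(N)\} \geq \Big(\tfrac14 - o(1)\Big)\log N.
\end{align*}
By the minimality of $\tilde{\chi}_1$ we have $\mc{D}_{f\bar{\tilde{\chi}}_1}(N) \leq \mc{D}_{f\bar{\tilde{\chi}}}(N)$, so the maximum on the left is $\mc{D}_{f\bar{\tilde{\chi}}}(N)$ itself, giving the desired lower bound $\mc{D}_{f\bar{\tilde{\chi}}}(N) \geq (\tfrac14 - o(1))\log N$. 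Substituting this into the Hal\'asz bound and using $(1+x)e^{-x} \ll e^{-x/2}$ (or just absorbing the polynomial factor into the $o(1)$ in the exponent) gives
\begin{align*}
\Big|\frac{1}{q^N}\sum_{G\in \mc{M}_N} f(G)\bar{\tilde{\chi}}(G)\Big| \ll \exp\big(-(\tfrac14-o(1))\log N\big) = N^{-1/4+o(1)}.
\end{align*}
Taking the maximum over all admissible $\tilde{\chi}$ (the implied constants and $o(1)$ are uniform since Lemma \ref{RepulsionFF} and Theorem \ref{HalThmFF} are uniform in the character) yields \eqref{SmallCorrels}.

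There is no serious obstacle here — the corollary is genuinely a two-line deduction — but the one point requiring a little care is confirming that the $o(1)$ error term in the conclusion is uniform over the (large, $N$-dependent) family of Hayes characters $\tilde{\chi}$ being maximized over. This is fine because the $o(1)$ in Lemma \ref{RepulsionFF} comes from the $O(\log\log N/\log N)$-type error in Lemma \ref{lem_hayesdist}, which is uniform over Hayes characters of conductor $\ll N^{A}$, and the Hal\'asz bound of Theorem \ref{HalThmFF} carries no character-dependence at all. A second minor point is the bookkeeping that $\tilde{\chi}\nsim\tilde{\chi}_1$ together with $\condH{\tilde{\chi}},\condH{\tilde{\chi}_1}\leq 2N$ really does put us in the situation ``$\tilde{\chi}_1\neq\tilde{\chi}_2$ two Hayes characters of conductors $\leq 2N$'' of Lemma \ref{RepulsionFF}; this is immediate from the definition of $\sim$.
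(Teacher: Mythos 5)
Your proposal is correct and is exactly the paper's argument: the corollary is stated as an immediate combination of Theorem \ref{HalThmFF} applied to $f\bar{\tilde{\chi}}$ with the repulsion estimate of Lemma \ref{RepulsionFF}, using the minimality of $\tilde{\chi}_1$ to convert the max into a lower bound on $\mc{D}_{f\bar{\tilde{\chi}}}(N)$. Your remarks on uniformity of the $o(1)$ and on $\nsim$ versus $\neq$ are sensible but not points the paper found necessary to spell out.
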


Lastly, we will need the following simple upper bound estimate for non-negative multiplicative functions later in this paper (for a corresponding result about functions over the integers, see~\cite{HalRic}).

\begin{lem}[Halberstam-Richert bound in function fields]\label{HalRicFF}
Let $g \colon \mc{M} \to [0,\infty)$ be multiplicative, and let $N \geq 1$. Let $\kappa > 0$, and assume that for all $P \in \mc{P}$ and $k \geq 1$ we have $g(P) \leq \kappa$ and $g(P^k) \ll_{\e} q^{k\e\deg{P}}$ for any $\e > 0$. Then
$$\frac{1}{|\mc{M}_N|} \sum_{G \in \mc{M}_N} g(G) \ll \frac{(\kappa+1)}{N}\exp\Big(\sum_{P \in \mc{P}_{\leq N}}g(P)q^{-\deg{P}}\Big).$$
\end{lem}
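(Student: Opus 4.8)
The plan is to follow the standard Shiu/Halberstam--Richert argument in the function field setting. First I would reduce to squarefree polynomials: using the hypothesis $g(P^k) \ll_\e q^{k\e \deg{P}}$, the contribution of $G \in \mc{M}_N$ divisible by $P^2$ for some irreducible $P$ of moderate degree is negligible, so one may replace $g$ by the multiplicative function that agrees with $g$ on squarefree arguments and vanishes on non-squarefree ones, at the cost of an acceptable error. (More precisely one writes $G = AB$ with $A$ squarefree-free part and $B$ the squarefull part, sums over $B$, and uses that $\sum_B g(B) q^{-\deg{B}}$ over squarefull $B$ converges.)

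Next I would open $g$ on squarefree $G = P_1 \cdots P_r$ as $g(G) = \prod_j g(P_j) \le \prod_j \min(g(P_j), \kappa) \cdot (\text{something})$; the cleanest route is a Rankin-type / induction-on-$r$ bound. Write $\sum_{G \in \mc{M}_N} g(G)$ by classifying $G$ according to its largest prime factor $P$ of degree $d$, so $G = PG'$ with $G' \in \mc{M}_{N-d}$ having all prime factors of degree $\le d$; this gives a recursion
\begin{align*}
S(N) := \sum_{G \in \mc{M}_N} g(G) \leq \sum_{d \leq N} \sum_{P \in \mc{P}_d} g(P) \, S_{\leq d}(N-d),
\end{align*}
where $S_{\leq d}$ restricts prime factors to degree $\le d$. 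Iterating (or arguing by induction on $N$ with the claimed bound as hypothesis) and using $g(P) \le \kappa$ together with the prime polynomial theorem $|\mc{P}_d| = q^d/d + O(q^{d/2})$, the sum $\sum_{P \in \mc{P}_{\leq N}} g(P) q^{-\deg{P}}$ emerges as the exponent, the factor $\kappa+1$ comes from the combinatorial weight of choosing which prime is largest, and the $1/N$ savings comes from the fact that in a squarefree $G \in \mc{M}_N$ the number of prime factors is $O(N)$ but they contribute a convergent-type sum once weighted by $q^{-\deg{P}}/(\text{number of factors})$ — equivalently, from comparing $\exp(\sum g(P)q^{-\deg P})$ against the full sum $\sum_{G} g(G) q^{-\deg G}$, which loses a factor $N$ (this is the analogue of $\sum_{n \le x} g(n) \ll (x/\log x)\exp(\sum_{p\le x} g(p)/p)$).

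The cleanest implementation: set $\mc{G}(z) := \sum_{G \in \mc{M}} g(G) z^{\deg{G}} = \prod_P (\sum_k g(P^k) z^{k\deg{P}})$, which by the prime polynomial theorem and the growth hypothesis on $g(P^k)$ converges for $|z| < q^{-1}$ and, near $z = q^{-1}$, behaves like $(1 - qz)^{-\kappa}$ up to a factor bounded in terms of $\exp(\sum_{P} (g(P)-\kappa) q^{-\deg P})$ — but since $g(P) \le \kappa$ this is not quite the right normalization, so instead I would run the elementary recursion above directly rather than via the generating function. Do the induction on $N$: assume $S(m) \le \frac{C(\kappa+1)}{m} q^m \exp(E(m))$ for $m < N$ where $E(m) = \sum_{P \in \mc{P}_{\leq m}} g(P) q^{-\deg{P}}$, plug into the recursion, split the $d$-sum at $d \le N/2$ and $d > N/2$, bound the tail crudely, and on the main range use $\frac{1}{N-d} \le \frac{2}{N}$ and $\sum_{d} \sum_{P \in \mc{P}_d} g(P) q^{-d} \cdot (\text{factor}) \le$ the exponential, absorbing constants.

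\textbf{Main obstacle.} The delicate point is extracting the $1/N$ gain: a naive induction only reproduces $\exp(\sum g(P)q^{-\deg P})$ without the saving, and closing the induction requires carefully tracking that the recursion, when the largest-prime-degree $d$ is small, contributes $\sum_{d} g(\cdot) q^{-d} \cdot \frac{C(\kappa+1)}{N-d}q^{N-d}\exp(E(N-d))$ and that $\sum_{d \le N} (\text{prime mass in degree } d) \cdot \frac{1}{N-d}$ telescopes against $\frac{1}{N}\exp(E(N))$ with a constant independent of $N$ and $\kappa$ — this is where the $\kappa+1$ (as opposed to $e^{O(\kappa)}$) must be watched, and one typically handles it by isolating the $d=N$ term ($G$ prime) separately and treating the rest by a clean inductive estimate. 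The function field specifics (prime polynomial theorem error term $O(q^{d/2})$, and the hypothesis $g(P^k)\ll_\e q^{k\e\deg P}$ controlling squarefull parts) enter only in routine error bounds and cause no genuine difficulty beyond the integer case.
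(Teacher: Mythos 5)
There is a genuine gap, and it sits exactly at the point you flag as the ``main obstacle'': the $1/N$ saving. Your recursion over the largest irreducible factor,
$$
S(N)\ \leq\ \sum_{d\leq N}\ \sum_{P\in\mc{P}_d} g(P)\,S_{\leq d}(N-d),
$$
cannot be closed by the induction you describe. If you discard the smoothness restriction and bound $S_{\leq d}(N-d)\leq S(N-d)\leq \frac{C(\kappa+1)}{N-d}q^{N-d}\exp(E(N-d))$ with $E(m):=\sum_{P\in\mc{P}_{\leq m}}g(P)q^{-\deg{P}}$, then summing over $d\leq N/2$ (where $\frac{1}{N-d}\leq\frac{2}{N}$) produces the extra factor $\sum_{d\leq N/2}\sum_{P\in\mc{P}_d}g(P)q^{-d}\leq E(N)$, i.e.\ an unwanted factor of size roughly $\kappa\log N$; bounding instead $g(P)\leq\kappa$ and using the prime polynomial theorem gives $\kappa\sum_{d<N}\frac{1}{d(N-d)}\asymp\frac{\kappa\log N}{N}$, again off by $\log N$. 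The over-counting you tried to remove by insisting $P$ be the largest factor re-enters the moment you replace $S_{\leq d}$ by $S$; to actually exploit $S_{\leq d}$ you would need genuine smooth-polynomial (Dickman $\rho$) savings in the induction hypothesis, which is a substantially harder argument and is not supplied by ``isolating the $d=N$ term and a clean inductive estimate.'' Your parenthetical remark that the $1/N$ is ``the analogue of $\sum_{n\leq x}g(n)\ll(x/\log x)\exp(\sum_{p\leq x}g(p)/p)$'' identifies the right target but not a mechanism that produces it.

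The missing idea is the degree-weighting identity, which is the function-field form of the classical $\log n$ trick and is even cleaner here because $\deg{G}=N$ exactly for every $G\in\mc{M}_N$: writing $N=\sum_{P^k\| G}k\deg{P}$ and swapping sums gives
$$
\sum_{G\in\mc{M}_N}g(G)\ =\ \frac{1}{N}\sum_{\substack{P^kB\in\mc{M}_N\\ (P,B)=1}}g(P^k)\,g(B)\,k\deg{P},
$$
so the factor $1/N$ appears for free. One then bounds the $k=1$ part by $g(P)\leq\kappa$ and the prime polynomial theorem (giving $\ll\kappa q^{N-\deg{B}}$ for the inner prime sum), bounds $\sum_{B}g(B)q^{-\deg{B}}$ by the Euler product and converts it into $\exp\bigl(\sum_{P\in\mc{P}_{\leq N}}g(P)q^{-\deg{P}}\bigr)$, and uses the hypothesis $g(P^k)\ll_{\e}q^{k\e\deg{P}}$ only to show the $k\geq 2$ (squarefull) part is of the same shape; this is where the ``$+1$'' in $\kappa+1$ comes from, not from a combinatorial choice of largest prime. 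Your preliminary reduction to squarefree $G$ is fine and plays the same role as that last step, but without the degree-weighting identity (or a full smooth-number analysis) the core estimate does not follow from the argument as proposed.
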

\begin{proof}
Observe that for any $G \in \mc{M}_N$ we have $N = \sum_{\substack{P^k||G \\ P \in \mc{P}}} k\deg{P}$ (where $P^k \mid \mid B$ means $P^k\mid B$ and $P^{k+1}\nmid B$), and thus
\begin{align*}
\sum_{G \in \mc{M}_N} g(G) &= \frac{1}{N}\sum_{\substack{P^kB \in \mc{M}_N \\ (P,B) = 1 \\ P \in \mc{P}}} g(P^k)g(B)k \deg{P} \\
&\leq \frac{1}{N}\sum_{B \in \mc{M}_{\leq N}} g(B) \sum_{P \in \mc{P}_{N-\deg{B}}} g(P)\deg{P} + \frac{1}{N}\sum_{\substack{P^kB \in \mc{M}_N \\ P \in \mc{P} \\ k \geq 2}} g(P^k)g(B)k\deg{P} \\
&=: \mf{S}_1 + \mf{S}_2.
\end{align*}
Consider $\mf{S}_1$ first. Bounding $g(P) \leq \kappa$ for each $P \in \mc{P}_{N-\deg{B}}$ and then using the prime polynomial theorem, we have
$$
\sum_{P \in \mc{P}_{N-\deg{B}}} g(P)\deg{P} \leq \kappa \sum_{G \in \mc{M}_{N-\deg{B}}} \Lambda(G) \ll \kappa q^{N-\deg{B}},
$$
for every $B \in \mc{M}_{\leq N}$. Summing over such $B$ now gives
$$
\mf{S}_1 \ll \kappa \frac{q^N}{N}\sum_{B \in \mc{M}_{\leq N}} g(B)q^{-\deg{B}} \leq \kappa \frac{q^N}{N} \prod_{P \in \mc{P}_{\leq N}} \Big(\sum_{k \geq 0} g(P^k)q^{-k\deg{P}}\Big).
$$
Using the condition $g(P^k) \ll q^{\frac{1}{4}k\deg{P}}$ for $k \geq 2$, we get
$$
\sum_{P \in \mc{P}_{\leq N}}\sum_{k \geq 2} g(P^k)q^{-k\deg{P}} \ll \sum_{P \in \mc{P}_{\leq N}} q^{-\frac{3}{2}\deg{P}} \ll \sum_{d \leq N} q^{-d/2} \ll 1.
$$
Thus, rewriting the product over $P \in \mc{P}_{\leq N}$ as an exponential, we get
\begin{align}
&\prod_{P \in \mc{P}_{\leq N}} \Big(1+g(P)q^{-\deg{P}} + \sum_{k \geq 2} g(P^k)q^{-k\deg{P}}\Big)\nonumber\\
&\leq \prod_{P \in \mc{P}_{\leq N}} \Big(1+g(P)q^{-\deg{P}}\Big)\Big(1+\sum_{k \geq 2} g(P^k)q^{-k\deg{P}}\Big) \nonumber\\
&\ll \exp\Big(\sum_{P \in \mc{P}_{\leq N}} g(P)q^{-\deg{P}}\Big) \label{eq_prodtoexp}
\end{align}
Inserting this into our bound for $\mf{S}_1$ yields
$$
\mf{S}_1 \ll \kappa \frac{q^N}{N}\exp\Big(\sum_{P \in \mc{P}_{\leq N}}g(P)q^{-\deg{P}}\Big).
$$
To bound $\mf{S}_2$, we use the identity $1 = q^N/q^{\deg{B}+k\deg{P}}$ and the upper bound $k\deg{P} g(P^k) \ll q^{k\deg{P}/3}$ to get
\begin{align*}
\mf{S}_2 &= \frac{q^N}{N}\sum_{B \in \mc{M}_{\leq N}} g(B)q^{-\deg{B}}\sum_{\substack{P^k \in \mc{P}_{N-\deg{B}} \\ k \geq 2}} k\deg{P}g(P^k)q^{-k\deg{P}} \\
&\ll \frac{q^N}{N}\Big(\prod_{P_1\in \mc{P}_{\leq N}}\sum_{\ell\geq 0} g(P_1^{\ell}) q^{-\ell\deg{P_1}}\Big) \sum_{k\geq 2}\sum_{P_2\in \mathcal{P}} q^{-2k\deg{P_2}/3}.
\end{align*}
The sum over $P_2$ can be bounded by
$$ 
\sum_{d \geq 1} |\mc{P}_d| \sum_{k \geq 2} q^{-2kd/3} \ll \sum_{d \geq 1} q^{-d/3} \ll 1.
$$
Bounding the product in $P_1$ as in~\eqref{eq_prodtoexp}, we obtain
$$
\mf{S}_2 \ll \frac{q^N}{N}\exp\Big(\sum_{P \in \mc{P}_{\leq N}} g(P)q^{-\deg{P}}\Big).
$$
Combining this with the bound for $\mf{S}_1$ proves the claim.
\end{proof}
\subsection{An Involution for Monic Polynomials} \label{sec_involution}
Let $G\in \mc{M}$, and assume that $(G,t) = 1$. Following Keating and Rudnick (see~\cite[Section 5]{kea_rud}), we define\footnote{We could extend this definition to other polynomials by writing $G^{\ast}(t) = t^{\nu(G)} (G/t^{\nu(G)})^{\ast}$, where $\nu(G)$ denotes the order of vanishing of $G$ at $t = 0$. We could also modify the definition here when $G(0) \neq 0$ to give $G^{\ast} = G(0)^{-1}t^{\deg{G}} G(1/t)$, thus ensuring that $G^{\ast}$ is monic whenever $G$ is; however, we will not need this variant of the involution here.} 
$$
G^{\ast}(t) := t^{\deg{G}}G(1/t).
$$
The coefficients of $G^{\ast}$ are the same as those of $G$, but in reverse order. One can easily check that when $(G,t) = 1$ and $G(0) = 1$, $G^{\ast}$ is monic and $(G^{\ast})^{\ast} = G$. Since $\deg{G^{\ast}} = \deg{G}$, the $\ast$-map is an involution on the set of monic degree $N$ polynomials with $G(0) = 1$, for each $N \geq 1$. 

We observe, furthermore, that this involution is a multiplicative homomorphism on $\mc{M}$. Indeed, if $(FG,t) = 1$ then
$$
(FG)^{\ast}(t) = t^{\deg{FG}}FG(1/t) = t^{\deg{F}} F(1/t) \cdot t^{\deg{G}} G(1/t) = F^{\ast}(t) G^{\ast}(t).
$$
In light of this, we can define a corresponding involution on the space of multiplicative functions. That is, suppose that $f \colon \mc{M}\to \mb{U}$ is multiplicative. We define a map $f \mapsto f^{\ast}$ via $f^{\ast}(G) := f(G^{\ast})$ for all $(G,t) = 1$ with $G(0) = 1$, and $f^{\ast}(t^k) = 0$ for $k \geq 1$. Under a suitable extension of $f$ to $\mb{F}_q[t]$ (which we are free to choose, given that $f$ is only defined on $\mc{M}$ by assumption), we may define $f^\ast(G)$ at all monic $G$ irrespective of the condition $G(0) = 1$.  Then $f^{\ast}$ acts as a multiplicative function on $\mc{M}$, and if $g \colon  \mc{M} \to \mb{U}$ is a second such multiplicative function then $(fg)^{\ast} = f^{\ast}g^{\ast}$.

The next result, which is essentially contained in~\cite{kea_rud}, shows that the $\ast$-operation maps short intervals to arithmetic progressions modulo a power of $t$. 
\begin{lem}\label{ShortIntToAP}
Let $1 \leq H \leq N$ and $G_0 \in \mc{M}_N$. There is a reduced residue class $A$ modulo $t^{N-H+1}$ for which we have a bijection
$$
\{G \in \mc{M}_N \colon G \in I_H(G_0), (G,t) = 1\} \leftrightarrow \{\deg{F} = N \colon F \equiv A \bmod{ t^{N-H+1}}, F(0) = 1\};
$$
the bijection is furnished by the map $G \mapsto G^{\ast}$. Moreover, the class $A = A(G_0)$ depends at most on the first $N-H$ coefficients of $G_0$ after the leading coefficient.
\end{lem}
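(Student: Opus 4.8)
The plan is to unpack both sides of the claimed bijection in terms of coefficients of polynomials and check that the map $G \mapsto G^\ast$ matches them up. Write $G_0 = t^N + c_{N-1}t^{N-1} + \cdots + c_1 t + c_0$ with $c_i \in \mathbb{F}_q$. A polynomial $G \in \mc{M}_N$ lies in $I_H(G_0)$ precisely when $G$ and $G_0$ agree in their top $N-H+1$ coefficients, i.e. $G = t^N + c_{N-1}t^{N-1} + \cdots + c_{H}t^{H} + (\text{lower order terms of degree} < H)$, where the coefficients of $t^{H-1}, \ldots, t^0$ in $G$ are free. The condition $(G,t)=1$ says the constant coefficient of $G$ is nonzero. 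First I would record that, since $\deg{G} = N$ and $G(0) \neq 0$, $G^\ast$ is a well-defined polynomial of degree exactly $N$ whose coefficient sequence is that of $G$ read in reverse; after dividing by $G(0)$ (equivalently, restricting attention to those $G$ with $G(0)=1$ on the target side, as the statement does) it is monic, and $(G^\ast, t) = 1$ since $G^\ast(0)$ is the leading coefficient $1$ of $G$.

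Next I would translate the short-interval condition on $G$ into a congruence condition on $F = G^\ast$. The top $N-H+1$ coefficients of $G$ (those of $t^N, t^{N-1}, \ldots, t^H$) become, after reversal, the bottom $N-H+1$ coefficients of $F$ (those of $t^0, t^1, \ldots, t^{N-H}$). Hence "$G$ agrees with $G_0$ in its top $N-H+1$ coefficients" is exactly "$F$ agrees with $G_0^\ast$ in its bottom $N-H+1$ coefficients", i.e. $F \equiv A \pmod{t^{N-H+1}}$ where $A := G_0^\ast \bmod t^{N-H+1}$ — a residue class depending only on $c_{N-1},\dots,c_H$, i.e. on the first $N-H$ coefficients of $G_0$ below the leading one (note $c_N=1$ contributes the forced constant term of $A$). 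Moreover $A$ is a reduced residue class mod $t^{N-H+1}$: its constant term is $c_N = 1 \neq 0$, so $(A,t)=1$. Conversely, the free lower coefficients of $G$ (those of $t^{H-1},\dots,t^0$) correspond to the free top coefficients of $F$ (those of $t^{N-H+1},\dots,t^N$), with the further normalization $F(0)=1$ pinned down and the requirement $\deg F = N$ meaning the $t^N$-coefficient of $F$ — which is $G(0)$ — is nonzero; restricting to $F(0)=1$ on the target corresponds to restricting to $G$ whose leading coefficient after the involution is normalized, but since both sides are stated with $G$ monic and $F(0)=1$, and $G(0)$ equals the leading coefficient of $F$, the condition $\deg F = N$ is automatic once we also ask the $t^N$ coefficient (i.e. $G(0)$) to be a unit, which on the source side is exactly $(G,t)=1$.

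Finally I would verify bijectivity: the $\ast$-map is its own inverse on $\{G \in \mc{M}_N : G(0)=1\}$ (established in the preceding Subsection~\ref{sec_involution}, using $(G^\ast)^\ast = G$ and $\deg G^\ast = \deg G = N$), it is multiplicative, and both the source set and target set are carved out by the "reversed" coefficient conditions just described, so $G \mapsto G^\ast$ restricts to a bijection between them. The last sentence of the lemma is then immediate from the formula $A = G_0^\ast \bmod t^{N-H+1}$: reversing the top $N-H+1$ coefficients of $G_0$ only sees $c_N=1$ (forced) together with $c_{N-1},\dots,c_H$, which are the first $N-H$ coefficients of $G_0$ after the leading one. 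I do not anticipate a genuine obstacle here — this is essentially bookkeeping with coefficient vectors — but the one point requiring care is the normalization issue around leading/constant coefficients: making sure that "$\deg F = N$" on the target, "$(G,t)=1$" on the source, and the two $\,\cdot(0)=1$ normalizations are matched consistently so that the map is a genuine bijection and not merely a surjection with fibers of size $q-1$. Keeping $G(0)=1$ throughout (as in the statement) resolves this cleanly.
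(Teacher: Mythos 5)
Your argument is correct, but it is not the route the paper takes: the paper disposes of this lemma by citing \cite[Lemma 5.1]{kea_rud} (together with the observation that $I_H(G_0)=I_H(t^HG_0')$ whenever $\deg{G_0-t^HG_0'}<H$, i.e.\ that the interval only sees the top coefficients of $G_0$), whereas you reprove the content of that cited lemma directly by coefficient bookkeeping. Your dictionary is the right one and is exactly what makes the statement true: agreement of $G$ with $G_0$ in the top $N-H+1$ coefficients becomes, after reversal, agreement of $F=G^{\ast}$ with $A:=G_0^{\ast}\bmod t^{N-H+1}$ in the bottom $N-H+1$ coefficients; monicity of $G$ corresponds to $F(0)=1$; and $(G,t)=1$ corresponds to $\deg{F}=N$, since the leading coefficient of $F$ is $G(0)$. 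The class $A$ is reduced because its constant term is the leading coefficient $1$ of $G_0$, and it visibly depends only on the coefficients $c_{N-1},\dots,c_H$ of $G_0$, as required. The self-contained proof buys independence from the reference; the paper's citation buys brevity.

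One small point of care: in places you speak of ``restricting to $G(0)=1$'' or ``keeping $G(0)=1$ throughout (as in the statement)'', and of the target $F$ being monic after dividing by $G(0)$. The statement does not impose $G(0)=1$; the source condition is only $(G,t)=1$, and the target polynomials $F$ are not required to be monic, only to satisfy $\deg{F}=N$ and $F(0)=1$. Your second paragraph does state the correct correspondence, so this is a presentational wobble rather than a gap, but in a written-up proof you should drop the $G(0)=1$ normalization and instead note that $(G^{\ast})^{\ast}=G$ holds for every $G$ of degree $N$ with $G(0)\neq 0$ (the same one-line computation as in Subsection \ref{sec_involution}, which states it only for $G(0)=1$); that is what gives bijectivity on the sets actually appearing in the lemma.
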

\begin{proof}
This is implied by~\cite[Lemma 5.1]{kea_rud}, using the fact that $I_H(G_0) = I_H(t^HG_0')$ whenever $\deg{G_0 - t^HG_0'} < H$. 
\end{proof}

The following lemma shows how the pretentious distance is affected by replacing a multiplicative function $f$ (whose behavior on $\mb{F}_q^{\times}$ is fixed) by its involution $f^{\ast}$. In the following, we fix a generator $\rho$ for $\mb{F}_q^{\times}$ and write $\nu_c$ to be the minimal non-negative integer such that $\rho^{\nu_c} = c$ whenever $c \in \mb{F}_q^{\times}$.

\begin{lem} \label{lem_dist_inv}
Let $\zeta \in \mu_{q-1}$ and let $f \colon  \mb{F}_q[t] \ra \mb{U}$ be a multiplicative function. Extend $f$ to $\mb{F}_q[t]$ so that $f(cF) = \zeta^{\nu_c}f(F)$ for all $c \in \mb{F}_q^{\times}$. Let $\chi$ be a Dirichlet character modulo $t^M$, for $M \geq 1$. Then there is a character $\xi = \xi(\zeta,\chi)$ modulo $t$ such that for any $N \geq 1$ we have 
$$
\mc{D}_{f\xi \bar{\chi^{\ast}}}(N) = \mc{D}_{f^{\ast}\bar{\chi}}(N) + O(1).
$$  
Moreover, if $\zeta = \chi^\ast(\rho)$ then $\xi \equiv 1$.
\end{lem}

\begin{rem} Note that even though we are only concerned with the values of $f$ on $\mc{M}$, in order to define $f^{\ast}$ we need to choose an extension of $f$ to $\mb{F}_q^{\times}$.  
\end{rem}

\begin{proof}
We claim first that there is a unique character $\xi$ modulo $t$ such that $\bar{\chi}^{\ast}(c)\zeta^{\nu_c}\xi(c) = 1$. To see this, note first that $\chi^{\ast}(1) = \chi(1) = 1$, so that $\chi^{\ast}(\rho) \in \mu_{q-1}$. The group of characters $\bmod{t}$ may be identified with that of $\mb{F}_q^{\times}$ via the isomorphism $(\mb{F}_q[t]/(t\mb{F}_q[t]))^{\times} \cong \mb{F}_q^{\times}$, so there is a $\xi$ modulo $t$ such that $\xi(\rho) = \bar{\zeta} \chi^{\ast}(\rho)$. Extending by complete multiplicativity, we obtain $\xi(c) = \chi^{\ast}(c)\zeta^{-\nu_c}$ for all $c \in \mb{F}_q^{\times}$. Moreover, if there is a second such character $\xi'$ modulo $t$ then we must have $\xi'(\rho) = \xi(\rho)$, and thus $\xi' = \xi$, as required.

We select $\xi$ to be the character modulo $t$ determined above. Let $\theta_0 \in [0,1]$ be fixed. We will show that $\mb{D}(f^{\ast},\chi e_{\theta_0}; N) = \mb{D}(f\xi,\chi^{\ast}e_{\theta_0};N) + O(1)$. By minimizing over $\theta_0$, we deduce the claimed estimate. 

First, note that if $R \in \mc{P}$, $R \neq t$, then $R^{\ast}/R(0) \in \mc{P}$. For if $R^{\ast} = AB$ with $\deg{A}\deg{B} > 0$ then as $(R^{\ast},t) = 1$ we have $R = A^{\ast}B^{\ast}$, with $\deg{A^{\ast}}\deg{B^{\ast}} = \deg{A}\deg{B} > 0$, a contradiction to irreduciblity. In particular, for each $c \in \mb{F}_q^{\times}$ and $d \geq 2$ we have a bijection
$$
\{R \in \mc{P}_d \colon R(0) = c\} \leftrightarrow \{R' \in \mc{P}_d  \colon  R'(0) = c^{-1}\},
$$
implied by the map $R \mapsto R' := R^{\ast}/R(0)$. Thus, we have
\begin{align*}
\mb{D}(f^{\ast},\chi e_{\theta_0};N)^2 &= \log N - \text{Re}\left(\sum_{2 \leq d \leq N} q^{-d}e(-\theta_0 d) \sum_{c \in \mb{F}_q^{\times}} \sum_{R \in \mc{P}_d \atop R(0) = c} f^{\ast}(R)\bar{\chi}(R)\right) + O(1) \\
&= \log N - \text{Re}\left(\sum_{2 \leq d \leq N} q^{-d}e(-\theta_0 d) \sum_{c \in \mb{F}_q^{\times}} \sum_{R' \in \mc{P}_d \atop R'(0) = c^{-1}} f(cR') \bar{\chi}^{\ast}(cR')\right) + O(1).
\end{align*}
Since $R'(0) = c^{-1}$ iff $R' \equiv c^{-1} \pmod{t}$, we get
$$
1_{R'(0) = c^{-1}} = \frac{1}{\phi(t)} \sum_{\xi' \pmod{t}} \xi'(c)\xi'(R'),
$$
and thus for each $2 \leq d \leq N$ we obtain
\begin{align*}
\sum_{c \in \mb{F}_q^{\times}} \sum_{R' \in \mc{P}_d \atop R'(0) = c^{-1}} f(cR') \bar{\chi}^{\ast}(cR') &= \frac{1}{\phi(t)} \sum_{\xi' \pmod{t}} \left(\sum_{c \in \mb{F}_q^{\times}} \bar{\chi}^{\ast}(c) \zeta^{\nu_c} \xi'(c)\right) \sum_{R' \in \mc{P}_d} f(R') \xi'(R') \bar{\chi}^{\ast}(R') \\
&= \sum_{R' \in \mc{P}_d} f(R')\xi(R')\bar{\chi}^{\ast}(R'),
\end{align*}
where $\xi$ is the character modulo $t$ constructed earlier. It follows then that
\begin{align*}
\mb{D}(f^{\ast},\chi e_{\theta_0};N)^2 &= \log N - \text{Re}\left(\sum_{2 \leq d \leq N} q^{-d}e(-\theta_0 d) \sum_{R' \in \mc{P}_d} f(R') \xi(R') \bar{\chi}^{\ast}(R')\right) + O(1) \\
&= \mb{D}(f\xi,\chi^{\ast}e_{\theta_0};N)^2 +O(1), 
\end{align*}
proving the first claim. \\
For the second, note that if $\chi^\ast(\rho) = \zeta$ then by construction we have $\xi(\rho) = 1$, and thus $\xi$ is trivial modulo $t$, as required.
\end{proof}

\subsection{Hayes Characters} \label{HayesSubSec}
We introduce here the following notation. Let $F,G \in \mb{F}_q[t]$ with $G \neq 0$, and consider $F/G \in \mb{F}_q(t)$. When $G$ is a power of $t$ this rational function admits a finite Laurent polynomial representation (in $1/t$)
$$
(F/G)(t) = \sum_{j = m_1}^{m_2} a_j t^{-j},
$$
where $m_1 \leq m_2$ are integers and $a_{m_1} \neq 0$. We then set $\lla F/G\rra := q^{-m_1}$. We note that the map $\lla \cdot \rra$ satisfies the ultrametric inequality $\lla f_1-f_2\rra \leq \max\{\lla f_1\rra, \lla f_2\rra\}$, with equality if $\lla f_1 \rra \neq \lla f_2\rra$, whenever $f_1,f_2 \in \mb{F}_q(t)$ have finite Laurent polynomial representations (in Section~\ref{sec_shortexp} we will extend this notation to all of $\mb{F}_q(t)$).

Let $\nu \geq 1$ and $M \in \mc{M}$. We define a relation $\mc{R}_{M,\nu}$ on $\mc{M}$ as follows: 
if $A,B \in \mc{M}$ then we say that
$$
A \equiv B \bmod{\mc{R}_{M,\nu}} \text{ if, and only if, } A \equiv B \bmod{M} \text{ and } \lla At^{-\deg{A}}-Bt^{-\deg{B}}\rra < q^{-\nu}. 
$$
This latter condition says that the leading $\nu+1$ coefficients of $A$ and $B$ are the same; in the particular case where $A,B \in \mc{M}_N$ for some $N$, it is equivalent to $\deg{A-B} < N-\nu$.  

It turns out that this defines an equivalence relation, and quotienting $\mc{M}$ by this relation yields a monoid whose multiplicative group of invertible elements is abelian. It thus admits a set of characters, which we call \emph{Hayes characters}. We will denote by $\mc{X}_{M,\nu}$ the collection of all Hayes characters associated with the pair $(M,\nu)$. A Hayes character $\tilde{\chi}$ is characterized by the property that it is constant on sets of the form 
$$
\{G \in \mc{M} \colon G \equiv C \bmod{M}\} \cap \{G \in \mc{M}  \colon  \lla Gt^{-\deg{G}}-Dt^{-\deg{D}}\rra < q^{-\nu}\},
$$
where $C$ is a reduced residue class modulo $M$, and $D\in \mc{M}_{\leq \nu}$. Any Hayes character in $\mc{X}_{M,\nu}$ can be uniquely decomposed as a product $\psi_M\xi_{\nu}$, where $\psi_M$ is a Dirichlet character modulo $M$, and $\xi_{\nu}$ is a \emph{short interval} character of \emph{length} $\text{len}(\xi_{\nu}) := \nu$, i.e., for $\ell=\nu$ the multiplicative function $\xi_{\nu}$ fixes the set $\{G \in \mc{M}_N \colon \lla Gt^{-\deg{G}}-Dt^{-\deg{D}}\rra < q^{-\ell}\}$ for all $D$, and the same does not hold for any $\ell<\nu$  (see, e.g.,~\cite[Theorem 8.6]{Hayes}). Thus this definition agrees with Definition~\ref{defn3}. We say that $\tilde{\chi} \in \mc{X}_{M,\nu}$ is \emph{primitive} if $\psi_M$ is primitive and $\nu > 0$, and imprimitive otherwise. Likewise, a Hayes character is \emph{non-principal} if it is either non-principal in the Dirichlet character aspect or if the length of its short interval character is non-zero. We define the \emph{Hayes conductor} of $\chi = \psi \xi \in \mc{X}_{M,\nu}$ by $\condH{\chi} := \cond{\psi} + \text{len}(\xi) := \deg{M} + \nu$. 

The group $\mathcal{X}_{M,\nu}$ has size $\phi(M)q^{\nu}$, and the orthogonality relations are given by
\begin{align}\label{ortho1}
\frac{1}{\phi(M)q^{\nu}}\sum_{A\bmod {R_{M,\nu}}}\widetilde{\chi_1}(A)\overline{\widetilde{\chi_2}(A)}=1_{\widetilde{\chi_1}=\widetilde{\chi_2}}    
\end{align}
and
\begin{align}\label{ortho2}
\frac{1}{\phi(M)q^{\nu}}\sum_{\widetilde{\chi}\in \mc{X}_{M,\nu}}\widetilde{\chi}(A)\overline{\widetilde{\chi}(B)}=1_{A\equiv B\bmod{R_{M,\nu}}};      
\end{align}
these are proved in~\cite{Hayes}.

An important fact about the relationship between Hayes characters and the $\ast$-involution from the previous subsection is the following.
\begin{lem} \label{lem_short_inv}
Let $n \geq 2$ and $k \geq 2$. Let $\chi$ be a Dirichlet character modulo $t^{k}$. Then there is a short interval character $\psi$ of length $k-1$ such that $\chi^{\ast}(G) = \psi(G)$ for all $G$ coprime to $t$. Moreover, if $\chi$ is non-principal then $\psi$ is also non-principal.
\end{lem}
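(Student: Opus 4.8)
The plan is to compute explicitly how the involution $G\mapsto G^{\ast}$ acts on residues modulo $t^{k}$ and then simply precompose $\chi$ with that operation. Write $G = b_{d}t^{d}+b_{d-1}t^{d-1}+\cdots+b_{0}$ with $(G,t)=1$, so $b_{0}\neq 0$ and $b_{d}\neq 0$. By definition $G^{\ast}=t^{d}G(1/t)=b_{0}t^{d}+b_{1}t^{d-1}+\cdots+b_{d}$, i.e. the coefficient string of $G$ read in reverse; in particular $G^{\ast}(0)=b_{d}\neq 0$, so $G^{\ast}$ is again coprime to $t$, and whenever $d\geq k-1$,
$$
G^{\ast}\equiv b_{d}+b_{d-1}t+\cdots+b_{d-k+1}t^{k-1}\pmod{t^{k}}.
$$
Thus $G^{\ast}$ modulo $t^{k}$ depends only on the $k$ highest-degree coefficients of $G$; conversely, as $G$ runs over all polynomials coprime to $t$ of degree $\geq k-1$, the residue $G^{\ast}$ mod $t^{k}$ runs over all of $(\mb{F}_{q}[t]/t^{k})^{\times}$ (take $b_{d},\dots,b_{d-k+1}$ equal to the target coefficients, $b_{0}=1$, and all remaining coefficients $0$).

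With this in hand, I would set $\psi:=\chi^{\ast}$, that is $\psi(G):=\chi(G^{\ast})$ for $(G,t)=1$ and $\psi(G):=0$ otherwise. Since $G\mapsto G^{\ast}$ is a multiplicative homomorphism on the monoid of polynomials coprime to $t$ (Subsection \ref{sec_involution}) and $\chi$ is completely multiplicative on $(\mb{F}_{q}[t]/t^{k})^{\times}$, the function $\psi$ is multiplicative; it is not identically zero because $G^{\ast}(0)\neq 0$ whenever $(G,t)=1$. By the displayed congruence, $\psi$ is constant on each set of polynomials whose $(k-1)+1$ leading coefficients coincide, which together with multiplicativity is exactly the description of a short interval character of length $k-1$ from Subsection \ref{HayesSubSec} (cf. the remark after Theorem \ref{thm_mr_real} that the functions $\chi_{1}^{\ast}$ are examples of short interval characters); its minimal length is $\leq k-1$, with equality precisely when $\chi$ is primitive modulo $t^{k}$. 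This gives the first assertion.

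For the ``moreover'' I would argue by contraposition. Suppose $\psi$ is principal, i.e. $\psi(G)=1$ for every $G$ coprime to $t$; then $\chi(G^{\ast})=1$ for all such $G$. Since $G^{\ast}$ mod $t^{k}$ exhausts $(\mb{F}_{q}[t]/t^{k})^{\times}$ as $G$ varies, $\chi$ is trivial on $(\mb{F}_{q}[t]/t^{k})^{\times}$, hence principal. Equivalently, if $\chi$ is non-principal then $\psi$ is non-principal.

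The one step requiring genuine care — and the main pitfall — is this transfer of non-principality, because it is essential to compare $\psi$ with $\chi^{\ast}$ on \emph{all} polynomials coprime to $t$, not only on $\mc{M}$. Indeed, for monic $G$ one always has $G^{\ast}(0)=1$, so the restriction of $\chi^{\ast}$ to $\mc{M}$ only sees the part of $\chi$ that is trivial on $\mb{F}_{q}^{\times}$; a character $\chi$ modulo $t^{k}$ that is non-principal solely through a nontrivial character of $\mb{F}_{q}^{\times}$ has $\chi^{\ast}$ trivial on $\mc{M}$, and it is precisely the non-monic polynomials, whose leading coefficient $b_{d}$ may be arbitrary, that make $G\mapsto G^{\ast}\bmod t^{k}$ surjective and rescue the implication. (The hypothesis $k\geq 2$ serves only to exclude the degenerate case $k=1$, where every short interval character of length $0$ is already principal.)
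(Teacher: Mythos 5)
Your proof is correct and is essentially the paper's own argument: your coefficient-reversal formula is the paper's computation that $G_1^{\ast}\equiv G_2^{\ast}\pmod{t^{k}}$ whenever the top $k$ coefficients of $G_1,G_2$ agree, and your surjectivity of $G\mapsto G^{\ast}\bmod t^{k}$ onto $(\mathbb{F}_q[t]/t^k\mathbb{F}_q[t])^{\times}$, which crucially uses non-monic $G$, is the same mechanism as the paper's observation that the involution maps $\{G: G(0)\neq 0\}$ onto itself — and you correctly identify that admitting non-monic inputs is exactly what makes the non-principality transfer work (a character modulo $t^k$ factoring through $G\mapsto G(0)$ has $\chi^{\ast}\equiv 1$ on monics). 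The one cosmetic fix: do not set $\psi(G):=0$ when $t\mid G$, since that breaks the constancy-on-classes property ($t$ and $1$ have the same leading coefficient string, so a short interval character must satisfy $\psi(t)=\psi(1)=1$); give $\psi$ the forced value $1$ at powers of $t$, exactly as in Remark \ref{rem_chiast_att}, which is harmless because the lemma only asserts agreement with $\chi^{\ast}$ for $G$ coprime to $t$.
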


\begin{proof}
It is enough to show that if $G_1,G_2 \in \mb{F}_q[t]$ satisfy $(G_1G_2,t) = 1$ and are close to each other in the sense that $\lla G_1t^{-\deg{G_1}} - G_2t^{-\deg{G_2}} \rra \leq q^{-k}$, then $\chi^{\ast}(G_1) = \chi^{\ast}(G_2)$. 

Without loss of generality suppose that $m_1 := \deg{G_1} \geq \deg{G_2} =: m_2$. Then we can write $G_1 = t^{m_1-m_2}G_2 + M$, where $r:= \deg{M} \leq m_1-k$. Writing $G_2(t) = \sum_{0 \leq j \leq m_2} b_jt^j$ and $M(t) = \sum_{0 \leq j \leq r} a_j t^j$ (with $a_0b_{m_2} \neq 0$ by assumption) we find
\begin{align*}
G_1^{\ast} &= \Big(\sum_{m_1-m_2 \leq j \leq m_1} b_{m_2-(m_1-j)} t^{j} + \sum_{0 \leq j \leq r} a_jt^j\Big)^{\ast}\\
&= t^{m_1}\Big(\sum_{m_1 - m_2 \leq j \leq m_1} b_{m_2-(m_1-j)} t^{-j} + \sum_{0 \leq j \leq r} a_jt^{-j}\Big) \\
&= t^{m_1-r}\sum_{0 \leq j \leq r} a_{r-j}t^{j} + \sum_{0 \leq l \leq m_2} b_{m_2-l}t^l \equiv \sum_{0 \leq l \leq m_2} b_{m_2-l}t^l  \bmod{t^{m_1-r}} \\
&\equiv G_2^{\ast} \bmod{t^{m_1-r}} \equiv G_2^{\ast} \bmod{t^k},
\end{align*}
since $k \leq m_1-r$. Thus, $\chi^{\ast}(G_1) - \chi^{\ast}(G_2) = \chi(G_1^{\ast}) - \chi(G_2^{\ast}) = 0$, as claimed.

For the second claim, if $\psi$ were principal then $\chi(G^{\ast}) = 1$ for all $(G^{\ast},t) = 1$. The set $\{G \in \mb{F}_q[t] \colon G(0) \neq 0\}$ is invariant under the involution, so this would imply that $\chi(G) = 1$ whenever $G(0) \neq 0$; but since $\chi(G) = 0$ whenever $G(0) = 0$, this implies that $\chi(G) = 1_{(G,t^k) = 1}$, which implies that $\chi$ is principal, and the claim follows. 
\end{proof}

\begin{rem}\label{rem_chiast_att}
Note that if $\chi$ is a character modulo $t^k$ then the previous lemma does not prescribe a value for $\chi^{\ast}(t)$. However, in keeping with our convention $f^{\ast}(t) = 0$ for multiplicative functions $f$ we shall set $\chi^{\ast}(t) = 0$. In any case, this particular definition will play no significant role in the sequel.
%since $\chi^{\ast}$ is equal to a short interval character $\psi$ for $(G,t) = 1$, we genuinely have $\chi^{\ast} = \psi$ if, and only if, $\chi^{\ast}(t) = \psi(t) = \psi(1) = 1$ (if $\psi$ is principal this is simply because $\psi$ is equal to 1 everywhere, and if $\psi$ is non-principal then its length is $> 0$, and $t$ and $1$ have the same string of coefficients). Our convention throughout the paper is thus that $\chi^{\ast}(t) = 1$ for any character $\chi$ modulo a power of $t$.
\end{rem}

We shall distinguish between the following notions of non-pretentiousness.
\begin{def1} \label{def_hayesnp}
Let $N \geq 1$. Let $f\colon \mc{M} \to \mb{U}$ be multiplicative. We say that $f$ is \emph{Hayes non-pretentious to level $W=W(N)$} if, as $N \ra \infty$,
$$\min_{w \leq W} \min_{\substack{\psi \bmod{M} \\ M \in \mc{M}_{w}}}\min_{\substack{\xi \text{ short} \\ \text{len}(\xi) \leq N}} \mc{D}_{f\bar{\chi \xi}}(N) \to \infty.$$
We say that $f$ is \emph{Dirichlet non-pretentious} to level $W=W(N)$ if, as $N \ra \infty$,
$$\min_{w \leq W} \min_{\substack{\psi \bmod{M} \\ M \in \mc{M}_{w}}} \mc{D}_{f\bar{\chi}}(N) \to \infty.$$
\end{def1}

An immediate corollary of Lemma~\ref{lem_short_inv} relating to Hayes non-pretentiousness (and utilized in Section~\ref{sec_shortexp}) is the following.
\begin{cor}[Hayes non-pretentiousness implies Dirichlet non-pretentiousness of dual]\label{StarNonPret}
Let $N \geq 1$, and let $W = W(N) \leq N$. Let $f\colon \mb{F}_q[t] \to \mb{U}$ be multiplicative and even, i.e., $f(cG) = f(G)$ for all $c \in \mb{F}_q^{\times}$. Then
$$
\min_{\substack{\psi \bmod{M} \\ M \in \mc{M}_{\leq W(N)+1}}} \min_{\substack{\xi \textnormal{ short} \\ \textnormal{len}(\xi) \leq N}} \mc{D}_{f\psi\bar{\xi}}(N) \leq \min_{M \in \mc{M}_{\leq W(N)}}\,\, \min_{\psi \bmod{M}} \min_{\substack{\chi \bmod{t^{\nu}} \\ 1 \leq \nu \leq N}} \mc{D}_{(f\psi)^{\ast}\bar{\chi}}(N) + O(1).
$$
In particular, if $f$ is Hayes non-pretentious to level $W' := W + 1$ then 
$$
\lim_{N \to \infty} \min_{\substack{\psi \bmod{M} \\ M \in \mc{M}_{\leq W(N)}}} \min_{\substack{\chi \bmod{t^{\nu}} \\ 1 \leq \nu \leq N}} \mc{D}_{(f\psi)^{\ast}\bar{\chi}}(N) = \infty.
$$
\end{cor}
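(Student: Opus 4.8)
The plan is to reduce the claimed inequality to Lemma~\ref{lem_short_inv} together with the distance-transformation identity in Lemma~\ref{lem_dist_inv}. First I would unwind the left-hand side: since $f$ is even, for any Dirichlet character $\psi$ modulo $M \in \mc{M}_{\leq W(N)}$ the product $f\psi$ is a multiplicative function whose values on $\mb{F}_q^{\times}$ are fixed (they are $\psi(c)$), so we are in a position to apply the $\ast$-involution. I would fix a modulus $t^{\nu}$ with $1 \leq \nu \leq N$, fix a Dirichlet character $\chi$ modulo $t^{\nu}$, and analyze $\mc{D}_{(f\psi)^{\ast}\bar{\chi}}(N)$. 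By definition this quantity is $\min_{\theta} \mb{D}((f\psi)^{\ast}, \chi e_{\theta}; N)^2$.

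The key step is to run Lemma~\ref{lem_dist_inv} with the role of ``$f$'' there played by $f\psi$ (extended to $\mb{F}_q[t]$ via the very character $\chi$ we are considering, consistently with evenness of $f$ and the fixed values of $\psi$ on constants), which gives $\mb{D}((f\psi)^{\ast}, \chi e_{\theta}; N) = \mb{D}(f\psi, \chi^{\ast} e_{\theta}; N) + O(1)$ for every $\theta$. Next, by Lemma~\ref{lem_short_inv} applied to $\chi$ modulo $t^{\nu}$ (handling separately the degenerate small-$\nu$ cases, where $\chi^{\ast}$ is principal or trivial and the bound is immediate), the dual $\chi^{\ast}$ agrees on polynomials coprime to $t$ with a short interval character $\xi$ of length $\nu - 1 \leq N$, and $\chi^{\ast}(t) = 1$ by the convention in Remark~\ref{rem_chiast_att}; thus $\chi^{\ast} = \xi$ identically as multiplicative functions on $\mc{M}$. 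Therefore $\mb{D}(f\psi, \chi^{\ast} e_{\theta}; N) = \mb{D}(f\psi, \xi e_{\theta}; N) = \mb{D}(f\psi\bar{\xi}, e_{\theta}; N)$, and minimizing over $\theta$ yields $\mc{D}_{(f\psi)^{\ast}\bar{\chi}}(N) = \mc{D}_{f\psi\bar{\xi}}(N) + O(1)$ (the $O(1)$ absorbed after squaring, using 1-boundedness so all distances are $O((\log N)^{1/2})$). Taking minima over $\psi$ (with $M \in \mc{M}_{\leq W(N)}$, so $\psi$ ranges over moduli in $\mc{M}_{\leq W(N)}$, hence the left side's minimum over $\mc{M}_{\leq W(N)+1}$ only includes more characters) and over $\nu \leq N$ (hence over short interval characters $\xi$ of length $\leq N - 1 \leq N$) gives exactly the asserted inequality, since every $(\psi, \chi \bmod t^\nu)$ on the right produces an admissible pair $(\psi, \xi)$ with $\mathrm{len}(\xi) \le N$ on the left.

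The ``in particular'' clause is then immediate: if $f$ is Hayes non-pretentious to level $W$, the left-hand side of the displayed inequality tends to $\infty$ by Definition~\ref{def_hayesnp} (the extra modulus $t$, contributing $W(N)+1$, is harmless since Hayes non-pretentiousness at level $W$ with the same bound is what is assumed — more precisely one checks Definition~\ref{def_hayesnp} is stable under enlarging $W$ by a bounded amount, which is obvious because the set of characters only grows), so the right-hand side does too.

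I expect the main obstacle to be bookkeeping rather than conceptual: one must be careful that the extension of $f\psi$ to $\mb{F}_q^{\times}$ demanded by Lemma~\ref{lem_dist_inv} is precisely the one for which the hypothesis $f(c) = \chi(c) = \chi^{\ast}(c)$ there is satisfied — this forces choosing the extension by $\chi$ itself, and it is here that evenness of $f$ is used (so that $f\psi$ on constants is $\psi$, matching $\chi$ modulo $t$ after noting $\chi$ and $\chi^\ast$ agree on $\mb{F}_q^\times$, both being determined by the residue mod $t$). The other place to be slightly careful is the degenerate range of small $\nu$ (e.g.\ $\nu = 1$), where Lemma~\ref{lem_short_inv} is stated for $k \geq 2$; but for $\nu = 1$ any character mod $t$ is even-supported and $\chi^{\ast}$ is again a character mod $t$, so it is handled directly, and in any case such terms only make the right-hand minimum smaller, which is the favorable direction.
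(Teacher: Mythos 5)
There is a genuine gap at the central step, namely the application of Lemma \ref{lem_dist_inv}. That lemma requires the extension of the multiplicative function to $\mb{F}_q^{\times}$ to be given by the \emph{same} character $\chi \pmod{t^{\nu}}$ that appears in the distance, i.e.\ it needs $(f\psi)(c) = \chi(c)$ for all constants $c$. But the function $(f\psi)^{\ast}$ in the statement of the corollary is the dual of $f\psi$ with its actual values on $\mb{F}_q[t]$: since $f$ is even, $(f\psi)(cG) = \psi(c)(f\psi)(G)$, so on constants $f\psi$ is $\psi$, and $\psi(c)$ need not equal $\chi(c)$ — your parenthetical claim that ``$f\psi$ on constants is $\psi$, matching $\chi$ modulo $t$'' is simply false in general ($\psi$ and $\chi$ are unrelated characters). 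If you instead \emph{define} the extension by $\chi$, you change $(f\psi)^{\ast}$ at every irreducible $R$ by the factor $\chi\bar{\psi}(R(0))$, i.e.\ by a twist by a Dirichlet character modulo $t$; this can shift $\mc{D}_{(f\psi)^{\ast}\bar{\chi}}(N)$ by $\asymp \log N$, so you would be proving a statement about a different function, not the corollary as written. The correct repair is exactly what the paper does: rerun the orthogonality computation from the proof of Lemma \ref{lem_dist_inv} with the true extension, which picks out an auxiliary character $\xi \pmod{t}$ (the one with $\xi(c)=\bar{\psi}\chi(c)$ on constants) and yields $\mc{D}_{f\psi\xi\bar{\chi^{\ast}}}(N) \leq \mc{D}_{(f\psi)^{\ast}\bar{\chi}}(N) + O(1)$. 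The Dirichlet part of the resulting Hayes character then has modulus $Mt$ of degree $\leq W(N)+1$, which is precisely why the left-hand minimum in the corollary runs over $\mc{M}_{\leq W(N)+1}$ rather than $\mc{M}_{\leq W(N)}$. Your observation that your argument would give the stronger bound with $W(N)$ on the left should have been a warning sign rather than a bonus.

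Two smaller points. First, your treatment of the degenerate case $\nu=1$ is stated backwards: terms with small $\nu$ making the right-hand minimum \emph{smaller} makes the inequality harder, not ``favorable''; in fact the $\nu=1$ case is fine, but only via the same corrected identity with the extra character mod $t$ (there $\chi^{\ast}$ is trivial on monics coprime to $t$, so one lands on $\mc{D}_{f\psi\xi}(N)+O(1)$). Second, the ``in particular'' clause does go through as you say once the displayed inequality is established, since Hayes non-pretentiousness to level $W$ controls moduli of degree up to $W(N)+1$ as well for $N$ large; that part is not where the difficulty lies.
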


\begin{proof}
Let $N$ be large and let $\psi \bmod{M}$ with $\deg{M} \leq W(N)$ and $\chi \bmod{t^{\nu}}$ with $1 \leq \nu \leq N$ be chosen such that
$$
\mc{D}_{(f\psi)^{\ast}\bar{\chi}}(N) = \min_{M'\in \mathcal{M}_{\leq W(N)}} \min_{\psi' \bmod{M'}} \min_{\substack{\chi' \bmod {t^{\nu'}} \\ 1 \leq \nu' \leq N}} \mc{D}_{(f\psi')^{\ast}\bar{\chi'}}(N).
$$
Since $f$ is even and $\psi(c) \in \mu_{q-1}$ for all $c \in \mb{F}_{q}^\times$ we may apply Lemma~\ref{lem_dist_inv} to conclude that there is a character $\xi \bmod{t}$, depending on $\psi$ and $\chi$, such that
$$
\mc{D}_{f\psi \xi \bar{\chi^{\ast}}}(N) = \mc{D}_{(f\psi)^{\ast}\bar{\chi}}(N) + O(1).
$$
By Lemma~\ref{lem_short_inv}, $\chi^{\ast}$ coincides with a short interval character of length $\nu-1$ at all primes $P\neq t$, so that $\psi\xi \bar{\chi^{\ast}}$ coincides at all $P \in \mc{P} \bk \{t\}$ with a Hayes character whose Dirichlet part has conductor $\leq \deg{M t} \leq W(N)+1$ and whose short interval character part has conductor at most $N$. It follows then that 
$$
\min_{M \in \mc{M}_{\leq W(N)+1}} \min_{\psi \bmod{M}} \min_{\substack{\xi \text{ short} \\ \text{len}(\xi) \leq N}}\mc{D}_{f\bar{\chi \xi}}(N) \leq \mc{D}_{f\psi \xi \bar{\chi^{\ast}}}(N) + O(1) \leq \mc{D}_{(f\psi)^{\ast}\bar{\chi}}(N) + O(1).
$$
This implies the first claim. The second claim follows upon taking $N \to \infty$ and using the definition of Hayes non-pretentiousness.
\end{proof}

\section{Preliminaries II:  Character Sums and Sieve Estimates} \label{sec_prelim2}

Beginning in this section we set out to prove (a generalization of) Theorem~\ref{thm_mr_real}, as well as Theorem~\ref{LogEllFF1}. We collect together the main general results we shall use for this purpose. Most of these are simple translations of the corresponding result in the number field setting, but we have not managed to locate such translations in the literature.

\begin{rem}\label{remshort}
For brevity and to simplify notation, all of the lemmas below are stated for sums of Dirichlet characters, but  as we will note in Section~\ref{MRTHMSec}, all of them work equally well if $\chi\bmod Q$ is replaced with $\chi\in \mathcal{X}_{1,\nu}$ (that is, we are summing over short interval characters of length $\nu$), and $\deg{Q}$ is replaced with $\nu$ and $\phi(Q)$ is replaced with $q^{\nu}$.
\end{rem}

\subsection{Large Sieve Estimates in Function Fields}
\begin{lem}[$L^2$ mean value theorem] \label{L2MVT}
Let $N \geq 1$. Let $\{a_G\}_{G \in \mc{M}_N} \subset \mb{C}$, and let $Q \in \mc{M}$. Then
$$\sum_{\chi \bmod{Q}} \Big|\sum_{G \in \mc{M}_N} a_G \chi(G)\Big|^2 \leq 2\Big(\phi(Q)q^{N-\deg{Q}}+\phi(Q)\Big)\sum_{\substack{G \in \mc{M}_N \\ (G,Q) = 1}}|a_G|^2.$$
\end{lem}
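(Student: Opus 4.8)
The plan is to prove this $L^2$ mean value theorem (the large sieve-type inequality for Dirichlet characters modulo $Q$) by the standard duality/expansion argument, expanding the square and using orthogonality of Dirichlet characters, exactly as in the classical integer-setting proof attributed to this form of the mean value theorem.

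\textbf{Step 1: Expand and apply orthogonality.} First I would expand the left-hand side as
\begin{align*}
\sum_{\chi \pmod{Q}} \Big|\sum_{G \in \mc{M}_N} a_G \chi(G)\Big|^2 = \sum_{\substack{G_1, G_2 \in \mc{M}_N \\ (G_1G_2,Q)=1}} a_{G_1}\bar{a_{G_2}} \sum_{\chi \pmod{Q}} \chi(G_1)\bar{\chi}(G_2).
\end{align*}
By the orthogonality relation for Dirichlet characters modulo $Q$, the inner sum equals $\phi(Q)$ if $G_1 \equiv G_2 \pmod{Q}$ and $(G_1G_2,Q)=1$, and $0$ otherwise. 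Thus the expression becomes
\begin{align*}
\phi(Q) \sum_{\substack{G_1, G_2 \in \mc{M}_N \\ (G_1,Q)=1 \\ G_1 \equiv G_2 \pmod{Q}}} a_{G_1}\bar{a_{G_2}}.
\end{align*}

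\textbf{Step 2: Bound the diagonal-plus-off-diagonal sum.} Using $|a_{G_1}\bar{a_{G_2}}| \leq \tfrac12(|a_{G_1}|^2 + |a_{G_2}|^2)$ and symmetry, this is at most $\phi(Q) \sum_{G_1} |a_{G_1}|^2 \cdot \#\{G_2 \in \mc{M}_N : G_2 \equiv G_1 \pmod{Q}\}$, so I need to count monic polynomials of degree exactly $N$ in a fixed residue class modulo $Q$. Writing $d = \deg{Q}$: if $N < d$ there is at most one such $G_2$ (only if $G_1$ itself has the right form); if $N \geq d$, the polynomials $G_2 = Q\cdot B + G_1'$ with $G_1'$ the representative of $G_1$ of degree $< d$ and $\deg(B) \leq N-d$, with the leading coefficient condition to make $G_2$ monic of degree exactly $N$, number exactly $q^{N-d}$ when $N > d$ and at most $q^{N-d}+1 = 2$ care is needed at the boundary. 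In all cases $\#\{G_2 \in \mc{M}_N : G_2 \equiv G_1 \pmod Q\} \leq 2q^{N-\deg{Q}} + 1$. Combining gives the claimed bound $\big(2\phi(Q)q^{N-\deg{Q}} + \phi(Q)\big)\sum_{(G,Q)=1}|a_G|^2$.

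\textbf{Main obstacle.} There is no deep obstacle here — this is a routine lemma. The only point requiring a little care is the counting in Step 2, where one must be careful about whether $N \geq \deg{Q}$ or not and about the monic/leading-coefficient constraint when $N = \deg{Q}$, which is exactly why the statement carries the slightly lossy constant $2q^{N-\deg{Q}}+1$ rather than a clean $q^{N-\deg{Q}}$. I would also remark (per Remark \ref{remshort}) that the identical argument works verbatim with $\chi \pmod Q$ replaced by short interval characters $\chi \in \mc{X}_{1,\nu}$, using the orthogonality relation \eqref{ortho2} with $\phi(Q)$ replaced by $q^\nu$ and the congruence $G_1 \equiv G_2 \pmod Q$ replaced by agreement of the top $\nu+1$ coefficients, the relevant count of $G_2 \in \mc{M}_N$ in such a short-interval class again being $\leq 2q^{N-\nu}+1$.
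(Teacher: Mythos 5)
Your proposal is correct and follows essentially the same route as the paper: expand the square, apply orthogonality of characters modulo $Q$, use the AM--GM bound $|a_{G_1}\bar{a_{G_2}}|\leq\tfrac12(|a_{G_1}|^2+|a_{G_2}|^2)$ with symmetry, and count monic polynomials of degree $N$ in a fixed residue class modulo $Q$ (the paper bounds this count slightly more crudely via $|\mc{M}_{\leq N-\deg{Q}}|\leq 2q^{N-\deg{Q}}$, which is where the same constant comes from). The only blemish is the garbled boundary remark in Step 2, but your final count $\leq 2q^{N-\deg{Q}}+1$ is valid and suffices.
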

\begin{rem}
The short interval analogue of this lemma reads as 
\begin{align*}
 \sum_{\xi\in \mathcal{X}_{1,\nu}} \Big|\sum_{G \in \mc{M}_N} a_G \xi(G)\Big|^2 \leq 2\Big(q^{\nu}q^{N-\nu}+q^{\nu}\Big)\sum_{G \in \mc{M}_N}|a_G|^2.   
\end{align*}
All the lemmas that follow in this section have short interval formulations in a completely analogous fashion.
\end{rem}

\begin{proof}
Denote the left-hand side by $\Sigma$. Expanding the square and swapping orders of summation yields
$$\Sigma = \sum_{G,G' \in \mc{M}_N} a_G\bar{a_{G'}}\sum_{\chi \bmod{Q}} \chi(G)\bar{\chi}(G') = \phi(Q)\Big(\sum_{\substack{G \in \mc{M}_N \\ (G,Q) = 1}} |a_G|^2 + \sum_{\substack{G,G' \in \mc{M}_N \\ G\equiv G' \bmod{Q}\\ G \neq G' , (GG',Q) = 1}} a_G\bar{a_{G'}}\Big).$$
Bounding the second sum trivially, using the AM-GM inequality in the form $|a_Ga_{G'}| \leq \frac{1}{2}\Big(|a_G|^2 + |a_{G'}|^2\Big)$ and invoking symmetry in $G$ and $G'$, we get
$$\Sigma \leq \phi(Q)\sum_{\substack{G \in \mc{M}_N \\ (G,Q) = 1}} |a_G|^2\Big(1 + \sum_{\substack{G' \in \mc{M}_N \\ Q|(G'-G)}} 1\Big).$$
Since $\deg{G'-G} \leq N$ for each $G \in \mc{M}_N$, and the number of polynomials in $\mc{M}_{\leq N}$ divisible by $Q$ is precisely $|\mc{M}_{\leq N-\deg{Q}}| \leq 2q^{N-\deg{Q}}$, it follows that
$$\Sigma \leq \Big(\phi(Q) + \phi(Q)|\mc{M}_{\leq N-\deg{Q}}|\Big)\sum_{\substack{G \in \mc{M}_N \\ (G,Q) = 1}} |a_G|^2 \leq 2\Big(\phi(Q) + \phi(Q)q^{N-\deg{Q}}\Big) \sum_{\substack{G \in \mc{M}_N \\ (G,Q) = 1}} |a_G|^2,$$
as claimed.
\end{proof}

\begin{lem} [Hal\'{a}sz--Montgomery lemma] \label{HalMonInt}
Let $N \geq 1$. Let $\{a_G\}_{G \in \mc{M}_N} \subset \mb{C}$, and let $Q \in \mc{M}$, $\deg{Q}\leq (1+o(1))N$. Let $\Xi \subseteq \mc{X}_Q$. Then
$$\sum_{\chi \in \Xi} \Big|\sum_{G \in \mc{M}_N} a_G\chi(G)\Big|^2 \ll \Big(\phi(Q)q^{N-\deg{Q}} + |\Xi|q^{(1/2+o(1))N}\Big)\sum_{\substack{G \in \mc{M}_N \\ (G,Q) = 1}} |a_G|^2.$$
\end{lem}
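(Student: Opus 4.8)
The plan is to prove the Halász--Montgomery lemma as a duality argument combined with the $L^2$ mean value theorem (Lemma \ref{L2MVT}), following the standard approach over the integers (as in, e.g., Iwaniec--Kowalski). First I would dualize: the bound
\[
\sum_{\chi \in \Xi} \Big|\sum_{G \in \mc{M}_N} a_G\chi(G)\Big|^2 \ll \Delta \sum_{\substack{G \in \mc{M}_N \\ (G,Q)=1}} |a_G|^2
\]
with $\Delta := \phi(Q)q^{N-\deg{Q}} + |\Xi|q^{(1/2+o(1))N}$ is, by the duality principle for bilinear forms, equivalent to the dual estimate
\[
\sum_{\substack{G \in \mc{M}_N \\ (G,Q)=1}} \Big|\sum_{\chi \in \Xi} b_\chi \chi(G)\Big|^2 \ll \Delta \sum_{\chi \in \Xi} |b_\chi|^2
\]
for arbitrary complex coefficients $\{b_\chi\}_{\chi \in \Xi}$. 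So I would work with the dual form instead.

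Next I would expand the square on the left-hand side of the dual estimate and swap the order of summation, obtaining
\[
\sum_{\chi_1,\chi_2 \in \Xi} b_{\chi_1}\bar{b_{\chi_2}} \sum_{\substack{G \in \mc{M}_N \\ (G,Q)=1}} \chi_1(G)\bar{\chi_2}(G).
\]
The diagonal terms $\chi_1 = \chi_2$ contribute exactly $|\mc{M}_N \cap \{(G,Q)=1\}| \cdot \sum_\chi |b_\chi|^2 \leq q^N \sum_\chi |b_\chi|^2$, which is absorbed into the $\phi(Q)q^{N-\deg{Q}}$ (or in any case $\ll \Delta$) times $\sum_\chi |b_\chi|^2$ — here one should be slightly careful, since we want $q^N$ rather than $q^N$ times something; but in fact $q^N \leq q^N$ and $\deg Q \ll N$ so the diagonal is fine either way (and $\phi(Q)q^{N-\deg Q}$ is comparable to $q^N$ when $\deg Q$ is not too large). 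For the off-diagonal terms I would bound $|b_{\chi_1}\bar{b_{\chi_2}}| \leq \tfrac12(|b_{\chi_1}|^2 + |b_{\chi_2}|^2)$ and use symmetry, reducing to controlling
\[
\sum_{\chi \in \Xi} |b_\chi|^2 \sum_{\substack{\psi \in \Xi \\ \psi \neq \chi}} \Big| \sum_{\substack{G \in \mc{M}_N \\ (G,Q)=1}} \psi\bar{\chi}(G) \Big|.
\]
The character sum $\sum_{G \in \mc{M}_N, (G,Q)=1} \psi\bar\chi(G)$ is a sum of the \emph{non-principal} Dirichlet character $\psi\bar\chi$ over $\mc{M}_N$ (after removing the coprimality condition by Möbius inversion over divisors of $Q$, which for a non-principal character changes nothing of substance since each $\sum_{G \in \mc{M}_N, D \mid G}$ piece is again a character sum to a modulus dividing $Q$), and by Lemma \ref{hayesbound} (or directly by the Riemann hypothesis for the associated $L$-function, which has degree $\leq \deg Q \leq (1+o(1))N$) it is $\ll q^{N/2}\binom{\deg Q - 1}{N} \ll q^{(1/2 + o(1))N}$, the binomial factor being at most $q^{o(N)}$ since $\deg Q \leq (1+o(1))N$. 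Summing over the at most $|\Xi|$ choices of $\psi$ gives the term $|\Xi| q^{(1/2+o(1))N} \sum_\chi |b_\chi|^2$, completing the proof of the dual estimate and hence, by duality, of the lemma.

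The main obstacle I anticipate is handling the coprimality restriction $(G,Q)=1$ cleanly in the off-diagonal character sums while keeping the $q^{(1/2+o(1))N}$ bound, and making sure the RH-type input (Lemma \ref{hayesbound}, or Rhin's theorem, Lemma \ref{thm_GRH_hayes}, applied to $\psi\bar\chi$) is legitimately applicable — one needs $\psi\bar\chi$ to be non-principal, which holds since $\psi \neq \chi$ are characters to the \emph{same} modulus $Q$ (so no subtlety about induced characters arises here, unlike in the Hayes setting), and one needs the degree bound $\deg Q \leq (1+o(1))N$ to kill the binomial coefficient. The rest is bookkeeping: assembling the diagonal contribution $\ll \phi(Q)q^{N-\deg Q}$ and the off-diagonal contribution $\ll |\Xi|q^{(1/2+o(1))N}$, then invoking duality. (Alternatively, one can avoid duality entirely and run the argument directly on the original form using Cauchy--Schwarz against $\sum_\chi \chi(G)\overline{\left(\sum_{G'} a_{G'}\chi(G')\right)}$ and then Lemma \ref{L2MVT}; I would mention this as the route actually taken if it is shorter, but the duality packaging is cleanest to present.)
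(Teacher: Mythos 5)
Your proposal is correct and follows essentially the same route as the paper: dualize, split into diagonal and off-diagonal terms, bound the diagonal by the count of $G\in\mc{M}_N$ coprime to $Q$, and handle the off-diagonal non-principal character sums $\chi_1\bar{\chi}_2$ via Lemma \ref{hayesbound} together with AM--GM, using $\deg{Q}\leq(1+o(1))N$ to absorb the binomial factor. (Your aside about M\"obius inversion is unnecessary, since $\psi\bar{\chi}$ already vanishes on $G$ not coprime to $Q$, but it does no harm.)
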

\begin{proof}
We may obviously assume that $\Xi \neq \emptyset$, since otherwise the claim is trivial. Moreover, by duality (see e.g.,~\cite[Lemma 10]{mr-annals}), it suffices to show that for any set of coefficients $\{c_{\chi}\}_{\chi \in \Xi} \subset \mb{C}$ we have
$$
\sum_{G \in \mc{M}_N} \Big|\sum_{\chi \in \Xi} c_{\chi}\chi(G)\Big|^2 \ll \Big(\phi(Q) q^{N-\deg{Q}} + |\Xi|q^{\deg{Q}/2}\Big) \sum_{\chi \in \Xi} |c_{\chi}|^2.
$$
Expanding the square in the left-hand side and swapping the order of summations, we get
$$
\sum_{\chi_1,\chi_2 \in \Xi} c_{\chi_1}\bar{c_{\chi_2}} \sum_{G \in \mc{M}_N} \chi_1\bar{\chi}_2(G).
$$
The diagonal contribution with $\chi_1 = \chi_2$ yields 
$$
|\{G \in \mc{M}_N  \colon  (G,Q) = 1\}| \sum_{\chi \in \Xi}|c_{\chi}|^2 \ll \phi(Q)q^{N-\deg{Q}} \sum_{\chi \in \Xi}|c_{\chi}|^2.
$$ 
When $\chi_1 \neq \chi_2$, $\chi_1\bar{\chi}_2$ is non-principal, so by Lemma~\ref{hayesbound} we have
$$\sum_{\substack{\chi_1,\chi_2 \in \Xi \\ \chi_1 \neq \chi_2}} |c_{\chi_1}c_{\chi_2}| \Big|\sum_{G \in \mc{M}_N} \chi_1\bar{\chi_2}(G)\Big| \ll q^{(1/2+o(1))N} \sum_{\substack{\chi_1,\chi_2 \in \Xi \\ \chi_1\neq \chi_2}}|c_{\chi_1}||c_{\chi_2}|.$$
Applying AM-GM as in the proof of the previous lemma, the sum above is bounded by $|\Xi| \sum_{\chi \in \Xi}|c_{\chi}|^2$. Putting everything together, this proves the claim.
\end{proof}
\begin{lem} [Hal\'{a}sz--Montgomery lemma for primes] \label{HalMonPrim}
Let $N \geq 1$. Let $\{a_P\}_{P \in \mc{P}_N} \subset \mb{C}$, and let $Q \in \mc{M}$. For any $\Xi \subseteq \mc{X}_Q$ we have
$$\sum_{\chi \in \Xi} \Big|\sum_{P \in \mc{P}_N} a_P\chi(P)\Big|^2 \ll \Big(\frac{q^N}{N} + \deg{Q}\frac{q^{N/2}}{N}|\Xi|\Big)\sum_{P \in \mc{P}_N} |a_P|^2.$$
\end{lem}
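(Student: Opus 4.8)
The plan is to mimic the proof of the preceding Hal\'asz--Montgomery Lemma (Lemma~\ref{HalMonInt}), but replacing the pointwise character-sum bound coming from Lemma~\ref{hayesbound} with the prime polynomial theorem together with Rhin's GRH bound (Lemma~\ref{thm_GRH_hayes}). First I would invoke duality (as in \cite[Lemma~10]{mr-annals}): it suffices to show that for any coefficients $\{c_\chi\}_{\chi\in\Xi}\subset\mb{C}$,
$$
\sum_{P\in\mc{P}_N}\Big|\sum_{\chi\in\Xi}c_\chi\chi(P)\Big|^2 \ll \Big(\frac{q^N}{N}+\deg{Q}\frac{q^{N/2}}{N}|\Xi|\Big)\sum_{\chi\in\Xi}|c_\chi|^2.
$$
Expanding the square and swapping summation gives $\sum_{\chi_1,\chi_2\in\Xi}c_{\chi_1}\bar{c_{\chi_2}}\sum_{P\in\mc{P}_N}\chi_1\bar{\chi_2}(P)$.

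Next I would split into the diagonal and off-diagonal contributions. For the diagonal $\chi_1=\chi_2$ the inner sum is $|\{P\in\mc{P}_N:(P,Q)=1\}| = |\mc{P}_N| + O(\deg{Q}) \ll q^N/N$ (using the prime polynomial theorem and that at most $O(\deg{Q})$ primes of degree $N$ divide $Q$), which yields the term $(q^N/N)\sum_{\chi\in\Xi}|c_\chi|^2$. For the off-diagonal terms, $\chi_1\bar{\chi_2}$ is non-principal, and I would convert the prime sum to a $\Lambda$-weighted sum: $\sum_{P\in\mc{P}_N}\chi_1\bar{\chi_2}(P) = \frac1N\sum_{G\in\mc{M}_N}\Lambda(G)\chi_1\bar{\chi_2}(G) + O(q^{N/2})$, the error absorbing the prime-power terms $P^k$, $k\geq 2$. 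Then Lemma~\ref{thm_GRH_hayes} bounds the main term by $\frac1N\cdot\operatorname{cond}_H(\chi_1\bar{\chi_2})\,q^{N/2} \ll \frac{\deg{Q}}{N}q^{N/2}$, since $\chi_1,\chi_2\in\mc{X}_Q$ have Hayes conductor $\le\deg{Q}$ and hence so does $\chi_1\bar{\chi_2}$ (here one uses $\deg{Q}\le(1+o(1))N$ implicitly, or just that $\deg{Q}\le N$, to make sure the $q^{N/2}$ error from prime powers is dominated). Hence each off-diagonal inner sum is $\ll \deg{Q}\,q^{N/2}/N$.

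Finally I would bound $\sum_{\chi_1\ne\chi_2}|c_{\chi_1}||c_{\chi_2}|$ by AM--GM exactly as in the proof of Lemma~\ref{HalMonInt}, getting $\le |\Xi|\sum_{\chi\in\Xi}|c_\chi|^2$, so the off-diagonal total is $\ll \deg{Q}\frac{q^{N/2}}{N}|\Xi|\sum_{\chi\in\Xi}|c_\chi|^2$. Adding the diagonal and off-diagonal contributions gives the claimed bound, and undoing the duality step completes the proof. The only mildly delicate point — more bookkeeping than a genuine obstacle — is the passage from the prime sum to the $\Lambda$-weighted sum and checking that the resulting $O(q^{N/2})$ error (and the prime-divisors-of-$Q$ correction) is harmless; everything else is a direct transcription of the integer-setting argument, with Rhin's theorem playing the role that the Riemann hypothesis for Dirichlet $L$-functions would play over $\mb{Z}$.
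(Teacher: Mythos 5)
Your proposal follows essentially the same route as the paper: duality, a diagonal/off-diagonal split, the prime polynomial theorem for the diagonal, Rhin's bound (Lemma \ref{thm_GRH_hayes}) for the off-diagonal, and AM--GM to resum. One bookkeeping point: as written, your prime-power error $O(q^{N/2})$ in $\sum_{P\in\mc{P}_N}\chi_1\bar{\chi_2}(P)=\frac1N\sum_{G\in\mc{M}_N}\Lambda(G)\chi_1\bar{\chi_2}(G)+O(q^{N/2})$ is \emph{not} dominated by $\deg{Q}\,q^{N/2}/N$ when $\deg{Q}=o(N)$, and your parenthetical remark has the comparison backwards; the fix is that the error is genuinely $O(q^{N/2}/N)$ (the $k\ge 2$ prime powers contribute $\ll q^{N/2}$ to the $\Lambda$-sum \emph{before} dividing by $N$), which is $\le \deg{Q}\,q^{N/2}/N$ whenever $\Xi$ contains two distinct characters, so the stated constant is recovered. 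The paper sidesteps this issue entirely by majorizing $1_{P\in\mc{P}_N}\le N^{-1}\Lambda(G)$ pointwise before expanding the square (legitimate because the dualized quadratic form is a sum of non-negative terms), so no prime-power error term ever appears.
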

\begin{proof}
We apply duality, as in the proof of Lemma~\ref{HalMonInt}. Given a sequence $\{c_{\chi}\}_{\chi \in \Xi} \subset \mb{C}$, we bound $1_{G \in \mc{P}_N} \leq N^{-1}\Lambda(G)$ to obtain
$$
\sum_{P \in \mc{P}_N} \Big|\sum_{\chi \in \Xi} c_{\chi}\chi(P)\Big|^2 \leq \sum_{G \in \mc{M}_N} \frac{\Lambda(G)}{N} \Big|\sum_{\chi \in \Xi} c_{\chi}\chi(G)\Big|^2 = \frac{1}{N}\sum_{\chi_1,\chi_2 \in \Xi} c_{\chi_1}\bar{c_{\chi_2}} \sum_{G \in \mc{M}_N} \Lambda(G) \chi_1\bar{\chi_2}(G).
$$
When $\chi_1 = \chi_2$, the prime polynomial theorem gives $\sum_{G \in \mc{M}_N} \Lambda(G) \ll q^N$, whence the diagonal contribution to the sum becomes $(q^N/N)\sum_{\chi \in \Xi} |c_{\chi}|^2$. 

When $\chi_1 \neq \chi_2$, we may apply Lemma~\ref{thm_GRH_hayes} to give
$$
\sum_{G \in \mc{M}_N} \Lambda(G)\chi_1\bar{\chi_2}(G) \ll \deg{Q}q^{N/2}.
$$
It follows that
$$
\sum_{\substack{\chi_1,\chi_2 \in \Xi \\ \chi_1 \neq \chi_2}} |c_{\chi_1}||c_{\chi_2}|\Big|\sum_{G \in \mc{M}_N} \Lambda(G)\chi_1\bar{\chi_2}(G)\Big| \ll \deg{Q}q^{N/2} |\Xi|\sum_{\chi \in \Xi} |c_{\chi}|^2,
$$
upon applying AM-GM and using symmetry, as before.

Combined with the diagonal contribution, we get
$$\sum_{P \in \mc{P}_N} \Big|\sum_{\chi \in \Xi} c_{\chi}\chi(P)\Big|^2 \ll \Big(\frac{q^N}{N} + \deg{Q}\frac{q^{N/2}}{N}|\Xi|\Big) \sum_{\chi \in \Xi}|c_{\chi}|^2.$$
Invoking duality as discussed above, the claim follows.
\end{proof}
\begin{lem}[A large values estimate]\label{LargeValues}
Let $N,Z \geq 1$. Let $\{a_P\}_{P \in \mc{P}_N} \subset \mb{U}$, and let $Q \in \mc{M}$, with $\phi(Q) \geq q^N$. Then
$$\Big|\Big\{\chi \bmod{Q}  \colon  \frac{1}{q^N}\Big|\sum_{P \in \mc{P}_N} a_P\chi(P)\Big| \geq \frac{1}{Z}\Big\}\Big| \ll \exp\Big(\frac{\log (q^N\phi(Q))}{N\log q} \Big(2\log\Big(\frac{2\log \phi(Q)}{N\log q}\Big) + \log\Big(\frac{2Z^2}{N}\Big)\Big)\Big).$$
\end{lem}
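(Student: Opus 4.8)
The plan is to run the high–moment (large–values) method over $\mathbb{F}_q[t]$. Write $L := \frac{\log \phi(Q)}{N\log q}$, so that $\phi(Q) = q^{LN}$ and $L \geq 1$ by hypothesis; put $V := q^N/Z$ and
\[
\mathcal{B} := \Big\{\chi \pmod{Q} : \Big|\sum_{P \in \mc{P}_N} a_P\chi(P)\Big| \geq V\Big\}.
\]
We may assume $\mathcal{B} \neq \emptyset$; since $\big|\sum_{P}a_P\chi(P)\big| \leq |\mc{P}_N| \leq q^N/N$ by the prime polynomial theorem, this forces $V \leq q^N/N$, i.e. $Z \geq N$ (a fact exploited at the end to manufacture slack). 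For any positive integer $k$, from $|\sum_P a_P\chi(P)| \geq V$ for $\chi\in\mathcal{B}$ we get
\[
|\mathcal{B}|\, V^{2k} \leq \sum_{\chi \pmod{Q}} \Big|\sum_{P \in \mc{P}_N} a_P\chi(P)\Big|^{2k}.
\]

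The next step is to expand this $2k$-th moment and apply the orthogonality relations for Dirichlet characters modulo $Q$ (as in the proof of Lemma \ref{L2MVT}): the right-hand side equals $\phi(Q)$ times the $a_P$-weighted number of tuples $(P_1,\dots,P_{2k}) \in \mc{P}_N^{2k}$ with $P_1\cdots P_k \equiv P_{k+1}\cdots P_{2k} \pmod{Q}$ (terms with some $P_i \mid Q$ contribute $0$), and since $\{a_P\}\subset \mb{U}$ this is at most $\phi(Q)$ times the unweighted count. Here I would choose $k := \lceil \deg{Q}/N\rceil$, the least integer with $kN \geq \deg{Q}$. The key elementary observation is that, since $P_1\cdots P_k$ is monic of degree $kN$, for every residue class $A_0 \pmod{Q}$ the set $\{B \in \mc{M}_{kN} : B \equiv A_0 \pmod{Q}\}$ has \emph{exactly} $q^{kN-\deg{Q}}$ elements (write $B = A_0 + QC$: the polynomial $C$ is then forced to be monic of degree $kN-\deg{Q}$). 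Since moreover each $B \in \mc{M}_{kN}$ has at most $k!$ ordered factorizations into $k$ monic irreducibles of degree $N$ (unique factorization in $\mathbb{F}_q[t]$), the unweighted count is at most $|\mc{P}_N|^k \cdot k! \cdot q^{kN-\deg{Q}}$.

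Feeding this back, together with $|\mc{P}_N| \leq q^N/N$ and $\phi(Q) \leq q^{\deg{Q}}$, every power of $q$ telescopes — the contributions $q^{\deg{Q}}$, $q^{kN-\deg{Q}}$, $q^{kN}$ and $q^{-2kN}$ sum to zero in the exponent — leaving
\[
|\mathcal{B}| \leq k!\,\Big(\frac{Z^2}{N}\Big)^{k}.
\]
It remains to substitute $k = \lceil \deg{Q}/N\rceil$ and compare with the target. Using the elementary lower bound $\phi(Q)/q^{\deg{Q}} = \prod_{P \mid Q}(1-q^{-\deg{P}}) \gg 1/\log \deg{Q}$, one gets $\deg{Q} = LN + O(\log\log \deg{Q})$, hence $\deg{Q}/N = L + o(1)$ and $k \leq L+2$ for $N$ large; taking logarithms, using $k!\leq k^k$, and using $\log(Z^2/N)\geq \log N \to\infty$ to absorb the gap between $k$ and $L$, one verifies $\log|\mathcal{B}| \leq 2L\big(\log L + \log(Z^2/N)\big)$. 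In the narrow range of $L$ near $1$ where this slack would be too tight, one instead takes $k=1$: then $N \leq \deg{Q}-1$, so $P_1 \equiv P_2 \pmod Q$ forces $P_1 = P_2$, the off-diagonal vanishes identically, and $|\mathcal{B}| \leq \phi(Q)|\mc{P}_N|V^{-2} = q^{(L-1)N}(Z^2/N) \leq (Z^2/N)^2 \leq \exp\!\big(2L(\log L + \log(Z^2/N))\big)$ in that range.

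The substance of the argument is the \emph{exact} evaluation of the off-diagonal count, which is possible precisely because $k$ is taken large enough that $kN\geq \deg{Q}$; this is what produces the clean telescoping of the powers of $q$. The part that requires genuine care — and where I expect the only real friction — is the closing bookkeeping: checking that $\lceil \deg{Q}/N\rceil$ is close enough to $\log_q\phi(Q)/N$ via the lower bound for $\phi(Q)/q^{\deg{Q}}$, and confirming that the factor $2$ in the exponent of the claimed estimate genuinely absorbs both the integer rounding of $k$ and the crude inequalities ($k!\leq k^k$, $|\mc{P}_N|\leq q^N/N$) used along the way, including the $L\approx 1$ boundary case.
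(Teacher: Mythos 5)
Your proof is essentially the paper's: the same Chebyshev/high-moment argument with $k \approx \log_q \phi(Q)/N$, orthogonality, and unique factorization of the $k$-fold products of elements of $\mc{P}_N$, your only variations being the choice $k=\lceil \deg{Q}/N\rceil$ and the exact count of monics of degree $kN$ in a residue class modulo $Q$ in place of the paper's appeal to the $L^2$ mean value theorem (which is why you get $k!$ where the paper gets $(k!)^2$). It is correct to the same standard as the paper's own proof: in a thin range of $\log\phi(Q)/(N\log q)$ just above $1$ your two cases (like the paper's concluding ``$\ll$'') deliver the stated bound only up to a bounded absolute constant, which is exactly the slack the paper itself absorbs.
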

\begin{proof}
The proof is essentially the same as in the number fields case~\cite[Lemma 8]{mr-annals}. Let $k := \llf \frac{\log \phi(Q)}{N\log q}\rrf + 1$. Let $\mc{N}$ denote the cardinality of the set of characters on the left-hand side. By Chebyshev's inequality, we have
\begin{align}
\mc{N} &\leq \Big(\frac{Z}{q^N}\Big)^{2k}\sum_{\chi \bmod{Q}}\Big|\sum_{P \in \mc{P}_N} a_P\chi(P)\Big|^{2k} = \Big(\frac{Z}{q^N}\Big)^{2k}\sum_{\chi \bmod{Q}}\Big|\Big(\sum_{P \in \mc{P}_N} a_P\chi(P)\Big)^k\Big|^{2} \nonumber\\
&= \Big(\frac{Z}{q^N}\Big)^{2k} \sum_{\chi \bmod{Q}} \Big|\sum_{G \in \mc{M}_{kN}} b_G \chi(G)\Big|^2, \label{2kPow}
\end{align}
where we have defined
$$
b_G := \sum_{\substack{P_1\cdots P_k = G \\ P_j \in \mc{P}_N \ \forall j}} a_{P_1} \cdots a_{P_k}.
$$
Applying Lemma~\ref{L2MVT}, we get
\begin{align*}
\sum_{\chi \bmod{Q}} \Big|\sum_{G\in \mc{M}_{kN}} b_G\chi(G)\Big|^2&\ll \Big(\phi(Q) + q^{kN}\Big)\sum_{G \in \mc{M}_{kN}} |b_G|^2\\
&\ll q^{kN}\sum_{\substack{P_1 \cdots P_k = Q_1\cdots Q_k \\ P_i,Q_j \in \mc{P}_N}} a_{P_1} \cdots a_{P_k}\bar{a_{Q_1}\cdots a_{Q_k}},
\end{align*}
according to our choice of $k$. Since the $P_i$ and $Q_j$ are irreducible, up to permutation we have $P_i = Q_i$ for all $1 \leq i \leq k$, and thus by the prime polynomial theorem
$$
\sum_{\substack{P_1 \cdots P_k = Q_1 \cdots Q_k \\ P_i,Q_j \in \mc{P}_N}} a_{P_1} \cdots a_{P_k} \bar{a_{Q_1} \cdots a_{Q_k}} \leq (k!)^2\Big(\sum_{P \in \mc{P}_N} |a_P|^2\Big)^k \ll (k!)^2(1.1q^N/N)^k.
$$
Inserting this into our mean value estimate, we get that
$$
\sum_{\chi \bmod{Q}} \Big|\sum_{P \in \mc{P}_N} a_P\chi(P)\Big|^{2k} \ll \Big(\frac{q^{2N}}{N}\Big)^k1.1^{k}(k!)^2.
$$
Combining this with~\eqref{2kPow} and using $\log k! \leq k\log k$ for $k \geq 2$, we find that
$$
\mc{N} \ll 1.1^k(k!)^2(Z^2/N)^k \ll \exp\Big(\Big(1+\frac{\log \phi(Q)}{N\log q}\Big) \Big(2\log\Big(\frac{2\log \phi(Q)}{N\log q}\Big) + \log(2Z^2/N)\Big)\Big).
$$
This implies the claim.
\end{proof}

\begin{lem}[A moment computation]\label{Lengthen}
Let $1 \leq d \leq m \leq N$. Let $\{a_P\}_{P \in \mc{P}_{d}}, \{b_G\}_{G \in \mc{M}_{N-m}} \subset \mb{U}$. Set
\begin{align*}
U(\chi) &:= \frac{1}{d|\mc{P}_d|}\sum_{P \in \mc{P}_d} a_P\chi(P),\\
V(\chi) &:= \frac{1}{|\mc{M}_{N-m}|} \sum_{G \in \mc{M}_{N-m}} b_G\chi(G).
\end{align*}
Set $\ell := \lceil m/d\rceil$. Then for any $Q \in \mc{M}$, we have
$$
\sum_{\chi \bmod{Q}} |U(\chi)^{\ell}V(\chi)|^2 \ll \Big(\phi(Q)q^{-N} + \phi(Q)q^{-\deg{Q}}\Big)\ell ^{2\ell}.
$$
\end{lem}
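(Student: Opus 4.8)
The plan is to carry out the standard moment computation underlying the Matom\"aki--Radziwi\l\l{} method (this is the direct analogue of the corresponding lemma over the integers): expand the power $U(\chi)^{\ell}$ into a Dirichlet character sum over monic polynomials, apply the $L^2$ mean value theorem (Lemma \ref{L2MVT}), and then reduce the main term to a count of multiplicative quadruples.

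First, since each $\chi \pmod{Q}$ is completely multiplicative,
\[
U(\chi)^{\ell}V(\chi) = \frac{1}{(d|\mc{P}_d|)^{\ell}q^{N-m}}\sum_{\substack{P_1,\dots,P_{\ell}\in\mc{P}_d\\ G\in\mc{M}_{N-m}}} a_{P_1}\cdots a_{P_{\ell}}b_G\,\chi(P_1\cdots P_{\ell}G) = \frac{1}{(d|\mc{P}_d|)^{\ell}q^{N-m}}\sum_{H\in\mc{M}_K}c_H\,\chi(H),
\]
where $K := d\ell + (N-m) \geq N$ (as $d\ell \geq m$) and $c_H$ collects the products $a_{P_1}\cdots a_{P_{\ell}}b_G$ over all factorizations $H = P_1\cdots P_{\ell}G$ of the above shape. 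Applying Lemma \ref{L2MVT} with modulus $Q$ and degree $K$ gives
\[
\sum_{\chi\pmod Q}|U(\chi)^{\ell}V(\chi)|^2 \leq \frac{2\phi(Q)q^{K-\deg{Q}}+\phi(Q)}{(d|\mc{P}_d|)^{2\ell}q^{2(N-m)}}\sum_{H\in\mc{M}_K}|c_H|^2,
\]
so everything reduces to bounding $\sum_{H}|c_H|^2$.

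Since $\{a_P\},\{b_G\}\subset\mb{U}$, we have $\sum_H|c_H|^2 \leq \mc{N}$, the number of solutions of $P_1\cdots P_{\ell}G = Q_1\cdots Q_{\ell}G'$ with $P_i,Q_j\in\mc{P}_d$ and $G,G'\in\mc{M}_{N-m}$. I would count $\mc{N}$ by grouping according to $j$, the size of a maximal matching between the multisets $\{P_i\}$ and $\{Q_j\}$, equivalently $dj = \deg{\gcd(P_1\cdots P_{\ell},\,Q_1\cdots Q_{\ell})}$. Writing $A := \prod_iP_i = eA_1$ and $B := \prod_jQ_j = eB_1$ with $e$ the gcd and $(A_1,B_1)=1$, the equation $AG=BG'$ forces $B_1\mid G$ and then parametrises $(G,G')$ by a single free monic polynomial of degree $(N-m)-d(\ell-j)$; hence a fixed pair $((P_i),(Q_j))$ of matching number $j$ extends to exactly $q^{(N-m)-d(\ell-j)}$ quadruples when this exponent is $\geq 0$, and to none otherwise. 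The number of pairs with matching number $\geq j$ is at most $\binom{\ell}{j}^2 j!\,|\mc{P}_d|^{2\ell-j}$ (choose the $j$ matched indices on each side, the bijection between them, the $j$ common primes, then the remaining $2(\ell-j)$ primes freely). Summing over $j$ and inserting the prime polynomial theorem in the form $|\mc{P}_d|\geq q^d/(2d)$, the powers of $q$ combine to a single factor $q^{-K}$ independent of $j$, leaving
\[
\frac{1}{(d|\mc{P}_d|)^{2\ell}q^{2(N-m)}}\sum_{H}|c_H|^2 \ll q^{-K}\sum_{0\leq j\leq\ell}\binom{\ell}{j}^2 j!\,\frac{(2d)^j}{d^{2\ell}}.
\]
A routine estimate (bounding each summand by $8^{\ell}\ell!\,d^{-\ell}$ and comparing with $\ell^{2\ell}$, separating the ranges $d\ell\geq 8$, where $8^{\ell}\ell!/(d\ell)^{\ell}\to 0$, and $d\ell<8$, a bounded range) shows the sum over $j$ is $\ll\ell^{2\ell}$ with an absolute constant. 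Multiplying back by $2\phi(Q)q^{K-\deg{Q}}+\phi(Q)$ and using $K = N+(d\ell-m)\geq N$, so that $q^{K-\deg{Q}}\cdot q^{-K} = q^{-\deg{Q}}$ while $q^{-K}\leq q^{-N}$, yields exactly the claimed bound $\ll(\phi(Q)q^{-N}+\phi(Q)q^{-\deg{Q}})\ell^{2\ell}$.

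The step I expect to be the main obstacle is the combinatorial count $\sum_H|c_H|^2 \le \mc{N}$: one must keep track of how the non-prime factors $G,G'$ interact with the primes $P_i,Q_j$ (which is the purpose of the gcd/matching bookkeeping), and then verify that the several factors exponential in $\ell$ that appear — in particular the $(2d)^j$ produced by the lower bound $|\mc{P}_d|\gtrsim q^d/d$ — are genuinely absorbed by the permitted $\ell^{2\ell}$, rather than leaving behind an uncontrolled $c^{\ell}$.
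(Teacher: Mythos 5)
Your proof is correct, and it follows the paper's route only up to a point: like the paper, you expand $U(\chi)^{\ell}V(\chi)$ into a single Dirichlet polynomial of degree $K=N-m+\ell d\geq N$ and apply the mean value theorem (Lemma \ref{L2MVT}), but you then bound the coefficient second moment in a genuinely different way. The paper bounds the coefficient $g(M)$ pointwise by $\ell!\,(1\ast\gamma)(M)$, where $\gamma$ is the indicator of $\mc{P}_d$-smooth-in-a-window monics, and controls the mean square of this multiplicative majorant by the function-field Shiu bound (Lemma \ref{HalRicFF}); you instead bound $\sum_H|c_H|^2$ by the number of solutions of $P_1\cdots P_{\ell}G=Q_1\cdots Q_{\ell}G'$ and count these directly via the $\gcd$/matching decomposition, using only unique factorization and the prime polynomial theorem. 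Your key steps check out: the parametrization of $(G,G')$ by one free monic of degree $(N-m)-d(\ell-j)$ is exactly right since $(A_1,B_1)=1$ forces $B_1\mid G$; the over-count $\binom{\ell}{j}^2 j!\,|\mc{P}_d|^{2\ell-j}$ of pairs with matching number at least $j$ is a valid upper bound term-by-term because the weight is increasing in $j$; and after inserting $|\mc{P}_d|\geq q^d/(2d)$ the $q$-powers do collapse to $q^{-K}$, leaving $\sum_j\binom{\ell}{j}^2 j!\,(2d)^j d^{-2\ell}\ll\ell^{2\ell}$ with an absolute constant (the worry you flag about stray $c^{\ell}$ factors is resolved exactly as you say, since $8^{\ell}\ell!\leq\ell^{2\ell}$ up to boundedly many small $\ell$), and $K\geq N$ converts $q^{K-\deg{Q}}q^{-K}$ and $\phi(Q)q^{-K}$ into the stated terms. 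The trade-off: your argument is more elementary and self-contained, while the paper's is shorter given that Lemma \ref{HalRicFF} is already available and mirrors the integer-setting proof (\cite[Lemma 13]{mr-annals}); both incur the same $(\ell!)^2$-type loss absorbed into $\ell^{2\ell}$.
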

\begin{proof}
This is similar to~\cite[Lemma 13]{mr-annals}. Expanding out the product for each $\chi$, we have
\begin{align*}
U(\chi)^{\ell}V(\chi) = \frac{1}{d^{\ell}|\mc{P}_d|^{\ell}|\mc{M}_{N-m}|} \sum_{M \in \mc{M}_{N-m + \ell d}} \chi(M) \Big(\sum_{\substack{GP_1 \cdots P_{\ell} = M \\ P_j \in \mc{P}_d \ \forall j}} a_{P_1}\cdots a_{P_{\ell}}b_G\Big).
\end{align*}
We denote by $g(M)$ the bracketed sum on the right-hand side. Taking squares, summing over $\chi \bmod{Q}$ and then applying Lemma~\ref{L2MVT} (and the prime polynomial theorem) yields
\begin{align*}
\sum_{\chi \bmod{Q}}|U(\chi)^{\ell}V(\chi)|^2 &\ll \phi(Q)\Big(1 + q^{N-\deg{Q}-m+\ell d}\Big) \frac{1}{d^{2\ell}|\mc{P}_d|^{2\ell}|\mc{M}_{N-m}|^2} \sum_{M \in \mc{M}_{N-m+\ell d}} |g(M)|^2 \\
&\ll  \Big(\phi(Q)q^{-N} + \phi(Q)q^{-\deg{Q}}\Big)\frac{1.1^{\ell}}{|\mc{M}_{N-m+\ell d}|}\sum_{M \in \mc{M}_{N-m+\ell d}} |g(M)|^2.
\end{align*}
Now, by the triangle inequality we can bound $g$ as
$$
|g(M)| \leq \sum_{\substack{G P_1 \cdots P_{\ell} = M \\ P_j \in \mc{P}_d \forall j}} 1 \leq (\ell !) 1\ast \gamma (M) =: (\ell !)\widetilde{g}(M),
$$
where $\gamma$ is the indicator function of monic polynomials all of whose prime factors belong to $\mc{P}_d$; note that on prime powers, $\widetilde{g}(P^k) = 1+k 1_{\mc{P}_d}(P)$, which is $\ll_{\e} q^{\e k \deg{R}}$ for any $\e > 0$ and $k \geq 1$, and $\widetilde{g}(P) \leq 2$ for all irreducibles $P$. We may thus apply Lemma~\ref{HalRicFF} to get that
\begin{align*}
\frac{1}{|\mc{M}_{N-m+d\ell}|} \sum_{M \in \mc{M}_{N-m+\ell d}} \widetilde{g}(M)^2 &\ll \frac{1}{N-m+\ell d}\exp\Big(\sum_{P \in \mc{P}_{\leq N-m+\ell d}}\widetilde{g}(P)^2q^{-\deg{P}}\Big) \\
&\ll \exp\Big(\sum_{P \in \mc{P}_{d}} (2^2-1)q^{-\deg{P}}\Big)  \ll 1.
\end{align*}
Inserting this into the above estimate, we get
$$
\sum_{\chi \bmod{Q}}|U(\chi)^{\ell}V(\chi)|^2 \ll \Big(\phi(Q)q^{-N}+\phi(Q)q^{-\deg{Q}}\Big)\ell^{2\ell},
$$
as claimed.
\end{proof}

\subsection{Sieve Bounds in Function Fields}
Our next result shows that most monics have irreducible factors whose degrees belong to prescribed ranges, provided these ranges are large enough.
\begin{lem} \label{SieveErat}
Let $P < Q$. Then
$$
|\{ G \in \mc{M}_N  \colon  R \in \mc{P} \text{ such that } R|G \Rightarrow \deg{R} \notin [P,Q]\}| \ll \frac{P}{Q}q^N.
$$
\end{lem}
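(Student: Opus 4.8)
We may assume $1 \le P < Q \le N$, which is the range relevant for our applications. The plan is to split each polynomial counted on the left-hand side into its ``small-prime part'' and ``large-prime part'', thereby reducing matters to a smooth-polynomial count and a sieve bound, both governed by the prime polynomial theorem. \emph{Step 1 (Decomposition).} By unique factorization in $\mathbb{F}_q[t]$, any monic $G$ all of whose irreducible factors have degree outside $[P,Q]$ factors uniquely as $G = G_1G_2$ with $(G_1,G_2)=1$, where every irreducible factor of $G_1$ has degree $<P$ and every irreducible factor of $G_2$ has degree $>Q$. Writing $a := \deg{G_1}$ and $b := \deg{G_2}$, this gives
\[
|\{G \in \mc{M}_N : R \in \mc{P},\ R \mid G \Rightarrow \deg{R} \notin [P,Q]\}| = \sum_{a+b=N} A_a B_b,
\]
where $A_a := |\{G_1 \in \mc{M}_a : R\mid G_1 \Rightarrow \deg{R}<P\}|$ and $B_b := |\{G_2 \in \mc{M}_b : R\mid G_2 \Rightarrow \deg{R}>Q\}|$.

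\emph{Step 2 (The large-prime part).} I would show $B_0=1$ and, uniformly, $B_b \ll q^b/Q$ for all $b\ge 1$. This is immediate for $1\le b\le Q$, where $B_b=0$. For $Q<b\le C_0Q$, with $C_0$ a suitable absolute constant, one writes $G_2$ as a product of $j \le b/(Q+1) < C_0$ (not necessarily distinct) irreducibles and bounds the number of such $G_2$ by the number of ordered $j$-tuples of irreducibles of degree $>Q$ with degree sum $b$; the prime polynomial theorem (in the form $|\mc{P}_d|\ll q^d/d$) together with a stars-and-bars count then yields $B_b \ll (q^b/Q)\sum_{1\le j<C_0}(2C_0)^j/j! \ll q^b/Q$. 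For $b>C_0Q$ I would instead invoke the fundamental lemma of sieve theory over $\mathbb{F}_q[t]$, sifting $\mc{M}_b$ by the (finitely many) irreducibles of degree $\le Q$: since $|\{G\in\mc{M}_b: D\mid G\}| = q^{b-\deg{D}}$ holds \emph{exactly} whenever $\deg{D}\le b$, the sieve remainder vanishes identically, and since the sifting ratio $b/Q$ exceeds $C_0$ one obtains $B_b \ll q^b\prod_{\deg{R}\le Q}(1-q^{-\deg{R}})$. By the prime polynomial theorem $\sum_{\deg{R}\le Q}q^{-\deg{R}} = \log Q + O(1)$, hence $\prod_{\deg{R}\le Q}(1-q^{-\deg{R}}) \asymp 1/Q$ and $B_b \ll q^b/Q$ follows.

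\emph{Step 3 (The small-prime part, and assembling).} For $a<N$ I would not estimate $A_a$ individually but only use the convergent sum $\sum_{a\ge 0}A_aq^{-a} = \prod_{\deg{R}<P}(1-q^{-\deg{R}})^{-1} \asymp P$ (again by the prime polynomial theorem, as $\sum_{\deg{R}<P}q^{-\deg{R}} = \log P + O(1)$). Combined with Step 2, this gives
\[
\sum_{\substack{a+b=N\\ b\ge 1}} A_aB_b \ll \frac{q^N}{Q}\sum_{a=0}^{N-1}A_aq^{-a} \ll \frac{P}{Q}q^N.
\]
The remaining term is $A_NB_0 = A_N$, the number of $(P-1)$-smooth monics of degree $N$; since $N\ge Q>P$, a standard Rankin-type bound for smooth polynomials over $\mathbb{F}_q[t]$ yields $A_N \ll (P/N)q^N \le (P/Q)q^N$. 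Adding the two contributions proves the lemma.

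\emph{Main obstacle.} The crux is establishing $B_b \ll q^b/Q$ \emph{uniformly} in $b$: the regime $b\le Q$ is trivial and the regime $b$ large relative to $Q$ is handled cleanly by the fundamental lemma (made especially clean here by the exact distribution of divisibility conditions in $\mathbb{F}_q[t]$), but the intermediate regime forces one to choose the sieve truncation level carefully and to track constants so that the crude count and the sieve estimate overlap. A secondary technical point is to pin down (or reprove via Rankin's trick) the smooth-polynomial bound $A_N \ll (P/N)q^N$ in the function field setting.
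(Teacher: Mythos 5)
Your argument is correct, but it takes a genuinely different and considerably longer route than the paper. The paper's proof is three lines: the indicator function $g$ of the sifted set is multiplicative and $1$-bounded, so the Shiu-type mean value bound (Lemma \ref{HalRicFF}) gives at once
$\sum_{G \in \mc{M}_N} g(G) \ll \frac{q^N}{N}\exp\big(\sum_{d < P} q^{-d}|\mc{P}_d| + \sum_{Q < d \leq N} q^{-d}|\mc{P}_d|\big) \ll \frac{q^N}{N}\cdot P \cdot \frac{N}{Q} = \frac{P}{Q}q^N$
by the prime polynomial theorem. Your decomposition into a $(P-1)$-smooth part and a $Q$-rough part is sound: the rough count $B_b \ll q^b/Q$ via direct counting in the range $Q < b \leq C_0Q$ plus an upper-bound sieve for larger $b$ (the Selberg sieve of Lemma \ref{Selberg} would serve in place of the fundamental lemma, with the remainder vanishing exactly as you observe), together with the Euler-product bound $\sum_a A_a q^{-a} \asymp P$, does yield the stated estimate. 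What your approach buys is an elementary argument that makes visible where the factors $P$ and $1/Q$ come from; what it costs is three separate estimates where the paper needs one. The one step to tighten is the boundary term $A_N$: the bound $A_N \ll (P/N)q^N$ is true and standard, but a single application of Rankin's trick loses a factor $P^{O(1)}$ and hence is not uniform when $N/P \lesssim \log P$, and Theorem \ref{SmoothsFF} carries an error term $O(\log N/P)$ that is only useful for $P \gg \log N$; the cleanest uniform proof is precisely Lemma \ref{HalRicFF} applied to the indicator of $(P-1)$-smooth polynomials --- that is, the very tool that proves the whole lemma in one stroke, which makes your route somewhat circuitous. (Your standing assumption $Q \leq N$ is harmless: it is the only range in which the lemma is applied, and the paper's own proof likewise really gives $P/\min\{Q,N\}$.)
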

\begin{proof}
Let $g$ denote the indicator function for the set on the left-hand side. Then $0 \leq g \leq 1$ and $g$ is multiplicative. By Lemma~\ref{HalRicFF}, the left-hand side is
\begin{align*}
\sum_{G \in \mc{M}_N} g(G) 
&\ll \frac{q^N}{N} \exp\Big(\sum_{\substack{R \in \mc{P}_{\leq N} \\ \deg{R} \notin [P,Q]}} q^{-d}\Big) \ll q^N\exp\Big(-\sum_{P \leq d \leq Q} q^{-d} |\mc{P}_d| \Big) \ll \frac{P}{Q} q^N,
\end{align*}
as claimed.
\end{proof}

\begin{def1} \label{def_SPQ}
Let $J \geq 1$, and let $\mbf{P} := \{P_j\}_{1 \leq j \leq J}$ and $\mbf{Q} := \{Q_j\}_{1 \leq j \leq J}$ be collections of parameters satisfying $P_j < P_{j+1}$, $Q_j < Q_{j+1}$ and $P_j < Q_j$ for all $j$. We define the set $\mc{S}_{\mbf{P},\mbf{Q}}(N)$ by
$$\mc{S}_{\mbf{P},\mbf{Q}} := \{G \in \mc{M}  \colon  \forall \ 1 \leq j \leq J \ \exists \ d \in [P_j,Q_j], R \in \mc{P}_d \text{ such that } R|G\}.$$
If $J = 1$ then, for convenience, we write $S_{P_1,Q_1} = S_{\mbf{P},\mbf{Q}}$.
\end{def1}

We will be able to restrict character-twisted sums over monic polynomials to monics belonging to sets of the form $\mc{S}_{\mbf{P},\mbf{Q}}(N)$, on average. 
\begin{lem}\label{RestricToS}
Let $N \geq 1$, and let $Q \in \mc{M}$ with $\deg{Q} \leq N$. Let $\Xi \subseteq \mc{X}_Q$ be a set of characters modulo $Q$, and let $f\colon \mc{M} \to \mb{U}$ be multiplicative. Then
$$
\sum_{\chi \in \Xi} \Big|\frac{1}{|\mc{M}_N|}\sum_{G \in \mc{M}_N} f(G)\bar{\chi}(G) \Big|^2 \ll \sum_{\chi \in \Xi}\Big|\frac{1}{|\mc{M}_N|}\sum_{\substack{G \in \mc{M}_N \\ G \in \mc{S}_{\mbf{P},\mbf{Q}}}} f(G)\bar{\chi}(G)\Big|^2 + \phi(Q)q^{-\deg{Q}}\sum_{1 \leq j \leq J} \frac{P_j}{Q_j}.
$$
\end{lem}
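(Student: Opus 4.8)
The plan is to write $1 = 1_{G \in \mc{S}_{\mbf{P},\mbf{Q}}} + 1_{G \notin \mc{S}_{\mbf{P},\mbf{Q}}}$ inside the sum over $G$ and split accordingly, so that
\[
\frac{1}{|\mc{M}_N|}\sum_{G \in \mc{M}_N} f(G)\bar\chi(G) = \frac{1}{|\mc{M}_N|}\sum_{\substack{G \in \mc{M}_N\\ G \in \mc{S}_{\mbf{P},\mbf{Q}}}} f(G)\bar\chi(G) + \frac{1}{|\mc{M}_N|}\sum_{\substack{G \in \mc{M}_N\\ G \notin \mc{S}_{\mbf{P},\mbf{Q}}}} f(G)\bar\chi(G).
\]
By the inequality $|a+b|^2 \le 2|a|^2 + 2|b|^2$ and summing over $\chi \in \Xi$, it suffices to bound $\sum_{\chi \in \Xi} \big|\,|\mc{M}_N|^{-1}\sum_{G \in \mc{M}_N,\, G \notin \mc{S}_{\mbf{P},\mbf{Q}}} f(G)\bar\chi(G)\big|^2$. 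Since $\Xi \subseteq \mc{X}_Q$, I would enlarge the sum to run over all of $\mc{X}_Q$ and apply the $L^2$ mean value theorem (Lemma \ref{L2MVT}) with the coefficient sequence $a_G := |\mc{M}_N|^{-1} f(G) 1_{G \notin \mc{S}_{\mbf{P},\mbf{Q}}}$. This gives a bound of
\[
\Big(2\phi(Q)q^{N-\deg{Q}} + \phi(Q)\Big)\sum_{\substack{G \in \mc{M}_N\\ G \notin \mc{S}_{\mbf{P},\mbf{Q}}}} \frac{|f(G)|^2}{|\mc{M}_N|^2} \ll \phi(Q)q^{-\deg{Q}} \cdot \frac{|\{G \in \mc{M}_N : G \notin \mc{S}_{\mbf{P},\mbf{Q}}\}|}{q^N},
\]
using $|f(G)| \le 1$, $|\mc{M}_N| = q^N$ and $q^{N-\deg{Q}} + 1 \ll q^{N-\deg{Q}}$.

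It then remains to bound the density of the exceptional set. By definition of $\mc{S}_{\mbf{P},\mbf{Q}}$, a polynomial $G$ fails to lie in it precisely when there exists some index $1 \le j \le J$ for which $G$ has no irreducible factor of degree in $[P_j, Q_j]$. Hence by the union bound,
\[
|\{G \in \mc{M}_N : G \notin \mc{S}_{\mbf{P},\mbf{Q}}\}| \le \sum_{1 \le j \le J} |\{G \in \mc{M}_N : R \in \mc{P},\, R \mid G \Rightarrow \deg{R} \notin [P_j, Q_j]\}|.
\]
Applying the sieve bound Lemma \ref{SieveErat} to each term on the right with the interval $[P_j, Q_j]$ yields that each summand is $\ll (P_j/Q_j) q^N$, so the whole exceptional set has size $\ll q^N \sum_{j \le J} P_j/Q_j$. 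Substituting this into the estimate from the previous paragraph gives the claimed error term $\phi(Q) q^{-\deg{Q}} \sum_{1 \le j \le J} P_j/Q_j$, completing the proof.

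I do not expect any genuine obstacle here: the argument is a routine combination of the second-moment bound, positivity, and the sieve estimate already established, and the only mild point of care is making sure the $L^2$ mean value theorem is applied with the correctly normalized coefficients (dividing by $|\mc{M}_N|$ up front) and that one replaces $\Xi$ by the full character group before invoking it, which only loses a nonnegative amount since every term in the sum is $\ge 0$. The short interval analogue mentioned in Remark \ref{remshort} follows verbatim, replacing $\mc{X}_Q$ by $\mc{X}_{1,\nu}$, $\phi(Q)$ by $q^\nu$ and $\deg{Q}$ by $\nu$ throughout, and using the short interval version of Lemma \ref{L2MVT}.
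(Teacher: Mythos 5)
Your proof is correct and follows essentially the same route as the paper: split via $1 = 1_{\mc{S}_{\mbf{P},\mbf{Q}}} + 1_{\mc{S}_{\mbf{P},\mbf{Q}}^c}$, use $|a+b|^2 \leq 2|a|^2+2|b|^2$, enlarge $\Xi$ to the full character group by positivity, apply the $L^2$ mean value theorem (Lemma \ref{L2MVT}) to the complement piece, and conclude with the union bound and Lemma \ref{SieveErat}. The only implicit assumption, shared with the paper's own argument, is $\deg{Q} \leq N$ in the simplification $q^{N-\deg{Q}}+1 \ll q^{N-\deg{Q}}$, which holds in every application of the lemma.
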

\begin{proof}
Given a map $g  \colon  \mb{F}_q[t] \to \mb{C}$, set $M_g(N) := \frac{1}{|\mc{M}_N|}\sum_{G \in \mc{M}_N} g(G)$. For each $\chi \bmod M$ we have
$$
|M_{f\bar{\chi}}(N)|^2 \leq 2|M_{f\bar{\chi} 1_{\mathcal{S}_{\mbf{P},\mbf{Q}}}}(N)|^2 + 2|M_{f\bar{\chi} 1_{\mathcal{S}_{\mbf{P},\mbf{Q}}^c}}(N)|^2. 
$$
Summing the first of these terms over $\chi \in \Xi$ gives the first term in the estimate. Summing the second term over $\chi$ and applying Lemma~\ref{L2MVT} gives
$$
\sum_{\chi \in \Xi}|M_{f\chi1_{\mathcal{S}_{\mbf{P},\mbf{Q}}^c}}(N)|^2 \leq \sum_{\chi \bmod{Q}} |M_{f\chi1_{\mc{S}_{\mbf{P},\mbf{Q}}^c}}(N)|^2 \ll \phi(Q) \Big(q^{N-\deg{Q}} + 1\Big)\frac{1}{|\mc{M}_N|^2}\sum_{\substack{G \in \mc{M}_N \\ G \notin \mathcal{S}_{\mbf{P},\mbf{Q}}}} 1.
$$
By the union bound and Lemma~\ref{SieveErat}, we have
\begin{align*}
\frac{1}{|\mc{M}_N|}\sum_{\substack{G \in \mc{M}_N \\ G \notin \mathcal{S}_{\mbf{P},\mbf{Q}}}} 1 &\leq \sum_{1 \leq j \leq J} \frac{1}{|\mc{M}_N|}|\{G \in \mc{M}_N  \colon  R \in \mc{P}, R|G \Rightarrow \deg{R} \notin [P_j,Q_j]\}| \\
&\ll \sum_{1 \leq j \leq J} \frac{P_j}{Q_j}.
\end{align*}
This implies the claim.
\end{proof}
We will also need the following estimate for smooth (otherwise known as friable) polynomials, i.e., polynomials with no irreducible factors of large degree. For $1 \leq M \leq N$, we write
$$\mc{S}(N,M) := \{G \in \mc{M}_N  \colon  R \in \mc{P} \text{ and } R|G \Rightarrow \deg{R} \leq M\}.$$
\begin{lem} \label{SmoothsFF}
Let $1 \leq M \leq N$. Then for some absolute constant $c>0$ we have
$$|\mc{S}(N,M)| \ll q^N\exp(-cN/M).$$
\end{lem}

\begin{proof}
This follows from~\cite{warlimont}.
\end{proof}

\begin{lem}[Selberg upper bound sieve in function fields] \label{Selberg}
Let $1 \leq y \leq z, H \leq N$ and let $\mc{A} \subseteq \mc{M}_N$. Put 
$$
\mathfrak{P}_{y,z} = \prod_{Q \in \mc{P}_{\leq z} \bk \mc{P}_{\leq y}}Q.
$$ 
Suppose $g$ is a multiplicative function supported on squarefree monic polynomials such that for each $D \in \mc{M}$ squarefree with $D \in \mc{M}_{\leq H}$,
\begin{equation}\label{DIVBYD}
\sum_{\substack{G \in \mc{A} \\ D|G}} 1 = g(D)|\mc{A}| + r_D(\mc{A}).
\end{equation}
Put $J = J(H) = \sum_{\substack{D|\mf{P}_{y,z} \\ \text{deg}(D) \leq H}} \prod_{\ss{R \in \mc{P} \\ R|D}} g(R)/(1-g(R))$. Then
$$
\sum_{\substack{G \in \mc{A} \\ (G,\mathfrak{P}_{y,z}) = 1}} 1 \leq |\mc{A}|J^{-1} + \sum_{\text{deg}(D) \leq H} \tau_3(D)|r_D(\mc{A})|,
$$
where $\tau_3(D) = \sum_{\substack{A,B,C \in \mc{M} \\ ABC = D}} 1$.
\end{lem}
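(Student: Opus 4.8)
The plan is to run the classical Selberg $\Lambda^2$ upper bound sieve over $\mathbb{F}_q[t]$. I would introduce real weights $(\lambda_D)$ indexed by monic squarefree divisors $D$ of $\mathfrak{P}_z$ with $\deg{D} \leq H$, normalized by $\lambda_1 = 1$ and otherwise to be optimized, and exploit the pointwise inequality
\[
\mathbf{1}_{(G,\mathfrak{P}_z) = 1} \leq \Big( \sum_{\substack{D \mid (G,\mathfrak{P}_z) \\ \deg{D} \leq H}} \lambda_D \Big)^2 \qquad (G \in \mc{M}),
\]
which holds because the right-hand side equals $\lambda_1^2 = 1$ when $(G,\mathfrak{P}_z) = 1$ and is a square otherwise. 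Summing over $G \in \mc{A}$, expanding the square and interchanging summations bounds the left-hand side of the claimed inequality by $\sum_{D_1,D_2} \lambda_{D_1}\lambda_{D_2} \#\{G \in \mc{A} : [D_1,D_2] \mid G\}$, which by \eqref{DIVBYD} applied with $D = [D_1,D_2]$ (squarefree, dividing $\mathfrak{P}_z$) equals a main term $|\mc{A}| \sum_{D_1,D_2} \lambda_{D_1}\lambda_{D_2} g([D_1,D_2])$ plus an error $\sum_{D_1,D_2} \lambda_{D_1}\lambda_{D_2} r_{[D_1,D_2]}(\mc{A})$.

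For the main term I would carry out Selberg's diagonalization. Let $f_1$ be the multiplicative function on squarefree monics with $f_1(R) := g(R)/(1-g(R))$ at primes $R$ (the hypothesis tacitly requires $0 < g(R) < 1$ for $R \mid \mathfrak{P}_z$, so that $f_1$ and $J$ are positive and well defined; $f_1$ is only ever used at divisors of $\mathfrak{P}_z$). From $g([D_1,D_2])\,g((D_1,D_2)) = g(D_1)g(D_2)$ and $g((D_1,D_2))^{-1} = \sum_{E \mid (D_1,D_2)} f_1(E)^{-1}$ one rewrites the quadratic form as
\[
\sum_{D_1,D_2} \lambda_{D_1}\lambda_{D_2} g([D_1,D_2]) = \sum_{\substack{E \mid \mathfrak{P}_z \\ \deg{E} \leq H}} \frac{y_E^2}{f_1(E)}, \qquad y_E := \sum_{\substack{D \,:\, E \mid D \\ \deg{D} \leq H}} \lambda_D\, g(D).
\]
By Möbius inversion the normalization $\lambda_1 = 1$ is equivalent to $\sum_E \mu(E) y_E = 1$, so Cauchy--Schwarz gives $\sum_E y_E^2 / f_1(E) \geq J^{-1}$ with $J = \sum_{E \mid \mathfrak{P}_z,\ \deg{E} \leq H} f_1(E) = \sum_{D \mid \mathfrak{P}_z,\ \deg{D} \leq H} \prod_{R \mid D} g(R)/(1-g(R))$, equality holding exactly at $y_E = \mu(E) f_1(E)/J$. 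Choosing these optimal weights makes the main term equal to $|\mc{A}|\,J^{-1}$.

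For the error, inverting the $\lambda \leftrightarrow y$ relation yields $\lambda_D = \frac{\mu(D)}{J} \cdot \frac{f_1(D)}{g(D)} \sum_{F \mid \mathfrak{P}_z/D,\ \deg{F} \leq H - \deg{D}} f_1(F)$, and I would prove $|\lambda_D| \leq 1$. The key is the identity $f_1(D)/g(D) = \sum_{E \mid D} f_1(E)$ (valid since $(1-g(R))^{-1} = 1 + f_1(R)$): inserting it turns $\tfrac{f_1(D)}{g(D)} \sum_F f_1(F)$ into $\sum_{E \mid D} \sum_F f_1(EF)$, and since $E \mid D$ and $F \mid \mathfrak{P}_z/D$ are coprime the map $(E,F) \mapsto EF$ is a bijection onto a subset of the divisors of $\mathfrak{P}_z$ of degree $\leq H$; as $f_1 \geq 0$, this double sum is $\leq J$, hence $|\lambda_D| \leq 1$. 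Therefore
\[
\Big| \sum_{D_1,D_2} \lambda_{D_1}\lambda_{D_2} r_{[D_1,D_2]}(\mc{A}) \Big| \leq \sum_{D_1,D_2} |r_{[D_1,D_2]}(\mc{A})| = \sum_D \#\{(D_1,D_2) : [D_1,D_2] = D\}\, |r_D(\mc{A})|,
\]
and for squarefree $D$ each prime $R \mid D$ can lie in $D_1$ only, in $D_2$ only, or in both (primes not dividing $D$ in neither), so $\#\{(D_1,D_2) : [D_1,D_2] = D\} = 3^{\omega(D)} = \tau_3(D)$. Combined with the main term, this gives the asserted inequality.

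The step I expect to be the main obstacle is the bound $|\lambda_D| \leq 1$, the one genuinely delicate point in the Selberg sieve; one must arrange the truncation $\deg{F} \leq H - \deg{D}$ carefully so that the corresponding partial sum of $f_1$ is dominated by $J$. Everything else is the routine transcription to $\mathbb{F}_q[t]$ of multiplicativity, Möbius inversion, and the elementary identity for $\tau_3$ on squarefree arguments. The remaining point to watch is the degree bookkeeping: with the $\lambda_D$ supported on degrees $\leq H$ the combined modulus $[D_1,D_2]$ can have degree up to $2H$, so one should either take \eqref{DIVBYD} to hold in that larger range (the factor of $2$ being immaterial for the applications) or restrict the weights accordingly.
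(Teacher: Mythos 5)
Your argument is a genuinely different route from the paper's: the paper does not reprove the Selberg sieve at all, but simply invokes Webb's function-field Selberg sieve (\cite[Theorem 1]{webb}) with $\mc{P} := \mc{P}_{\leq z}$ and the divisor-closed modulus set $\{D : \deg{D} \leq H\}$, whereas you transcribe the classical $\Lambda^2$ argument to $\mb{F}_q[t]$ from scratch. The machinery in your write-up is correct: the pointwise majorant, the diagonalization via $g([D_1,D_2])\,g((D_1,D_2)) = g(D_1)g(D_2)$ and $g(E)^{-1} = \sum_{E' \mid E} f_1(E')^{-1}$, the optimization giving exactly $J^{-1}$ subject to $\sum_E \mu(E)y_E = 1$, the bound $|\lambda_D| \leq 1$ via $f_1(D)/g(D) = \sum_{E \mid D} f_1(E)$ combined with the truncation $\deg{F} \leq H - \deg{D}$, and the count $\#\{(D_1,D_2) : [D_1,D_2] = D\} = 3^{\omega(D)} = \tau_3(D)$ for squarefree $D$. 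What your self-contained proof buys is independence from Webb's formulation; what the citation buys is brevity and the statement in exactly the form quoted.

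The one substantive point is the degree bookkeeping you flag at the end, and it should not be treated as an afterthought: it is precisely where your proposal and the stated lemma diverge. With weights supported on $\deg{D} \leq H$, the remainder term involves $[D_1,D_2]$ of degree up to $2H$, so \eqref{DIVBYD} and the error sum must be taken at level $2H$; alternatively, supporting the weights on $\deg{D} \leq \lfloor H/2 \rfloor$ keeps the remainders at level $H$ but yields $J(\lfloor H/2 \rfloor)^{-1}$ in place of $J(H)^{-1}$. Neither variant is verbatim the statement (main term $J(H)^{-1}$ together with remainders only up to degree $H$), which the paper obtains by quoting Webb's theorem rather than by this computation. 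For the lemma's only application in the paper (Corollary \ref{PRIM4TUP}) the discrepancy is harmless: there the remainders vanish for all squarefree moduli of degree $\leq H$, and the lower bound for $J$ obtained from the tail estimate is insensitive to halving the level, so your $J(\lfloor H/2 \rfloor)$ version suffices. So you have proved a version of the lemma fully adequate for its role here; to get the statement as written you should either cite Webb as the paper does, or adjust the level in your conclusion accordingly.
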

\begin{proof}
This follows from~\cite[Theorem 1]{webb} (take $\mc{P} := \mc{P}_{\leq z} \bk \mc{P}_{\leq y}$ and $\mc{D} := \{D \in \mc{D}  \colon  \deg{D} \leq H\}$, which is divisor closed, as needed according to the hypotheses there).
\end{proof}

We have the following useful corollary.
\begin{cor}[Additive energy of irreducible polynomials] \label{PRIM4TUP}
Let $H \geq 1$. If $M\in \mb{F}_q[t]$ has $\deg{M} < H$ then
$$
|\{(P_1,P_2,P_3,P_4) \in \mc{P}_H^4  \colon  P_1 + P_2 - P_3 - P_4 = M\}| \ll q^{3H}/H^4.
$$
\end{cor}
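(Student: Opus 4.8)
The plan is to bound the additive energy $E := |\{(P_1,P_2,P_3,P_4) \in \mc{P}_H^4 : P_1 + P_2 - P_3 - P_4 = M\}|$ by fixing the difference $P_1 - P_3 =: N$ and counting solutions. Write $E = \sum_{\deg{N} < H} r(N) r(N+M)$, where $r(N) := |\{(P_1, P_3) \in \mc{P}_H^2 : P_1 - P_3 = N\}|$ (note $P_1 + P_2 - P_3 - P_4 = M$ forces $P_1 - P_3 = -(P_2 - P_4) + M$, so after reindexing we are comparing shifted difference-counting functions). By Cauchy--Schwarz, $E \ll \sum_{\deg{N} < H} r(N)^2$, so it suffices to prove $\sum_{\deg{N} < H} r(N)^2 \ll q^{3H}/H^4$, which is exactly the additive energy of $\mc{P}_H$ with itself.

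The main step is a Selberg sieve upper bound for $r(N)$ for each fixed nonzero $N$ of degree $< H$: I would apply Lemma~\ref{Selberg} to the set $\mc{A} = \{G \in \mc{M}_H : G(G+N) \text{ has no irreducible factor of degree} \leq z\}$-type configuration, more precisely sieving the polynomials $G \in \mc{M}_H$ so that both $G$ and $G+N$ avoid small prime factors, taking $z$ a small constant times $H$ and $H_{\text{sieve}}$ (the level) a small multiple of $H$. The relevant local densities are $g(R) = 2/q^{\deg{R}}$ for $R \nmid N$ (two forbidden residues, $G \equiv 0$ and $G \equiv -N$) and $g(R) = 1/q^{\deg{R}}$ for $R \mid N$; this is the classical "twin-prime"-type sieve. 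Evaluating $J = \sum_{D \mid \mf{P}_z, \deg{D} \leq H_{\text{sieve}}} \prod_{R \mid D} g(R)/(1-g(R))$ via the prime polynomial theorem gives $J \gg \mathfrak{S}(N)^{-1} (\log z)^2 \asymp \mathfrak{S}(N)^{-1} H^2$ up to constants, where $\mathfrak{S}(N) = \prod_{R \mid N}(1 + 1/(q^{\deg{R}} - 2))$ is a singular-series factor satisfying $\mathfrak{S}(N) \ll \log\log(2 + \deg N)$ on average (and $\ll_\e q^{\e \deg N}$ pointwise). The error term $\sum_{\deg{D} \leq H_{\text{sieve}}} \tau_3(D) |r_D(\mc{A})|$ is controlled because $r_D(\mc{A})$ is bounded (each congruence condition modulo $D$ picks out an arithmetic progression exactly, with no rounding error in $\mb{F}_q[t]$ when $\deg D \leq H$, so $r_D = 0$ for those $D$, and the small residual $\tau_3$-sum is $O(q^{H_{\text{sieve}}})$, negligible if $H_{\text{sieve}}$ is a sufficiently small multiple of $H$). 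This yields $r(N) \ll \mathfrak{S}(N) q^H / H^2$, losing a factor $\deg{P} = H$ relative to the prime polynomial theorem weight since I must convert "no small factors" to "prime" — but this loss is already built into comparing with the expected $q^H/H$ count.

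Summing, $\sum_{\deg{N} < H} r(N)^2 \ll (q^H/H^2)^2 \sum_{\deg{N} < H} \mathfrak{S}(N)^2 \ll q^{2H} H^{-4} \cdot q^H \log\log H$ using the average bound $\sum_{\deg N < H}\mathfrak{S}(N)^2 \ll q^H \log\log H$ (proved by expanding the Euler product and applying the prime polynomial theorem, exactly as over $\Z$). Strictly, the $\log\log H$ is absorbable only if one is slightly more careful — one can instead note that the trivial bound $r(N) \ll q^H/H$ holds for all $N$ (by the prime polynomial theorem applied to $P_1$), and split the sum over $N$ according to the size of $\mathfrak{S}(N)$, or simply restrict attention to the range where the Selberg bound is in force; since the target has the implicit constant unspecified, the cleanest route is to bound $\sum_N r(N)^2 \leq (\max_N r(N)) \sum_N r(N) \ll (q^H/H)\cdot (\text{number of pairs}) $ — but that gives $q^H/H \cdot q^{2H}/H$ (there are $\asymp q^{2H}/H^2$ pairs $(P_1,P_3)$, hence $\sum_N r(N) = |\mc{P}_H|^2 \asymp q^{2H}/H^2$), producing $q^{3H}/H^3$, which is off by a factor $H$. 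So the Selberg input on $r(N)$ is genuinely needed for one of the two factors: use $\sum_N r(N)^2 \leq (\max_N r(N))\sum_N r(N) \ll (\mathfrak{S}_{\max} q^H/H^2)(q^{2H}/H^2)$, and handle the finitely many (on average) large values of $\mathfrak{S}$ separately, or absorb the $\log\log$ into $H^{-4+o(1)}$ — but here we want a clean power, so I would carry out the full second-moment computation of $\mathfrak{S}$. \textbf{The main obstacle} is thus the bookkeeping of the singular series: ensuring that the average of $\mathfrak{S}(N)^2$ over $\deg N < H$ is $O(q^H)$ with an absolute constant (no $\log\log$ loss), which requires expanding $\mathfrak{S}(N)^2$ as a Dirichlet convolution and showing the resulting sum $\sum_{\deg N < H} \sum_{D \mid N} h(D)$ with $h$ supported on squarefulls of small degree converges — this is routine over $\mb{F}_q[t]$ via the prime polynomial theorem but must be done to get the stated clean bound $q^{3H}/H^4$.
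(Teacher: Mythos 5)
Your proposal is correct in substance and follows the same strategy as the paper: a Cauchy--Schwarz (AM--GM) reduction to a second moment of a representation-counting function, a Selberg sieve upper bound via Lemma \ref{Selberg} with vanishing remainder (the congruence counts over $\mc{M}_H$ are exact for squarefree moduli of degree $\leq H$, so $r_D(\mc{A})=0$), and then an average bound for the resulting singular series. The only real difference is which representation function you square: you use the twin-type difference count $r(N)=|\{(P_1,P_3)\in\mc{P}_H^2 : P_1-P_3=N\}|$, whereas the paper fixes the Goldbach-type sum count $r(G)=|\{(P_1,P_2)\in\mc{P}_H^2 : P_1+P_2=G\}|$ and reduces general $M$ to $M=0$ by AM--GM. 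The paper's choice pays off precisely at the step you flag as the main obstacle: its singular series is $(q^{\deg{G}}/\phi(G))^2$ summed over $G\in\mc{M}_H$, a multiplicative function bounded at prime powers, so the second moment follows immediately from the Shiu-type bound already proved (Lemma \ref{HalRicFF}). Your version instead needs $\sum_{\deg{N}<H}\mathfrak{S}(N)^2\ll q^H$, but this is equally routine and incurs no $\log\log$ loss: $\mathfrak{S}(N)^2$ is multiplicative in $N$ and bounded at prime powers, so either your convolution expansion (writing $\mathfrak{S}^2=1*h$ with $h$ supported on squarefree polynomials and $h(R)\ll q^{-\deg{R}}$) or a direct appeal to Lemma \ref{HalRicFF} gives the clean $O(q^H)$ bound; the pointwise $\log\log$ bound on $\mathfrak{S}$ is irrelevant, and your fallback manoeuvres (splitting by the size of $\mathfrak{S}$, or losing $H^{o(1)}$) are unnecessary. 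Two small points to record when writing this up: the diagonal term $N=0$ should be set aside (trivially $r(0)^2=|\mc{P}_H|^2\ll q^{2H}/H^2$, which is acceptable), and for $q=2$ the local factor at the degree-one irreducibles degenerates --- there $g(R)=1$ and your expression $1+1/(q^{\deg{R}}-2)$ divides by zero; in that case $r(N)$ vanishes unless $t(t+1)\mid N$, so those two irreducibles are treated separately, a caveat which the paper's own sieve application shares.
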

\begin{proof}
We begin by considering the case $M = 0$. 
Given $G \in \mb{F}_q[t]$ of degree $H$, let $r(G)$ denote the number of representations of $G$ as a sum of two irreducible polynomials of degree $ \leq H$. Note that if $q = 2$ then $r(G) > 0$ only when $\text{deg}(G) < H$; otherwise, if $q > 2$ then $r(G) > 0$ only when $\text{deg}(G) = H$. 
We thus have
$$
\sum_{\substack{P_1,P_2,P_3,P_4 \in \mc{P}_H \\ P_1 + P_2 = P_3 + P_4}} 1 = \sum_{\deg{G} \leq H} r(G)^2.
$$
Let $z := H/2$, $y = 1$ and $\mf{P}_z := \mf{P}_{y,z} = \prod_{P \in \mc{P}_{\leq z} \bk \mc{P}_{\leq y}} P$ as in the previous lemma. We then have
$$
r(G) \leq \sum_{\substack{M \in \mc{M}_H \\ (M(G-M),\mf{P}_z) = 1 \\ G-M \in \mc{M}_H}} 1 = \sum_{\substack{F \in \mc{A} \\ (F,\mf{P}_z) = 1}} 1,
$$
where $\mc{A} = \{B(G-B)  \colon  B \in \mc{M}_H\} \cap \mc{M}_{2H}$; as $\mc{A}$ and $\mc{M}_H$ are in bijection with one another, we have $|\mc{A}| = |\mc{M}_H| \asymp q^H$. \\
Note that for $D|\mf{P}_z$ with $\deg{D} \leq H$ 
$$\sum_{\substack{F \in \mc{A} \\ D|F}} 1 = g(D)|\mc{M}_{H}|,$$
where $g$ is the multiplicative function supported on squarefree polynomials and defined at irreducibles via $g(P) = 2q^{-\deg{P}}$ if $P\nmid G$ and $g(P) = q^{-\deg{P}}$ otherwise; note that $g(P) \leq 1/2$ for all $P | \mf{P}_z$ and all $q \geq 2$, since such $P$ must have $\text{deg}(P) \geq 2$. By Lemma~\ref{Selberg}, we deduce
$$
\sum_{\substack{F \in \mc{A} \\ (F,\mf{P}_z) = 1}} 1 \ll q^{H}\Big(\sum_{\substack{D|\mf{P}_z \\ \deg{D} \leq H}} \prod_{P|D} \frac{g(P)}{1-g(P)} \Big)^{-1} \ll q^H \Big(\sum_{D|\mf{P}_z} g(D) - \sum_{\substack{D \mid \mf{P}_z \\ \deg{D} > H}} g(D)\Big)^{-1}.
$$
Note that the full bracketed sum over $D|\mf{P}_z$ has order of magnitude
$$= \prod_{R\in \mc{P}_{\leq z}} \Big(1+g(R)\Big) \asymp \exp\Big(2\sum_{R \in \mc{P}_{ \leq z}} q^{-\deg{R}} - \sum_{\substack{R \in \mc{P} \\ R|G}} q^{-\deg{R}}\Big) \asymp \frac{\phi(G)}{q^{\deg{G}}} z^2.$$
The remaining sum over $D|\mf{P}_z$ with $\deg{D} > H$ can be bounded above as
\begin{align*}
\leq \sum_{k > H} q^{-k} \sum_{\substack{D|P_z \\ \deg{D} = k}} 2^{\omega(D)} &= \sum_{k > H} q^{-k} \sum_{\substack{a_1 + 2a_2 + \cdots + za_z = k \\ 0 \leq a_j \leq |\mc{P}_j|}} \prod_{1 \leq j \leq z} 2^{a_j} \\
&\leq \sum_{k > H} q^{-k} \sum_{\substack{a_1 + 2a_2 + \cdots + za_z = k \\ 0 \leq a_j \leq |\mc{P}_j|}} 2^{q + \frac{1}{2}(2a_2 + \cdots + za_z)} \\
&\ll \sum_{k > H} (\sqrt{2}/q)^k |\{\mbf{a} \in \mb{N} \cup \{0\}  \colon  a_1 + 2a_2 + \cdots + za_z = k\}|.
\end{align*}
Using standard results on partitions (see e.g.,~\cite{erd_part}), the cardinality above is $\ll e^{c\sqrt{k}}$, for some $c > 0$ absolute. Thus, as $q \geq 2$ the series over $k$ converges, and in fact
$$
\sum_{\substack{D \mid \mf{P}_z \\ \deg{D} > H}} g(D) \ll e^{-c'H},
$$
for a suitable absolute $c' > 0$. 

It follows that for large $H$,
$$r(G) \ll \frac{q^{\deg{G}}}{\phi(G)} \frac{q^H}{z^2} \ll \frac{q^{\deg{G}}}{\phi(G)} \frac{q^H}{H^2}.$$ 
Squaring this bound and summing over $G \in \mb{F}_q[t]$ of degree $\leq H$ for which $r(G) \neq 0$, we get that
$$
|\{(P_1,P_2,P_3,P_4) \in \mc{P}_H  \colon  P_1 + P_2 = P_3 + P_4\}| \leq \sum_{\deg{G} \leq H} r(G)^2 \ll \frac{q^{2H}}{H^4} \sum_{\deg{G}\leq H} \Big(\frac{q^{\deg{G}}}{\phi(G)}\Big)^2.
$$
We claim that the sum over $G$ is $\ll q^H$, which will then imply the claim for $M = 0$. To see this, write $\psi(G) := (q^{\deg{G}}/\phi(G))^2$; note that $\psi$ is independent of the leading coefficient of $G$, and so we may replace $G$ by $G/G(0)$ and assume $G$ is monic (this changes the sum by at most a factor depending only on $q$). Note that for any $k \geq 1$, $\psi(R^k) = \psi(R) = (1-q^{-\deg{R}})^{-2}\leq 4$ uniformly over $R \in \mc{P}$. Hence, we may apply Lemma~\ref{HalRicFF} to get
$$
\sum_{G \in \mc{M}_{\leq H}} \psi(G) \ll \sum_{0 \leq h \leq H} \frac{q^h}{h}\exp\Big(\sum_{R \in \mc{P}_{\leq h}} \psi(R)q^{-\deg{R}}\Big).
$$
We may directly evaluate the sum over $R$ here for each $h \leq H$ by the prime polynomial theorem, getting
$$
\sum_{R \in \mc{P}_{\leq h}} \psi(R)q^{-\deg{R}} = \sum_{k \leq h}\frac{q^{-k}}{(1-q^{-k})^{2}} |\mc{P}_k| = \sum_{k \leq h} \Big(\frac{1}{k} (1-q^{-k})^{-2} + O(q^{-k/2})\Big) = \log h +O(1),
$$
which leads to
$$
\sum_{G \in \mc{M}_{\leq H}} \psi(G) \ll \sum_{h \leq H} q^h \ll q^H,
$$
as required. 
Next, let $M \in \mc{M}_{< H}$. Then
\begin{align*}
|\{(P_1,P_2,P_3,P_4) \in \mc{P}_H^4  \colon  P_1+P_2-P_3-P_4 = M\}| = 
\sum_{\deg{G} \leq H} r(G)r(M+G) \leq \sum_{\deg{G} \leq H} r(G)^2,
\end{align*}
by the AM-GM inequality and the fact that $\deg{G+M} \leq H$ whenever $\max\{\deg{G},\deg{M}\} \leq H$. The second claim now follows from the first.
\end{proof}

\subsection{Dirichlet Polynomial Decompositions}
Let $Q> P \geq 1$. Recall that $\mc{S}_{P,Q}$ denotes the set of monic $G$ that have an irreducible factor $R$ satisfying $\deg{R} \in [P,Q]$.
\begin{lem}[Ramar\'e's identity]\label{Ramare}
Let $P < Q$. Let $f\colon \mc{M} \to \mb{U}$ be multiplicative. Then for any $G \in \mc{S}_{P,Q}$,
$$f(G) = \sum_{\substack{RM = G \\ R \in \mc{P} \\ \deg{R} \in [P,Q]}} \frac{f(RM)}{1_{(R,M) = 1} + \omega_{[P,Q]}(M)},$$
where $\omega_{[P,Q]}(M):= |\{R \in \mc{P}  \colon  \deg{R} \in [P,Q], R|M\}|$.
\end{lem}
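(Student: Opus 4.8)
The plan is to imitate the classical proof of Ramaré's identity over the integers, which hinges on a simple combinatorial fact: for a fixed multiplicative function and a fixed polynomial $G$ with at least one irreducible factor of degree in $[P,Q]$, summing over the ways to strip off one such irreducible factor overcounts $f(G)$ by exactly the number of irreducible factors of degree in $[P,Q]$ that remain available, suitably interpreted. Concretely, I would fix $G \in \mc{S}_{P,Q}$ and consider the sum on the right-hand side,
$$
S(G) := \sum_{\substack{RM = G \\ R \in \mc{P},\ \deg{R} \in [P,Q]}} \frac{f(RM)}{1_{(R,M) = 1} + \omega_{[P,Q]}(M)}.
$$
Since $RM = G$ forces $R \mid G$, the outer sum runs over the (nonempty, by hypothesis) set of irreducible divisors $R$ of $G$ with $\deg{R}\in[P,Q]$; for each such $R$ there is a unique $M = G/R \in \mc{M}$.

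The first step is to observe that $f(RM) = f(G)$ for every term, so it suffices to prove
$$
\sum_{\substack{R \in \mc{P},\ R \mid G \\ \deg{R} \in [P,Q]}} \frac{1}{1_{(R,\,G/R) = 1} + \omega_{[P,Q]}(G/R)} = 1.
$$
The second step is the key identity $1_{(R,\,G/R)=1} + \omega_{[P,Q]}(G/R) = \omega_{[P,Q]}(G)$ for each $R$ appearing in the sum. Indeed, write $G = \prod_{i} R_i^{a_i}$ with the $R_i$ distinct irreducibles, and let $\mathcal{I} := \{i : \deg{R_i}\in[P,Q]\}$, so $\omega_{[P,Q]}(G) = |\mathcal{I}|$. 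Pick $R = R_j$ with $j \in \mathcal{I}$. If $a_j \geq 2$ then $(R_j,\,G/R_j)\ne 1$ so $1_{(R_j,G/R_j)=1}=0$, while $G/R_j$ still has all the irreducible factors $R_i$, $i\in\mathcal{I}$, hence $\omega_{[P,Q]}(G/R_j) = |\mathcal{I}|$; the two sides agree. If $a_j = 1$ then $(R_j,\,G/R_j)=1$ so the indicator contributes $1$, but now $G/R_j$ has lost the factor $R_j$, so $\omega_{[P,Q]}(G/R_j) = |\mathcal{I}| - 1$, and again $1 + (|\mathcal{I}|-1) = |\mathcal{I}|$. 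In both cases the denominator equals $\omega_{[P,Q]}(G)$, which is a nonzero constant independent of $R$.

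The third and final step is then trivial: the sum becomes
$$
\sum_{\substack{R \in \mc{P},\ R \mid G \\ \deg{R} \in [P,Q]}} \frac{1}{\omega_{[P,Q]}(G)} = \frac{\omega_{[P,Q]}(G)}{\omega_{[P,Q]}(G)} = 1,
$$
since the number of terms in the sum is precisely $\omega_{[P,Q]}(G)$. Multiplying through by $f(G)$ gives the claim. There is essentially no analytic obstacle here; the only thing to be careful about is the bookkeeping in the case distinction $a_j=1$ versus $a_j\geq 2$ (i.e. whether the stripped irreducible was squarefree in $G$), which is exactly what the indicator $1_{(R,M)=1}$ is designed to track. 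Everything transfers verbatim from the integer setting because it uses only unique factorization in $\mb{F}_q[t]$ and the multiplicativity of $f$ on coprime arguments.
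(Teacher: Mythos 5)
Your proposal is correct and follows essentially the same route as the paper: write $1$ as a sum of $1/\omega_{[P,Q]}(G)$ over the irreducible divisors $R\mid G$ with $\deg{R}\in[P,Q]$, and use the identity $\omega_{[P,Q]}(RM)=1_{(R,M)=1}+\omega_{[P,Q]}(M)$ for $RM=G$ (which you verify explicitly via the case split $a_j=1$ versus $a_j\geq 2$, while the paper states it without comment). The only cosmetic difference is that since $RM=G$, the factor $f(RM)=f(G)$ comes out directly and multiplicativity of $f$ is not actually needed at this stage.
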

\begin{proof}
Since $\omega_{[P,Q]}(G) \geq 1$ by assumption we have
$$1 = \sum_{\substack{R|G \\ R \in \mc{P} \\ \deg{R} \in [P,Q]}} \frac{1}{\omega_{[P,Q]}(G)}  = \sum_{\substack{RM = G \\ R \in \mc{P} \\ \deg{R} \in [P,Q]}} \frac{1}{\omega_{[P,Q]}(RM)} = \sum_{\substack{RM = G \\ R \in \mc{P} \\ \deg{R} \in [P,Q]}} \frac{1}{1_{(R,M) = 1} + \omega_{[P,Q]}(M)}.$$
This implies the claim.
\end{proof}
We will use Ramar\'{e}'s identity to decompose Dirichlet polynomials supported on $\mc{S}_{P,Q}$, as in the following lemma.
\begin{lem}\label{DirDecomp}
Let $N \geq 1$. Let $L \in \mc{M}_{\leq N}$ and suppose $\Xi \subseteq \mc{X}_L$. Lastly, let $f\colon \mc{M} \to \mb{U}$ be multiplicative. Then for any $1 \leq P< Q \leq N$,
\begin{align*}
\sum_{\chi \in \Xi} \Big|q^{-N} \sum_{G \in \mc{M}_N} f(G)\bar{\chi}(G)1_{\mc{S}_{P,Q}}(G)\Big|^2 &\ll (Q-P+1)\sum_{P \leq d \leq Q} \sum_{\chi \in \Xi}|A_d(\chi)|^2|B_{N-d}(\chi)|^2 \\
&+ \phi(L)(q^{-N} + q^{-\deg{L}}) q^{-P},
\end{align*}
where for $d \geq 1$ and a character $\chi$ modulo $L$, we set
\begin{align*}
A_{d}(\chi) &:= q^{-d}\sum_{R \in \mc{P}_d} f(R)\bar{\chi}(R).\\
B_{N-d}(\chi) &:= q^{-N+d}\sum_{D \in \mc{M}_{N-d}} \frac{f(D)\bar{\chi}(D)}{1+\omega_{[P,Q]}(D)}.
\end{align*}
\end{lem}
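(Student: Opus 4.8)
The plan is to apply Ramar\'e's identity (Lemma~\ref{Ramare}) to split the character-twisted average restricted to $\mc{S}_{P,Q}$ into the bilinear piece $\sum_{P\le d\le Q}A_d(\chi)B_{N-d}(\chi)$ plus a small error supported on polynomials carrying a repeated irreducible factor of degree in $[P,Q]$, which is then killed by the large sieve.

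First I would use Lemma~\ref{Ramare}, which for every $G\in\mc{S}_{P,Q}$ gives $f(G)=\sum_{RM=G,\ R\in\mc{P},\ \deg{R}\in[P,Q]} f(RM)/\big(1_{(R,M)=1}+\omega_{[P,Q]}(M)\big)$; the right-hand side is the empty sum when $G\notin\mc{S}_{P,Q}$, so (using complete multiplicativity of $\chi$)
$$q^{-N}\sum_{G\in\mc{M}_N}f(G)\bar\chi(G)1_{\mc{S}_{P,Q}}(G)=q^{-N}\sum_{\substack{RM\in\mc{M}_N\\ R\in\mc{P},\ \deg{R}\in[P,Q]}}\frac{f(RM)\,\bar\chi(RM)}{1_{(R,M)=1}+\omega_{[P,Q]}(M)}.$$
I would then compare this with $\sum_{P\le d\le Q}A_d(\chi)B_{N-d}(\chi)=q^{-N}\sum_{RM\in\mc{M}_N,\ R\in\mc{P},\ \deg{R}\in[P,Q]}f(R)f(M)\bar\chi(RM)/\big(1+\omega_{[P,Q]}(M)\big)$, obtained by multiplying out the two Dirichlet polynomials and summing over $d$. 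On the pairs with $(R,M)=1$ the two expressions agree termwise, since there $f(RM)=f(R)f(M)$ and $1_{(R,M)=1}=1$. Hence
$$q^{-N}\sum_{G\in\mc{M}_N}f(G)\bar\chi(G)1_{\mc{S}_{P,Q}}(G)=\sum_{P\le d\le Q}A_d(\chi)B_{N-d}(\chi)+\mathcal{E}(\chi),$$
where $\mathcal{E}(\chi)$ is $q^{-N}$ times a sum over pairs $R,M$ with $R\in\mc{P}$, $\deg{R}\in[P,Q]$ and $R\mid M$, of $\bar\chi(RM)\big(f(RM)/\omega_{[P,Q]}(M)-f(R)f(M)/(1+\omega_{[P,Q]}(M))\big)$, each such summand having modulus $\le 2$.

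For the main term I would apply Cauchy--Schwarz over the $Q-P+1$ indices $d$ and then sum over $\chi\in\Xi$, giving $\sum_{\chi\in\Xi}\big|\sum_d A_d(\chi)B_{N-d}(\chi)\big|^2\le(Q-P+1)\sum_{P\le d\le Q}\sum_{\chi\in\Xi}|A_d(\chi)|^2|B_{N-d}(\chi)|^2$, which is the first term on the right-hand side of the claim. For the error, note that $R$ irreducible with $R\mid M$ forces $R^2\mid G:=RM$ and $\deg{R}\in[P,Q]$; thus $\mathcal{E}(\chi)=q^{-N}\sum_{G\in\mc{M}_N}a_G\bar\chi(G)$ with $|a_G|\le 2\,b_G$, where $b_G:=|\{R\in\mc{P}:R^2\mid G,\ \deg{R}\in[P,Q]\}|$. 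Bounding $\sum_{\chi\in\Xi}|\mathcal{E}(\chi)|^2\le\sum_{\chi\pmod{L}}|\mathcal{E}(\chi)|^2$ and applying the $L^2$ mean value theorem (Lemma~\ref{L2MVT}) reduces matters to the count $\sum_{G\in\mc{M}_N}b_G^2\ll q^{N-P}$, which follows by writing $b_G^2=b_G+2\binom{b_G}{2}$ and using the prime polynomial theorem to count $G\in\mc{M}_N$ divisible by $R^2$, respectively by $R_1^2R_2^2$ with $R_1\ne R_2$ (Lemma~\ref{SieveErat} could also be quoted here). Dividing the outcome by $q^{2N}$ then yields the remaining, sieve-type, error term on the right-hand side of the claim.

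The step I expect to demand the most care is the handling of $\mathcal{E}(\chi)$: one must be precise about exactly which terms of Ramar\'e's identity survive, and verify that they are all governed by polynomials possessing a squared irreducible factor of degree $\ge P$, so that the saving $q^{-P}$ genuinely materializes after the large sieve. Everything else is a routine assembly of Ramar\'e's identity, Cauchy--Schwarz, complete multiplicativity of Dirichlet characters, and Lemma~\ref{L2MVT}.
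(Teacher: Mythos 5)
Your proposal is correct and follows essentially the same route as the paper: Ramar\'e's identity to produce the bilinear decomposition $\sum_d A_d(\chi)B_{N-d}(\chi)$, Cauchy--Schwarz over $d$ for the main term, and Lemma \ref{L2MVT} plus a count of $G\in\mc{M}_N$ with squared irreducible factors of degree in $[P,Q]$ for the error (the paper merely splits your $\mathcal{E}(\chi)$ into two pieces $\mc{R}_{1,\chi},\mc{R}_{2,\chi}$, and its expansion of the square is exactly your $\sum_G b_G^2\ll q^{N-P}$ bound). No gaps.
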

\begin{proof}
This is analogous to~\cite[Lemma 12]{mr-annals}. By Lemma~\ref{Ramare}, for any $\chi \in \Xi$ we have
\begin{align}\label{AdBd}
&q^{-N}\sum_{G \in \mc{M}_N}f(G) \bar{\chi}(G)1_{\mc{S}_{P,Q}}(G)\nonumber\\ 
&= q^{-N}\sum_{RM \in \mc{M}_N} \frac{f\bar{\chi}(R)f\bar{\chi}(M)}{1+\omega_{[P,Q]}(M)} + q^{-N}\sum_{RM \in \mc{M}_N} \frac{(f(RM) - f(R)f(M))\bar{\chi}(RM)}{1+ \omega_{[P,Q]}(M)}\nonumber \\
&+ q^{-N}\sum_{RM \in \mc{M}_N} f(RM)\bar{\chi}(RM) \Big(\frac{1}{1_{(R,M) = 1} + \omega_{[P,Q]}(M)}
- \frac{1}{1+\omega_{[P,Q]}(M)}\Big)\nonumber \\
&= \sum_{P \leq d \leq Q} \Big(q^{-d}\sum_{R \in \mc{P}_d} f(R)\bar{\chi}(R)\Big)\Big(q^{-N+d}\sum_{M \in \mc{M}_{N-d}} \frac{f(M)\bar{\chi}(M)}{1+\omega_{[P,Q]}(M)}\Big) + \mc{R}_{1,\chi} + \mc{R}_{2,\chi}\nonumber \\
&= \sum_{P \leq d \leq Q} A_{d}(\chi)B_{N-d}(\chi) + \mc{R}_{1,\chi} + \mc{R}_{2,\chi}.
\end{align}
Note that for each $\chi \in \Xi$,  both of $\mc{R}_{1,\chi}$ and $\mc{R}_{2,\chi}$ are supported on polynomials $M$ such that $R|M$ for some $R \in \mc{P}$, $\deg{R} \in [P,Q]$. We now take squares and sum the whole expression over all $\chi \in \Xi$ to see that the mean square of~\eqref{AdBd} is 
$$
\ll \sum_{\chi \in \Xi} \Big|\sum_{P \leq d \leq Q} A_d(\chi)B_{N-d}(\chi)\Big|^2 + \sum_{\chi \in \Xi} |\mc{R}_{1,\chi}|^2 + \sum_{\chi \in \Xi} |\mc{R}_{2,\chi}|^2.
$$
To treat the first term, we use the Cauchy--Schwarz inequality in the inner sum to get
$$
\sum_{\chi \in \Xi} \Big|\sum_{P \leq d \leq Q} A_{d}(\chi)B_{N-d}(\chi)\Big|^2 \leq (Q-P+1)\sum_{P \leq d \leq Q} \sum_{\chi \in \Xi}|A_{d}(\chi)|^2|B_{N-d}(\chi)|^2.
$$
To treat $\sum_{\chi \in \Xi}|\mc{R}_{j,\chi}|^2$ for $j = 1,2$ we use Lemma~\ref{L2MVT}; since the arguments are similar we shall restrict ourselves to proving the bound for $\mc{R}_{1,\chi}$. By Lemma~\ref{L2MVT},
$$
\sum_{\chi \in \Xi}|R_{1,\chi}|^2 \ll \phi(L)\Big(q^{-N} + q^{-\deg{L}}\Big) q^{-N}\sum_{G \in \mc{M}_N} \Big|\sum_{\substack{\substack{RM = G \\ R \in \mc{P}, \,  R|M \\ \deg{R} \in [P,Q]}}} \frac{(f(RM)-f(R)f(M))}{1+\omega_{[P,Q]}(M)}\Big|^2.
$$
Expanding the square and bounding the summands trivially, we bound the sum on the right-hand side as
\begin{align*}
&\sum_{\substack{R_1,R_2 \in \mc{P} \\ \deg{R_j} \in [P,Q]}} \sum_{\substack{G \in \mc{M}_N \\ [R_1,R_2]^2 | G}} 1 \ll q^N \Big(\sum_{P \leq d \leq Q} |\mc{P}_d|q^{-2d} + \sum_{P \leq d_1,d_2 \leq Q} |\mc{P}_{d_1}||\mc{P}_{d_2}| q^{-2(d_1+d_2)}\Big) \\
&\ll q^N \sum_{P \leq d \leq Q} q^{-d} \ll q^{N-P},
\end{align*}
which implies the claim.
\end{proof}
\begin{lem}[Pointwise bound with Ramar\'e weight]\label{QuasiMultFF}
Let $1 \leq P < Q < N^{0.9}$. Let  $f\colon \mc{M} \to \mb{U}$ be multiplicative. There is a Hayes character $\widetilde{\chi_1}$ of conductor $\leq N$ such that
$$\max_{\substack{\textnormal{cond}_H(\widetilde{\chi})\leq N \\ \widetilde{\chi} \not\sim \widetilde{\chi_1}}} \frac{1}{|\mc{M}_N|}\Big|\sum_{G \in \mc{M}_N} \frac{f(G)\bar{\widetilde{\chi}}(G)}{1+\omega_{[P,Q]}(G)}\Big| \ll (Q/P)^3N^{-1/4+o(1)}.$$
Moreover, we can take $\widetilde{\chi_1}$ to be the Hayes character of conductor $\leq N$ that minimizes $\chi\mapsto \mathcal{D}_{f\bar{\widetilde{\chi}}}(N)$.
\end{lem}

\begin{proof}
Let $\widetilde{\chi_{1}}$ be the character that minimizes $\mc{D}_{f\bar{\widetilde{\chi}}}(N)$ among all $\widetilde{\chi}$ of conductor $\leq N$, and let $\widetilde{\chi} \nsim \widetilde{\chi_{1}}$. Write $\mc{I} := [P,Q]$ and $\mc{I}^c := \mb{N} \bk \mc{I}$. We can express $f = f_{\mc{I}} \ast f_{\mc{I}^c}$, where for $\mc{J} \in \{\mc{I},\mc{I}^c\}$ we define the multiplicative function $f_{\mc{J}}$ at powers of irreducibles via
$$f_{\mc{J}}(P^k) := \begin{cases} f(P^k) &\text{ if $\deg{P} \in \mc{J}$} \\ 0 &\text{ otherwise.}\end{cases}$$
Let $N' := \llf N/2\rrf$. By the hyperbola method,
\begin{align*}
&\sum_{G \in \mc{M}_N} \frac{f(G)\bar{\widetilde{\chi}}(G)}{1+\omega_{\mc{I}}(G)} = \sum_{AB \in \mc{M}_N} \frac{f_{\mc{I}}(A)f_{\mc{I}^c}(B)\bar{\widetilde{\chi}}(AB)}{1+\omega_{\mc{I}}(A)} \\
&= \sum_{A \in \mc{M}_{\leq N'}} \frac{f_{\mc{I}}(A)\bar{\widetilde{\chi}}(A)}{1+\omega_{\mc{I}}(A)}\sum_{B \in \mc{M}_{N-\deg{A}}}f_{\mc{I}^c}(B)\bar{\widetilde{\chi}}(B) + \sum_{B \in \mc{M}_{\leq N-N'}} f_{\mc{I}^c}(B)\bar{\widetilde{\chi}}(B) \sum_{A \in \mc{M}_{N-\deg{B}}} \frac{f_{\mc{I}}(A)\bar{\widetilde{\chi}}(A)}{1+\omega_{\mc{I}}(A)} \\
&- \Big(\sum_{A \in \mc{M}_{N'}}\frac{f_{\mc{I}}(A)\bar{\widetilde{\chi}}(A)}{1+\omega_{\mc{I}}(A)} \Big)\Big(\sum_{B \in \mc{M}_{N-N'}}f_{\mc{I}^c}(B)\bar{\widetilde{\chi}}(B)\Big) =: T_1+T_2-T_3.
\end{align*}
We first treat $T_1$. Let $0 \leq K \leq N'$. Since $\widetilde{\chi} \not\sim \widetilde{\chi_{1}}$, Lemma~\ref{RepulsionFF} implies that
\begin{align*}
\mc{D}_{f_{\mc{I}^c}\bar{\widetilde{\chi}}}(N-K) &= \mc{D}_{f_{\mc{I}^c}\bar{\widetilde{\chi}}}(N) - O(1) \geq \mc{D}_{f\bar{\widetilde{\chi}}}(N) - 2\sum_{P \leq d \leq Q} q^{-d}|\mc{P}_d|\\
&\geq \Big(\frac{1}{4}-o(1)\Big)\log N - 2\log(Q/P).
\end{align*}
Combining this with Theorem~\ref{HalThmFF}, we obtain
\begin{equation} \label{eq:DirConvNoWt}
\sum_{G \in \mc{M}_{N-K}} f_{\mc{I}^c}(G)\bar{\widetilde{\chi}}(G) \ll q^{N-K}\mc{D}_{f_{\mc{I}^c}\bar{\widetilde{\chi}}}(N-K)\exp\Big(-\mc{D}_{f_{\mc{I}^c}\bar{\widetilde{\chi}}}(N-K)\Big) \ll q^{N-K} (Q/P)^2N^{-1/4+o(1)}.
\end{equation}
Applying this with $K = \deg{A}$ in $T_1$ and summing over $A \in \mc{M}_{\leq N'}$ yields
\begin{align*}
T_1 &\ll q^N(Q/P)^2N^{-1/4+o(1)}\sum_{\substack{A \in \mc{M}_{\leq N'} \\ R |A \Rightarrow \deg{R} \in [P,Q]}} q^{-\deg{A}}\\
&\ll q^N(Q/P)^2N^{-1/4+o(1)} \exp\Big(\sum_{\substack{R \in \mc{P} \\ P \leq \deg{R} \leq Q}} q^{-\deg{R}}\Big) \\
&\ll q^N(Q/P)^3N^{-1/4+o(1)}.
\end{align*}
We next consider $T_2$. Using Lemma~\ref{SmoothsFF}, for every $0 \leq K \leq N-N' \leq N/2+1$ we have
\begin{align} \label{eq:DirConvSmooth}
\Big|\sum_{A \in \mc{M}_{N-K}} \frac{f_{\mc{I}}(A)}{1+\omega_{\mc{I}}(A)} \Big| &\leq |\{A \in \mc{M}_{N-K}  \colon  R|A, R \in \mc{P} \Rightarrow \deg{R}\leq Q\}| \ll q^{N-K}\exp(-c(N-K)/Q)\\
&\ll q^{N-K}\exp\Big(-c\frac{N}{3Q}\Big)
\end{align}
for some $c>0$.
Applying this with $K = \deg{B}$, then summing over $B$ in $T_2$ yields
\begin{align*}
T_2 &\ll \sum_{B \in \mc{M}_{\leq N-N'}} \Big|\sum_{A \in \mc{M}_{N-\deg{B}}} \frac{f_{\mc{I}}(A)}{1+\omega_{\mc{I}}(A)}\Big| \ll q^N\exp\Big(-c\frac{N}{3Q}\Big)\sum_{B \in \mc{M}_{\leq N-N'}} q^{-\deg{B}} \\
&\ll q^N N\exp\Big(-c\frac{N}{3Q}\Big).
\end{align*}
Finally, consider $T_3$. Using the estimates~\eqref{eq:DirConvNoWt} and~\eqref{eq:DirConvSmooth} (with $N-K$ replaced by $N'$ and $N-N'$, respectively) yields 
\begin{align*}
T_3 &\ll q^NN\exp\Big(-c\frac{N}{6Q}\Big)(Q/P)^2N^{-1/4+o(1)}\\
&\ll q^N N^3 \exp\Big(-c\frac{N}{6Q}\Big).
\end{align*}
Combining the estimates for $T_1,T_2$ and $T_3$ and noting that $N^3\exp(-c\frac{N}{6Q})\ll N^{-100}$ establishes the claim.
\end{proof}

\section{Variance of Multiplicative Functions in Progressions to Large Degree Moduli} \label{VarSec}
In this section, we will prove Theorem~\ref{VarAPs}. In the next section, we will apply a very similar argument to deduce the Matom\"{a}ki--Radziwi\l\l \ type theorem that we shall need. 

Let $1 \leq H \leq N-N^{3/4}$ with $H= H(N) \ra \infty$ as $N\to \infty$. Let $f\colon\mc{M} \ra \mb{U}$ be multiplicative, and let $Q \in \mc{M}_{N-H}$. Let $\chi_1 \in \mc{X}_Q$ be the Dirichlet character $\bmod{Q}$ that minimizes $\chi \mapsto \mc{D}_{f\bar{\chi}}(N)$. By orthogonality, 
\begin{equation}\label{CharOrtho}
\asum_{A \bmod{Q}} \Big|\sum_{\substack{G \in \mc{M}_N \\ G \equiv A \bmod{Q}}} f(G)-\frac{\chi_1(A)}{\phi(Q)} \sum_{G \in \mc{M}_N} f(G)\bar{\chi_1}(G)\Big|^2 =\frac{1}{\phi(Q)} \sum_{\chi \neq \chi_1} \Big|\sum_{G \in \mc{M}_N} f(G)\bar{\chi}(G)\Big|^2.
\end{equation}
Let $\eta \in (0,1/6)$ be fixed, and set 
\begin{align*}
Q_1 := \min\{H, N^{1/5}\}, \quad \textnormal{and}\quad P_1 := \frac{400}{\eta \log q} \log Q_1.
\end{align*} 
Fix $J \geq 1$ to be the least integer such that $J^{4J+2}Q_1^J \geq N^{1/2}$, and if $J \geq 2$ set \begin{align*}
P_j := j^{4j}Q_1^{j-1}P_1\quad \textnormal{and}\quad Q_j := j^{4j+2}Q_1^j   
\end{align*} 
for each $2 \leq j \leq J$. We define $\mc{S}_{\mbf{P},\mbf{Q}}$ as in Definition~\ref{def_SPQ} with these collections of parameters $P_j$ and $Q_j$, and for $1 \leq j \leq J$, we let $\mc{S}^{(j)}_{\mbf{P},\mbf{Q}}$ denote the set of $G \in \mc{M}$ with an irreducible factor $R$ with $\deg{R} \in [P_i,Q_i]$ for all $i \neq j$.

For each $j$, $1 \leq d \leq N$ and a character $\chi$ modulo $Q$, set
\begin{align*}
A_{j,d}(\chi) &:= \frac{1_{[P_j,Q_j]}(d)}{d|\mc{P}_d|}\sum_{R \in \mc{P}_d} f(R)\bar{\chi}(R).\\
B_{j,d}(\chi) &:= \frac{1}{|\mc{M}_d|}\sum_{\substack{D \in \mc{M}_d \\ D \in \mc{S}^{(j)}_{\mbf{P},\mbf{Q}}}} \frac{f(D)\bar{\chi}(D)}{1+\omega_{[P_j,Q_j]}(D)}.
\end{align*}
Thus, $A_{j,d}(\chi) = 0$ except when $d \in [P_j,Q_j]$. Following~\cite{mr-annals} (see also~\cite[Section 5.2]{klu_man_ter}), we split the set $\Xi := \mc{X}_Q \bk \{\chi_1\}$ into the following sets.
\begin{def1}
For $j \geq 1$ put $\beta_j := \frac{1}{4}-\frac{\eta}{2}(1+1/j)$. Define
\begin{align*}
\mc{X}_1 &:= \{\chi \in \Xi  \colon  |A_{1,d}(\chi)| \leq q^{-\beta_1d} \ \forall P_1 \leq d \leq Q_1\}  \\
\mc{X}_j &:= \{\chi \in \Xi  \colon  |A_{j,d}(\chi)| \leq q^{-\beta_jd} \ \forall P_j \leq d \leq Q_j\} \bk \bigcup_{1 \leq i \leq j-1} \mc{X}_i \ \text{ ($2 \leq j \leq J$)} \\
\mc{U} &:= \Xi \bk \bigcup_{1 \leq j \leq J} \mc{X}_j.
\end{align*}
\end{def1}
We shall bound the contribution of the characters from each of $\mc{X}_j$ and $\mc{U}$ using the lemmata from the previous sections. 

We begin by estimating the contribution from $\chi \notin \mc{U}$. By Lemma~\ref{RestricToS}, we have
\begin{align}
\frac{1}{\phi(Q)} \sum_{\ss{\chi \neq \chi_1 \\ \chi \notin \mc{U}}} \Big|\sum_{G \in \mc{M}_N} f(G)\bar{\chi}(G)\Big|^2 &\ll \frac{1}{\phi(Q)}\sum_{\ss{\chi \neq \chi_1 \\ \chi \notin \mc{U}}} \Big|\sum_{\substack{G \in \mc{M}_N \\ G \in \mc{S}_{\mbf{P},\mbf{Q}}}} f(G)\bar{\chi}(G)\Big|^2 + q^{2N-\deg{Q}}\sum_{1 \leq j \leq J} \frac{P_j}{Q_j} \nonumber\\
&\ll \frac{1}{\phi(Q)}\sum_{\ss{\chi \neq \chi_1 \\ \chi \notin \mc{U}}} \Big|\sum_{\substack{G \in \mc{M}_N \\ G \in \mc{S}_{\mbf{P},\mbf{Q}}}} f(G)\bar{\chi}(G)\Big|^2 + q^{2N-\deg{Q}}\frac{P_1}{Q_1}. \label{CharOrtho2}
\end{align}
For each $\chi \in \Xi \bk \mc{U}$, write
$$F(\chi) := \frac{1}{|\mc{M}_N|}\sum_{\substack{G \in \mc{M}_N \\ G \in \mc{S}_{\mbf{P},\mbf{Q}}}}f(G)\bar{\chi}(G).$$
We apply Lemma~\ref{DirDecomp} for each $1 \leq j \leq J$ (with $P = P_j$ and $Q = Q_j$ in the notation there) to get
\begin{align*}
\sum_{\chi \in \mc{X}_j} |F(\chi)|^2 &\ll (Q_j-P_j+1) \sum_{P_j \leq d \leq Q_j}\sum_{\chi \in \mc{X}_j} |A_{j,d}(\chi)|^2|B_{j,N-d}(\chi)|^2 + \phi(Q)q^{-\deg{Q}}q^{-P_j} \\
&=: \mf{M}_j + \mf{R}_j.
\end{align*}
Summing the error terms arising from the terms $1 \leq j \leq J$ yields 
\begin{align}\label{Rj}
\sum_{1 \leq j \leq J} \mf{R}_j \ll \phi(Q)q^{-\deg{Q}} \cdot q^{-P_1}\ll Q_1^{-100}\phi(Q)/q^{\deg{Q}},
\end{align}
using the definitions of $P_j$ and $Q_j$ above. We thus focus on the main terms arising in the above estimate.

\subsection*{Case 1: \texorpdfstring{$j = 1$}{j=1}}
In this case we bound $|A_{1,d}(\chi)| \leq q^{-\beta_1d}$ for each $\chi \in \mc{X}_1$ and then apply Lemma~\ref{L2MVT} to get
\begin{align*}
\mf{M}_1 &\ll (Q_1-P_1+1) \sum_{P_1 \leq d \leq Q_1} q^{-2\beta_1d} \sum_{\chi \bmod{Q}} |B_{1,N-d}(\chi)|^2 \\
&\leq Q_1\Big(\phi(Q)q^{Q_1-N}+\phi(Q)q^{-\deg{Q}}\Big)\sum_{P_1 \leq d \leq Q_1}q^{-2\beta_1d} \\
&\ll \phi(Q)q^{-\deg{Q}} \cdot Q_1 q^{-2\beta_1P_1} \ll \phi(Q)q^{-\deg{Q}} \cdot Q_1q^{-P_1/6},
\end{align*}
since $\beta_1 = 1/4-\eta \geq 1/12$. Thus,
\begin{align}\label{M1}
\mf{M}_1\ll Q_1^{-100} \phi(Q)/q^{\deg{Q}}.   
\end{align}
 
\subsection*{Case 2: \texorpdfstring{$2\leq j\leq J$}{2 <=j<=J}}
We know that for each $\chi \in \mc{X}_j$ we can find a $d_{\chi} \in [P_{j-1},Q_{j-1}]$ for which $|A_{j-1,d_{\chi}}(\chi)| \geq q^{-\beta_{j-1}d_{\chi}}$. Thus, similarly as in~\cite{mr-annals}, we can estimate
\begin{align*}
\mf{M}_j &\ll (Q_j-P_j+1) \sum_{P_{j-1} \leq r \leq Q_{j-1}} \sum_{\substack{\chi \in \mc{X}_j\\ d_{\chi} = r}}\sum_{P_j \leq d \leq Q_j} |A_{j,d}(\chi)|^2|B_{j,N-d}(\chi)|^2 \\
&\leq (Q_j-P_j+1)(Q_{j-1}-P_{j-1}+1) \max_{P_{j-1} \leq r \leq Q_{j-1}} \sum_{\substack{\chi \in \mc{X}_j \\ d_{\chi} = r}}\sum_{P_j \leq d \leq Q_j} q^{-2\beta_jd} |B_{j,N-d}(\chi)|^2 \\
&\leq Q_j^2 \sum_{P_j \leq d \leq Q_j} q^{-2\beta_j d} q^{2\ell_d r_0\beta_{j-1}} \sum_{\chi \bmod{Q}}|A_{j-1,r_0}(\chi)^{\ell_d}B_{j,N-d}(\chi)|^2,
\end{align*}
for some $r_0 \in [P_{j-1},Q_{j-1}]$, with $\ell_d:= \lceil d/r_0\rceil$. Applying Lemma~\ref{Lengthen}, we have
$$
\sum_{\chi \bmod{Q}}|A_{j-1,r_0}(\chi)^{\ell_d}B_{j,N-d}(\chi)|^2 \ll \phi(Q)q^{-\deg{Q}}\ell_d^{2\ell_d}.
$$ 
Combining this with the estimates from the previous line, we get
$$
\mf{M}_j \ll \phi(Q)q^{-\deg{Q}} \cdot Q_j^2 \sum_{P_j \leq d \leq Q_j} q^{2(\ell_dr_0\beta_{j-1}-d\beta_j)} \ell_d^{2\ell_d}.
$$
By definition, $\ell_d \leq d/r_0+1$, so that since $r_0 \leq Q_{j-1}$,
$$
\ell_d r_0 \beta_{j-1} - d\beta_j \leq d(\beta_{j-1}-\beta_j) + r_0\beta_{j-1} \leq -\frac{\eta d}{2j^2} + Q_{j-1}\beta_{j-1}.
$$
Furthermore, we have
$$
\ell_d \log \ell_d \leq \frac{d\log d}{r_0} + \log d \leq (\log Q_j) (d/P_{j-1} + 1).
$$
We thus may bound $\mf{M}_j$ as
$$
\mf{M}_j \ll \phi(Q)q^{-\deg{Q}} \cdot Q_j^4 q^{2Q_{j-1}\beta_{j-1}} \sum_{P_j \leq d \leq Q_j} q^{-2d(\eta/(2j^2) - (\log Q_j)/(P_{j-1}\log q))}.
$$
We record the following easy-to-check bounds, contingent on $Q_1$ being sufficiently large and $j \geq 2$:
\begin{enumerate}[label=(\roman*)]
 
\item $\frac{\log Q_j}{P_{j-1}} \leq \frac{j (\log Q_1)(1 + 5\log j)}{(j-1)^{4j-4}Q_1^{j-2}P_1} \leq \frac{50\log Q_1}{P_1} \cdot \frac{1}{j^2} \leq \frac{\eta\log q}{8j^2}$ 
\item $Q_j^4q^{2\beta_{j-1}Q_{j-1}} \leq q^{Q_{j-1}/2}$.
\item $Q_{j-1} \leq j^{4j-2}Q_1^{j-1} \leq P_j/(j^2P_1).$
\end{enumerate}
Using these bounds, we get
\begin{align*}
\mf{M}_j &\ll \phi(Q)q^{-\deg{Q}} \cdot q^{Q_{j-1}/2} \sum_{P_j \leq d \leq Q_j} q^{-\eta d/(2j^2)} \ll \phi(Q)q^{-\deg{Q}} \cdot j^2\eta^{-1} q^{P_j/(2j^2 P_1)} q^{-\eta P_j/(2j^2)} \\
&\ll_{\eta} \phi(Q)q^{-\deg{Q}} \cdot j^2 q^{-\eta P_j/(4j^2)} \leq \phi(Q)q^{-\deg{Q}} \cdot j^{-2}q^{-\eta Q_1P_1/4}.
\end{align*}
Summing over $2 \leq j \leq J$, we get 
\begin{align}\label{M2}
\sum_{2 \leq j \leq J} \mf{M}_j \ll_{\eta} \phi(Q)q^{-\deg{Q}} \cdot q^{-\eta Q_1P_1/4}\ll Q_1^{-100}\phi(Q)q^{-\deg{Q}}.
\end{align}
\subsection*{Case 3:  \texorpdfstring{$\mc{U}$}{U}}
We now treat the remaining characters $\chi \in \mc{U}$. We make an additional choice of parameters $\tilde{P} := N^{2/3}$, $\tilde{Q} := N^{13/18}$. Combining Lemma~\ref{RestricToS} with Lemma~\ref{DirDecomp}, we find $\tilde{P} \leq d_0 \leq \tilde{Q}$ such that
\begin{align*}
&\sum_{\chi \in \mc{U}}|\sum_{G \in \mc{M}_N} f(G) \bar{\chi}(G)|^2 \\
&\ll \sum_{\chi \in \mc{U}} |\sum_{\ss{G \in \mc{M}_N \\ G \in \mc{S}_{\tilde{P},\tilde{Q}}}} f(G) \bar{\chi}(G)|^2 + \phi(Q) q^{-\deg{Q}} \frac{\tilde{P}}{\tilde{Q}} \\ 
&\ll (\tilde{Q} - \tilde{P}+1)\sum_{\tilde{P} \leq d \leq \tilde{Q}} \sum_{\chi \in \mc{U}}|A_d(\chi)|^2|B_{N-d}(\chi)|^2 + \phi(Q)q^{-\deg{Q}} \Big(q^{-\tilde{P}} + \frac{\tilde{P}}{\tilde{Q}}\Big)\\
&\ll \tilde{Q}^2\sum_{\chi \in \mc{U}}|A_{d_0}(\chi)|^2|B_{N-d_0}(\chi)|^2 + \phi(Q)q^{-\deg{Q}} \Big(q^{-\tilde{P}} + \frac{\tilde{P}}{\tilde{Q}}\Big),
\end{align*}
where $A_{d}$ and $B_{N-d}$, $\tilde{P} \leq d \leq \tilde{Q}$, are defined as in Lemma~\ref{DirDecomp} with respect to the parameters $\tilde{P}$ and $\tilde{Q}$. We now split the set $\mc{U}$ further.
Following~\cite{mr-annals}, we define the subsets
\begin{align*}
\mc{U}_S &:= \{\chi \in \mc{U}  \colon  |A_{d_0}(\chi)| \leq N^{-10}\} \\
\mc{U}_L &:= \{\chi \in \mc{U}  \colon  |A_{d_0}(\chi)| > N^{-10}\}.
\end{align*}
We begin by treating the contribution from $\mc{U}_S$. By Lemma~\ref{HalMonInt} and the fact that $d_0 \leq \tilde{Q} =o(N)$, we have
\begin{align}\label{chisum}
\sum_{\chi \in \mc{U}_S} |A_{d_0}(\chi)|^2|B_{N-d_0}(\chi)|^2 &\leq N^{-20}\sum_{\chi \in \mc{U}_S} |B_{N-d_0}(\chi)|^2\nonumber\\
&\ll N^{-20}\phi(Q)q^{-\deg{Q}}\Big(1 + |\mc{U}_S| q^{(1/2+o(1))N-N+d_0}\Big)\nonumber \\
&\ll N^{-20}\phi(Q)q^{-\deg{Q}}\Big(1 + |\mc{U}|q^{(-1/2+o(1))N}\Big).
\end{align}
To estimate the size of $\mc{U}$, we note that whenever $\chi \in \mc{U}$ there is $d_J \in [P_J,Q_J]$ such that $|A_{J,d_J}(\chi)| \geq q^{-\beta_Jd_J}$, and thus 
$$
|\mc{U}| \leq \left|\bigcup_{P_J \leq d_J \leq Q_J} \{\chi \bmod{Q}  \colon  |A_{J,d_J}(\chi)| \geq q^{-\beta_Jd_J}\}\right| \leq Q_J \max_{P_J \leq d_J \leq Q_J} |\{\chi \bmod{Q}  \colon  |A_{J,d_J}(\chi)| \geq q^{-\beta_Jd_J}\}|.
$$
By choice, we have that $N^{1/2} \leq Q_J \ll N^{1/2} J^4 Q_1 \ll N^{7/10+o(1)}$, so that from (i) above we have $P_J \geq \tfrac{8J^2}{\eta \log q} \log Q_{J+1} \geq \tfrac{4}{\eta \log q}\log N$ and $\phi(Q) \geq q^{Q_J} \geq q^{d_J}$ for all $d_J \in [P_J,Q_J]$. Hence, $(\log \log \phi(Q))/P_J < \frac{1}{2}\eta\log q$ for $N$ large enough, and Lemma~\ref{LargeValues} may be applied to give 
$$
|\mc{U}| \ll Q_J\exp\Big(\frac{\log(q^{d_J} \phi(Q))}{d_J\log q}\Big(2\log\Big(\frac{\log (2\phi(Q))}{d_J\log q}\Big) + \log(2q^{2\beta_J d_J}/d_J)\Big)\Big) \leq \phi(Q)^{\frac{1}{2}-\eta/2}.
$$
Inserting this into~\eqref{chisum}, the off-diagonal term becomes $O(q^{(-\eta/2+o(1))N}) = o(1)$. Thus, we find that
$$\sum_{\chi \in \mc{U}_S} |A_{d_0}(\chi)|^2|B_{N-d_0}(\chi)|^2 \ll \phi(Q)q^{-\deg{Q}} \cdot N^{-20}.$$
We now consider the contribution from $\mc{U}_L$. Since $\chi_1 \notin \mc{U}$, and since $2(N-d_0)>N$, we may apply Lemma~\ref{QuasiMultFF} to obtain
$$
B_{N-d_0}(\chi) \ll (\tilde{Q}/\tilde{P})^3(N-d_0)^{-1/4+o(1)} \ll \phi(Q)q^{-\deg{Q}} \cdot (\tilde{Q}/\tilde{P})^3N^{-1/4+o(1)},
$$
since $d_0 \leq \tilde{Q} =o(N)$ and $\phi(Q)q^{-\deg{Q}} \gg (\log N)^{-1}$. It follows that
$$
\sum_{\chi \in \mc{U}_L} |A_{d_0}(\chi)|^2|B_{N-d_0}(\chi)|^2 \ll (\phi(Q)q^{-\deg{Q}})^2 (\tilde{Q}/\tilde{P})^6N^{-1/2+o(1)}\sum_{\chi \in \mc{U}_L}|A_{d_0}(\chi)|^2.
$$
Applying Lemma~\ref{HalMonPrim}, we deduce that
$$
\sum_{\chi \in \mc{U}_L} |A_{d_0}(\chi)|^2 \ll \frac{1}{d_0^2}\Big(1 + \deg{Q}q^{-d_0/2}|\mc{U}_L|\Big) \ll \tilde{P}^{-2}\Big(1 + \deg{Q}|\mc{U}_L|q^{-\tilde{P}/2}\Big).
$$
Since $\phi(Q) \geq q^{(1-o(1))N^{3/4}} \geq q^{\tilde{Q}} \geq q^{d_0}$ we may appeal once again to Lemma~\ref{LargeValues}, this time with $Z = N^{10}$, getting
$$
|\mc{U}_L| \ll \exp\Big(\frac{\log (q^{d_0}\phi(Q))}{d_0\log q}\Big(2\log\Big(\frac{\log(2\phi(Q))}{d_0\log q}\Big) + \log (2N^{19})\Big)\Big) \leq e^{N^{1+o(1)}/\tilde{P}}.
$$
Inserting this into the previous bound and using the fact that $\tilde{P} = N^{2/3} \geq 2N^{1.01}/\tilde{P}$ yields
$$
\sum_{\chi \in \mc{U}_L} |A_{d_0}(\chi)|^2 \ll \tilde{P}^{-2}+\deg{Q}q^{-\tilde{P}/2} e^{N^{1+o(1)}/\tilde{P}} \ll \tilde{P}^{-2}.
$$
It follows that
$$
\sum_{\chi \in \mc{U}_L} |A_{d_0}(\chi)|^2|B_{N-d_0}(\chi)|^2 \ll \Big(\phi(Q)q^{-\deg{Q}}\Big)^2 \cdot N^{-1/2+o(1)}\tilde{Q}^6\tilde{P}^{-8}.
$$
Combined with the bounds for $\mc{U}_S$, we get
\begin{align}\label{US}
\sum_{\chi \in \mc{U}} |F(\chi)|^2 &\ll (\phi(Q)q^{-\deg{Q}})^2 \cdot \Big(\tilde{Q}^2\Big(N^{-18} + \tilde{Q}^6\tilde{P}^{-8}N^{-1/2+o(1)}\Big)+\tilde{P}/\tilde{Q}+  q^{-\tilde{P}}\Big)\nonumber \\
&\ll \phi(Q)q^{-\deg{Q}} \cdot \Big(N^{-1/2+o(1)}\Big(\tilde{Q}/\tilde{P}\Big)^8 + \tilde{P}/\tilde{Q}\Big)\nonumber\\
&\ll N^{-1/18+o(1)}\phi(Q)q^{-\deg{Q}}.
\end{align}

Lastly, putting~\eqref{Rj},~\eqref{M1},~\eqref{M2} and~\eqref{US} together with~\eqref{CharOrtho} and~\eqref{CharOrtho2}, Theorem~\ref{VarAPs} follows.

\begin{rem} \label{RemOnS1}
Note that if we began by assuming that the sums in the variable $G$ in~\eqref{CharOrtho} are supported on $\mc{S}_{\mbf{P},\mbf{Q}}$ then the same proof would give the sharper estimate
\begin{align}
&\asum_{A \bmod{Q}} \Big|\sum_{\substack{G \in \mc{M}_N \\ G \equiv A\bmod{Q}}} f1_{\mc{S}_{\mbf{P},\mbf{Q}}}(G)-\frac{\chi_1(A)}{\phi(Q)}\sum_{G \in \mc{M}_N} f1_{\mc{S}_{\mbf{P},\mbf{Q}}}\bar{\chi_1}(G)\Big|^2 \nonumber \\
&\ll q^{2N-\deg{Q}} \Big(Q_1q^{-P_1/6} + N^{-1/18+o(1)}\Big). \label{MRAPonS}
\end{align}
We will use this sharper version of the theorem in Section~\ref{sec_shortexp}.
\end{rem}

\section{Matom\"{a}ki--Radziwi\l \l \ Theorem in Function Fields} \label{MRTHMSec}

In this section, we prove the following analogue of the main result in~\cite{mr-annals} (see Section~\ref{sec_involution} for the definition of $g^{\ast}$, for $g$ multiplicative).
\begin{thm}[Matom\"{a}ki--Radziwi\l\l \ theorem in function fields]\label{MRFF}
Let $f \colon  \mc{M} \to \mb{U}$ be a multiplicative function and let $1 \leq H \leq N-N^{3/4}$, with $H = H(N)$ tending to infinity with $N$. Then
\begin{align}\label{mrsum}
&\frac{1}{|\mc{M}_N|}\sum_{G_0 \in \mc{M}_N} \Big|\frac{1}{|\mc{M}_{<H}|}\sum_{\substack{G \in \mc{M}_N \\ G \in I_H(G_0)}} f(G)- \frac{1}{|\mc{M}_N|} \sum_{G \in \mc{M}_N} f(G) \overline{\chi_1^{\ast}}(G) \Big|^2 \\
&\ll (\log H)/H+N^{-1/18+o(1)},\nonumber
\end{align}
where $\chi_1$ is the Dirichlet character modulo $t^{N-H+1}$ that minimizes the map $\chi \mapsto \mc{D}_{f\bar{\chi^{*}}}(N)$. 
\end{thm}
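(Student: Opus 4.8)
The plan is to transport the argument of Section~\ref{VarSec} to the present setting, replacing the family of Dirichlet characters modulo a fixed $Q$ by the family $\mc{X}_{1,N-H}$ of short interval characters of length at most $N-H$; this substitution is legitimate for two reasons. First, for $G,G_0\in\mc{M}_N$ one has $G\in I_H(G_0)$ exactly when $G\equiv G_0\pmod{\mc{R}_{1,N-H}}$, so expanding $1_{G\in I_H(G_0)}$ by the orthogonality relation \eqref{ortho2} (with $M=1$, $\nu=N-H$), isolating the short interval character $\xi_1$ that carries the main term (identified below), and using \eqref{ortho1} to compute $q^{-N}\sum_{G_0\in\mc{M}_N}\overline{\xi_a(G_0)}\xi_b(G_0)=1_{\xi_a=\xi_b}$, one reduces \eqref{mrsum} to the estimate
\[
\sum_{\xi\in\mc{X}_{1,N-H}\setminus\{\xi_1\}}\Bigl|q^{-N}\sum_{G\in\mc{M}_N}f(G)\xi(G)\Bigr|^2\ll\frac{\log H}{H}+N^{-1/36+o(1)},
\]
which is the precise analogue of the bound obtained in Section~\ref{VarSec}, starting from \eqref{CharOrtho}, on $\phi(Q)^{-1}\sum_{\chi\ne\chi_1}|\sum_Gf(G)\overline{\chi}(G)|^2$. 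Second, by Remark~\ref{remshort} every large sieve, Halász--Montgomery, large-values, moment and Dirichlet-polynomial lemma of Section~\ref{sec_prelim2} holds with $\chi\pmod Q$ replaced by $\xi\in\mc{X}_{1,\nu}$ and $\phi(Q),\deg{Q}$ replaced by $q^\nu,\nu$; the GRH input, Lemma~\ref{thm_GRH_hayes}, is available here precisely because short interval characters are Hayes characters — this is the ``analogy supplied by the involution''. Granting these, \eqref{mrsum} follows by repeating the case analysis over $\mc{X}_j$, $\mc{U}_S$ and $\mc{U}_L$ of Section~\ref{VarSec} with only cosmetic changes, the restriction of $G$ to the sieved set $\mc{S}_{\mbf{P},\mbf{Q}}$ again producing the $(\log H)/H$ term.

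It remains to identify $\xi_1$. By Lemma~\ref{lem_short_inv}, as $\chi$ runs over Dirichlet characters modulo $t^{N-H+1}$ the functions $\chi^*$ range over the short interval characters of length $\le N-H$, that is, over $\mc{X}_{1,N-H}$ (the map being $(q-1)$-to-one, reflecting the value of $\chi$ on $\mb{F}_q^\times$); and by Lemma~\ref{lem_dist_inv}, once the value of $f$ on $\mb{F}_q^\times$ is fixed, the map $\chi\mapsto\mc{D}_{f\overline{\chi^*}}(N)$ coincides up to $O(1)$ with $\xi\mapsto\mc{D}_{f\bar\xi}(N)$ on $\mc{X}_{1,N-H}$. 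Thus the minimizer $\chi_1$ in the statement singles out $\xi_1:=\overline{\chi_1^*}$: by Halász's theorem (Theorem~\ref{HalThmFF}) the term carried by $\xi_1$, which involves $\sum_Gf(G)\overline{\chi_1^*}(G)$, is the only one that can be non-negligible — Corollary~\ref{cor_halrep} together with the repulsion estimate Lemma~\ref{RepulsionFF} give $|q^{-N}\sum_Gf(G)\xi(G)|\ll N^{-1/4+o(1)}$ for every other $\xi\in\mc{X}_{1,N-H}$ — and since $\chi_1^*$ is constant on each interval $I_H(G_0)$ this term is precisely the quantity subtracted in \eqref{mrsum}. Equivalently, the entire reduction may be run through the involution of Subsection~\ref{sec_involution}: by Lemma~\ref{ShortIntToAP} the map $G\mapsto G^*$ identifies $\{G\in\mc{M}_N: G\in I_H(G_0),\ (G,t)=1\}$ with an arithmetic progression modulo $t^{N-H+1}$ of the dual function $f^*$, so that \eqref{mrsum} becomes the variance of $f^*$ in progressions to modulus $t^{N-H+1}$, which is covered by Theorem~\ref{VarAPs} in the sharpened form of Remark~\ref{RemOnS1}.

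The step I expect to be the main obstacle is the handling of the prime $t$. The involution $G\mapsto G^*$ is only defined for $(G,t)=1$, yet the polynomials in $I_H(G_0)$ divisible by $t$ make up a proportion $1/q$ of the interval and so cannot simply be discarded; moreover, passing from $f$ to $f^*$ forces a choice of extension of $f$ to $\mb{F}_q^\times$. I would handle this by splitting each short interval sum according to $\nu_t(G)$, peeling off factors of $t$ and applying the same estimate recursively to the resulting shorter intervals of $t$-coprime polynomials, the tails in $\nu_t$ decaying geometrically, and by absorbing the $\mb{F}_q^\times$-dependence into a Dirichlet character modulo $t$ exactly as in Lemma~\ref{lem_dist_inv}. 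Carrying the parameter choices of Section~\ref{VarSec} through this extra bookkeeping is what yields the slightly weaker exponent $N^{-1/36+o(1)}$ in place of the $N^{-1/18+o(1)}$ of Theorem~\ref{VarAPs}.
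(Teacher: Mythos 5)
Your first route—expanding the short-interval indicator over $\mc{X}_{1,N-H}$ via \eqref{ortho2}, excluding the single character $\overline{\chi_1^{\ast}}$ picked out through Lemmas \ref{lem_short_inv} and \ref{lem_dist_inv}, and then rerunning the Section \ref{VarSec} argument with short interval characters in place of Dirichlet characters as licensed by Remark \ref{remshort}—is exactly the paper's proof of Theorem \ref{MRFF}. The involution detour through Theorem \ref{VarAPs} and the prime-$t$ bookkeeping you flag as the main obstacle are not needed (the paper invokes the involution only afterwards, to deduce Theorem \ref{thm_mr_real} via Lemmas \ref{lem_dist_inv} and \ref{PrincPret}), and in particular the exponent $N^{-1/36+o(1)}$ is not an artifact of that bookkeeping.
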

Theorem~\ref{thm_mr_real} will follow as a special case, as we will see later in this section. 

\begin{rem} \label{RemOnS2}
In light of Remark~\ref{RemOnS1}, if we replace $f$ by $f1_{\mc{S}_{\mbf{P},\mbf{Q}}}$, with the choice of parameters $P_1,Q_1$, $P_j=j^{4j}P_1Q_1^{j-1}$ and $Q_j = j^{4j+2}Q_1^j$ for $j \geq 2$ then the bound in Theorem~\ref{MRFF} improves to $\ll Q_1^{1/2}q^{-P_1/12} + N^{-1/18+o(1)}$. 
The additional flexibility in choosing $P_1$ and $Q_1$ will be used in the next section.
\end{rem}

\begin{rem}\label{mrremark}
We can obtain the same estimate as in Theorem~\ref{MRFF} if~\eqref{mrsum} is replaced with
\begin{align*}
&\frac{1}{|\mc{M}_N|}\sum_{G_0 \in \mc{M}_N} \Big|\frac{1}{|\mc{M}_{<H}|}\sum_{\substack{G \in \mc{M}_N \\ G \in I_H(G_0)}} f(G)-\frac{1}{|\mc{M}_N|} \sum_{G \in \mc{M}_N} f(G) - \frac{1_{\chi_1 \neq \chi_0}}{|\mc{M}_N|} \sum_{G \in \mc{M}_N} f(G) \overline{\chi_1^{\ast}}(G) \Big|^2.
\end{align*}
The proof is the same, and in fact it will be clear from the application of the orthogonality relations in the proof that this quantity is never larger than~\eqref{mrsum}.
\end{rem}

\begin{proof}[Proof of Theorem~\ref{MRFF}]
The proof is very similar to the proof of Theorem~\ref{VarAPs}, just with a different set of characters. 

By the orthogonality relation~\eqref{ortho2}, we can write~\eqref{mrsum} as 
\begin{align*}
\frac{1}{q^{2N}}\sum_{\substack{\xi\in \mathcal{X}_{1,N-H}\\\xi\neq \chi_1^{*}}}\Big|\sum_{G\in \mc{M}_N}f(G)\overline{\xi}(G)\Big|^2.
\end{align*}
This is analogous to~\eqref{CharOrtho}, just with a different group of characters (see also~\cite[(2.12)]{Gor}). Now the rest of the proof follows precisely as the proof of Theorem~\ref{VarAPs} up to notation. Indeed, the only properties of the Dirichlet characters used in the proof of Theorem~\ref{VarAPs} were the lemmas from Sections~\ref{sec_prelim1} and~\ref{sec_prelim2}. In Section~\ref{sec_prelim1}, all the lemmas  are readily stated for Hayes characters, which includes both short interval characters and Dirichlet characters as special cases. Also in Section~\ref{sec_prelim2}, all the mean value estimates have perfect analogues for short interval characters, and the proofs are identical, as noted in Remark~\ref{remshort}. Moreover, the pointwise bound  offered by Lemma~\ref{QuasiMultFF} is written for more general Hayes characters, and we can take $\widetilde{\chi_1}$ there to be the short interval character of length $N-H$ that minimizes $\tilde{\chi} \mapsto \mc{D}_{f\bar{\tilde{\chi}}}(N)$. Hence, all the steps in the proof of Theorem~\ref{VarAPs} work in the same way.
\end{proof}

To deduce the real-valued case of the Matom\"aki--Radziwi\l{}\l{} theorem from this, we will need the following variant of Corollary~\ref{cor_halrep}, applicable to real-valued multiplicative functions twisted by Dirichlet characters modulo powers $t^m$, $m \geq 2$. 

\begin{lem}[Sup norm estimate for weighted Dirichlet character sums] \label{PrincPret}
Assume $\textnormal{char}(\mb{F}_q) \neq 2$. Let $N \geq 1$ and $2\leq k\leq N$. Let $f \colon  \mc{M} \to [-1,1]$ be multiplicative. Let $2 \leq k \leq N$. Then
$$
\max_{\substack{\chi \bmod{t^k} \\ \chi \neq \chi_0}} \frac{1}{q^N}\Big|\sum_{G \in \mc{M}_N} f(G)\chi(G) \Big| \ll N^{-1/4+o(1)}.
$$
\end{lem}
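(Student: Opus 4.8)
The plan is to reduce the statement to a lower bound for the pretentious distance $\mc{D}_{f\chi}(N)$ and then quote Hal\'asz's theorem in function fields. Applying Theorem~\ref{HalThmFF} to the $1$-bounded multiplicative function $g := f\chi$ gives
\[
\frac{1}{q^N}\Big|\sum_{G \in \mc{M}_N} f(G)\chi(G)\Big| \ll \big(1 + \mc{D}_{f\chi}(N)\big)e^{-\mc{D}_{f\chi}(N)},
\]
with an absolute implied constant (in particular uniform in $\chi$). Since $x\mapsto(1+x)e^{-x}$ is decreasing, it suffices to show that $\mc{D}_{f\chi}(N) \ge \big(\tfrac14 - o(1)\big)\log N$ for the relevant $\chi$; this then forces the left-hand side to be $\ll N^{-1/4+o(1)}$.

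To obtain this lower bound I would exploit that $f$ is real-valued. Pick $\theta \in [0,1]$ with $\mc{D}_{f\chi}(N) = \mb{D}(f\chi,e_{\theta};N)^2 = \mb{D}(f,\bar\chi e_{\theta};N)^2$. Conjugating the definition of $\mb{D}$ and using $f = \bar f$ gives $\mb{D}(f,\chi e_{-\theta};N) = \mb{D}(f,\bar\chi e_{\theta};N)$, so the triangle inequality for $\mb{D}$ yields
\[
2\,\mb{D}(f,\bar\chi e_{\theta};N) \ge \mb{D}(f,\bar\chi e_{\theta};N) + \mb{D}(f,\chi e_{-\theta};N) \ge \mb{D}(\bar\chi e_{\theta},\chi e_{-\theta};N).
\]
By the prime polynomial theorem (exactly as in the proof of Theorem~\ref{HalThmFF}),
\[
\mb{D}(\bar\chi e_{\theta},\chi e_{-\theta};N)^2
= \sum_{P \in \mc{P}_{\le N}} q^{-\deg{P}}\Big(1 - \textnormal{Re}\big(\overline{\chi^2}(P)\,e(2\theta\deg{P})\big)\Big)
= \log N - \textnormal{Re}\Big(\sum_{P \in \mc{P}_{\le N}} \overline{\chi^2}(P)\,e(2\theta\deg{P})\,q^{-\deg{P}}\Big) + O(1).
\]
If $\chi^2$ is non-principal then it is a Dirichlet character of conductor of degree $\le k \le N$, so Lemma~\ref{lem_hayesdist} (with $A = 1$, $\tilde\chi := \overline{\chi^2}$, and angle $2\theta \bmod 1$) bounds the remaining sum by $O(\log\log N)$. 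Hence $\mb{D}(\bar\chi e_{\theta},\chi e_{-\theta};N)^2 \ge \log N - O(\log\log N)$, and squaring the previous display gives $4\,\mc{D}_{f\chi}(N) = 4\,\mb{D}(f,\bar\chi e_{\theta};N)^2 \ge \log N - O(\log\log N)$, i.e. $\mc{D}_{f\chi}(N) \ge (\tfrac14 - o(1))\log N$, completing the argument.

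The one delicate point — and the only place the hypothesis $\textnormal{char}(\mb{F}_q)\neq 2$ is used — is the verification that $\chi^2$ is non-principal. Using the decomposition $(\mb{F}_q[t]/(t^k))^{\times} \cong \mb{F}_q^{\times} \times U$, where $U$ is the subgroup of units congruent to $1$ modulo $t$ (a group of order $q^{k-1}$, hence a $p$-group), write $\chi = \psi\eta$ with $\psi$ a character of $\mb{F}_q^{\times}$ and $\eta$ a character of $U$; then $\chi^2 = \psi^2\eta^2$, and since $p$ is odd the relation $\eta^2 = 1$ forces $\eta = 1$. Thus $\chi^2 = \chi_0$ exactly when $\chi$ is the quadratic character pulled back from $\mb{F}_q^{\times}$, which has conductor of degree $1$ and satisfies $\chi^{\ast}\equiv 1$. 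Consequently, for every $\chi \bmod t^k$ of conductor of degree $\ge 2$ — which is the only case needed when deducing Theorem~\ref{thm_mr_real} from Theorem~\ref{MRFF} — we have $\chi^2 \neq \chi_0$ and the argument goes through. I expect this structural verification, together with correctly isolating the exceptional conductor-$1$ quadratic character (which genuinely fails to have small twisted mean value, in accordance with the split form of Theorem~\ref{thm_mr_real} when $q$ is even), to be the main subtlety; the remainder is a routine combination of Theorem~\ref{HalThmFF}, the pretentious triangle inequality, and Lemma~\ref{lem_hayesdist}.
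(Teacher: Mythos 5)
Your proposal follows essentially the same route as the paper's proof: Theorem~\ref{HalThmFF} reduces the bound to showing $\mc{D}_{f\chi}(N)\geq(1/4-o(1))\log N$, the realness of $f$ is exploited via the pretentious triangle inequality to pass to $\mb{D}(1,\chi^2e_{2\theta};N)$, and Lemma~\ref{lem_hayesdist} (i.e.\ Rhin's GRH) supplies the lower bound $\log N-O(\log\log N)$ whenever $\chi^2$ is non-principal. Two harmless deviations: where the paper treats non-unimodular real $f$ by the stochastic completely multiplicative device of Lemma~\ref{RepulsionFF}, you invoke the triangle inequality for $\mb{U}$-valued functions directly (this is the form stated in Section~3, so it is fine); and where the paper rules out real primitive characters modulo $t^k$, $k\geq 2$, via the relation $\chi(B+At^m)=\chi((B+At^m)^q)$, you use the decomposition $(\mb{F}_q[t]/(t^k))^{\times}\cong\mb{F}_q^{\times}\times U$ with $|U|=q^{k-1}$ odd; both give the same structural conclusion, and your version has the advantage of not needing any reduction to the primitive inducing character.

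The substantive point is the exceptional character you isolate, and there you are right while the paper's own proof is too hasty. The paper reduces to the primitive character inducing $\chi$ and then excludes real primitive characters modulo $t^k$ for $k\geq 2$, but the inducing character may have modulus $t$, and the quadratic character $\psi$ of $\mb{F}_q^{\times}$, viewed modulo $t$, is real, primitive and non-principal. For the character $\chi\pmod{t^k}$ it induces, the stated bound genuinely fails: taking $f(G):=\psi(G(0))$ for $(G,t)=1$ and $f(G):=0$ otherwise (multiplicative, with values in $\{-1,0,1\}$), one has $f(G)\chi(G)=1_{(G,t)=1}$, so $q^{-N}\big|\sum_{G\in\mc{M}_N}f(G)\chi(G)\big|=1-q^{-1}$. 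So your restriction to conductor degree $\geq 2$ (equivalently $\chi^2\neq\chi_0$) is not a gap in your reasoning but a necessary correction to the statement, and your observation that the exceptional character satisfies $\chi^{\ast}\equiv 1$ on $\mc{M}$ is exactly what keeps the deduction of Theorem~\ref{thm_mr_real} from Theorem~\ref{MRFF} intact, since its twisted long sum coincides with the one attached to $\chi_0$. One small slip: this exceptional character exists precisely when $q$ is odd and has nothing to do with the even-$q$ split in Theorem~\ref{thm_mr_real} (for even $q$ there is no quadratic character of $\mb{F}_q^{\times}$; the even-case obstruction comes instead from real primitive characters of higher conductor), so the parenthetical comparison with the even-$q$ statement should be dropped.
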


\begin{proof}
In light of Theorem~\ref{HalThmFF}, it suffices to show that
$$
\min_{\substack{\chi \bmod{t^k}\\\chi\neq \chi_0}} \mc{D}_{f\bar{\chi}}(N) \geq \Big(1/4-o(1)\Big) \log N.
$$
Note that if $\chi$ is a character modulo $t^k$ and $\chi'$ is the primitive character inducing $\chi$ then 
$$
\mc{D}_{f\bar{\chi}}(N) = \mc{D}_{f\bar{\chi'}}(N) + O(1),
$$ 
so it suffices to consider primitive characters modulo $t^k$. Now, suppose $\chi$ is a primitive character that is \emph{not} real. Since $\chi^2$ is not principal, arguing precisely as in the proof of Lemma~\ref{RepulsionFF} we obtain
\begin{align*}
\mb{D}(f,\chi e_{\theta};N)^2\geq (1/4-o(1))\log N
\end{align*}
(for instance, when $f$ takes values in $S^1$ the triangle inequality immediately yields $\mb{D}(f,\chi e_{\theta};N) \geq \frac{1}{2} \mb{D}(1,\chi^2e_{2\theta};N)$, and the general case follows from this as in the proof of Lemma~\ref{RepulsionFF}).
Thus we may conclude that for any \emph{primitive non-quadratic} Dirichlet character modulo $t^k$ we have 
$$
\mc{D}_{f\bar{\chi}}(N) \geq \Big(\frac{1}{4}-o(1)\Big) \log N.
$$
Furthermore, it is easy to see that there are \emph{no} primitive quadratic (non-principal) characters modulo $t^k$ for any $k \geq 2$. Indeed, suppose $\chi$ is real and primitive modulo $t^k$. Then $\chi$ cannot be periodic modulo $t^j$, for any $j < k$. To deduce a contradiction from this, set now $m := \left \lceil k/2\right \rceil < k$. Since $q\geq 3$ is odd and $\chi$ is real we have $\chi^q = \chi$, and also $mq>k$. Thus, for any $A,B \in \mb{F}_q[t]$ we have
$$\chi(B+At^m) = \chi^q(B+At^m) = \chi\Big((B+At^m)^q\Big) = \chi(B^q+qAB^{q-1}t^m) = \chi(B)^q = \chi(B).$$
Thus, in fact, $\chi$ is periodic modulo $t^m$, contradicting the fact that $\chi$ is primitive modulo $t^k$. 

Therefore, we obtain
$$
\min_{\substack{\chi \bmod{t^k} \\ \chi \neq \chi_0}} \mc{D}_{f\bar{\chi}}(N) = \min_{\substack{\chi \bmod{t^k} \\ \chi^2 \neq \chi_0 \\ \chi \text{ primitive}}} \mc{D}_{f\bar{\chi}}(N) + O(1) \geq (1/4-o(1))\log N,
$$
as claimed.
\end{proof}

\begin{proof}[Proof of Theorem~\ref{thm_mr_real}]
Assume that $f \colon  \mc{M} \ra [-1,1]$. We extend $f$ to a map on $\mb{F}_q[t]$ by requiring that $f(c) = \chi_1^\ast(c)$ for all $c \in \mb{F}_q^{\times}$, where $\chi_1$ is given by Theorem~\ref{MRFF}. 

Theorem~\ref{thm_mr_real} follows immediately from Theorem~\ref{MRFF} when $q$ is even, aside from the claim that $\chi_1$ may be assumed to be real. To see this, note that by Lemma~\ref{PrincPret}, if $\chi_1$ were not real since then the mean value of $f\bar{\chi_1}$ would contribute $\ll N^{-1/4+o(1)}$, which is anyway dwarfed by the error term in the statement of the theorem. 

When $q$ is odd, it suffices to show that if $\chi_1 \neq \chi_0$ then $\mc{D}_{f\overline{\chi_1^{\ast}}}(N) \geq \Big(1/4-o(1)\Big)\log N$, so that once again the sum in $f\overline{{\chi}_1^{\ast}}$ contributes negligibly. 

Since $f(c) = \chi_1^{\ast}(c)$ on $\mb{F}_q^\times$ and $f^\ast$ is necessarily real, Lemma~\ref{lem_dist_inv} combines with Lemma~\ref{PrincPret} to show that
$$
\mc{D}_{f\overline{{\chi_1}^{\ast}}}(N) = \mc{D}_{f^{\ast}\bar{\chi_1}}(N) + O(1) \geq \Big(\frac{1}{4}-o(1)\Big) \log N.
$$
This completes the proof.
\end{proof}

\section{Short Exponential Sums of Non-Pretentious Functions} \label{sec_shortexp}

In this section, we apply the results of the previous section to derive two function field analogues of estimates for short exponential sums weighted by a multiplicative function, due to Matom\"{a}ki, Radziwi\l\l \ and Tao~\cite{mrt}. To explain the formulation of our results, we begin by recording some of the relevant definitions. See~\cite{wooley-liu} for an excellent reference to the definitions given here.

We write $\mb{F}_q(t)$ to denote the field of rational functions of $t$ over $\mb{F}_q$. This comes equipped with the non-archimedean valuation $\lla \cdot \rra$ such that if $G = \sum_{j = N}^{\infty} a_{-j}t^{-j}$ for an integer $N$ with $a_N\neq 0$, then $\lla G\rra = q^{-N}$. The completion of $\mb{F}_q(t) = \mb{F}_q(1/t)$ with respect to this valuation is the set $\mb{K}_{\infty}(t) := \mb{F}_q((1/t))$ of formal Laurent series in $1/t$ with a finite number of non-negative power terms. We define $\mb{T}$ to be the unit ball of $\mb{K}_{\infty}(t)$ with respect to $\lla \cdot \rra$, i.e.,
$$
\mb{T} := \{\alpha \in \mb{K}_{\infty}(t)  \colon  \lla \alpha \rra < 1\} \cong \mb{K}_{\infty}(t)/\mb{F}_q[t].
$$
That is, $\mb{T}$ is the set of formal power series in $1/t$. This set forms a compact abelian group under addition, and thus comes equipped with a normalized Haar measure, which we shall denote by $d\alpha$. The Pontryagin dual group consists of the characters $\{\alpha \mapsto e_{\mb{F}}(G\alpha)\}_{G \in \mb{F}_q[t]}$, where, given $\alpha \in \mb{K}_{\infty}(t)$, we have written
$$
e_{\mb{F}}(\alpha) := e\Big(\frac{\text{tr}_{\mb{F}_q/\mb{F}_p}(a_{-1}(\alpha))}{p}\Big),
$$
writing $a_{-1}(\alpha)$ to denote the coefficient of the term $t^{-1}$ in the expansion of $\alpha$. An important feature of these characters is that 
$$
\int_{\mb{T}} e_{\mb{F}}(G\alpha) d\alpha = \begin{cases} 1 &\text{ if $G = 0$}\\ 0 &\text{ otherwise,}\end{cases}
$$
in analogy to the orthogonality of additive characters on $\mb{R}/\mb{Z}$. 

Our goal in this section will be to prove the following two results. The first is an estimate for exponential sums with multiplicative coefficients over short intervals that applies to complex-valued $f$, provided that $f$ is \emph{Hayes} non-pretentious (see Definition~\ref{def_hayesnp}). The second result concerns such exponential sums with \emph{real-valued} functions $f$, for which only the usual notion of (Dirichlet) non-pretentiousness needs to be assumed. The first of these theorems will be of relevance in proving the logarithmically averaged binary Chowla conjecture in this context. 

In the theorems below, given $1 \leq H \leq N-N^{3/4}$ and $f \colon  \mc{M} \to \mb{U}$ a multiplicative function, set 
\begin{align*}
M_{\text{Hayes}}(f;N,H) &:= \min_{M \in \mc{M}_{\leq H}} \min_{\psi \bmod{M}} \min_{\substack{\xi \text{ short} \\ \text{len}(\xi) \leq N}} \mc{D}_{f\bar{\psi}\bar{\xi}}(N) \\
M_{\textnormal{Dir}}(f;N,H) &:= \min_{M \in \mc{M}_{\leq H}}\min_{\psi \bmod{M}} \mc{D}_{f\bar{\psi}}(N).
\end{align*}
It is clear from the definitions that $M_{\textnormal{Hayes}}(f;N,H) \leq M_{\textnormal{Dir}}(f;N,H)$.
\begin{thm}\label{ShortExpSumFFComplex}
Let $1 \leq H \leq N-N^{3/4}$. Let $f \colon  \mc{M} \to \mb{U}$ be multiplicative. Then
$$
\sup_{\alpha \in \mb{T}}\frac{1}{|\mc{M}_N|} \sum_{G_0 \in \mc{M}_N} \frac{1}{|\mc{M}_{<H}|} \Big|\sum_{\substack{G \in \mc{M}_N \\ G \in I_H(G_0)}} f(G)e_{\mb{F}}(G\alpha)\Big| \ll \frac{\log H}{H} + N^{-1/(2000\log q)} + Me^{-M/100},
$$
where $M:=1+M_{\textnormal{Hayes}}(f;N,H)$.
\end{thm}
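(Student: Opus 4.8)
The plan is to carry out a circle method over $\mb{F}_q[t]$, reducing the short exponential sum to the Matom\"aki--Radziwi\l\l\ estimate of Theorem \ref{MRFF} (and its Dirichlet twists) together with Hal\'asz' theorem (Theorem \ref{HalThmFF}) and the hypothesis of Hayes non-pretentiousness, with a separate bilinear treatment of the minor arcs. \emph{Step 1 (rational approximation).} By Dirichlet's theorem in $\mb{F}_q[t]$ (pigeonholing among $\{\lla MG\alpha\rra:G\in\mc{M}_{\leq H}\}$), every $\alpha\in\mb{T}$ is of the form $\alpha=a/Q+\beta$ with $Q\in\mc{M}_{\leq H}$, $(a,Q)=1$, $\deg{a}<\deg{Q}$ (or $Q=1$) and $\lla\beta\rra\leq q^{-H-1-\deg{Q}}$. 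As $\lla\beta\rra$ is so small, the ultrametric property of $\lla\cdot\rra$ forces $e_{\mb{F}}((G-G')\beta)=1$ whenever $\deg{G-G'}<H$, so $G\mapsto e_{\mb{F}}(G\beta)$ is constant on each $I_H(G_0)$ and may be pulled out of the absolute value. One is thereby reduced to bounding, uniformly over such $Q,a$, the quantity
$$\frac{1}{|\mc{M}_N|}\sum_{G_0\in\mc{M}_N}\frac{1}{|\mc{M}_{<H}|}\Big|\sum_{\substack{G\in\mc{M}_N\\ G\in I_H(G_0)}}f(G)\,e_{\mb{F}}(Ga/Q)\Big|.$$
The case $Q=1$ (trivial additive factor) is immediate from Theorem \ref{MRFF}, which replaces the short sum by the long sum $|\mc{M}_N|^{-1}\sum_G f(G)\overline{\chi_1^{\ast}}(G)$ with mean-square error $\ll\log H/H+N^{-1/36+o(1)}$, combined with Theorem \ref{HalThmFF}: since $\overline{\chi_1^{\ast}}$ is a short interval character of length $\leq N-H$, one has $\mc{D}_{f\overline{\chi_1^{\ast}}}(N)\geq M_{\textnormal{Hayes}}(f;N,H)=M-1$, so the long sum is $\ll Me^{-M/100}$.

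\emph{Step 2 (major arcs, $\deg{Q}$ small).} Fix a slowly growing threshold $D=D(N)\to\infty$, say $D=\lfloor\log\log N\rfloor$. When $\deg{Q}\leq D$, I would expand the $Q$-periodic additive character $G\mapsto e_{\mb{F}}(Ga/Q)$ into Dirichlet characters modulo $Q$ by Gauss sums (the classes with $(G,Q)\neq1$ handled by the routine descent to proper divisors of $Q$, which is lower order as those moduli have degree $\leq D$). Each Gauss sum has modulus $\ll q^{\deg{Q}/2}$ and there are $\phi(Q)$ characters, so the problem reduces, for each $\psi\pmod{Q}$, to the $G_0$-average of $\big|\,|\mc{M}_{<H}|^{-1}\sum_{G\in I_H(G_0)}f(G)\psi(G)\big|$. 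Applying Theorem \ref{MRFF} to the multiplicative function $f\psi$ and Cauchy--Schwarz in $G_0$, this is $\ll(\log H/H+N^{-1/36+o(1)})^{1/2}$ plus a long sum $\big|\,|\mc{M}_N|^{-1}\sum_G f(G)\psi(G)\overline{\xi}(G)\big|$ for some short interval character $\xi$ of length $\leq N-H$; by Theorem \ref{HalThmFF} this long sum is $\ll Me^{-M/100}$, because $\psi\overline{\xi}$ is a Hayes character whose Dirichlet part has conductor $\deg{Q}\leq D\leq H$ and whose short part has length $\leq N$, so $\mc{D}_{f\psi\overline{\xi}}(N)\geq M_{\textnormal{Hayes}}(f;N,H)=M-1$. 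The extra factor $q^{\deg{Q}/2}\phi(Q)=q^{O(D)}$ is harmless since $D$ grows slowly.

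\emph{Step 3 (minor arcs, $\deg{Q}$ large) --- the main obstacle.} The Gauss sum expansion now loses a prohibitive $q^{\deg{Q}/2}$, so a genuinely bilinear argument is needed. The idea is to pass through the Keating--Rudnick involution (Lemma \ref{ShortIntToAP}): for a suitable reduced class $A\bmod t^{N-H+1}$,
$$\sum_{\substack{G\in I_H(G_0)\\ (G,t)=1}}f(G)\,e_{\mb{F}}(Ga/Q)=\sum_{\substack{\deg{F}=N,\ F(0)=1\\ F\equiv A\pmod{t^{N-H+1}}}}f^{\ast}(F)\,e_{\mb{F}}(F^{\ast}a/Q),$$
and a short computation gives $e_{\mb{F}}(F^{\ast}a/Q)=e_{\mb{F}}(F\tilde\alpha)$, where $\tilde\alpha$ is a ``reversal'' of $a/Q$, itself a rational function with denominator of degree $\asymp\deg{Q}$. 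On the $\ast$-side the constraints ``$F$ lies in a short interval'' and ``$F$ lies in a residue class'' have \emph{both} become congruences modulo powers of $t$ (respectively, modulo the denominator of $\tilde\alpha$), which are compatible with multiplication; using that $R\mapsto R^{\ast}/R(0)$ permutes the monic irreducibles (as in the proof of Lemma \ref{lem_dist_inv}), one can run a Ramar\'e-type decomposition (Lemma \ref{Ramare}) of $f^{\ast}$ extracting an irreducible factor $P$ of degree in a dyadic window, upon which --- $P$ being fixed --- the additive character becomes $M\mapsto e_{\mb{F}}(M\cdot P\tilde\alpha)$, i.e.\ the same object with numerator scaled by $P$. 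This presents the sum as a combination of bilinear forms $\sum_P\sum_M f^{\ast}(P)f^{\ast}(M)\,w(P,M)\,e_{\mb{F}}(MP\tilde\alpha)$ with a bounded weight $w$; opening one such form by Cauchy--Schwarz in the $M$ variable and expanding the square reduces matters to counting solutions of relations of the type $P_1+P_2\equiv P_3+P_4$ among irreducibles in the window, which is controlled by the additive-energy bound of Corollary \ref{PRIM4TUP}, while polynomials possessing no irreducible factor in any chosen window are discarded using the Selberg sieve (Lemma \ref{Selberg}), the smooth-count estimate (Theorem \ref{SmoothsFF}), and the large-sieve estimates of Section \ref{sec_prelim2}. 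This is the function field incarnation of the circle-method argument of \cite{mrt}, and the permissible window lengths are what produce the explicit exponent in the $N^{-1/(2000\log q)}$ term.

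Assembling Steps 1--3, and converting the mean-square $G_0$-bounds to the claimed mean via Cauchy--Schwarz throughout, the three error terms come respectively from Theorem \ref{MRFF} (giving $\log H/H$), the minor-arc bilinear and sieve estimates (giving $N^{-1/(2000\log q)}$), and Hal\'asz' theorem via $M_{\textnormal{Hayes}}(f;N,H)$ (giving $Me^{-M/100}$). The hard part is Step 3: short-interval and congruence constraints are incompatible with the multiplicativity of $f$, and it is precisely the passage to the $\ast$-side --- where short intervals become congruences modulo $t^{N-H+1}$, which \emph{are} compatible with multiplication --- that unlocks the Vinogradov-type decomposition, so that controlling the resulting bilinear exponential sums uniformly in $\deg{Q}$, all the way up to $\deg{Q}\approx H$, is the technical crux of the proof.
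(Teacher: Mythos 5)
Your overall architecture (function-field circle method; Gauss-sum expansion plus Theorem \ref{MRFF} and Hal\'asz on the major arcs; a bilinear estimate with the prime additive energy on the minor arcs) matches the paper's strategy in outline, but the quantitative heart of the proof is missing and, as written, neither side of your arc decomposition closes. First, your threshold $D=\lfloor\log\log N\rfloor$ is chosen independently of $H$ and of $M_{\textnormal{Hayes}}(f;N,H)$, and this is fatal at both ends. On the major arcs the Gauss-sum loss is $q^{\deg{Q}/2}\leq q^{D/2}=(\log N)^{c}$, which cannot be absorbed into $Me^{-M/100}$ when $M\to\infty$ more slowly than $\log\log N$; on the minor arcs the only possible saving from $\deg{Q}>D$ is $q^{-cD}=(\log N)^{-c'}$, which is far larger than $\frac{\log H}{H}+N^{-1/(2000\log q)}+Me^{-M/100}$ already for $f=\mu$ and $H=N$. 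The paper instead takes the cutoff $W\asymp\log H_0$ subject to $W\leq\min\{M_{\ast}/10,\,(\log N)/(100\log q),\,H/10\}$, and when $H$ is too large for such a $W$ to exist it first subdivides $I_H(G_0)$ into intervals of length $H_0$ with $\log H_0=\min\{M_{\ast}/100,(\log N)/(1000\log q),H/100\}$ (this reduction is exactly where the exponent $1/(2000\log q)$ and the shape $Me^{-M/100}$ come from); none of this interplay appears in your sketch. Second, in your Step 2 the Cauchy--Schwarz application of Theorem \ref{MRFF} produces $(\log H/H)^{1/2}$, not $\log H/H$, and for moderate $H$ (say $H=(\log N)^{10}$ with $M$ large) this exceeds every term on the right-hand side. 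The paper avoids the square-root loss by restricting to $\mc{S}_{\mbf{P},\mbf{Q}}$ with $P_1=100W\gg\log H$ \emph{before} everything (incurring the $\log H/H$ loss once, additively, via Lemma \ref{SieveErat}), after which the mean square in Remark \ref{RemOnS2} contains only $Q_1^{1/2}q^{-P_1/12}+N^{-1/36+o(1)}$, so taking square roots is harmless.

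On the minor arcs your plan also omits the actual mechanism by which minor-arc-ness is exploited. After Ramar\'e and H\"older one does reach 4-tuples of irreducibles, but the additive energy bound (Corollary \ref{PRIM4TUP}) only controls the multiplicity $r(F)$ of $F=R_1+R_2-R_3-R_4$; the saving $q^{-W/4}$ comes from evaluating the complete linear sum $\sum_{\deg{L}<H-k}e_{\mb{F}}(LF\alpha)$ (Lemma \ref{TYPEI}) and then counting the $F$ of degree $<k$ with $\lla F\alpha\pmod 1\rra$ small, which is where the hypothesis $\deg{g}>W$ enters (Lemma \ref{lem_rightcount}); you never say where $\deg{Q}>D$ is used, and without such a counting step the bilinear argument does not close. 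Moreover, the Keating--Rudnick involution is not needed for this: the paper's minor-arc argument runs entirely in physical space (the short-interval constraint is handled by the arbitrary unimodular weights $\theta(G_0)$ and a change of variables), and the identity $e_{\mb{F}}(F^{\ast}a/Q)=e_{\mb{F}}(F\tilde\alpha)$ you invoke, while plausible, is left unverified and buys you nothing that the trivial identity $e_{\mb{F}}((PM)\alpha)=e_{\mb{F}}(M(P\alpha))$ does not already give. The involution is used in the paper only inside Theorem \ref{MRFF} itself and, via Corollary \ref{StarNonPret}, to convert the character $\chi_1^{\ast}$ arising there into a Hayes character so that the long sums are bounded by $Me^{-M/100}$. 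Finally, a smaller point: the paper proves the estimate first for completely multiplicative $f$ (needed, e.g., to pull $f(D)$ out of $f(GD)$ in the major-arc Gauss-sum expansion) and then passes to general multiplicative $f$ by writing $f=\tilde f\ast h$ with $h$ supported on squarefull polynomials; your proposal does not address this reduction.
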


\begin{thm}\label{ShortExpSumFF}
Assume $q$ is odd. Let $1 \leq H \leq N-N^{3/4}$. Let $f \colon  \mc{M} \to [-1,1]$ be a multiplicative function. Then
\begin{align}\label{EXPSUMFF}
\sup_{\alpha \in \mb{T}}\frac{1}{|\mc{M}_N|} \sum_{G_0 \in \mc{M}_N} \frac{1}{|\mc{M}_{<H}|} \Big|\sum_{\substack{G \in \mc{M}_N \\ G \in I_H(G_0)}} f(G)e_{\mb{F}}(G\alpha)\Big| \ll \frac{\log H}{H} + N^{-1/(2000\log q)} +Me^{-M/100}, 
\end{align}
where $M:=1+M_{\textnormal{Dir}}(f;N,H)$
\end{thm}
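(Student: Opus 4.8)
The plan is to deduce Theorem~\ref{ShortExpSumFF} from its complex-valued analogue Theorem~\ref{ShortExpSumFFComplex} by means of a character-theoretic comparison of $M_{\textnormal{Hayes}}(f;N,H)$ and $M_{\textnormal{Dir}}(f;N,H)$ that is available only in odd characteristic. Theorem~\ref{ShortExpSumFFComplex} already applies to $f:\mc{M}\to[-1,1]$ and gives the asserted bound with $M=1+M_{\textnormal{Hayes}}(f;N,H)$, and we are told that $M_{\textnormal{Hayes}}\le M_{\textnormal{Dir}}$, so the theorem will follow once I establish, for $q$ odd and $f:\mc{M}\to[-1,1]$,
\[
1+M_{\textnormal{Hayes}}(f;N,H)\ \ge\ \min\Big(1+M_{\textnormal{Dir}}(f;N,H),\ \big(\tfrac14-o(1)\big)\log N\Big),
\]
and then substitute this into Theorem~\ref{ShortExpSumFFComplex}. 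Feeding it back is routine: in the first régime of the minimum the inequalities $M_{\textnormal{Hayes}}\le M_{\textnormal{Dir}}$ and $M_{\textnormal{Hayes}}\ge M_{\textnormal{Dir}}$ force $M_{\textnormal{Hayes}}=M_{\textnormal{Dir}}$, so $Me^{-M/100}=(1+M_{\textnormal{Dir}})e^{-(1+M_{\textnormal{Dir}})/100}$; in the second régime $1+M_{\textnormal{Hayes}}\gg\log N$, and since $x\mapsto xe^{-x/100}$ is eventually decreasing this gives $Me^{-M/100}\ll(\log N)N^{-1/400+o(1)}\ll N^{-1/(2000\log q)}$ (using $2000\log q\ge 2000\log 2>400$), which is dominated by the error terms we are allowed.

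The core step is therefore the displayed comparison, which I would prove as follows. Let $\tilde\chi=\psi\xi$ be a Hayes character realising $M_{\textnormal{Hayes}}(f;N,H)=\mc{D}_{f\bar{\tilde\chi}}(N)$, where $\psi$ is a Dirichlet character modulo some $M\in\mc{M}_{\le H}$ and $\xi$ is a short interval character of length $\le N$. If $\xi$ is trivial, then $\psi$ is an admissible competitor in the minimum defining $M_{\textnormal{Dir}}(f;N,H)$, so $M_{\textnormal{Hayes}}=\mc{D}_{f\bar\psi}(N)\ge M_{\textnormal{Dir}}(f;N,H)$ and we are done. Suppose instead that $\xi$ is non-trivial. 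This is the one point where oddness of $q$ enters: the group $\mc{X}_{1,N}$ of short interval characters of length $\le N$ has order $q^{N}$, which is odd, hence has trivial $2$-torsion; therefore $\xi^{2}$ is again a non-trivial short interval character, and consequently $\tilde\chi^{2}=\psi^{2}\xi^{2}$ is a non-principal Hayes character, of conductor at most $2N$.

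Now I would run the repulsion argument from the proofs of Lemmas~\ref{RepulsionFF} and~\ref{PrincPret}, using that $f$ is real-valued. Since $\bar f=f$, for every $\theta\in[0,1]$ we have $\mb{D}(f,\overline{\tilde\chi}e_{-\theta};N)=\mb{D}(f,\tilde\chi e_{\theta};N)$ (both squares equal $\sum_{P\in\mc{P}_{\le N}}\bigl(1-\textnormal{Re}(f(P)\tilde\chi(P)e_{\theta}(P))\bigr)q^{-\deg{P}}$, using $\textnormal{Re}(\bar z)=\textnormal{Re}(z)$). Applying the triangle inequality for $\mb{D}$ to the triple $(\tilde\chi e_{\theta},\,f,\,\overline{\tilde\chi}e_{-\theta})$ therefore gives $2\,\mb{D}(f,\tilde\chi e_{\theta};N)\ge\mb{D}(\tilde\chi e_{\theta},\overline{\tilde\chi}e_{-\theta};N)=\mb{D}(1,\overline{\tilde\chi^{2}}e_{-2\theta};N)$. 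Because $\tilde\chi^{2}$ is a non-principal Hayes character of conductor at most $2N\le N^{2}$, Lemma~\ref{lem_hayesdist} bounds the relevant prime sum by $O(\log\log N)$, and combined with the prime polynomial theorem estimate $\sum_{P\in\mc{P}_{\le N}}q^{-\deg{P}}=\log N+O(1)$ this yields $\mb{D}(1,\overline{\tilde\chi^{2}}e_{-2\theta};N)^{2}\ge\log N-O(\log\log N)$. Squaring and taking the minimum over $\theta$ gives $\mc{D}_{f\bar{\tilde\chi}}(N)\ge(\tfrac14-o(1))\log N$, completing the comparison and hence the proof.

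The main obstacle — the function-field circle method, the Ramar\'e-type Dirichlet-polynomial decomposition of $f$, and especially the minor-arc bound resting on the additive-energy estimate for irreducibles (Corollary~\ref{PRIM4TUP}) — is already absorbed into Theorem~\ref{ShortExpSumFFComplex}. The only genuinely new input needed for the real-valued, odd-characteristic case is the elementary parity observation that $\mc{X}_{1,N}$ has no non-trivial quadratic character, which renders short interval characters harmless for real $f$ and lets the hypothesis be weakened from Hayes to ordinary (Dirichlet) non-pretentiousness.
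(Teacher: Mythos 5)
Your argument is correct, and it reaches Theorem \ref{ShortExpSumFF} by a genuinely different route from the paper. The paper does not deduce the real case from Theorem \ref{ShortExpSumFFComplex}: both theorems are obtained from the completely multiplicative case (Theorem \ref{ShortExpSumFFCM}, transferred to general $f$ via the factorization $f=\tilde f \ast h$), and the real/odd-$q$ saving is produced \emph{inside} the major-arc analysis of Proposition \ref{expprop}, where after applying Theorem \ref{MRFF} one must control a term governed by $\mc{D}_{(f\bar{\psi})^{\ast}\bar{\chi}}(N)$ for characters $\chi$ modulo powers of $t$; Cases 2.1--2.2 there show this term is $\ll N^{-1/4+o(1)}$ using the involution lemmas (Lemmas \ref{lem_dist_inv} and \ref{lem_short_inv}) and the absence of primitive quadratic characters modulo $t^k$ for odd $q$ (as in Lemma \ref{PrincPret}). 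You instead treat Theorem \ref{ShortExpSumFFComplex} as a black box and prove the dichotomy: the minimizing Hayes character either has trivial short part, forcing $M_{\textnormal{Hayes}}=M_{\textnormal{Dir}}$, or has non-trivial short part $\xi$, in which case $\xi^2\neq 1$ because $|\mc{X}_{1,N}|=q^N$ is odd, so $\psi^2\xi^2$ is a non-principal Hayes character of conductor $\leq 2N$ and the repulsion argument (the triangle inequality, which the paper states for arbitrary $\mb{U}$-valued multiplicative functions and hence applies with your real, possibly vanishing, middle function $f$, combined with Lemma \ref{lem_hayesdist}) gives $M_{\textnormal{Hayes}}\geq (\tfrac14-o(1))\log N$; then the $M e^{-M/100}$ term from Theorem \ref{ShortExpSumFFComplex} (with $M=1+M_{\textnormal{Hayes}}$) is absorbed into $N^{-1/(2000\log q)}$ since $1/(2000\log q)<1/400$. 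Both proofs ultimately rest on the same arithmetic fact — odd characteristic rules out quadratic short-interval/$t$-power characters — but yours operates externally on the pretentiousness quantities rather than inside the circle method: it is shorter, avoids the involution machinery in this particular deduction, and shows in passing that for real-valued $f$ in odd characteristic the Hayes and Dirichlet non-pretentiousness measures coincide unless both exceed $(\tfrac14-o(1))\log N$. The only cosmetic slips are the notational reuse of $M$ for $1+M_{\textnormal{Hayes}}$ in the second régime and the edge case of very small $N$, where the bound is trivial anyway; neither affects correctness.
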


We will deduce both of these results from the following result about completely multiplicative functions.

\begin{thm}\label{ShortExpSumFFCM}
Let $1 \leq H \leq N-N^{3/4}$. Let $f  \colon  \mc{M} \ra \mb{U}$ be completely multiplicative. Then Theorem~\ref{ShortExpSumFFComplex} holds for $f$. Moreover, if $f$ is real-valued and $q$ is odd then Theorem~\ref{ShortExpSumFF} holds for $f$.
\end{thm}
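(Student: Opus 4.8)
The plan is to carry out a circle method over $\mb{F}_q(t)$ and feed the outcome into the large-sieve and Hal\'asz machinery of Sections \ref{sec_prelim1}--\ref{MRTHMSec}, exploiting that, on a short interval, an additive twist collapses to a finitely parametrised object whose symmetries are recorded by the Hayes characters. \textbf{Step 1 (reduce the supremum to rational points).} If $G,G'\in I_H(G_0)$ then $\deg{G-G'}<H$, so by the definition of $e_{\mb{F}}$ the value $e_{\mb{F}}((G-G')\alpha)$ depends only on the coefficients of $t^{-1},\dots,t^{-H}$ in the Laurent expansion of $\alpha$, i.e.\ on the truncation $\alpha_H$ of $\alpha$ keeping only those terms. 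Consequently, for $G$ running over $I_H(G_0)$,
\begin{align*}
e_{\mb{F}}(G\alpha)=e_{\mb{F}}\big(G_0(\alpha-\alpha_H)\big)\,e_{\mb{F}}(G\alpha_H),
\end{align*}
where the first factor has modulus $1$ and is constant in $G$. Writing $\alpha_H=A/t^{k}$ in lowest terms, so that $0\le k\le H$ and $(A,t)=1$, it suffices to bound, uniformly in such a fraction, the average
\begin{align*}
\frac{1}{|\mc{M}_N|}\sum_{G_0\in\mc{M}_N}\frac{1}{|\mc{M}_{<H}|}\Big|\sum_{\substack{G\in\mc{M}_N\\ G\in I_H(G_0)}}f(G)\,e_{\mb{F}}(GA/t^{k})\Big|,
\end{align*}
the case $k=0$ being exactly Theorem \ref{MRFF}; so we may assume $k\ge 1$.

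\textbf{Step 2 (absorb the twist into a Hayes character group).} The map $G\mapsto e_{\mb{F}}(GA/t^{k})$ is $t^{k}$-periodic, and the indicator of $I_H(G_0)$ fixes the top $N-H+1$ coefficients of $G$; both are constant on the classes of the relation $\mc{R}_{t^{k},N-H}$, and the natural orthogonality is that of the group $\mc{X}_{t^{k},N-H}$ in \eqref{ortho1}--\eqref{ortho2}. Peeling off the $G$ with $t\mid G$ (writing $G=tG'$, complete multiplicativity reduces the problem to the one with $(N,H,k)$ replaced by $(N-1,H-1,k-1)$, and as this level enters with weight $q^{-1}$ the resulting recursion of depth $\le k$ is geometrically convergent), and expanding $e_{\mb{F}}(\,\cdot\,A/t^{k})$ over the primitive Dirichlet characters modulo $t^{k}$ by Gauss sums — whose $\ell^2$ mass equals $1$ by Parseval on $(\mb{F}_q[t]/t^{k})^{\times}$, so that no power of $q^{k}$ is lost — one opens the square in the $G$-variable and is left with a second-moment expression of the type
\begin{align*}
\frac{1}{|\mc{X}_{t^{k},N-H}|}\sum_{\substack{\tilde{\chi}\in\mc{X}_{t^{k},N-H}\\ \tilde{\chi}\not\sim\tilde{\chi}_1}}\Big|\frac{1}{|\mc{M}_N|}\sum_{G\in\mc{M}_N}f(G)\overline{\tilde{\chi}}(G)\Big|^2,
\end{align*}
where $\tilde{\chi}_1$ minimises $\tilde{\chi}\mapsto\mc{D}_{f\bar{\tilde{\chi}}}(N)$. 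By Remark \ref{remshort} every estimate of Sections \ref{sec_prelim1}--\ref{sec_prelim2} holds for Hayes characters, so the proof of Theorem \ref{VarAPs} — the Ramar\'e decomposition (Lemma \ref{DirDecomp}), the large values bound (Lemma \ref{LargeValues}), the mean value estimates (Lemmas \ref{L2MVT}--\ref{Lengthen}), the restriction to $\mc{S}_{\mbf{P},\mbf{Q}}$ (Lemma \ref{RestricToS}), and the pointwise bound with Ramar\'e weight (Lemma \ref{QuasiMultFF}) — goes through verbatim and bounds the display by $\ll \tfrac{\log H}{H}+N^{-1/(2000\log q)}$, uniformly over $1\le k\le H$ and $A$.

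\textbf{Step 3 (dispose of the main term).} The $\tilde{\chi}_1$-contribution is $\ll |\mc{M}_N|^{-1}\big|\sum_{G\in\mc{M}_N}f(G)\overline{\tilde{\chi}_1}(G)\big|$, which by Theorem \ref{HalThmFF} is $\ll (1+\mc{D})e^{-\mc{D}}$ with $\mc{D}=\mc{D}_{f\bar{\tilde{\chi}}_1}(N)$. Since the Dirichlet part of $\tilde{\chi}_1$ has conductor $\le k\le H$ and its short part has length $\le N$, we have $\mc{D}\ge M_{\textnormal{Hayes}}(f;N,H)=M-1$, hence this term is $\ll Me^{-M/100}$; this proves Theorem \ref{ShortExpSumFFComplex} for completely multiplicative $f$. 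If $f$ is real-valued and $q$ is odd, the self-repulsion argument inside Lemma \ref{RepulsionFF} (passing to squares of characters) together with Lemma \ref{PrincPret} shows that any Hayes character whose Dirichlet part modulo $t^{k}$ is non-real forces $\mc{D}\gg\log N$, while there are no primitive quadratic characters modulo $t^{k}$ for $k\ge 2$ in odd characteristic; hence only a principal Dirichlet part survives and $\mc{D}\ge M_{\textnormal{Dir}}(f;N,H)=M-1$, giving Theorem \ref{ShortExpSumFF} for real-valued completely multiplicative $f$. Combining the three steps yields the claim. The delicate point is Step 2: organising the function-field circle method so that the twist $e_{\mb{F}}(\,\cdot\,A/t^{k})$ is absorbed into the \emph{combined} group $\mc{X}_{t^{k},N-H}$ rather than handled by composing Theorem \ref{MRFF} with a Gauss-sum expansion (which would lose a factor $q^{k}$ from the $q^{k}$-many resulting Dirichlet twists), so that the dependence on $H$ stays of the sharp order $\tfrac{\log H}{H}$ and uniform across all $1\le k\le H$ — the large-$k$ regime playing the role of the minor arcs, where the required cancellation comes precisely from non-pretentiousness of $f$ to Dirichlet twists of conductor up to $H$. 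Tracking the polynomial saving $N^{-1/(2000\log q)}$ through the large-values and smooth-polynomial estimates, and checking that the $t$-removal recursion does not accumulate, are the remaining points of care.
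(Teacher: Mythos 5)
Your Step 1 reduction is fine: on $I_H(G_0)$ the twist $e_{\mb{F}}(G\alpha)$ indeed depends only on the truncation $\alpha_H$, so one may assume $\alpha=A/t^k$ with $(A,t)=1$ and $k\leq H$. The genuine gap is in Step 2, in the claim that absorbing the twist into the group $\mc{X}_{t^k,N-H}$ and running the proof of Theorem \ref{VarAPs} ``verbatim'' yields $\ll \frac{\log H}{H}+N^{-1/(2000\log q)}$ \emph{uniformly over $1\le k\le H$}. The Hayes conductor of that group is $k+(N-H)$, so the mean-value/large-sieve machinery can only save in terms of the co-dimension $H-k$ (the effective interval length), not $H$. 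Concretely, when $k$ is within $O(1)$ of $H$ the group $\mc{X}_{t^k,N-H}$ has size $\phi(t^k)q^{N-H}\asymp q^{N}$, the classes of $\mc{R}_{t^k,N-H}$ inside $\mc{M}_N$ have size $\asymp q^{H-k}=O(1)$, and orthogonality gives $\frac{1}{q^{2N}}\sum_{\tilde\chi\in\mc{X}_{t^k,N-H}}\big|\sum_{G\in\mc{M}_N}f\bar{\tilde\chi}(G)\big|^2\asymp 1$ for any unimodular $f$; deleting the single character $\tilde{\chi}_1$ changes nothing. So the second-moment expression you reduce to is simply not small in the large-$k$ regime: the cancellation there lives in the specific Gauss-sum-weighted linear combination across residue classes, and your Cauchy--Schwarz in $\psi$ (which is where the ``no loss of $q^k$'' bookkeeping happens) throws it away by majorizing with the full unweighted second moment. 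This is exactly why the paper does \emph{not} treat all $\alpha$ multiplicatively: the dichotomy is whether $\alpha$ is close to a rational $a/g$ with $\deg g\le W$ small, and on the minor arcs (which include most $A/t^k$ with $k$ large) the cancellation is extracted by a bilinear, type-II argument — Ramar\'e's identity (Lemma \ref{Ramare}) to insert an irreducible factor, H\"older, the additive-energy bound for irreducibles (Corollary \ref{PRIM4TUP}, via the Selberg sieve), and the counting Lemmas \ref{TYPEI} and \ref{lem_rightcount} — giving a saving $q^{-W/4}$ with no appeal to pretentiousness at all. None of this appears in your proposal, and without it the range $k\gtrsim H-o(H)$ is untreated.

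Two secondary points. First, your passage from the first moment over $G_0$ to a second moment costs a square root, so a second-moment bound of size $\frac{\log H}{H}$ only yields $(\frac{\log H}{H})^{1/2}$; the paper avoids this by arranging (via the restriction to $\mc{S}_{\mbf{P},\mbf{Q}}$ with $P_1=100W$, $W\ge 10\log H$) that the MR-type error is $\ll H^{-50}$ and the $\frac{\log H}{H}$ comes from the sieve term $P_1/Q_1$. Second, in Step 3 the real/odd-$q$ deduction needs more than Lemmas \ref{RepulsionFF} and \ref{PrincPret}: the exceptional $\tilde{\chi}_1$ has a short-interval component of length up to $N-H$, and ruling it out for real $f$ requires transporting the problem through the involution (Lemmas \ref{lem_dist_inv} and \ref{lem_short_inv}, Corollary \ref{StarNonPret}) as in the paper's Cases 2.1--2.2; your one-line argument only addresses the Dirichlet part modulo $t^k$. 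The first of these is fixable bookkeeping and the second is patchable along the paper's lines, but the minor-arc gap is structural: the approach as stated cannot prove the theorem.
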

We will begin by proving Theorem~\ref{ShortExpSumFFCM}; we will prove the deduction of Theorems~\ref{ShortExpSumFFComplex} and~\ref{ShortExpSumFF} for general 1-bounded multiplicative functions at the end of this section. The proofs of the complex and real cases begin the same way. We shall thus begin both simultaneously, then highlight where the differences arise below. 

We proceed using the circle method, as in~\cite{mrt}, splitting into cases according to whether $\alpha$ lies in a major or minor arc (to be defined momentarily). In the function field setting, arcs can be determined via the following form of Dirichlet's theorem.

\begin{lem}[Dirichlet's theorem in function fields]
Suppose $\alpha \in \mb{T}$. Given $M \geq 1$ we can find $g \in \mc{M}_{\leq M}$ and $a \in \mb{F}_q[t]$ coprime to $g$ with $\deg{a} < \deg{g}$ such that $\lla g\alpha - a \rra \leq q^{-M}.$
\end{lem}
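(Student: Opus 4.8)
The plan is to transplant Dirichlet's classical pigeonhole argument to the complete field $\mb{K}_{\infty}(t)$, and then to adjust the resulting approximating fraction so that it becomes monic, reduced, and of the correct degree. For $g \in \mb{F}_q[t]$ write $g\alpha = \lfloor g\alpha\rfloor + \{g\alpha\}$ for the unique decomposition with $\lfloor g\alpha\rfloor \in \mb{F}_q[t]$ and $\{g\alpha\} \in \mb{T}$. I would partition $\mb{T}$ into the $q^M$ cosets of the subgroup $\{\beta \in \mb{T} : \lla\beta\rra \le q^{-M-1}\}$; two elements of $\mb{T}$ lie in the same coset exactly when their expansions in powers of $1/t$ agree in the coefficients of $t^{-1},\dots,t^{-M}$. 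There are $q^{M+1}$ polynomials of degree $\le M$, so the pigeonhole principle produces two distinct such polynomials $g_1 \ne g_2$ with $\{g_1\alpha\}$ and $\{g_2\alpha\}$ in the same coset. Setting $h := g_1 - g_2 \ne 0$ (so $\deg{h} \le M$) and $b := \lfloor g_1\alpha\rfloor - \lfloor g_2\alpha\rfloor \in \mb{F}_q[t]$, this gives $\lla h\alpha - b\rra \le q^{-M-1} \le q^{-M}$.

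Next I would pass to a reduced, monic approximant. Let $c \in \mc{M}$ be the monic greatest common divisor of $h$ and $b$ (taking $\gcd(h,0)$ to be the monic associate of $h$), let $\lambda \in \mb{F}_q^{\times}$ be the leading coefficient of $h$, and put $g := h/(c\lambda)$ and $a := b/(c\lambda)$. Then $g \in \mc{M}$ with $\deg{g} = \deg{h} - \deg{c} \le M$, and $\gcd(g,a) = 1$. Since $h\alpha - b = c\lambda(g\alpha - a)$ and $\lla\cdot\rra$ is multiplicative with $\lla c\lambda\rra = q^{\deg{c}} \ge 1$, we obtain $\lla g\alpha - a\rra = q^{-\deg{c}}\lla h\alpha - b\rra \le q^{-M}$. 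It remains to check $\deg{a} < \deg{g}$, which is equivalent to $\deg{b} < \deg{h}$. Here I would use that $\alpha \in \mb{T}$ forces $\lla\alpha\rra \le q^{-1}$, so $\lla h\alpha\rra = q^{\deg{h}}\lla\alpha\rra \le q^{\deg{h}-1}$; if instead $\deg{b} \ge \deg{h}$ then $\lla b\rra = q^{\deg{b}} \ge q^{\deg{h}} > \lla h\alpha\rra$, and the ultrametric inequality yields $\lla h\alpha - b\rra = \lla b\rra \ge 1$, contradicting $\lla h\alpha - b\rra \le q^{-M}$. The degenerate possibility $\deg{h} = 0$ is subsumed: then $\lla h\alpha - b\rra < 1$ forces $b = 0$, whence $g = 1$, $a = 0$, and $\lla\alpha\rra \le q^{-M}$ holds directly.

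I do not anticipate a genuine obstacle here: this is the verbatim function-field analogue of the real-variable argument, with the non-archimedean valuation $\lla\cdot\rra$ playing the role of distance to the nearest integer. The only points that need a little care are the normalisation — dividing out $\gcd(h,b)$ and the leading coefficient of $h$ in order to force $g$ to be monic and $(g,a)=1$ — and the appeal to the strict-inequality case of the ultrametric inequality to deduce $\deg{a} < \deg{g}$; both are routine.
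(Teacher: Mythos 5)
Your argument is correct, and it is exactly the approach the paper has in mind: the paper's proof is a one-line appeal to the pigeonhole argument "as in the integer setting," which is precisely the pigeonhole-on-fractional-parts argument you carry out, together with the routine normalisation (dividing out the gcd and leading coefficient, and using the ultrametric to get $\deg{a}<\deg{g}$). No gaps; the details you supply are the standard ones the paper leaves implicit.
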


\begin{proof}
This follows from the pigeonhole principle, just as in the integer setting.
\end{proof}

We are now ready to embark on the proof of Theorem~\ref{ShortExpSumFFCM}. We will first prove the following closely related statement.

\begin{prop}\label{expprop}\ Let $1 \leq H \leq N-N^{3/4}$. Let $f \colon  \mc{M} \to \mb{U}$ be completely multiplicative. Suppose that
\begin{align}\label{Wbound}
10\log H\leq \min\{(\log N)/(100\log q),H/10,M_{*}(f;N,H)/100\},    
\end{align}
where $M_{\ast}(f;N,H) :=  1+M_{\text{Dir}}(f;N,H)$ if $f$ is real-valued and $q$ is odd, and $M_{\ast}(f;N,H) :=1+ M_{\text{\textnormal{Hayes}}}(f;N,H)$ otherwise.
Then
\begin{align*}
\sup_{\alpha \in \mb{T}}\frac{1}{|\mc{M}_N|} \sum_{G_0 \in \mc{M}_N} \frac{1}{|\mc{M}_{<H}|} \Big|\sum_{\substack{G \in \mc{M}_N \\ G \in I_H(G_0)}} f(G)e_{\mb{F}}(G\alpha)\Big| \ll  (\log H)/H+ N^{-1/40} + e^{-M_{\text{Hayes}}(f;N,H)/20}.   
\end{align*}
Moreover, if $f$ is real-valued and $q$ is odd, we can replace $M_{\text{Hayes}}$ with $M_{\text{Dir}}$.
\end{prop}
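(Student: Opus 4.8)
The plan is to transplant the circle-method argument of Matom\"aki--Radziwi\l{}\l{}--Tao \cite{mrt} to $\mb{F}_q[t]$. First I would make two harmless reductions: by Lemma \ref{RestricToS} in the sharp form of Remark \ref{RemOnS2} one may restrict the variable $G$ (and the prime factors used in the decomposition below) to $\mc{S}_{\mbf{P},\mbf{Q}}$ for the parameter choice of Section \ref{VarSec} with $P_1 \asymp \log H$ and $Q_1 = H$, at the cost of an error $\ll (\log H)/H$; this is where the constant $10\log H$ of \eqref{Wbound} enters. Next, fix $\alpha \in \mb{T}$ and apply the function-field Dirichlet theorem with parameter $H+1$ to write $\alpha = a/g + \beta$ with $g \in \mc{M}_{\leq H+1}$, $(a,g)=1$, $\deg{a} < \deg{g}$ and $\lla\beta\rra \leq q^{-(H+1)-\deg{g}} \leq q^{-H-1}$. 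Because $\lla\beta\rra \leq q^{-H-1}$, the coefficient of $t^{-1}$ in $G\beta$ depends only on the coefficients of $G$ of degree $\geq H$, so the phase $e_{\mb{F}}(G\beta)$ is \emph{constant} on each set $I_H(G_0)\cap\mc{M}_N$; up to a unimodular scalar the inner sum becomes $\sum_{G\in I_H(G_0)\cap\mc{M}_N}f(G)e_{\mb{F}}(Ga/g)$. I then split into a major arc $q^{\deg{g}}\leq H^{10}$ and a minor arc $q^{\deg{g}}>H^{10}$.

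On the major arc, $e_{\mb{F}}(Ga/g)$ depends only on $G\bmod g$, so expanding over residue classes mod $g$ (via Gauss sums and orthogonality of Dirichlet characters mod $g$) and over short interval characters (via \eqref{ortho2}, equivalently via the involution of Lemma \ref{ShortIntToAP}), the $G_0$-average of $q^{-H}\big|\sum_{G\in I_H(G_0)\cap\mc{M}_N}f(G)e_{\mb{F}}(Ga/g)\big|$ is controlled by a mean square of $q^{-N}\sum_{G}f(G)\bar{\tilde\chi}(G)$ over Hayes characters $\tilde\chi$ whose Dirichlet part has conductor $\deg{g}\leq \tfrac{10}{\log q}\log H<H$ and whose short interval part has length $\leq N-H$. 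The mean-square estimate over Hayes characters that follows, exactly as Theorem \ref{MRFF} does, from the lemmas of Sections \ref{sec_prelim1}--\ref{sec_prelim2} bounds the contribution of all $\tilde\chi$ other than the minimizer $\tilde\chi_1$ of $\tilde\chi\mapsto \mc{D}_{f\bar{\tilde\chi}}(N)$ by $\ll (\log H)/H + N^{-1/36+o(1)}$, while $\tilde\chi_1$ has Hayes conductor $<H$, so by Halász's theorem \ref{HalThmFF} and \eqref{Wbound} it contributes $\ll e^{-\mc{D}_{f\bar{\tilde\chi_1}}(N)}(1+\mc{D}_{f\bar{\tilde\chi_1}}(N))\ll e^{-M_{\textnormal{Hayes}}(f;N,H)/20}$ (one must check that restricting to $\mc{S}_{\mbf{P},\mbf{Q}}$ decreases the relevant pretentious distance by only $O(\log(Q_1/P_1)) = O(\log H)$, which \eqref{Wbound} absorbs). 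When $f$ is real-valued and $q$ is odd, Lemmas \ref{lem_dist_inv}, \ref{lem_short_inv} and \ref{PrincPret} (cf. Corollary \ref{StarNonPret}) show that the short interval part of $\tilde\chi_1$ can be discarded, so $M_{\textnormal{Dir}}(f;N,H)$ suffices in place of $M_{\textnormal{Hayes}}(f;N,H)$.

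On the minor arc, $\deg{g}>\tfrac{10}{\log q}\log H$. Here I would decompose $f$ by Ramar\'e's identity (Lemma \ref{Ramare}, exactly as in Lemma \ref{DirDecomp}), writing the inner sum as a sum of bilinear expressions $\sum_{\deg{R}=d}\sum_{\deg{M}=N-d}f(R)f(M)e_{\mb{F}}(RMa/g)\,w(M)$ with $d$ ranging over the sieve blocks $[P_j,Q_j]$, so $d\leq \tfrac{10}{\log q}\log H<\deg{g}$. Cauchy--Schwarz in $M$ (and in the $G_0$-average), then opening the square, produces phases $e_{\mb{F}}((R_1-R_2)Ma/g)$ which exhibit cancellation in $M$ unless $g\mid R_1-R_2$; since $\deg(R_1-R_2)\leq d<\deg{g}$ this forces $R_1=R_2$, and a refined treatment via the fourth moment, the additive-energy bound for irreducibles (Corollary \ref{PRIM4TUP}) and the smooth-polynomial count (Theorem \ref{SmoothsFF}) yields a minor-arc contribution $\ll N^{-1/40}+(\log H)/H$, using that $q^{\deg{g}}\leq H^{10}$ is a small power of $N$ by the first clause of \eqref{Wbound}. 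Collecting the major- and minor-arc bounds (the $N^{-1/40}$ term absorbing the $N^{-1/36+o(1)}$ one) gives the proposition, with the real-valued/odd-$q$ statement following from the corresponding refinement of the major-arc step.

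The hard part will be the minor-arc estimate: performing the circle method over $\mb{F}_q[t]$ (Gauss sums, the Haar measure $d\alpha$ on $\mb{T}$ and Dirichlet's theorem are available, but the bilinear analysis is delicate), correctly handling the interaction between the average over the short intervals $I_H(G_0)$ and the additive twist $e_{\mb{F}}(Ga/g)$, and extracting a genuine power-of-$N$ saving from Corollary \ref{PRIM4TUP} rather than the trivial diagonal bound. A secondary technical point is the bookkeeping that feeds the combined ``short interval $\times$ residue class mod $g$'' structure into the Hayes-character orthogonality relations \eqref{ortho1}--\eqref{ortho2} without losing factors of $\phi(g)$ or $q^H$ in the major-arc step.
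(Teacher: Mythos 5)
Your overall architecture (restriction to $\mc{S}_{\mbf{P},\mbf{Q}}$, circle method, Ramar\'e plus a bilinear estimate with Corollary \ref{PRIM4TUP} on the minor arcs, Gauss sums plus the Matom\"aki--Radziwi\l{}\l{} input and Hal\'asz on the major arcs, and the involution lemmas for the real/odd-$q$ refinement) matches the paper's, but your arc decomposition has a genuine gap. You approximate $\alpha$ at quality $q^{-(H+1)}$ and declare it minor as soon as $q^{\deg{g}}>H^{10}$, with no upper cap on $\deg{g}$. Your minor arcs then contain, for example, $\alpha=t^{-H}$ (its only approximation at that quality is $1/t^{H}$ itself), and more generally any $\alpha$ within $q^{-H-1-\deg{g}}$ of a reduced fraction $a/g$ with $\deg{g}$ comparable to $H$. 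For such $\alpha$ the quantity your bilinear step needs to be small, namely $\#\{F:\deg{F}<k,\ \lla F\alpha \pmod{1}\rra<q^{-H+k-1}\}$, can be $\gg q^{k-1}$ (for $\alpha=t^{-H}$ the condition is exactly $\deg{F}<k-1$), so the Type II/additive-energy argument gives no saving; likewise the orthogonality in $M$ you invoke after Cauchy--Schwarz requires the $M$-range (length about $H-d$) to exceed $\deg{g}$, which fails once $\deg{g}$ may be as large as $H+1$. A Gauss-sum expansion is hopeless there too, since $q^{\deg{g}/2}\approx q^{H/2}$. This is precisely why the paper approximates only to quality $q^{-X}$ with $X=H-W$ and $W$ chosen subject to \eqref{Wbound}: every $\alpha$ then has an approximation with $\deg{g}\leq X$, minor-arc denominators are trapped in the window $W<\deg{g}\leq H-W$ (both bounds are used in Lemma \ref{lem_rightcount}, which yields the count $\ll q^{k-W}$), and the points you misclassify as minor become major arcs with small $g$ but only moderately small $\beta$. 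The price is that $e_{\mb{F}}(G\beta)$ is then constant only on subintervals of length about $X$, so the major-arc analysis must first subdivide $I_H(G_0)$ --- the subdivision your choice of quality $q^{-H-1}$ was designed to avoid cannot in fact be avoided.

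Two smaller points. The saving $N^{-1/40}$ does not come from the minor arcs (their mechanism only saves $q^{-cW}\approx H^{-c'}$); it comes from the major arcs via Theorem \ref{MRFF} in the sharpened form of Remark \ref{RemOnS2}, after beating the Gauss-sum amplification $q^{\deg{g}/2}$. This is why the paper takes $P_1=100W$ rather than merely $P_1\asymp\log H$, so that $Q_1^{1/2}q^{-P_1/12}$ crushes $q^{W/2}$, and why the constraints $W\leq(\log N)/(100\log q)$ and $W\leq M_{\ast}/10$ are needed to absorb $q^{W/2}$ into $N^{-1/40}$ and into the Hal\'asz factor $e^{-M_{\ast}/20}$; your sketch leaves these amplification factors untracked, and the sentence invoking ``$q^{\deg{g}}\leq H^{10}$'' inside your minor-arc paragraph contradicts your own definition of the minor arcs.
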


 By~\eqref{Wbound}, we can choose $1 \leq W \leq X \leq H$ such that
$$
10\log H \leq W \leq \min\{(\log N)/(100\log q), H/10, M_{\ast}(f; N,H)/10\},
$$ 
and set $X = H-W$ (so that $W \leq X/2$). In general, we define arcs of the form 
$$
\mf{M}_{a,g}(X) := \{\alpha \in \mb{T}  \colon  \lla g\alpha - a\rra \leq q^{-X} \}.
$$
The major arcs of length $X$ and degree $W$ are defined by
$$
\mf{M}(X,W) := \bigcup_{\deg{g} \leq W} \bigcup_{\substack{a \bmod{g} \\ (a,g) = 1}} \mf{M}_{a,g}(X),
$$
and the minor arcs are then defined by 
$$
\mathfrak{m} = \mf{m}(X,W) := \mb{T} \bk \mc{M}(X,W).
$$ 

Let $P_1 := 100W$ and $Q_1 := H/3$, and let $\mc{S} = \mc{S}_{\mbf{P},\mbf{Q}}$, with $P_j,Q_j$ defined in terms of $P_1$ and $Q_1$ as in Section~\ref{VarSec}. For the same reason as in Section~\ref{VarSec}, it will be advantageous to replace the expression on the left-hand side in~\eqref{EXPSUMFF} by 
\begin{equation}\label{EXPSUMFFwithS}
\sup_{\alpha \in \mb{T}}\frac{1}{|\mc{M}_N|} \sum_{G_0 \in \mc{M}_N} \frac{1}{|\mc{M}_{<H}|} \Big|\sum_{\substack{G \in \mc{M}_N \\ G \in I_H(G_0)}} f1_{\mc{S}}(G)e_{\mb{F}}(G\alpha)\Big|.
\end{equation}
By Lemma~\ref{SieveErat} and the triangle inequality, the difference between this latter expression and the one in~\eqref{EXPSUMFF} is $O(P_1/Q_1)$. We will thus focus our attention mostly on the estimation of~\eqref{EXPSUMFFwithS}.

As mentioned, the expression in~\eqref{EXPSUMFFwithS} will be treated differently according to whether $\alpha$ lies in a major arc or a minor arc. We start with the minor arc case, where the argument has some resemblance to the derivation of the orthogonality criterion for multiplicative functions~\cite{bourgain-sarnak-ziegler}, and which can be derived independently of the results of the last two sections.

\subsection{The Minor Arcs}

We fix $\alpha \in \mf{m}$. In order to proceed in estimating~\eqref{EXPSUMFFwithS}, we shall need the following basic result.
\begin{lem} \label{TYPEI}
Let $\alpha \in \mb{K}_{\infty}(t)$ and $H \geq 1$. Then
$$\sum_{\text{deg}(F) < H} e_{\mb{F}}(F\alpha) = q^H1_{\lla \alpha \bmod{1} \rra \leq q^{-H-1}}.$$
\end{lem}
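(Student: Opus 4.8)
The plan is to recognize the left-hand side as a complete additive character sum over the finite group $V_H := \{F \in \mathbb{F}_q[t] : \deg{F} < H\}$, which is an $\mathbb{F}_q$-vector space of dimension $H$ (and has $q^H$ elements, including $F = 0$), and then to apply orthogonality of characters. First I would check that $\psi_\alpha : V_H \to S^1$, $\psi_\alpha(F) := e_{\mb{F}}(F\alpha)$, is a group homomorphism: the coefficient $a_{-1}(\cdot)$ of $t^{-1}$ in a Laurent series is $\mathbb{F}_p$-linear, and composing with $\text{tr}_{\mb{F}_q/\mb{F}_p}$ and $x \mapsto e(x/p)$ preserves additivity, while $(F+F')\alpha = F\alpha + F'\alpha$. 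One also notes that $\psi_\alpha$ depends only on $\alpha \bmod \mathbb{F}_q[t]$, since for $F \in V_H$ and $R \in \mathbb{F}_q[t]$ the product $FR$ is a polynomial, so $a_{-1}(FR) = 0$ and $e_{\mb{F}}(FR) = 1$; hence we may assume $\alpha \in \mb{T}$ and write $\alpha = \sum_{j \geq 1} c_j t^{-j}$ with $c_j \in \mathbb{F}_q$.

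By orthogonality on the finite abelian group $V_H$, the sum is $|V_H| = q^H$ if $\psi_\alpha$ is the trivial character and $0$ otherwise, so everything reduces to determining when $\psi_\alpha$ is trivial. For $F = \sum_{0 \leq i \leq H-1} b_i t^i$, a short computation gives $a_{-1}(F\alpha) = \sum_{0 \leq i \leq H-1} b_i c_{i+1}$, and hence $\psi_\alpha(F) = e\big(\text{tr}_{\mb{F}_q/\mb{F}_p}(\sum_{i} b_i c_{i+1})/p\big)$. If $c_1 = \cdots = c_H = 0$, i.e.\ $\lla \alpha \bmod 1\rra \leq q^{-H-1}$, this is $1$ for every $F$, so $\psi_\alpha$ is trivial and the sum equals $q^H$.

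Conversely, if some $c_{k+1} \neq 0$ with $0 \leq k \leq H-1$, I would take $F = bt^k$ and let $b$ range over $\mathbb{F}_q$: then $\psi_\alpha(bt^k) = e\big(\text{tr}_{\mb{F}_q/\mb{F}_p}(b c_{k+1})/p\big)$, and since the trace form $(x,y) \mapsto \text{tr}_{\mb{F}_q/\mb{F}_p}(xy)$ on $\mathbb{F}_q$ is non-degenerate and $c_{k+1} \neq 0$, the linear functional $b \mapsto \text{tr}_{\mb{F}_q/\mb{F}_p}(b c_{k+1})$ is surjective onto $\mathbb{F}_p$, hence not identically zero; so $\psi_\alpha(bt^k) \neq 1$ for some $b$, $\psi_\alpha$ is nontrivial, and the sum vanishes. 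Combining the two cases gives the claimed identity.

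There is really no substantive obstacle here; the only points requiring a little care are the bookkeeping that isolates the coefficient $a_{-1}(F\alpha) = \sum_i b_i c_{i+1}$ from the Laurent expansion, and the appeal to non-degeneracy of the trace form $\mathbb{F}_q \times \mathbb{F}_q \to \mathbb{F}_p$ (equivalently, separability of $\mathbb{F}_q/\mathbb{F}_p$) to conclude triviality of $\psi_\alpha$ exactly at the stated threshold. One could equally well organize the argument as an iterated orthogonality/geometric-series computation over the coefficients $b_0,\dots,b_{H-1}$ one at a time, which amounts to the same thing.
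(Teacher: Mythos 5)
Your proof is correct. Note that the paper does not prove this lemma at all: it simply cites it as standard (Lemma 7 of the reference \cite{kub}), so your argument supplies the details that the paper leaves to the literature. What you write is the standard proof: the map $F \mapsto e_{\mb{F}}(F\alpha)$ is an additive character of the $\mb{F}_q$-vector space $\{F : \deg{F} < H\}$, it depends only on $\alpha \bmod 1$ because $a_{-1}$ of a polynomial vanishes, orthogonality reduces everything to deciding triviality, and the coefficient computation $a_{-1}(F\alpha) = \sum_{0 \le i \le H-1} b_i c_{i+1}$ together with non-degeneracy of the trace form $\mb{F}_q \times \mb{F}_q \to \mb{F}_p$ shows the character is trivial precisely when $c_1 = \cdots = c_H = 0$, i.e.\ $\lla \alpha \bmod 1 \rra \le q^{-H-1}$. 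All the steps check out, including the boundary bookkeeping at $j = i+1$ and the two directions of the triviality criterion, so this is a complete and acceptable substitute for the citation.
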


\begin{proof}
This is standard, see e.g.~\cite[Lemma 7]{kub}.
\end{proof}

We will also need the following estimate, connected with Lemma~\ref{TYPEI}.

\begin{lem}\label{lem_rightcount}
Let $\alpha \in \mf{M}_{a,g}(X)$, where $W < \deg{g} \leq X$ and $(a,g) = 1$. Let $100W \leq k \leq H/3$. Then
$$
|\{\deg{F} < k  \colon  \lla F\alpha \bmod{1}\rra < q^{-H+k-1}\}| \ll q^{k-W}.
$$
\end{lem}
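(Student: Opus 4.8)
The plan is to use the major-arc hypothesis to strip off the ``small'' part of $\alpha$ and reduce the count to an elementary counting of polynomials in residue classes modulo $g$. Set $\beta := \alpha - a/g$, so that $g\beta = g\alpha - a$, and hence, since $\lla\cdot\rra$ is multiplicative and $\lla g\rra = q^{\deg{g}}$,
\[
\lla\beta\rra = \frac{\lla g\alpha - a\rra}{q^{\deg{g}}} \leq q^{-X-\deg{g}} \leq q^{-X-W-1},
\]
using $W < \deg{g}$. For any $F$ with $\deg{F} < k$ this gives $\lla F\beta\rra \leq q^{k-1}\lla\beta\rra \leq q^{k-X-W-2} = q^{k-H-2}$ (recall $X = H - W$, so $X + W = H$); in particular $\lla F\beta\rra < q^{-H+k-1}$ and $\lla F\beta\rra < 1$, so $F\beta \in \mb{T}$.

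Next I would split $F\alpha = Fa/g + F\beta$ and analyse its fractional part. Writing $Fa = Qg + r$ with $\deg{r} < \deg{g}$, we have $r/g \in \mb{T}$, so $F\alpha \equiv r/g + F\beta \pmod{1}$ with no ``carry''. Since $\lla F\beta\rra < q^{-H+k-1}$, the ultrametric inequality shows that $\lla F\alpha \pmod{1}\rra < q^{-H+k-1}$ if and only if $\lla r/g\rra < q^{-H+k-1}$. As $\lla r/g\rra = q^{\deg{r}-\deg{g}}$ for $r \neq 0$ (and $\lla 0/g \rra = 0$), this is precisely the condition $\deg{r} < s$ with $s := \deg{g} - H + k - 1$; that is, $Fa \bmod g$ lies in the set $R_s$ of polynomials of degree $< s$, and $|R_s| = q^{\max(s,0)}$, which is $\leq q^{\deg{g}}$ since $k \leq H+1$.

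It then remains to count such $F$. As $(a,g) = 1$, multiplication by $a$ permutes the residues modulo $g$, so the set in question equals $\{F : \deg{F} < k,\ F \bmod g \in \mc{C}\}$ for a fixed collection $\mc{C}$ of $q^{\max(s,0)}$ residue classes. Each residue class contains at most $q^{\max(k-\deg{g},0)}$ polynomials of degree $< k$, so the count is at most $q^{\max(s,0)+\max(k-\deg{g},0)}$. A short case check using $W < \deg{g} \leq X = H - W$ and $100W \leq k \leq H/3$ shows the exponent never exceeds $k - W$: when both maxima are positive it equals $2k-H-1 \leq k-W$ (because $k \leq H/3 \leq X+1$), when only $\max(s,0)$ is positive it is $\leq X-H+k-1 = k-W-1$, when only $\max(k-\deg{g},0)$ is positive it is $< k-W$ since $\deg{g} > W$, and when both vanish it is $0 \leq k-W$ since $k \geq W$. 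Hence the count is $\ll q^{k-W}$, as claimed.

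I do not expect a serious obstacle here: the only delicate points are purely bookkeeping, namely (a) verifying that $\lla F\beta\rra$ really lies strictly below the cutoff $q^{-H+k-1}$ — this is where $X = H - W$ and $\deg{g} > W$ are both used — and (b) checking that $\max(s,0)+\max(k-\deg{g},0) \leq k-W$ across all regimes of $\deg{g}$ relative to $k$.
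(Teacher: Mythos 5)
Your proof is correct, and it rests on the same basic strategy as the paper's: write $\alpha = a/g+\beta$ with $\lla \beta \rra \leq q^{-X-\deg{g}}$, use the ultrametric inequality to reduce the condition on $F\alpha$ to a condition on the rational part $Fa/g$, and then count the $F$ of degree $<k$ falling into the admissible residue classes modulo $g$. The difference is in the organization: you note that $\lla F\beta\rra \leq q^{k-H-2}$ lies strictly below the cutoff for \emph{every} $F$ of degree $<k$ (using $\deg{g}>W$ and $X=H-W$), which converts membership in the set into the single criterion that $Fa \bmod g$ has degree $< \deg{g}-H+k-1$, whereas the paper compares $\lla Fa/g \pmod{1}\rra$ with $\lla F\beta\rra$ through a three-way case analysis ($g\mid F$; $g\nmid F$ with $\deg{Fa}<\deg{g}$; $\deg{F}<\deg{g}\leq \deg{Fa}$) and bounds each contribution by $q^{k-W}$ separately. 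Your version subsumes all of these cases at once (e.g.\ $g\mid F$ is simply $r=0$), and your exponent check $\max(s,0)+\max(k-\deg{g},0)\leq k-W$ is fine in every regime (in fact the regime where both maxima are positive is vacuous, since it would force $k>(H+1)/2>H/3$). Two harmless remarks: for the upper bound you only need the forward implication of your ``iff''; and if $W$ were not an integer, $\deg{g}>W$ gives only $\lla F\beta\rra\leq q^{-H+k-1}$ rather than a strict inequality, which would merely relax the criterion to $\deg{(Fa\bmod g)}\leq \deg{g}-H+k-1$ and cost a factor $q$, absorbed by the implied constant.
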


\begin{proof}
Write $\beta := \alpha-a/g$. By assumption, we have $\lla \beta \rra \leq q^{-X-\deg{g}}$. Since $(A,B) \mapsto \lla A-B\rra$ is an ultrametric, for any $F \in \mb{F}_q[t]$ we have
$$
\lla F\alpha \bmod{1}\rra \leq \max\{ \lla Fa/g \bmod{1}\rra, \lla F\beta \bmod{1}\rra\},
$$
with equality whenever the two valuations on the right-hand side differ. 

Note that if $g \nmid F$ then as $(a,g) = 1$, we can write $Fa = Mg + L$ with $L \not \equiv 0 \bmod{g}$. Hence, if $\deg{F} \leq k$ then
$$
\lla Fa/g \bmod{1} \rra = q^{\deg{L}-\deg{g}} \geq q^{-\deg{g}} > q^{\deg{F}-X - \deg{g}} \geq \lla F\beta\bmod{1}\rra,
$$
using $X \geq H/2 > k$. On the other hand, if $g|F$ then $\lla Fa/g \bmod{1} \rra = 0 \leq \lla F\beta \bmod{1} \rra$. In particular, we have
$$
\lla F\alpha \bmod{1} \rra \begin{cases} = \lla Fa/g \bmod{1} \rra &\text{ if $g\nmid F$} \\ \leq \lla F\beta \bmod{1} \rra &\text{ if $g|F$.}\end{cases}
$$
Let $\mc{E} := \{\deg{F} < k  \colon  \lla F\alpha \bmod{1}\rra < q^{-H+k-1}\}$. Consider separately the number of $F\in \mc{E}$ with $g\mid F$ and $g\nmid F$. Note that
$$
|\{F \in \mc{E}  \colon  g|F\}| \leq |\{\deg{F} < k  \colon  g|F\}| \leq 1 + q^{k-\deg{g}}\ll q^{k-W}.
$$
Next, consider the contribution to $\mc{E}$ from $F$ that are not divisible by $g$. We first observe that there are no $F \in \mc{E}$ with $\deg{F} \geq \deg{g}$. Indeed, if such an $F$ belonged to $\mc{E}$ then
$\lla F\alpha \bmod{1} \rra = \lla Fa/g \bmod{1}\rra \geq q^{-\deg{g}}$. This implies the chain of inequalities
\begin{align*}
q^{-k} < q^{-\deg{F}} \leq q^{-\deg{g}} \leq \lla F\alpha \bmod{1}\rra < q^{-H+k-1},
\end{align*}
which are conflicting since $k \leq H/3$. 

We may therefore assume that $\deg{F} < \deg{g}$. Suppose next that $\deg{Fa} < \deg{g}$ as well. Then 
$$
\lla F\alpha \bmod{1}\rra = \lla Fa/g\bmod{1}\rra = q^{\deg{Fa}-\deg{g}} \geq q^{\deg{F}-\deg{g}}.
$$ 
Thus, if $F \in \mc{E}$ then we must have 
$$
\deg{F} \leq \deg{g} + k-H-1 \leq k-H+X -1 < k-W,
$$ 
since $W=H-X$. Hence,
$$
|\{F \in \mc{E}  \colon  g\nmid F, \text{ and } \deg{F} \geq \deg{g} \text{ or } \deg{Fa} < \deg{g}\}| \leq |\{\deg{F} < k-W\}| \ll q^{k-W}.
$$
It remains to consider those $F$ with $\deg{F} < \deg{g} \leq \deg{Fa}$. Observe that
\begin{align*}
&|\{F \in \mc{E}  \colon  g\nmid F, \deg{F} < \deg{g} \leq \deg{Fa}\}| \\
&= \sum_{0 \leq m < \deg{g}} \sum_{\deg{B} = m} |\{F  \colon  \deg{g}-\deg{a} \leq \deg{F} < \min\{k,\deg{g}\}  \colon  Fa \equiv B \bmod{g}\\
&\quad \quad \quad \quad \quad \quad \quad \quad \quad \quad \quad \textnormal{and}\quad\lla Fa/g\bmod{1} \rra <  q^{-H+k-1}\}|.
\end{align*}
Note that $\lla Fa/g \bmod{1} \rra = \lla B/g\rra = q^{m-\deg{g}}$ whenever $\deg{B} = m$ and $Fa \equiv B \bmod{g}$, and so $F \in \mc{E}$ under these conditions only if $0 \leq m < \deg{g} - H + k -1$. This condition is empty if $\deg{g} \leq H-k+1$ so we assume otherwise (and hence $\deg{g} > k$). We may thus bound the above by
$$
\leq \sum_{0 \leq m < \deg{g}-H+k-1} \sum_{\deg{B} = m} |\{\deg{g}-\deg{a} \leq \deg{F} < k  \colon  F\equiv \bar{a}B \bmod{g}\}|,
$$
where $\bar{a}$ is the inverse of $a\bmod {g}$. Since $\deg{F} < \deg{g}$, the cardinality above is $\leq 1$, and thus
\begin{align*}
|\{F \in \mc{E}  \colon  g\nmid F, \deg{F} < \deg{g} \leq \deg{Fa}\}| &\leq 1 + \sum_{0 \leq m < \deg{g}-H+k-1} \sum_{\deg{B} = m} 1 \\
&\leq 1+ q\sum_{0 \leq m < \deg{g}-H+k-1} q^m \ll 1+ q^{\deg{g}-H+k}\\
&\ll q^{X-H+k} = q^{k-W}.
\end{align*}
It follows that
\begin{align*}
|\mc{E}| &\leq |\{F \in \mc{E}  \colon  g|F\}| + |\{F \in \mc{E}  \colon  g\nmid F, \deg{F} \geq \deg{g} \text{ or } \deg{Fa} < \deg{g}\}|\\
&+ |\{F \in \mc{E}  \colon  \deg{F} < \deg{g} \leq \deg{Fa}\}|\ll q^{k-W},
\end{align*}
as claimed.
\end{proof}
Let $\alpha \in \mf{m}.$ For each $G_0 \in \mc{M}_N$ let $\theta(G_0) \in S^1$ be chosen so as to write~\eqref{EXPSUMFFwithS} as
\begin{align}\label{eq100}
\Sigma_{\mc{S}}(\alpha):=\frac{1}{|\mc{M}_N|} \sum_{G_0 \in \mc{M}_N} \frac{\theta(G_0)}{|\mc{M}_{<H}|} \sum_{\substack{G \in \mc{M}_N \\ G \in I_H(G_0)}} f1_{\mc{S}}(G)e_{\mb{F}}(\alpha G).
\end{align}
Since $f$ is completely multiplicative, upon applying Lemma~\ref{Ramare} we obtain
\begin{align*}
&\Sigma_{\mc{S}}(\alpha)\\
&= \frac{1}{|\mc{M}_N|} \sum_{G_0 \in \mc{M}_N} \frac{\theta(G_0)}{|\mc{M}_{<H}|}\sum_{G'\in \mc{M}_{\leq N}} \frac{f(G')}{1+\omega_{[P_1,Q_1]}(G')} \sum_{\substack{R \in \mc{P} \\ P_1 \leq \deg{R} \leq \min\{Q_1,N-\deg{G'}\} \\ RG'\in I_H(G_0) \\ \deg{RG'} = N}} f(R)e_{\mb{F}}(RG'\alpha) \\
&\quad \quad \quad \quad \quad \quad \quad \quad + O\Big(q^{-N-H} \sum_{G_0 \in \mc{M}_N} \sum_{P_1 \leq d \leq Q_1} \sum_{R \in \mc{P}_d} |\{G \in I_H(G_0)  \colon  R^2|G\}|\Big) \\
&= \frac{1}{|\mc{M}_N|} \sum_{G_0 \in \mc{M}_N} \frac{\theta(G_0)}{|\mc{M}_{<H}|}\sum_{G'\in \mc{M}_{\leq N}} \frac{f(G')}{1+\omega_{[P_1,Q_1]}(G')} \sum_{\substack{R \in \mc{P} \\ P_1 \leq \deg{R} \leq \min\{Q_1,N-\deg{G'}\} \\ RG'\in I_H(G_0) \\ \deg{RG'} = N}} f(R)e_{\mb{F}}(RG'\alpha)\\
&\quad \quad \quad \quad \quad \quad \quad \quad \quad \quad \quad \quad \quad \quad \quad \quad \quad \quad \quad \quad \quad \quad \quad \quad \quad \quad \quad \quad \quad \quad \quad \quad + O(q^{-P_1}).
\end{align*}
We pull the summation over $G'$ out, split the sum over $R$ according to degree and apply the triangle inequality to get
\begin{align*}
|\Sigma_{\mc{S}}(\alpha)| &\leq \sum_{P_1 \leq k \leq Q_1} \frac{1}{|\mc{M}_N|} \sum_{G' \in \mc{M}_{N-k}} \Big|\sum_{R \in \mc{P}_k} f(R)e(G'R\alpha) \frac{1}{|\mc{M}_{<H}|} \sum_{\substack{G_0 \in \mc{M}_N \\ G'R \in I_H(G_0)}} \theta(G_0)\Big| + O(H^{-50}),
\end{align*}
since $P_1 \geq 1000 \log H$. We apply H\"{o}lder's inequality to the sum over $G'$, getting
\begin{align}
|\Sigma_{\mc{S}}(\alpha)| &\ll q^{-N/4-H}\sum_{P_1 \leq k \leq Q_1} q^{-3k/4} \mc{T}_k^{1/4} + H^{-50}, \label{InsertTk}
\end{align}
where for each $P_1 \leq k \leq Q_1$ we define
\begin{align*}
\mc{T}_k &:= \sum_{R_1,R_2,R_3,R_4 \in \mc{P}_k } f(R_1)f(R_2)\bar{f(R_3)f(R_4)} \sum_{G_1,G_2,G_3,G_4\in \mc{M}_N} \theta(G_1)\theta(G_2)\bar{\theta(G_3)\theta(G_4)} \\
&\sum_{\substack{G' \in \mc{M}_{N-k}\\ G'R_j \in I_H(G_j) \,\,\forall j}}e_{\mb{F}}(G'\alpha(R_1+R_2-R_3-R_4)).
\end{align*}
Fix $P_1 \leq k \leq Q_1$ for the time being. Split the sums over $G_j$ according to their residue classes $A_j \bmod{R_j}$. Writing $G_j = D_jR_j+A_j$, we know that $I_H(G_j) = I_H(D_jR_j)$ since $\deg{A_j} < \deg{R_j} \leq Q_1 < H$. Thus, we can rewrite $\mc{T}_k$ as
\begin{align*}
\mc{T}_k &= \sum_{\substack{R_1,R_2,R_3,R_4 \in \mc{P}_k \\1 \leq j \leq 4}}f(R_1)f(R_2)\bar{f(R_3)f(R_4)} \sum_{\substack{A_1,A_2,A_3,A_4 \\ A_j \bmod{R_j}\,\, \forall 1 \leq j \leq 4}} \sum_{D_1 \in \mc{M}_{N-k}} \theta(D_1R_1+A_1) \\
&\cdot \sum_{\substack{D_2,D_3,D_4 \in \mc{M}_{N-k}}} \theta(D_2R_2+A_2)\bar{\theta(D_3R_3+A_3)\theta(D_4R_4+A_4)}\\
&\cdot \sum_{\substack{G' \in \mc{M}_{N-k} \\ G'\in I_{H -k}(D_j)\,\, \forall 1\leq j \leq 4}} e_{\mb{F}}(G'\alpha(R_1+R_2-R_3-R_4)).
\end{align*}
We observe now that 
$$
G' \in \bigcap_{1 \leq j \leq 4} I_{H-k}(D_j) \Longleftrightarrow G' \in I_{H-k}(D_1) \text{ and } \deg{D_j-D_1} < H-k \text{ for all $1 \leq j \leq 4$.}
$$
Hence, making the change of variables $L := G'-D_1$, we can recast the above expression for $\mc{T}_k$ as
\begin{align*}
&\sum_{\substack{R_j \in \mc{P}_k \\ 1 \leq j \leq 4}}f(R_1)f(R_2)\bar{f(R_3)f(R_4)} \sum_{D_1 \in \mc{M}_{N-k}}\sum_{\substack{A_j \bmod{R_j} \\ 1 \leq j \leq 4}} \theta(D_1R_1+A_1)e_{\mb{F}}(D_1\alpha(R_1+R_2-R_3-R_4)) \\
&\cdot \sum_{\substack{D_2,D_3,D_4 \in \mc{M}_{N-k} \\ \deg{D_j-D_1} < H-k\,\, \forall j}} \theta(D_2R_2+A_2) \bar{\theta(D_3R_3+A_3)} \bar{\theta(D_4R_4+A_4)}\\
&\cdot\sum_{\deg{L} <H-k} e_{\mb{F}}(L\alpha(R_1+R_2-R_3-R_4)).
\end{align*}
Note that now the inner sum over $L$ is decoupled from the sums over $A_j$ and $D_j$. Given $D_1 \in \mc{M}_{N-k}$ fixed, there are $\ll q^{H-k}$ choices of each of $D_2$, $D_3$ and $D_4$ to satisfy the condition $\deg{D_j-D_1} < H-k$. Furthermore, there are $\ll q^{4k}$ choices of 4-tuples of residue classes $A_1,A_2,A_3,A_4$ to their respective moduli $R_1,R_2,R_3$ and $R_4$. Recalling that $\theta(\cdot)$ is unimodular and bounding trivially in $D_1 \in \mc{M}_{N-k}$, it follows that
\begin{align*}
\mc{T}_k &\ll q^{4k} \cdot q^{3(H-k)} \cdot q^{N-k} \sum_{R_1,R_2,R_3,R_4 \in \mc{P}_k} \Big|\sum_{\deg{L} <H-k} e_{\mb{F}}(L\alpha(R_1+R_2-R_3-R_4))\Big| \\
&\ll q^{3H+N} \sum_{R_1,R_2,R_3,R_4 \in \mc{P}_k} \Big|\sum_{\deg{L} <H-k} e_{\mb{F}}(L\alpha(R_1+R_2-R_3-R_4))\Big|
\end{align*}
We arrange the 4-tuples $(R_1,R_2,R_3,R_4) \in \mc{P}_k^4$ according to the values of $F:= R_1+R_2 - R_3 - R_4 \in \mb{F}_q[t]$; note that since the $R_j$ are all monic, $\deg{F} < k$. By Corollary~\ref{PRIM4TUP}, there are $\ll q^{3k}/k^4$ such representations of $F$ in terms of irreducibles $R_j \in \mc{P}_k$. It follows that
$$\mc{T}_k \ll \frac{q^{3(H+k)+N}}{k^4} \sum_{\deg{F} < k}\Big|\sum_{\deg{L}<H-k} e_{\mb{F}}(LF\alpha)\Big|.$$
By Lemma~\ref{TYPEI}, we can evaluate the exponential sum to yield
\begin{align*}
\mc{T}_k &\ll \frac{q^{3(H+k)+N}}{k^4} \sum_{\deg{F} < k} q^{H-k} 1_{\lla F\alpha \bmod{1}\rra < q^{-H+k-1}} \\
&= \frac{q^{4H+2k+N}}{k^4} |\{\deg{F} < k \colon \lla F\alpha \bmod{1}\rra < q^{-H+k-1}\}|.
\end{align*}
Since $\alpha \in \mf{m}$ there must be a $g \in \mc{M}$ and a reduced residue class $a \bmod g$ such that $W < \deg{g} \leq X$, $(a,g) = 1$ and $\alpha \in \mf{M}_{a,q}(X)$. Since $P_1 \leq k \leq Q_1$ and given our choice of $P_1,Q_1$, Lemma~\ref{lem_rightcount} yields
$$
|\{\deg{F} < k  \colon  \lla F\alpha \bmod{1} \rra < q^{-H+k-1}\}| \ll q^{k-W},
$$
so that we finally obtain the estimate 
$$
\mc{T}_k \ll\frac{q^{4H+3k+N}}{k^4}q^{-W}.
$$
Taking fourth roots of both sides and inserting this into~\eqref{InsertTk}, we get
\begin{align*}
|\Sigma_{\mc{S}}(\alpha)| &\ll q^{-N/4-H} \sum_{P_1 \leq k \leq Q_1} q^{-3k/4} \Big(q^{3k+4H+N-W}/k^4\Big)^{1/4} + H^{-50} \ll q^{-W/4} \sum_{P_1 \leq k \leq Q_1}1/k + H^{-50}\\
&\ll \log(Q_1/P_1)q^{-W/4} + H^{-50}.
\end{align*}
In light of the choices $W \geq 10 \log H$, $P_1 = 100W$, $Q_1 = H/3$, this leads, finally, to the bound
\begin{align*}
&\max_{\alpha \in \mf{m}(X,W)} \frac{1}{|\mc{M}_N|} \sum_{G_0 \in \mc{M}_N}\Big|\frac{1}{|\mc{M}_{<H}|} \sum_{\substack{G \in \mc{M}_N \\ G\in I_H(G_0)}} f(G)e_{\mb{F}}(G\alpha)\Big| \ll \max_{\alpha \in \mf{m}(X,W)} |\Sigma_{\mc{S}}(\alpha)| + \frac{P_1}{Q_1}\\
&\ll\log(Q_1/P_1)q^{-W/4} + \frac{P_1}{Q_1} + H^{-50} \ll \frac{P_1}{Q_1}.
\end{align*}

\subsection{The Major Arcs}
Next, we turn to the estimation of the major arcs, where the Matom\"aki--Radziwi\l{}\l{} theorem in function fields will be put into use. Fix $g \in \mc{M}_{\leq W}$ and a reduced residue class $a$ modulo $g$ coprime to $g$. Suppose that $\alpha \in \mf{M}_{a,g}(X)$. We shall estimate $\Sigma_{\mc{S}}(\alpha)$ (given by~\eqref{EXPSUMFFwithS}) in this case as well. 

Write $e_{\mb{F}}(G\alpha) = e_{\mb{F}}(Ga/g)e_{\mb{F}}(G\beta)$, and set $\gamma := \deg{g}-1$. Since $X+\gamma < H$, for each $G_0 \in \mc{M}_N$ we may decompose
$$
I_H(G_0) = \bigsqcup_{\deg{G'} < H-X-\gamma} I_{X+\gamma}(G_0 + t^{X+\gamma}G').
$$
As $\beta = \sum_{j \geq X+\gamma+1} b_jt^{-j}$, it follows that $e_{\mb{F}}(\beta G)$ is constant on $I_{X+\gamma}(G_0+t^{X+\gamma}G')$, for each $G'$ in the union. Splitting the inner sum over $G$ in $\Sigma_{\mc{S}}(\alpha)$ into pieces supported on each of these shorter intervals and applying the triangle inequality, we obtain
\begin{align*}
|\Sigma_{\mc{S}}(\alpha)| &\leq \frac{1}{|\mc{M}_N|} \sum_{G_0 \in \mc{M}_N} \frac{1}{|\mc{M}_{<H}|}\sum_{\deg{G'} < H-X-\gamma} \Big|\sum_{\substack{G \in \mc{M}_N \\ G \in I_{X+\gamma}(G_0+t^{X+\gamma}G')}} f1_{\mc{S}}(G)e_{\mb{F}}(Ga/g)\Big|.\\
&=: \frac{1}{|\mc{M}_N|} \sum_{G_0 \in \mc{M}_N} \frac{1}{|\mc{M}_{<H}|}\sum_{\deg{G'}< H-X-\gamma}|\Sigma_{\mc{S}}(\alpha;G_0,G')|.
\end{align*}
For a Dirichlet character $\psi \bmod{Q}$ recall that the Gauss sum $\tau(\psi)$ of $\psi$ is defined as
$$
\tau(\psi) := \sum_{G \bmod{Q}} \psi(G) e_{\mb{F}}(G/M).
$$
It is well-known, as in the number field setting, that $|\tau(\psi)| \leq q^{\tfrac 12 \deg{Q}}$. Expanding $e_{\mathbb{F}}(Ga/g)$ in terms of Dirichlet characters $\bmod g$, separating $G$ according to the greatest common divisor $D = (G,g)$, we can rewrite
\begin{align*}
\Sigma_{\mc{S}}(\alpha; G_0,G') &= \sum_{\substack{G \in \mc{M}_N \\ G \in I_{X+\gamma}(G_0+t^{X+\gamma}G')}} f1_{\mc{S}}(G)e_{\mb{F}}(Ga/g) \\
&= \sum_{D|g}\frac{f(D)}{\phi(g/D)} \sum_{\psi \bmod{g/D}} \bar{\psi}(a)\tau(\psi) \sum_{\substack{G \in \mc{M}_{N-\deg{D}} \\ GD \in I_{X+\gamma}(G_0+t^{X+\gamma}G')}} f1_{\mc{S}}(G)\bar{\psi}(G)
\end{align*}
for each $G_0,G'$ in their respective ranges; here we have used the fact that as $\deg{D} \leq W < P_1$ we have $1_{\mc{S}}(DG) = 1_{\mc{S}}(G)$.

As in the minor arc case, we separate $G_0$ and $G'$ according to residue classes modulo $D$. Write $G_0 = DG_0' + A$, $G' = DG'' + B$ and $t^{X+\gamma} = DT_D+C$, so that
$$t^{X+\gamma}G' = D(DG''T+CG'' + BT_D)+BC.$$
Then as $\deg{D} \leq W < X/2$, we have $\deg{A}, \deg{BC} < X + \gamma$ and thus 
$$
I_{X+\gamma}(G_0+t^{X+\gamma}G') = I_{X+\gamma}(D(G_0'+DG''T_D + BT_D+CG'')) = I_{X+\gamma}(D(G_0' + t^{X+\gamma}G'' + BT_D)).
$$ 
Hence, we see that 
$$
GD \in I_{X+\gamma}(G_0+t^{X+\gamma}G') \text{ if and only if } G \in I_{X+\gamma-\deg{D}}(G_0'+t^{X+\gamma}G''+BT_D).
$$
It follows, using the triangle inequality and the bound $|\tau(\psi)| \leq q^{\deg{g/D}/2}$ that
\begin{align*}
&|\Sigma_{\mc{S}}(\alpha)|\\
&\ll \sum_{D|g} \frac{q^{\frac{1}{2}\deg{g/D}}}{\phi(g/D)} \sum_{\psi \bmod{g/D}} \sum_{A,B \bmod{D}} \frac{1}{|\mc{M}_N|} \sum_{DG_0'+A \in \mc{M}_{N}} \frac{|\mc{M}_{<X+\gamma-\deg{D}}||\mc{M}_{<H-X-\gamma-\deg{D}}|}{|\mc{M}_{<H}|} \\
&\cdot \frac{1}{|\mc{M}_{<H-X-\gamma-\deg{D}}|}\sum_{\deg{DG''+B} <H-X-\gamma} \Big|\frac{1}{|\mc{M}_{< X+\gamma-\deg{D}}|} \sum_{\substack{G \in \mc{M}_{N-\deg{D}} \\ G\in I_{X+\gamma-\deg{D}}(G_0'+t^{X+\gamma}G'' + BT_D)}} f1_{\mc{S}}(G)\bar{\psi}(G)\Big| \\
&\ll q^{\frac{1}{2}\deg{g}}\sum_{D|g} q^{-\frac{3}{2}\deg{D}}\max_{\psi \bmod{g/D}}\, \max_{\deg{G''} < H-X-\gamma-\deg{D}}\, \max_{B\bmod{D}} \\
&\cdot \frac{1}{|\mc{M}_{N-\deg{D}}|} \sum_{G_0' \in \mc{M}_{N-\deg{D}}} \Big|\frac{1}{|\mc{M}_{<X+\gamma-\deg{D}}|} \sum_{\substack{G \in \mc{M}_{N-\deg{D}}\\ G \in I_{X+\gamma-\deg{D}}(G_0'+t^{X+\gamma}G'' + BT_D)}} f1_{\mc{S}}(G)\bar{\psi}(G)\Big|.
\end{align*}
We observe that $\deg{BT_D} < X+\gamma \leq \deg{G''t^{X+\gamma}}$, so that $\deg{G''t^{X+\gamma}+BT_D} \leq H-\deg{D} < N-\deg{D}$. We can thus make the change of variables $G_0'' := G_0' + G''t^{X+\gamma}+BT_D$ to finally obtain
\begin{align*}
|\Sigma_{\mc{S}}(\alpha)| &\ll q^{\frac{1}{2}\deg{g}} \max_{\substack{D|g \\ \psi \bmod{g/D}}} \frac{1}{|\mc{M}_{N-\deg{D}}|}\\
&\cdot\sum_{G_0'' \in \mc{M}_{N-\deg{D}}} \Big|\frac{1}{|\mc{M}_{<X+\gamma-\deg{D}}|} \sum_{\substack{G \in \mc{M}_{N-\deg{D}} \\ G \in I_{X+\gamma-\deg{D}}(G_0'')}} f1_{\mc{S}}(G)\bar{\psi}(G)\Big|.
\end{align*}
Let $D_1$ be a divisor of $g$ such that some character $\psi_1\bmod{D_1}$ yields the maximal contribution among the characters whose modulus divides $g$. Put $d := \deg{D_1}$, so that $d \leq W < N/2$. By Theorem~\ref{HalThmFF} and Lemma~\ref{SieveErat}, we get
\begin{align*}
&\frac{1}{|\mc{M}_{N-d}|} \Big|\sum_{G \in \mc{M}_{N-d}} f1_{\mc{S}}(G)\bar{\psi}_1(G)\Big| \\
&\leq \frac{1}{|\mc{M}_{N-d}|} \Big|\sum_{G \in \mc{M}_{N-d}} f(G)\bar{\psi}_1(G)\Big| + \frac{1}{|\mc{M}_{N-d}|}\Big|\sum_{G \in \mc{M}_{N-d}} f(G)\bar{\psi}_1(G)1_{\mc{S}^c}(G)\Big| \\
&\ll (1+\mc{D}_{f\bar{\psi}_1}(N))e^{-\mc{D}_{f\bar{\psi}_1}(N)} + \min\left\{\frac{P_1}{Q_1},(1+\mc{D}_{f\bar{\psi}_1 1_{\mc{S}^c}}(N))e^{-\mc{D}_{f\bar{\psi}_1 1_{\mc{S}^c}}(N)}\right\} \\
&\ll (1+\mc{D}_{f\bar{\psi}_1}(N))e^{-\mc{D}_{f\bar{\psi}_1}(N)} + \frac{P_1}{Q_1}\min\left\{1,(Q_1/P_1)^3(1+\mc{D}_{f\bar{\psi}_1}(N))e^{-\mc{D}_{f\bar{\psi}_1}(N)}\right\},
\end{align*}
where in the last step we used the fact that
\begin{align*}
\sum_{R \in \mc{P}_{\leq N}} \frac{1-\text{Re}(f(R)\bar{\psi}_1(R)1_{\mc{S}^c}(R)e_{-\theta}(R))}{q^{\deg{R}}} &\geq \sum_{R \in \mc{P}_{\leq N}} \frac{1-\text{Re}(f(R)\bar{\psi}_1(R)e_{-\theta}(R))}{q^{\deg{R}}} - 2\sum_{\ss{R \in \mc{P} \\ P_1 \leq \deg{R} \leq Q_1}} q^{-\deg{R}} \\
&= \sum_{R \in \mc{P}_{\leq N}} \frac{1-\text{Re}(f(R)\bar{\psi}_1(R)e_{-\theta}(R))}{q^{\deg{R}}} - 2\log(Q_1/P_1) + O(1).
\end{align*}
By the triangle inequality and the assumption $\deg{g} \leq W$, we thus have
\begin{align*}
&|\Sigma_{\mc{S}}(\alpha)|\\
&\ll \frac{q^{\frac{W}{2}} }{|\mc{M}_{N-d}|} \sum_{G_0'' \in \mc{M}_{N-d}} \Big|\frac{1}{|\mc{M}_{<X+\gamma-d}|}\sum_{\substack{G \in \mc{M}_{N-d} \\ G \in I_{X+\gamma-d}(G_0'')}} f1_{\mc{S}}\bar{\psi}_1(G)  - \frac{1}{|\mc{M}_{N-d}|} \sum_{G \in \mc{M}_{N-d}} f1_{\mc{S}}(G)\bar{\psi}_1(G) \Big| \\
&+ q^{\frac{W}{2}}\Big(\mc{D}_{f\bar{\psi}_1}(N)e^{-\mc{D}_{f\bar{\psi}_1}(N)} + \frac{P_1}{Q_1}\min\left\{1,(Q_1/P_1)^3(1+\mc{D}_{f\bar{\psi}_1}(N))e^{-\mc{D}_{f\bar{\psi}_1}(N)}\right\}\Big).
\end{align*}
Applying Theorem~\ref{MRFF} (in the form given in Remarks~\ref{RemOnS2} and~\ref{mrremark}, bounding the long sum using Theorem~\ref{HalThmFF}), the first expression above is 
$$
\ll q^{W/2}\Big(Q_1^{\frac{1}{2}}q^{-\frac{1}{12}P_1} + N^{-1/36+o(1)} + 1_{\substack{f\bar{\psi}_1 \text{ not real} \\ \text{or $2|q$}}} 1_{\chi_1 \neq \chi_0} (1+\mc{D}_{(f\bar{\psi}_1)^{\ast}\bar{\chi}_1}(N))e^{-\mc{D}_{(f\bar{\psi}_1)^{\ast}\bar{\chi}_1}(N)}\Big),
$$
where $\chi_1$ denotes the character modulo $t^{N-X-\gamma+1}$ such that $\chi \mapsto \mc{D}_{(f\bar{\psi}_1)^{\ast}\bar{\chi}}(N)$ is minimal (and $\chi_0$ is the principal character to the same modulus). Recalling that $P_1 = 100W$ and $Q_1 = H/3$, it follows that
\begin{align}
&\max_{\alpha \in \mc{M}(X,W)} |\Sigma_{\mc{S}}(\alpha)|\ll q^{W/2}(H^{1/2}q^{-8W}+N^{-1/36+o(1)} \nonumber \\
&+ \max_{\substack{M \in \mc{M}_{\leq W} \\ \psi \bmod{M}}}\Big((1+\mc{D}_{f\bar{\psi}}(N))e^{-\mc{D}_{f\bar{\psi}}(N)}
+(P_1/Q_1) \min\{1,(Q_1/P_1)^3 (1+\mc{D}_{f\bar{\psi}}(N))e^{-\mc{D}_{f\bar{\psi}}(N)}\})\Big)\label{MajArcNoHayes}\\
&+ q^{W/2}\max_{M \in \mc{M}_{\leq W}}\max_{\substack{\psi \bmod{M} \\ f\bar{\psi} \text{ not real} \\ \text{ if $2 \nmid q$}}}\max_{\substack{X < j \leq H \\ \chi \neq \chi_0 \bmod{t^{N-j+1}}}}(1+\mc{D}_{(f\bar{\psi})^{\ast}\bar{\chi}}(N))e^{-\mc{D}_{(f\bar{\psi})^{\ast}\bar{\chi}}(N)} \label{MajArc}.
\end{align}

In order to estimate this quantity further and to prove Proposition~\ref{expprop}, we split the remainder of the analysis into two cases.

\subsection*{Case 1: \texorpdfstring{$f$}{f} is not real-valued or \texorpdfstring{$q$}{q} is even}
By Corollary~\ref{StarNonPret}, we have
$$
M_{\text{Hayes}}(f;N,H) \leq \min_{M \in \mc{M}_{\leq X}} \min_{\substack{\psi \bmod{M} \\ f\bar{\psi} \text{ not real}}} \min_{X < j \leq H} \min_{\substack{\chi \bmod{t^{N-j+1}} \\ \chi \neq \chi_0}} \mc{D}_{(f\bar{\psi})^{\ast} \bar{\chi}}(N) + O(1).
$$
Of course, we also have
$$
M_{\text{Hayes}}(f;N,H) \leq M_{\text{Dir}}(f;N,H) \leq \min_{M \in \mc{M}_{\leq X}} \min_{\psi \bmod{M}} \mc{D}_{f\bar{\psi}}(N).
$$
Inserting these bounds into~\eqref{MajArc} and using $10 \log H \leq W \leq (\log N)/(100 \log q)$, we get
\begin{align*}
&\max_{\alpha \in \mc{M}(X,W)} |\Sigma_{\mc{S}}(\alpha)|\\
&\ll H^{-50} + N^{-1/40} \\
&+ q^{W/2}\Big(e^{-M_{\text{Dir}}(f;N,H)/2}+(P_1/Q_1) \min\{1,(Q_1/P_1)^3 e^{-M_{\text{Dir}}(f;N,H)/2}\} + e^{-M_{\text{Hayes}}(f;N,H)/2}\Big).
\end{align*}
By assumption, we also have $W \leq M_{\text{Hayes}}(f;N,H)/10 \leq M_{\text{Dir}}(f;N,H)/10$. Furthermore, if $Q_1/P_1 \geq e^{M_{\text{Dir}}(f;N,H)/5}$ then
$$
q^{W/2}P_1/Q_1 \leq e^{M_{\text{Dir}}(f;N,H)/20 - M_{\text{Dir}}(f;N,H)/5} \leq e^{-M_{\text{Dir}}(f;N,H)/10},
$$
whereas if $Q_1/P_1 < e^{M_{\text{Dir}}(f;N,H)/5}$ then
$$
q^{W/2} (Q_1/P_1)^2e^{-M_{\text{Dir}}(f;N,H)/2} < e^{M_{\text{Dir}}(f;N,H)(1/20 + 2/5 - 1/2)} = e^{-M_{\text{Dir}}(f;N,H)/20}.
$$
Thus, we deduce the bound
\begin{align*}
\max_{\alpha \in \mc{M}(X,W)} |\Sigma_{\mc{S}}(\alpha)| &\ll (\log H)H^{-1} + N^{-1/40} + e^{-M_{\text{Dir}}(f;N,H)/20} + e^{-M_{\text{Hayes}}(f;N,H)/2}q^{W/2} \\
&\ll H^{-50} + N^{-1/40} + e^{-M_{\text{Hayes}}(f;N,H)/20},
\end{align*}
since $10 \log H \leq W \leq \min\{H/10,(\log N)/(100\log q), M_{\text{Hayes}}(f;N,H)/10\}$.
\subsection*{Case 2: \texorpdfstring{$f$}{f} is real-valued and \texorpdfstring{$q$}{q} odd}
We claim that
\begin{equation} \label{eq_claim_rv}
\max_{M \in \mc{M}_{\leq W}}\max_{\substack{\psi \bmod{M} \\ f\bar{\psi} \text{ not real}}}\max_{\substack{X < j \leq H \\ \chi \neq \chi_0 \bmod{t^{N-j}}}} \mc{D}_{(f\bar{\psi})^{\ast}\bar{\chi}}(N)e^{-\mc{D}_{(f\psi)^{\ast}\bar{\chi}}(N)} \ll N^{-1/4+o(1)}.
\end{equation}
Inserting this into~\eqref{MajArc} and then repeating the arguments in Case 1 to simplify the terms in~\eqref{MajArcNoHayes}, we obtain
$$
\max_{\alpha \in \mc{M}(X,W)} |\Sigma_{\mc{S}}(\alpha)| \ll H^{-50} + N^{-1/40} + e^{-M_{\text{Dir}}(f;N,H)/20}.
$$
Let $\psi$ be a character of modulus $M$ with $\deg{M} \leq W \leq \log N$ for which $f\bar{\psi}$ is not real-valued. Since $f$ is real-valued it follows that $\psi$ is not, nor is $(f\bar{\psi})^{\ast}$. Put $M = \tilde{M} t^r$, where $(\tilde{M},t) = 1$, and write $\psi = \psi_{\tilde{M}} \psi_{t^r}$. We consider two subcases, depending on whether or not $\psi_{\tilde{M}}$ is real-valued. 

\textbf{Case 2.1.} Suppose first that $\psi_{\tilde{M}}$ is real. Since $\psi^2$ is non-principal, it follows that $\psi_{t^r}^2$ is non-principal. Applying the triangle inequality as in the proof of Lemma~\ref{PrincPret}, we can show that
$$
\mc{D}_{(f\bar{\psi})^{\ast}\bar{\chi}}(N) \geq \frac{1}{4}\mc{D}_{(\psi_{t^r}^2)^{\ast}\chi^2}(N).
$$
By Lemma~\ref{lem_short_inv}, $(\psi_{t^r}^2)^{\ast}$ is a non-principal short interval character of length $\leq r$, so that $(\psi_{t^r}^2)^{\ast}\chi^2$ is a non-principal Hayes character of conductor $\leq r + \cond{\chi^2} \leq W + N-H + 1 < N$. 
Lemma~\ref{lem_hayesdist} now implies that, for some $\theta_0 \in [0,1]$,
\begin{align*}
\mc{D}_{(\psi_{t^r}^2)^{\ast} \chi^2}(N) &= \log N - \text{Re}\Big(\sum_{d \leq N} \frac{e(-\theta_0d)}{dq^{d}} \sum_{G \in \mc{M}_d} (\psi_{t^r}^2)^{\ast}\chi^2(G) \Lambda(G)\Big) + O(1) \\
&= (1-o(1)) \log N.
\end{align*}
In particular, we find that
$$
\mc{D}_{(f\bar{\psi})^{\ast}\bar{\chi}}(N) \geq (1/4-o(1))\log N,
$$
which implies~\eqref{eq_claim_rv} in this case. 

\textbf{Case 2.2.} Next, suppose $\psi_{\tilde{M}}$ is not real, so that $\psi_{\tilde{M}}^2$ is non-principal. Without loss of generality, we may assume that $f$ is extended to $\mb{F}_q[t]$ by $f(c)\bar{\psi}(c)=\chi(c)$. By Lemma~\ref{lem_dist_inv}, we see that 
$$
\mc{D}_{(f\bar{\psi})^{\ast}\bar{\chi}}(N) \geq \mc{D}_{f\bar{\psi}_{\tilde{M}} \bar{\psi}_{t^r}  \bar{\chi}^{\ast}}(N) + O(1).
$$
Applying a similar argument as in the previous subcase, we have then that
$$
\mc{D}_{(f\bar{\psi})^{\ast}\bar{\chi}}(N) \geq \frac{1}{4} \mc{D}_{\psi_{\tilde{M}}^2 (\psi_{t^r})^2 (\chi^2)^{\ast}}(N) + O(1).
$$
Since $(\tilde{M},t) = 1$, $\psi_{\tilde{M}}^2 (\psi_{t^r})^2(\chi^2)^{\ast}$ is a non-principal Hayes character. Similarly as in the previous subcase, we obtain
$$
\mc{D}_{(f\bar{\psi})^{\ast}\chi}(N) \geq (1/4-o(1)) \log N.
$$
Thus,~\eqref{eq_claim_rv} is valid in this case as well, and thus in all cases in which $f$ is real-valued. This completes the proof of Proposition~\ref{expprop}.

\begin{proof}[Proof of Theorem~\ref{ShortExpSumFFCM}]
Let $1 \leq W, H' \leq N$, and put $P_1 := 100W$ and $Q_1 := H'/3$. If we assume the condition
\begin{equation} \label{eq_adm_W}
10\log H' \leq W \leq \min\{M_{\ast}(f; N, H')/10, (\log N)/(100 \log q), H'/10\},
\end{equation}
where we recall that $M_{\ast} = M_{\text{Hayes}}$ unless $f$ is real and $q$ is odd in which case $M_{\ast} = M_{\text{Dir}}$, then we have
\begin{align*}
&\max_{\alpha \in \mb{T}} \frac{1}{|\mc{M}_N|} \sum_{G_0 \in \mc{M}_N} \Big|\sum_{\substack{G \in \mc{M}_N \\ G \in I_{H'}(G_0)}} f(G) e_{\mb{F}}(G\alpha)\Big|\\
&\ll q^{H'}\Big((\log H')(H')^{-1} + P_1/Q_1 + N^{-1/40} + e^{-M_{\ast}(f; N,H')/20}\Big).
\end{align*}
Suppose now that $1 \leq H \leq N-N^{3/4}$, and define $1 \leq H_0 \leq N$ by
$$
\log H_0 := \min\{M_{\ast}(f;N,H)/100, (\log N)/(1000\log q), H/100\}.
$$
We will make a choice of $W$ that suits our current choice of $H$. \\
If $H \leq H_0$ then $W := 10 \log H$ is admissible in~\eqref{eq_adm_W} with $H' = H$, and Theorem~\ref{ShortExpSumFFCM} is verified in this case (here $P_1 \ll \log H$, so $P_1/Q_1 \ll (\log H)H^{-1}$). 

Next, suppose $H > H_0$. For each $G_0 \in \mc{M}_N$ we can split $I_H(G_0)$ into $\ll q^{H-H_0}$ short intervals $I_{H_0}(G_0 + t^{H_0}M)$, where $\deg{M} < H-H_0$. We then have
\begin{align*}
&\max_{\alpha \in \mb{T}} \frac{1}{|\mc{M}_N|} \sum_{G_0 \in \mc{M}_N} \Big|\sum_{\substack{G \in \mc{M}_N \\ G \in I_H(G_0)}} f(G) e_{\mb{F}}(G\alpha)\Big|\\ 
&\ll q^{H-H_0}\max_{\deg{M} < H-H_0} \max_{\alpha \in \mb{T}} \frac{1}{|\mc{M}_N|} \sum_{G_0 \in \mc{M}_N} \Big|\sum_{\substack{G \in \mc{M}_N \\ G \in I_{H_0}(G_0 + t^{H_0}M)}} f(G) e_{\mb{F}}(G\alpha)\Big| \\
&= q^{H-H_0} \max_{\alpha \in \mb{T}} \frac{1}{|\mc{M}_N|} \sum_{G_0' \in \mc{M}_N} \Big|\sum_{\substack{G \in \mc{M}_N \\ G \in I_{H_0}(G_0')}} f(G) e_{\mb{F}}(G\alpha)\Big|.
\end{align*}
We have thus reduced matters to the case $H = H_0$, which was addressed previously. Since
$H \mapsto M_{\ast}(f;N,H)$ is non-increasing, we see that
$$
\log H_0 \leq \min\{M_{\ast}(f;N,H_0)/100, (\log N)/(1000\log q), H_0/100\}
$$
when $N$ (and therefore $H$) is large enough. 
Selecting $W := 10 \log H_0$ gives an admissible choice in relation to~\eqref{eq_adm_W} with $H = H_0$, picking $P_1,Q_1$ in terms of $H_0$ so that $P_1/Q_1 \ll (\log H_0)/H_0$. We thus have
\begin{align*}
&\max_{\alpha \in \mb{T}} \frac{1}{|\mc{M}_N|} \sum_{G_0 \in \mc{M}_N} \Big|\sum_{\substack{G \in \mc{M}_N \\ G \in I_H(G_0)}} f(G) e_{\mb{F}}(G\alpha)\Big|\\
&\ll q^{H-H_0} \cdot q^{H_0} \Big((\log H_0)H_0^{-1} + N^{-1/40} + e^{-M_{\ast}(f; N, H_0)/20}\Big) \\
&\ll q^H \Big(N^{-1/(2000\log q)} + M_{\ast}(f;N,H) e^{-M_{\ast}(f;N,H)/100} \Big),
\end{align*}
using the fact that $H' \mapsto M_{\ast}(f;N,H')$ is non-increasing and the definition of $H_0$ in the last line. Theorem~\ref{ShortExpSumFFCM} then follows in this case as well. 
\end{proof}

\begin{proof}[Proof of Theorems~\ref{ShortExpSumFFComplex} and~\ref{ShortExpSumFF}]
We will only prove Theorem~\ref{ShortExpSumFFComplex} from the first statement in Theorem~\ref{ShortExpSumFFCM}, as Theorem~\ref{ShortExpSumFF} follows in the same way from the second statement in Theorem~\ref{ShortExpSumFFCM}.

Let $f \colon  \mc{M} \ra \mb{U}$ be a multiplicative function. Define $\tilde{f}$ to be the completely multiplicative function such that $\tilde{f}(P) = f(P)$ for all $P \in \mc{P}$. We may thus find a multiplicative function $h  \colon  \mc{M} \ra \mb{C}$, supported on squarefull monic polynomials (i.e., if $h(P^k) \neq 0$ for $P \in \mc{P}$ and $k \in \mb{N}$ then $k \geq 2$) such that $f = \tilde{f} \ast h$; in particular, $h$ is bounded by the divisor function $d(G) = \sum_{D|G} 1$, and hence $|h(G)| \ll_{\e} q^{\e \deg{G}}$ for any $G \in \mc{M}$, a fact we will use shortly. We thus have
\begin{align*}
&\sup_{\alpha \in \mb{T}} \frac{1}{|\mc{M}_N|} \sum_{G_0 \in \mc{M}_N} \frac{1}{|\mc{M}_{<H}|} \Big|\sum_{\substack{G \in \mc{M}_N \\ G \in I_H(G_0)}} f(G)e_{\mb{F}}(G\alpha)\Big| \\
&\leq \sum_{D \in \mc{M}_{\leq N}} |h(D)| \sup_{\alpha \in \mb{T}} \frac{1}{|\mc{M}_N|} \sum_{G_0 \in \mc{M}_N} \frac{1}{|\mc{M}_{<H}|} \Big|\sum_{\substack{G' \in \mc{M}_{N-\deg{D}} \\ DG' \in I_H(G_0)}} \tilde{f}(G')e_{\mb{F}}(G'D\alpha)\Big| \\
&=: \mc{T}_{< H} + \mc{T}_{\geq H}.
\end{align*}
We first estimate $\mc{T}_{\geq H}$, which corresponds to the terms with $\deg{D} \geq H$ above. If $\deg{D} \geq H$ then if $G_0 \in \mc{M}_N$ is such that $I_H(G_0) \cap D\mc{M}_{N-\deg{D}} \neq \emptyset$ then in fact $|I_H(G_0) \cap D\mc{M}_{N-\deg{D}}| = 1$ and $G_0$ lies in one of at most $\ll q^H$ residue classes modulo $D$. It follows that
\begin{align*}
\mc{T}_{\geq H} \ll \sum_{\substack{D \in \mc{M} \\ \deg{D} \geq H}} |h(D)| \cdot \frac{1}{|\mc{M}_N||\mc{M}_{<H}|} \cdot q^{N+H-\deg{D}} \ll \sum_{\substack{D \in \mc{M} \\ \deg{D} \geq H}} |h(D)|q^{-\deg{D}}.
\end{align*}
Since $h$ is supported on squarefull polynomials, all of which are of the form $A^2B^3$ for some $A,B \in \mc{M}$, and moreover $|h(D)| \ll q^{\deg{D}/5}$ for all $D$, we obtain
\begin{equation}\label{eq_sqfull}
\mc{T}_{\geq H} \ll \sum_{\substack{D \in \mc{M} \\ \deg{D} \geq H}} |h(D)|q^{-\deg{D}} \ll q^{-H/5} \sum_{A^2B^3 \in \mc{M}} q^{-0.6(2\deg{A} + 3\deg{B})} \ll q^{-H/5}.
\end{equation}
Next, we estimate $\mc{T}_{< H}$. Writing $G_0 = DG_0'+B$ for some $B \bmod{D}$ and $D \in \mc{M}_{<H}$, we have
\begin{align*}
\mc{T}_{<H} = \sum_{D \in \mc{M}_{<H}} |h(D)| \sup_{\alpha \in \mb{T}} \frac{1}{|\mc{M}_N|} \sum_{B \bmod{D}} \sum_{G_0' \in \mc{M}_{N-\deg{D}}} \frac{1}{|\mc{M}_{<H}|}\Big|\sum_{\substack{G' \in \mc{M}_{N-\deg{D}} \\ DG' \in I_H(DG_0'+B)}} \tilde{f}(G')e_{\mb{F}}(G'D\alpha)\Big|.
\end{align*}
Since $\deg{B} < \deg{D} < H$, we see that $I_H(DG_0'+B) = I_H(DG_0')$ for all $B \bmod{D}$. Moreover, we also have that $DG' \in I_H(DG_0')$ if and only if $G' \in I_{H-\deg{D}}(G_0')$. Thus, 
\begin{align*}
&\mc{T}_{<H} = \sum_{D \in \mc{M}_{<H}} |h(D)| \sup_{\alpha \in \mb{T}}  \frac{q^{\deg{D}}}{|\mc{M}_N|}\sum_{G_0' \in \mc{M}_{N-\deg{D}}} \frac{1}{|\mc{M}_{<H}|}\Big|\sum_{\substack{G' \in \mc{M}_{N-\deg{D}} \\ G' \in I_{H-\deg{D}}(G_0')}} \tilde{f}(G')e_{\mb{F}}(G'D\alpha)\Big| \\
&\ll \sum_{D \in \mc{M}_{<H}} \frac{|h(D)|}{q^{\deg{D}}} \sup_{\alpha \in \mb{T}} \frac{1}{|\mc{M}_{N-\deg{D}}|} \sum_{G_0' \in \mc{M}_{N-\deg{D}}} \frac{1}{|\mc{M}_{<H-\deg{D}}|}\Big|\sum_{\substack{G' \in \mc{M}_{N-\deg{D}} \\ G' \in I_{H-\deg{D}}(G_0')}} \tilde{f}(G')e_{\mb{F}}(G'D\alpha)\Big| \\
&\leq \sum_{D \in \mc{M}_{<H}} \frac{|h(D)|}{q^{\deg{D}}} \sup_{\beta \in \mb{T}} \frac{1}{|\mc{M}_{N-\deg{D}}|} \sum_{G_0' \in \mc{M}_{N-\deg{D}}} \frac{1}{|\mc{M}_{<H-\deg{D}}|}\Big|\sum_{\substack{G' \in \mc{M}_{N-\deg{D}}\\ G' \in I_{H-\deg{D}}(G_0')}} \tilde{f}(G')e_{\mb{F}}(G'\beta)\Big|.
\end{align*}
Since the supremum over $\beta$ is $\leq 1$ for all $D \in \mc{M}_{<H}$, we may further bound the contribution from $\deg{D} \geq H/2$ (as in~\eqref{eq_sqfull}, with $H$ replaced by $H/2$) by $O(q^{-H/10})$. 
Applying Theorem~\ref{ShortExpSumFFCM} for each $D \in \mc{M}_{<H/2}$, we find
\begin{align}\label{eq_lastEst}\begin{split}
\mc{T}_{<H}&\ll q^{-H/10} + \sum_{D \in \mc{M}_{<H/2}} |h(D)|q^{-\deg{D}} \Big(\frac{\log(H-\deg{D})}{H-\deg{D}}\\
&+ (N-\deg{D})^{-1/(2000\log q)} + M_De^{-M_D/100}\Big),
\end{split}
\end{align}
where we have set $M_D := M_{\text{Hayes}}(\tilde{f};N-\deg{D},H-\deg{D})+1$. We note from its definition that $M_{\text{Hayes}}$ is non-increasing in $H$, and since $\tilde{f}$ takes the same values as $f$ on primes we get
$$
M_D \geq M_{\text{Hayes}}(\tilde{f};N-\deg{D},H) = M_{\text{Hayes}}(f;N-\deg{D},H).
$$
Finally, as $\mc{D}_g(N-\deg{D}) \geq \mc{D}_g(N/2) = \mc{D}_g(N) - O(1)$ for any 1-bounded function $g \colon  \mc{M} \ra \mb{U}$, it follows that
$$
M_D \geq M_{\text{Hayes}}(f;N,H) - O(1) =: M - 1- O(1)
$$
for all $D \in \mc{M}_{<H/2}$. Invoking this in~\eqref{eq_lastEst}, we obtain the bound
\begin{align*}
\mc{T}_{<H} &\ll q^{-H/10} + \Big(\frac{\log H}{H} + N^{-1/(2000\log q)} + Me^{-M/100}\Big)\sum_{D \in \mc{M}} |h(D)|q^{-\deg{D}}\\
&\ll \frac{\log H}{H} + N^{-1/(2000\log q)} + Me^{-M/100}.
\end{align*}
Combining this with our earlier estimate for $\mc{T}_{\geq H}$, the proof of Theorem~\ref{ShortExpSumFFComplex} follows.
\end{proof}

\section{Elliott's Conjecture} \label{LogEllSec}

In this section, we shall prove the two-point case of the logarithmically averaged Elliott's conjecture on correlations of non-pretentious multiplicative functions in function fields, Theorem~\ref{LogEllFF1}. Here, we only treat the case $A = 1$ for simplicity; the proof of the general case of fixed monic $A$, which is essentially the same, is left to the interested reader.

In the sequel, we will adopt the following notational conventions: if $S \subset \mb{F}_q[t]$ and $g\colon \mb{F}_q[t] \to \mb{C}$ then
\begin{align*}
\mb{E}_{G \in S} g(G) &:= |S|^{-1}\sum_{G \in S} g(G),\\
\mb{E}_{G \in S}^{\log} g(G) &:= \Big(\sum_{G \in S} q^{-\deg{G}}\Big)^{-1} \sum_{G \in S} g(G)q^{-\deg{G}}\quad \textnormal{if}\quad  0 \not\in S.
\end{align*}

To prove Theorem~\ref{LogEllFF1}, we will combine the exponential sum estimate of Theorem~\ref{ShortExpSumFF} with a function field version of the entropy decrement argument that Tao developed in~\cite{tao} for the corresponding problem in the integer setting. The key proposition arising from this is the following.

\begin{prop}[Introducing an extra averaging variable]\label{Prop1}
Let $N \geq 100$, and let $B \in \mb{F}_q[t]\bk\{0\}$ be fixed. For any $1\leq K\leq \log \log \log N$, there exists $H \in [K,\exp(\exp(10K))]$ such that the following is true.  Suppose that $f_1,f_2$ satisfy the hypotheses of Theorem~\ref{LogEllFF1}. For each $R \in \mc{P}_H$ set $c_R := \bar{f_1(R)f_2(R)}$. Then
\begin{align*}
&\mb{E}^{\log}_{G \in \mc{M}_{\leq N}} q^{-\deg{G}} f_1(G)f_2(G+B) \\
&= \mb{E}_{P \in \mc{P}_H}  c_P \mb{E}^{\log}_{G \in \mc{M}_{\leq N}} f_1(G)f_2(G+PB) + O(K^{-0.1}).
\end{align*}
\end{prop}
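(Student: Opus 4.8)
The plan is to adapt Tao's entropy decrement argument \cite[Section 3]{tao} to the function field setting, with polynomials of degree $H$ playing the role of primes in $[H, \exp(\exp(10K))]$. First I would recall that for any irreducible $R \in \mc{P}_H$ not dividing the relevant moduli, multiplicativity of $f_1$ and $f_2$ and a change of variables $G \mapsto RG$ give the pointwise identity
$$
f_1(RG)f_2(RG+RB) = f_1(R)f_2(R) f_1(G)f_2(G+B),
$$
so that $c_R f_1(RG)f_2(RG+RB) = f_1(G)f_2(G+B)$ whenever $(R,f_1(R)f_2(R))$ is a unit (which holds since $f_i$ take values in $\mb{U}$, with the only loss coming from $R \mid G$ or $f_i(R) = 0$, whose contribution is $O(q^{-H})$ in the logarithmic average, negligibly small). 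The first step is therefore to write, for a fixed $H$,
$$
\mb{E}^{\log}_{G \in \mc{M}_{\leq N}} q^{-\deg{G}} f_1(G)f_2(G+B) = \mb{E}_{R \in \mc{P}_H} c_R \, \mb{E}^{\log}_{G \in \mc{M}_{\leq N}} q^{-\deg{G}} f_1(RG) f_2(RG + RB) + O(q^{-H} + \text{(small)}),
$$
and then to shift the logarithmic weight: the weight $q^{-\deg{RG}} = q^{-H} q^{-\deg{G}}$ is off by exactly the factor $q^{-H}$ which cancels against $|\mc{P}_H|^{-1} \approx H q^{-H}$ in the average over $R$, and the ranges $\deg{G} \leq N$ vs. $\deg{RG} \leq N$ differ only in a negligible tail. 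The content of the proposition is that after this substitution we may \emph{drop} the weight $q^{-\deg{G}}$ inside, replacing $\mb{E}^{\log}_G q^{-\deg{G}} f_1(RG)f_2(RG+RB)$ by $\mb{E}^{\log}_G f_1(G)f_2(G+RB)$ — i.e. "physical space" on the inside — at the cost of choosing $H$ well and an error $O(K^{-0.1})$.

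The heart of the matter is the entropy decrement / pigeonholing step that produces the good value of $H \in [K, \exp(\exp(10K))]$. Here I would set up the analogue of Tao's argument: consider the joint distribution of the tuple $(f_1(G+jB))_{j}$ (and of $f_2$) as $G$ ranges over a long logarithmic average, conditioned on the residues of $G$ modulo $\prod_{R \in \mc{P}_H} R$, and track the conditional entropy as $H$ increases through a lacunary sequence $H_1 < H_2 < \cdots$ inside $[K, \exp(\exp(10K))]$. Since entropies are bounded (the alphabet is effectively finite after discretizing $\mb{U}$), by pigeonhole there is some scale $H = H_i$ at which the entropy increment is $O(1/i)$, which translates — via Pinsker-type inequalities exactly as in \cite{tao} — into the statement that averaging over a random shift by $RB$, $R \in \mc{P}_H$, changes the correlation sum by only $O(K^{-0.1})$. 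This is where one must be careful with the function field bookkeeping: the prime polynomial theorem $|\mc{P}_H| = q^H/H + O(q^{H/2})$ replaces the prime number theorem, the "Chinese remainder" decomposition modulo $\prod_{R \in \mc{P}_H} R$ replaces modulus $\prod_{p \sim H} p$, and one needs $\sum_{R \in \mc{P}_H} (\deg{R})/q^{\deg{R}} = \sum_{R \in \mc{P}_H} H q^{-H} = 1 + O(q^{-H/2})$ to play the role of $\sum_{p \sim H} (\log p)/p \approx 1$. The bound $K \leq \log\log\log N$ ensures $\exp(\exp(10K)) = o(\text{any power of } \log N)$, so that the scale $H$ is tiny compared to $N$ and all the "negligible tail" estimates above (ranges $\deg G \le N$ vs $\le N - H$, contribution of $R \mid G$, etc.) are genuinely $o(1)$ as $N \to \infty$, in fact much smaller than $K^{-0.1}$.

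Finally I would assemble the pieces: the substitution identity converts the logarithmically weighted two-point sum into $\mb{E}_{P \in \mc{P}_H} c_P \, \mb{E}^{\log}_{G} q^{-\deg G} f_1(PG) f_2(PG+PB)$ up to $o(1)$; the weight-shifting converts $q^{-\deg G}$ on the inside into genuine physical averaging, so the inner expression becomes $\mb{E}^{\log}_{G \in \mc{M}_{\le N}} f_1(G) f_2(G + PB)$ (the $q^{-\deg{G}}$ no longer appears because it was absorbed by the change of variables $G \mapsto PG$, while the outer $\log$-average over $G \in \mc{M}_{\le N}$ is kept to control the tail); and the entropy step guarantees that, for the pigeonholed $H$, this two-scale average differs from the original one-scale average by $O(K^{-0.1})$. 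Note that the inner average $\mb{E}^{\log}_{G \in \mc{M}_{\le N}} f_1(G)f_2(G+PB)$ is no longer weighted by $q^{-\deg G}$ in the "physical" variable but the $\log$ average is over which dyadic-degree block $G$ lives in — this matches the statement. The main obstacle I anticipate is making the function field entropy decrement fully rigorous: one must verify that conditioning on residues modulo a product of $\sim q^H/H$ distinct irreducibles of degree $H$ behaves, probabilistically, like the integer case (equidistribution of $G \bmod \prod R$ in the logarithmic average, with error controlled uniformly as $H$ grows along the lacunary sequence), and that the discretization of $\mb{U}$ needed to make entropies finite does not accumulate error across the (doubly exponentially many in $K$) scales. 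Everything else — the multiplicativity substitution, the prime polynomial theorem input, the weight-shifting — is routine bookkeeping given the lemmas already in the paper (in particular the prime polynomial theorem is used throughout Sections \ref{sec_prelim1}--\ref{sec_prelim2}).
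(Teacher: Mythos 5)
Your overall strategy---the multiplicativity identity $f_1(G)f_2(G+B)=c_Pf_1(PG)f_2(PG+PB)$ off the exceptional set $P\mid G(G+B)$, followed by a Tao-style entropy decrement with a pigeonhole in $H$---is the same as the paper's, but the step you dismiss as ``routine bookkeeping'' (your ``weight-shifting'') is wrong, and it conceals the actual crux. The factor $q^{-H}$ coming from $q^{-\deg{PG}}=q^{-H}q^{-\deg{G}}$ does not ``cancel against $|\mc{P}_H|^{-1}\approx Hq^{-H}$'': those factors live in different variables (and differ by a factor of $H$ besides). What the substitution $G\mapsto PG$ really does is produce an average over \emph{multiples of} $P$, so that after renormalizing one gets
\begin{align*}
\mb{E}^{\log}_{G\in\mc{M}_{\leq N}} f_1(G)f_2(G+B)=\mb{E}_{P\in\mc{P}_H}c_P\,\mb{E}^{\log}_{G\in\mc{M}_{\leq N}} f_1(G)f_2(G+PB)\,q^{\deg{P}}1_{P\mid G}+O\big(q^{-H}+N^{-1}\big),
\end{align*}
and the entire content of the proposition is that the mean-one weight $q^{\deg{P}}1_{P\mid G}$ may be replaced by $1$ for a well-chosen $H$. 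That is precisely what the paper's Proposition \ref{EntDec} asserts, namely $\mb{E}^{\log}_{G}\big|\mb{E}_{P\in\mc{P}_H}a_1(G+PB_1)\cdots a_k(G+PB_k)(q^{\deg{P}}1_{P\mid G}-1)\big|\ll K^{-0.1}$. Your description of the entropy step's output (``averaging over a random shift by $RB$ changes the correlation sum by $O(K^{-0.1})$'') is not this statement, and your account is internally inconsistent: if passing from multiples of $P$ back to all $G$ were routine, the identity would hold for every $H$ and no pigeonhole would be needed. Relatedly, there is no conversion to ``physical space'': the extra $q^{-\deg{G}}$ on the left-hand side of the statement is a typo (both sides carry the same plain $\mb{E}^{\log}$, as the deduction of Theorem \ref{LogEllFF1} shows), and a change of variables by a fixed $P$ only rescales the logarithmic weight by the constant $q^{-H}$; it cannot turn a logarithmic average into an unweighted one.

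Two further points in the entropy step need repair. First, your claim that $K\leq\log\log\log N$ forces $\exp(\exp(10K))$ to be smaller than any power of $\log N$ is false (at $K=\log\log\log N$ it equals $\exp((\log\log N)^{10})$); what the argument actually needs is uniformity of the residue vector $(G\bmod P)_{P\in\mc{P}_H}$, which via CRT requires $\sum_{P\in\mc{P}_H}\deg{P}\asymp q^{H}<N$, i.e.\ $H\ll \log N$, so the lacunary pigeonhole must be confined to a subrange such as $[K,\exp(\exp(K/2))]$ rather than run up to $\exp(\exp(10K))$. Second, the quantity to pigeonhole on is not the entropy of the discretized data itself, which grows like $q^{H}$, but the normalized entropy $\He(\mbf{X}_H)/q^{H}$ (bounded by $O(\e^{-1})$ after discretizing to an $\e$-net); it is the telescoping of the information inequality along $H_{r+1}=H_r+2\log H_r+O(\log(1/\e))$ that converts boundedness of this normalized quantity into a scale with small mutual information. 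These are fixable, but as written the pigeonhole setup and the ``all scales are negligible'' justification do not go through.
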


Proposition~\ref{Prop1} will be deduced from the following proposition, which is based on our version of the entropy decrement argument.

\begin{prop}[Entropy decrement argument in function fields] \label{EntDec} Let $k\geq 1$, and let $a_1,\ldots, a_k \colon \mathbb{F}_q[t]\to \mathbb{U}$ be arbitrary $1$-bounded functions. Also let $B_1,\ldots, B_k\in \mathbb{F}_q[t]$ be any fixed polynomials. Then for any large enough $N$ and for $1\leq K\leq \log \log \log N$ there exists $H \in [K,\exp(\exp(10K))]$ such that 
\begin{align*}
\mathbb{E}_{G\in \mathcal{M}_{\leq N}}^{\log}|\mathbb{E}_{P\in \mathcal{P}_H}a_1(G+PB_1)\cdots a_k(G+PB_k)(q^{\deg{P}}1_{P\mid G}-1)|\ll K^{-0.1}.
\end{align*}
\end{prop}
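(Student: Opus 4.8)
\textbf{Proof plan for Proposition \ref{EntDec} (Entropy decrement argument).}

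The plan is to follow Tao's entropy decrement argument from \cite[Section 3]{tao}, transported to the function field setting, where the role of the primes $p\leq x$ is played by the irreducibles $P\in\mc{P}_H$ for $H$ in a suitable range, and the prime number theorem / Mertens estimates are replaced by the prime polynomial theorem. First I would set up the relevant probability space: fix a large $N$, let $\mbf{G}$ be a random variable taking values in $\mc{M}_{\leq N}$ with the logarithmic distribution $\mb{P}(\mbf{G}=G)\propto q^{-\deg{G}}$, and for each $H$ consider the random vector $\mbf{G}\bmod \mf{P}_H$, where $\mf{P}_H$ is the product of all irreducibles of degree exactly $H$ (or a convenient variant). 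The key analytic inputs are: (i) the prime polynomial theorem, giving $|\mc{P}_H|=q^H/H+O(q^{H/2})$ and hence $\sum_{R\in\mc{P}_H}\deg{R}/q^{\deg{R}}=1+O(q^{-H/2})$, so that $\sum_{R\in\mc{P}_H}q^{-\deg{R}}\asymp 1/H$; and (ii) the fact that $\log\log\log N$ independent ``scales'' $H$ can be extracted from a dyadic-type pigeonholing of an entropy that is bounded by $O(\log N)$.

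The heart of the argument is the \emph{entropy decrement}: one shows that for most choices of $H$ in a range like $[K,\exp(\exp(10K))]$, the conditional mutual information between $(\mbf{G}\bmod \mf{P}_H)$ and the ``new coordinate'' at scale $H$ is small, because the total entropy $\mb{H}(\mbf{G}\bmod \prod_{j\leq J}\mf{P}_{H_j})$ cannot increase by more than $O(\log N)$ overall while each scale, if it carried non-negligible information, would force a definite entropy gain. Concretely, I would: (1) define a nested sequence of scales $H_1<H_2<\cdots$ with $H_{j+1}$ roughly exponential in $H_j$, so that $\sum_j 1/H_j$ telescopes and the number of available scales up to $\exp(\exp(10K))$ is $\gg K$; (2) invoke the chain rule for entropy together with the crude bound $\mb{H}(\mbf{G}\bmod \mf{P}_{H})\ll \sum_{R\in\mc{P}_H}\deg{R}\ll q^H H$, but more importantly the bound $\mb{H}(\mbf{G})\ll N\log q$, to conclude that at least one scale $H\in[K,\exp(\exp(10K))]$ has $I(\mbf{G}\bmod \mf{P}_H\,;\,\text{something})\ll K^{-1}\log H$ or so; (3) translate this information-theoretic smallness into the $L^1$ statement in the proposition via Pinsker's inequality, exactly as in \cite{tao}: if $(\mbf{G}\bmod \mf{P}_H)$ is nearly independent of the relevant residue data, then replacing the ``true'' indicator $1_{P\mid \mbf{G}}$ by its expectation $q^{-\deg{P}}$ inside $\mb{E}_{P\in\mc{P}_H}a_1(\mbf{G}+PB_1)\cdots a_k(\mbf{G}+PB_k)$ costs only $O(K^{-0.1})$ on average over $\mbf{G}$. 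The functions $a_i$ are $1$-bounded, so all the relevant random variables live in a bounded range and the passage between information, total variation, and the claimed $L^1$ bound is uniform.

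The main obstacle I anticipate is bookkeeping the \emph{quantitative} relationship between the scales $H_j$, the entropy budget $O(\log N)$, and the final error $K^{-0.1}$: one must choose the growth rate of the $H_j$ (here $H_{j+1}$ of size roughly $\exp(\exp(cH_j))$, consistent with the stated range $[K,\exp(\exp(10K))]$) so that the number of scales is $\gg K$ while the per-scale entropy cost coming from $\mb{H}(\mbf{G}\bmod \mf{P}_{H_j})\ll q^{H_j}H_j$ does not blow up the budget — this is precisely why the outer range must be doubly exponential in $K$ and why $K\leq \log\log\log N$. A secondary technical point is that in the function field setting the ``coprimality'' structure is a bit cleaner than over $\Z$ (the residues $\mbf{G}\bmod R$ for distinct $R\in\mc{P}_H$ are genuinely independent-looking after the logarithmic weighting, up to errors controlled by the prime polynomial theorem), so the ``$P\mid G$'' event has probability exactly $q^{-\deg{P}}+O(q^{-N})$ and the variance computations simplify; one should still check that the error terms from the prime polynomial theorem ($O(q^{H/2})$ relative to $q^H$) are negligible against $K^{-0.1}$, which they are since $H\geq K$. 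Once the entropy decrement is in place, the deduction of the displayed $L^1$ bound is a routine Cauchy--Schwarz plus Pinsker computation identical in form to \cite[Section 3]{tao}.
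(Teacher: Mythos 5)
Your high-level plan --- the logarithmic measure on $\mc{M}_{\leq N}$, the residues modulo the degree-$H$ irreducibles as the ``$\mbf{Y}$'' variable, mutual information plus a Pinsker-type inequality, and pigeonholing over scales --- is indeed the route the paper takes, but the quantitative core of your plan would not work as written. First, the scales. You propose $H_{j+1}\approx\exp(H_j)$ (later even $\exp(\exp(cH_j))$) and claim this yields $\gg K$ scales inside $[K,\exp(\exp(10K))]$; with that growth you in fact get only $O(1)$ (at best $\log^{*}$-many) scales in the window, and $\sum_j 1/H_j\ll 1/K$. The paper does the opposite: the scales increase only additively, $H_{r+1}=H_r+2\log H_r+1000\log(1/\e)$, so there are doubly exponentially many of them in $[K,\exp(\exp(K/2))]$ and $\sum_r 1/H_r\gg K$. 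This matters because a ``bad'' scale (one where the conclusion fails) only forces a decrement of size about $\e^3/H_r$ in the relevant monotone quantity, so a contradiction requires $\sum_r 1/H_r\gg \e^{-4}$; slow growth is forced, while the increments must still exceed roughly $\log H_r+\log(1/\e)$ so that the $\He(\mbf{Y}_{H_r})/q^{H_r+j}$ error in the telescoping inequality is absorbed.

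Second, the budget. You propose to pigeonhole against $\He(\mbf{G})\ll N\log q$ (elsewhere ``$O(\log N)$''). The bounded quantity that actually drives the argument is not the entropy of $\mbf{G}$ but the \emph{normalized} entropy $w_H=\He(\mbf{X}_H)/q^H$ of the pattern variable $\mbf{X}_H=(\tilde a_1(F),\dots,\tilde a_k(F))_{F\in I_{H+b}(G_0)}$, which is $O_{k,b}(\log(1/\e))$ because each of the $\asymp q^{H}$ coordinates is first discretized to $\e\Z[i]$ --- a step your plan omits, though without it $\He(\mbf{X}_H)$ need not even be finite for arbitrary $1$-bounded $a_i$. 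The smallness one must reach is $\mathbb{I}(\mbf{X}_H,\mbf{Y}_H)\le\e^3 q^H/H$, whose $q^H/H$ scaling comes from Hoeffding's inequality applied to the $\asymp q^H/H$ jointly independent residues $G_0\bmod P$, $P\in\mc{P}_H$ (exact uniformity of $\mbf{Y}_H$ under the logarithmic measure needs $\sum_{P\in\mc{P}_H}\deg{P}<N$, i.e. $H\ll\log N$, which is where $K\le\log\log\log N$ enters); your target ``$I\ll K^{-1}\log H$'' has the wrong scaling, and a budget of $N\log q$ spread over the at most $\log N$ admissible integer scales can never fall below the threshold $\e^3 q^H/H\le N^{1/4}$. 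The repair is to replace your steps (1)--(2) by the paper's: compare $\He(\mbf{X}_{H+j}\mid\mbf{Y}_H)$ with the $q^{j}$ shifted copies of $\He(\mbf{X}_H\mid\mbf{Y}_H)$ via Shannon subadditivity and the almost shift-invariance of the logarithmic measure, deduce $\mathbb{I}(\mbf{X}_H,\mbf{Y}_H)/q^H\le w_H-w_{H+j}+O(\e q^{-H}+q^{-j})$, and then telescope along the slowly growing scale sequence above.
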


\begin{proof}[Proof of Proposition~\ref{Prop1} assuming Proposition~\ref{EntDec}] We may assume that $K$ is larger than any fixed constant, otherwise the claim of Proposition~\ref{Prop1} (with a suitably large implicit constant in the error term). 

Thus, let $H\in [K,\exp(\exp(10K))]$, which may be assumed to be sufficiently large. By multiplicativity, for each $P \in \mc{P}_H$ we have
$$f_1(G)f_2(G+B) = c_P f_1(GP)f_2(PG+BP),$$
unless $P|G$ or $P|(G+B)$. Averaging over $P \in \mc{P}_H$, for a suitable choice of $H$ we have
\begin{align*}
&\mb{E}_{G \in \mc{M}_{\leq N}}^{\log} f_1(G)f_2(G+B) \\
&= \mb{E}_{P \in \mc{P}_H} c_P \mb{E}_{G \in \mc{M}_{\leq N}}^{\log} f_1(G)f_2(G+PB) q^{\deg{P}}1_{P|G} + O(q^{-H} + N^{-1}),
\end{align*}
since (accounting for $G = -B$ in case this is monic)
\begin{align*}
\mb{E}_{P \in \mc{P}_H} \mb{E}_{G \in \mc{M}_{\leq N}}^{\log} 1_{G\equiv 0\,\, \textnormal{or}\,\, -B\bmod P}  \ll q^{-H} + N^{-1}.
\end{align*}
By Proposition~\ref{EntDec} with $a_i=f_i$ and the triangle inequality, we have
\begin{align*}
&\mb{E}_{P \in \mc{P}_H} c_P\mb{E}_{G \in \mc{M}_{\leq N}}^{\log} f_1(G)f_2(G+PB)  q^{\deg{P}}1_{P|G} \\
&= \mb{E}_{P \in \mc{P}_H}c_P \mb{E}_{G \in \mc{M}_{\leq N}}^{\log} f_1(G)f_2(G+PB) + O(K^{-0.1}),
\end{align*}
and the claim follows.
\end{proof}

In the next subsection, we will establish Proposition~\ref{EntDec}.

\subsection{The Entropy Decrement Argument in Function Fields}
\begin{def1}\label{defn1} Let $\mbf{X},\mbf{Y}$ be random variables on a probability space $(\Omega,\mathbb{P})$ with finite ranges $\mc{X},\mc{Y}$, respectively. We define the entropy
\begin{align*}
\He(\mbf{X}) := \sum_{x\in \mc{X}}\mathbb{P}(\mbf{X}=x)\log \frac{1}{\mathbb{P}(\mbf{X}=x)}
\end{align*}
and the joint entropy
\begin{align*}
\He(\mbf{X},\mbf{Y}) := \sum_{x\in \mc{X},y\in \mc{Y}}\mathbb{P}(\mbf{X}=x,\mbf{Y}=y)\log \frac{1}{\mathbb{P}(\mbf{X}=x,\mbf{Y}=y)}.
\end{align*}
Let $E\subset \Omega$. We define the conditional entropy of $\mbf{X}$ with respect to the event $E$ by
\begin{align*}
\He(\mbf{X}|E)=\sum_{x\in \mc{X}}\mathbb{P}(\mbf{X}=x\mid E)\log \frac{1}{\mathbb{P}(\mbf{X}=x\mid E)}
\end{align*}
and further define the conditional entropy of $\mbf{X}$ given $\mbf{Y}$ by
\begin{align*}
\He(\mbf{X}|\mbf{Y})=\sum_{y\in \mc{Y}}\He(\mbf{X}|\mbf{Y}=y)\mathbb{P}(\mbf{Y}=y)
\end{align*}
Note that this satisfies the identity
$$
\He(\mbf{X},\mbf{Y}) = \He(\mbf{X}|\mbf{Y}) + \He(\mbf{Y}).
$$
Finally, we define the mutual information between $\mbf{X}$ and $\mbf{Y}$ by
\begin{align*}
\mathbb{I}(\mbf{X},\mbf{Y}):=\He(\mbf{X})+\He(\mbf{Y})-\He(\mbf{X},\mbf{Y})
\end{align*}

\end{def1}
The nonnegativity of $\mb{I}(\mbf{X},\mbf{Y})$ follows from the following lemma.

\begin{lem}[Shannon inequalities]\label{le_shannon} Let $\mbf{X},\mbf{Y}$ be random variables on a probability space $(\Omega,\mathbb{P})$ with finite ranges $\mc{X},\mc{Y}$. Then we have the bounds
\begin{align*}
0\leq \He(\mbf{X})\leq \log |\mc{X}|
\end{align*}
and
\begin{align*}
\He(\mbf{X})\leq \He(\mbf{X},\mbf{Y})\leq \He(\mbf{X})+\He(\mbf{Y}).
\end{align*}
\end{lem}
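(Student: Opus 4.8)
The plan is to deduce all four inequalities from a single analytic fact, namely the concavity of $t \mapsto \log t$ on $(0,\infty)$ (equivalently, Jensen's inequality, or the Gibbs inequality), together with the additivity identity $\He(\mbf{X},\mbf{Y}) = \He(\mbf{X}\mid\mbf{Y}) + \He(\mbf{Y})$ recorded in Definition \ref{defn1}. Throughout one adopts the standard convention $0\cdot\log(1/0) = 0$, and writes $p_x := \mathbb{P}(\mbf{X}=x)$, $p_y := \mathbb{P}(\mbf{Y}=y)$ and $p_{x,y} := \mathbb{P}(\mbf{X}=x,\mbf{Y}=y)$ for the relevant marginal and joint probabilities.

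First, the lower bound $\He(\mbf{X}) \geq 0$ is immediate, since each summand $p_x\log(1/p_x)$ is nonnegative for $p_x \in [0,1]$; the same observation shows $\He(\mbf{X}\mid E) \geq 0$ for every event $E$, and hence $\He(\mbf{X}\mid\mbf{Y}) \geq 0$ as a convex combination of such quantities. For $\He(\mbf{X}) \leq \log|\mc{X}|$, one restricts the sum to the support $\{x : p_x > 0\}$ and applies Jensen's inequality to $\log$ with weights $p_x$, obtaining $\He(\mbf{X}) \leq \log\big(\sum_{x : p_x>0} p_x\cdot p_x^{-1}\big) \leq \log|\mc{X}|$. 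The bound $\He(\mbf{X}) \leq \He(\mbf{X},\mbf{Y})$ then follows by noting that the joint entropy is symmetric in its two arguments, so the identity of Definition \ref{defn1} also reads $\He(\mbf{X},\mbf{Y}) = \He(\mbf{Y}\mid\mbf{X}) + \He(\mbf{X})$, and $\He(\mbf{Y}\mid\mbf{X}) \geq 0$ by the first point.

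It remains to prove the subadditivity $\He(\mbf{X},\mbf{Y}) \leq \He(\mbf{X}) + \He(\mbf{Y})$, which is equivalent to the nonnegativity of the mutual information $\mathbb{I}(\mbf{X},\mbf{Y})$. Expanding the definitions and using $\sum_y p_{x,y} = p_x$ and $\sum_x p_{x,y} = p_y$, one gets $-\mathbb{I}(\mbf{X},\mbf{Y}) = \sum_{(x,y) : p_{x,y}>0} p_{x,y}\log\frac{p_x p_y}{p_{x,y}}$, and a final application of Jensen to $\log$ bounds the right-hand side by $\log\big(\sum_{(x,y):p_{x,y}>0} p_x p_y\big) \leq \log 1 = 0$, whence $\mathbb{I}(\mbf{X},\mbf{Y}) \geq 0$. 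The only points requiring any care — and the closest thing to an obstacle here — are the bookkeeping around zero-probability atoms (handled by the convention above and by restricting sums to supports before invoking Jensen) and explicitly checking the symmetry of $\He(\cdot,\cdot)$ and the expansion of $\mathbb{I}$ in terms of the $p_{x,y}$; none of this is substantive, so the proof is short.
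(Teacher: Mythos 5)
Your proposal is correct, and it follows essentially the route the paper itself indicates: the paper's "proof" merely remarks that the inequalities follow from Jensen's inequality applied to the concave map $x\mapsto x\log\frac{1}{x}$ and refers to Billingsley for details, which is exactly the standard argument you have written out (Jensen for the upper bounds and for nonnegativity of $\mathbb{I}(\mbf{X},\mbf{Y})$, plus the chain rule $\He(\mbf{X},\mbf{Y})=\He(\mbf{Y}\mid\mbf{X})+\He(\mbf{X})$ and nonnegativity of conditional entropy for the monotonicity bound). Your handling of the zero-probability atoms and of the symmetry of the joint entropy is fine, so nothing further is needed.
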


\begin{proof} These inequalities are proved by applying Jensen's inequality to the concave function $x\mapsto x\log \frac{1}{x}$; see~\cite{billingsley} for the details.
\end{proof}

\begin{proof}[Proof of Proposition~\ref{EntDec}] We may assume that $K$ (and thus $N$) are sufficiently large, since otherwise the claim of the proposition is trivial. We adapt Tao's proof in~\cite{tao} to the function field setting. Let $\varepsilon=K^{-0.1}$. It suffices to show that there exists $H\in [K, \exp(\exp(10K))]$ for which
\begin{align*}
|\mathbb{E}_{G\in \mathcal{M}_{\leq N}}^{\log}\mathbb{E}_{P\in \mathcal{P}_H}c_Pa_1(G+PB_1)\cdots a_k(G+PB_k)(q^{\deg{P}}1_{P\mid G}-1)|\ll \varepsilon
\end{align*}
uniformly for all choices of $c_P\in \mathbb{U}$. We discretize the functions $a_i$ by defining $\tilde{a}_i(F)$ for each $F\in \mathbb{F}_q[t]$ to be $a_i(F)$ rounded to the nearest element in the Gaussian lattice $\varepsilon \mathbb{Z}[i]$, breaking ties using the lexicographic ordering, say. Then it suffices to prove 
\begin{align}\label{eqq1}
|\mathbb{E}_{P\in \mathcal{P}_H}\mathbb{E}^{\log}_{G\in \mathcal{M}_{\leq N}}c_P\tilde{a}_1(G+PB_1)\cdots \tilde{a}_k(G+PB_k)(q^{\deg{P}}1_{P\mid G}-1)|\ll \varepsilon
\end{align}
for some $H$ as above and for any $c_P\in \mathbb{U}$. Since each polynomial $G\in \mathcal{M}_{\leq N}$ of degree $\geq H$ belongs to the same number of short intervals $I_H(G_0)$, where $G_0$ ranges through $\mathcal{M}_{\leq N}$, and $H/N \ll \e$, the left-hand side of~\eqref{eqq1} can be rewritten as
\begin{align}\label{eqq2}
|\mathbb{E}_{G_0\in \mathcal{M}_{\leq N}}^{\log}\mathbb{E}_{\substack{G\in \mathcal{M}_{\leq N}\\ G \in I_H(G_0)}}\mathbb{E}_{P\in \mathcal{P}_H}c_P\tilde{a}_1(G+PB_1)\cdots \tilde{a}_k(G+PB_k)(q^{\deg{P}}1_{P\mid G}-1)|+O(\varepsilon).
\end{align}
Let $(\Omega,\mathbb{P})$ be the probability space where $\Omega=\mathcal{M}_{\leq N}$ and $\mathbb{P}$ is the probability measure
\begin{align*}
\mathbb{P}(A):=\mathbb{E}_{G\in \mathcal{M}_{\leq N}}^{\log}1_{A}(G),
\end{align*} 
for any $A\subset \Omega$. Since each $c_P$ and each map $\tilde{a}_i$ is uniformly bounded in absolute value,~\eqref{eqq2} can be bounded trivially by
\begin{align}\label{eqq3}
\ll \varepsilon+\mathbb{P}(G_0\in \mathcal{M}_{\leq N} \colon  |\mathbb{E}_{\substack{G\in \mathcal{M}_{\leq N}\\ G \in I_H(G_0)}}\mathbb{E}_{P\in \mathcal{P}_H}c_P\tilde{a}_1(G+PB_1)\cdots \tilde{a}_k(G+PB_k)(q^{\deg{P}}1_{P\mid G}-1)|\geq \varepsilon).
\end{align} 
Let $b:=\max_{j\leq k}\deg{B_j}$. Introduce the random variables $\mbf{X}_H$ and $\mbf{Y}_H$ defined on $\Omega$ and given by
\begin{align*}
\mbf{X}_H(G_0):=(\tilde{a}_1(F),\ldots, \tilde{a}_k(F))_{F \in I_{H+b}(G_0)},\quad \mbf{Y}_H(G_0):=(G_0\bmod P)_{P\in \mathcal{P}_{H}},\quad G_0\in \Omega.
\end{align*}
Then there is a deterministic function $\mathcal{F}$ such that we can write the probability in~\eqref{eqq3} as 
\begin{align*}
\mathbb{P}(G_0\in \mc{M}_{\leq N} \colon \,\,|\mathcal{F}(\mbf{X}_H(G_0),\mbf{Y}_H(G_0))|\geq \varepsilon);
\end{align*}
more precisely, $\mathcal{F}$ is of the form
\begin{align}\label{eqq5}
\mathcal{F}(x,y)=\mathbb{E}_{P\in \mc{P}_H}c_P\mc{Z}_P(x,y):=\mathbb{E}_{P\in \mc{P}_H}c_P \mathbb{E}_{\deg{J} < H+b}\phi_{J}(x,P)(q^{\deg{P}}1_{P\mid y+J}-1)
\end{align}
for some $1$-bounded functions $\phi_J$ and for $x\in \mathcal{X}_H, y\in \mathcal{Y}_H$, where $\mathcal{X}_H,\mathcal{Y}_H$ are the ranges of $\mbf{X}_H,\mbf{Y}_H$, respectively. Therefore, by the triangle inequality we have the bound $|\mathcal{Z}_P(x,y)|\leq 2$ for all $x\in \mathcal{X}_H, y\in \mathcal{Y}_H$.

It suffices to show that $\mb{P}(|\mc{F}(\mbf{X}_H,\mbf{Y}_H)| \geq \e) \ll \e$, for some $H \in [K, \exp(\exp(10K))]$. To do this, we start by bounding the probabilities $\mathbb{P}(|\mc{F}(x,\mbf{Y}_H)|\geq \varepsilon)$ without conditioning and then we  will deduce a bound on the corresponding conditional probabilities with $\mbf{X}_H = x$. 

By the Chinese remainder theorem, $\mbf{Y}_H(F)=y$ for any $y\in \mc{Y}_H$ corresponds to a unique congruence for $F$ modulo $\prod_{P\in \mc{P}_H} P$. Thus, this happens with probability exactly equal to $q^{-\sum_{P\in \mc{P}_H}\deg{P}}$ as long as 
\begin{align*}
\sum_{P\in \mc{P}_H} \deg{P}<N,
\end{align*}
which by the prime polynomial theorem holds whenever $H\leq\frac{\log N}{4\log q}$ for $N$ large enough, say. Hence, $\mbf{Y}_H$ is a uniform random variable on $\mathcal{Y}_H$ under the aforementioned condition. In particular, all the random variables $G_0 \mapsto G_0\bmod P$ for $P\in \mc{P}_H$ are jointly independent of each other. By~\eqref{eqq5}, we may write
\begin{align*}
\mathcal{F}(x,\mbf{Y}_H)=\mathbb{E}_{P\in \mc{P}_H}c_P\mc{Z}_P(x,\mbf{Y}_H),
\end{align*}
and the random variables $\{\mc{Z}_P(x, \mbf{Y}_H)  \colon  P \in \mc{P}_H\}$ are jointly independent, all having mean $0$. Moreover, the number of different $P$ here is $\geq \frac{1}{2}q^{H}/H$, say, again by the prime polynomial theorem. By Hoeffding's inequality~\cite{hoeffding}, there is an absolute constant $C>0$ such that
\begin{align} \label{eq_hoeff}
\mathbb{P}(|\mc{F}(x,\mbf{Y}_H)|\geq \varepsilon)=\mathbb{P}(|\mathbb{E}_{P\in \mc{P}_H}c_P \mc{Z}_P(x,\mbf{Y}_H)|\geq \varepsilon)\leq \exp(-C\varepsilon^2 q^{H}/H)
\end{align}
for any $x\in \mc{X}_H$.

To bound the conditional probability $\mathbb{P}(|\mc{F}(x,\mbf{Y}_H)|\geq \varepsilon|\mbf{X}_H=x)$, we use a Pinsker-type inequality from~\cite{tao-ter}. This is applicable since $\mbf{Y}_H$ is a uniform random variable. We get
\begin{align*}
\mathbb{P}(|\mc{F}(x,\mbf{Y}_H)|\geq \varepsilon|\mbf{X}_H=x)\leq \frac{\He(\mbf{Y}_H)-\He(\mbf{Y}_H|\mbf{X}_H=x)+\log 2}{\log \frac{1}{\mathbb{P}(|\mc{F}(x,\mbf{Y}_H)|\geq \varepsilon)}}.
\end{align*} 
Since $H\geq K$ and $K$ is large, we may bound this from above using~\eqref{eq_hoeff} and the prime polynomial theorem, obtaining
\begin{align} \label{eqq4}
\leq \varepsilon+C^{-1}\varepsilon^{-2}\frac{\He(\mbf{Y}_H)-\He(\mbf{Y}_H|\mbf{X}_H=x)}{q^{H}/H}.
\end{align}
Recalling that
\begin{align*}
\mb{P}(|\mc{F}(\mbf{X}_H,\mbf{Y}_H)| \geq \e) = \sum_{x\in \mathcal{X}_H}\mathbb{P}(|\mc{F}(x,\mbf{Y}_H)|\geq \varepsilon|\mbf{X}_H=x)\mathbb{P}(\mbf{X}_H=x),
\end{align*}
we multiply the bound in~\eqref{eqq4} by $\mathbb{P}(\mbf{X}_H=x)$ and sum over $x\in \mathcal{X}_H$ to get 
\begin{align*}
\mathbb{P}(|\mc{F}(\mbf{X}_H,\mbf{Y}_H)|\geq \varepsilon)\leq \varepsilon+C^{-1}\varepsilon^{-2}\frac{\He(\mbf{Y}_H)-\He(\mbf{Y}_H|\mbf{X}_H)}{q^{H}/H}=\varepsilon+C^{-1}\varepsilon^{-2}\frac{\mathbb{I}(\mbf{X}_H,\mbf{Y}_H)}{q^{H}/H}
\end{align*}
by the definition of mutual information $\mathbb{I}(\mbf{X}_H,\mbf{Y}_H)$ from Definition~\ref{defn1}. Now what remains to be shown is that
\begin{align}\label{eqq6}
\mathbb{I}(\mbf{X}_H,\mbf{Y}_H)\leq \varepsilon^3\frac{q^H}{H}
\end{align}
holds for some $H$ satisfying the conditions in Proposition~\ref{EntDec}. We will prove~\eqref{eqq6} by appealing to Shannon's inequality (Lemma~\ref{le_shannon}) and pigeonholing in the parameter $H$.

Consider the conditional entropy
\begin{align*} 
\He(\mbf{X}_{H+j}|\mbf{Y}_H)
\end{align*}  
for $H,j\leq \frac{\log N}{4\log q}$, say. We may write
\begin{align*}
\mbf{X}_{H+j}=\bigotimes_{\deg{M}\leq j} \mbf{X}_{H}^{(M)},
\end{align*}
where each $\mbf{X}_{H}^{(M)}$ is a shifted copy of $\mbf{X}_H$ given by
\begin{align*}
\mbf{X}_H^{(M)}(G_0):=\mbf{X}_H(G_0+Mt^{H+b}).
\end{align*}
Define also 
\begin{align*}
\mbf{Y}_H^{(M)}(G_0):=\mbf{Y}_H(G_0+Mt^{H+b}).
\end{align*}
Then by Shannon's inequality
\begin{align*}
\He(\mbf{X}_{H+j}|\mbf{Y}_H)\leq \He(\mbf{X}_{H+j}, \mbf{Y}_H) \leq \sum_{\deg{M}\leq j} \He(\mbf{X}_{H}^{(M)}|\mbf{Y}_H).
\end{align*}
Since the sigma algebra given by $\mbf{Y}_H$ is shift-invariant and $\mathbb{P}$ is \emph{almost} shift-invariant in the sense that
\begin{align*}
\sup_{A\subset \mc{M}_{\leq N}}|\mathbb{P}(G\in A)-\mathbb{P}(G+J\in A)|\leq \deg{J}/N
\end{align*}
we obtain
\begin{align*}
\He(\mbf{X}_{H+j}|\mbf{Y}_H)&\leq \sum_{\deg{M}\leq j} \He(\mbf{X}_{H}^{(M)}|\mbf{Y}_H^{(M)}) + \e q^j\\
&\ll q^{j}\He(\mbf{X}_H\mid \mbf{Y}_H)+\varepsilon q^j\\
&=q^{j}\He(\mbf{X}_H,\mbf{Y}_H)-q^{j}\He(\mbf{Y}_H)+\varepsilon q^j\\
&=q^j\He(\mbf{X}_H)-q^j\mathbb{I}(\mbf{X}_H,\mbf{Y}_H) + \e q^j.
\end{align*}
On the other hand, from Shannon's inequality we also have the lower bound
\begin{align*}
\He(\mbf{X}_{H+j}|\mbf{Y}_H)=\He(\mbf{X}_{H+j},\mbf{Y}_H)-\He(\mbf{Y}_H)\geq \He(\mbf{X}_{H+j})-\He(\mbf{Y}_H).
\end{align*}
Comparing the upper and lower bounds for $\He(\mbf{X}_{H+j}|\mbf{Y}_H)$, we now have 
\begin{align}\label{eqq7}
\frac{\mathbb{I}({\mbf{X}}_{H},\mbf{Y}_H)}{q^H}\leq \frac{\He(\mbf{X}_H)}{q^H}-\frac{\He(\mbf{X}_{H+j})}{q^{H+j}}+\frac{\He(\mbf{Y}_H)+\varepsilon q^{j}}{q^{H+j}}.
\end{align}
Since $\mbf{Y}_H$ is a uniform random variable, we have $\He(\mbf{Y}_H)=\log |\mc{Y}_H|\leq 2q^{H}$ by the prime polynomial theorem. Since $\mbf{X}_H$ has $kq^{H+b}$ components, each taking values in $\varepsilon\mathbb{Z}[i]\cap \mathbb{U}$, we have $\He(\mbf{X}_H)\leq 10(\log \frac{1}{\varepsilon})kq^{H+b}\leq C_{k,b}\varepsilon^{-1} q^{H}$ for some $C_{k,b} > 0$. Now, if we denote $w_H:=\He(\mbf{X}_H)/q^H$, then from~\eqref{eqq7} we have the information bound
\begin{align*}
\frac{\mathbb{I}({\mbf{X}}_{H},\mbf{Y}_H)}{q^H}\leq w_H-w_{H+j}+\frac{\varepsilon}{q^{H}}.
\end{align*}
and $w_H\in [0,\varepsilon^{-1}C_{k,b}].$ Suppose that~\eqref{eqq6} failed for all $H\in [K, \exp(\exp(K/2))]$. Then we would have 
\begin{align}\label{eqq8a}
\frac{\varepsilon^3}{H}\leq w_H-w_{H+j}+\frac{\varepsilon}{q^{H}} + 2q^{-j}
\end{align}
for all $H\in [K,\exp(\exp(K/2))]$, $j\leq \frac{\log N}{4\log q}$. Define $H_1,H_2,\ldots$ recursively by $H_1=\lceil K\rceil$ and $H_{r+1} := H_r+2\log H_r+1000\log \frac{1}{\varepsilon}$. Then $H_r\leq \exp(\exp(K/2))$ for $r\leq \exp(\exp(K/3))$, say. Telescoping~\eqref{eqq8a} with $H=H_r$ and $j=H_{r+1}-H_r$ then yields
\begin{align*}
\sum_{r\leq \exp(\exp(K/3))}\frac{\varepsilon^3}{2H_r}\leq \sum_{r\leq \exp(\exp(K/3))}(w_{H_r}-w_{H_{r+1}}+\frac{\varepsilon}{q^{H_r}} + 2q^{H_r-H_{r+1}})\leq \varepsilon^{-1}C_{k,b}+1.
\end{align*}
Since, by telescoping, we have
$$
H_r = H_1 + 2\sum_{1 \leq j \leq r-1} \log H_j + 1000(r-1) \log(1/\e),
$$
by induction on $r$ we find that $H_r\leq C_0 (r\log r+r\log \frac{1}{\varepsilon})$ for some absolute constant $C_0>0$ whenever $r \geq K \geq 1000\log(1/\e)$. Therefore,
\begin{align*}
\varepsilon^{4}\sum_{K \leq r\leq \exp(\exp(K/3))}\frac{1}{10C_0 r\log r}\leq C_{k,b}+\varepsilon.
\end{align*}
However, given our choice $\e = K^{-0.1}$, the left-hand side is
\begin{align*}
\gg K^{-0.4}(K-O(\log K)),
\end{align*}
which is a contradiction for $K$ large enough. This completes the proof.
\end{proof}

Now that we have established Proposition~\ref{EntDec}, which relates one-variable correlations to two-variable ones, we can apply the circle method to complete the proof of Theorem~\ref{LogEllFF1}.

\begin{prop}\label{Prop2}
Assume the hypotheses of Proposition~\ref{Prop1} and let $H$ be chosen as in the conclusion of that proposition. Let $f_1,f_2 \colon \mathcal{M}\to \mathbb{U}$ be multiplicative functions. Set $H' := H + \deg{B}$. Then for any $\e > 0$,
\begin{align}\label{circlemethod}
&\mb{E}^{\log}_{G \in \mc{M}_{\leq N}} \Big|\mb{E}_{P \in \mc{P}_H} c_P \mb{E}_{\deg{J} < H'} f_1(G + J)f_2(G+J+PB)\Big|\\
&\ll \e^{-8}\Big((\log H)H^{-1} + N^{-1/(100\log q)} +e^{-M_{\text{Hayes}}(f_{1};N/H,H')/100}\Big) + \e^2.\nonumber 
\end{align}
Moreover, if $f_1$ is real-valued and $q$ is odd, we may replace $M_{\textnormal{Hayes}}$ with $M_{\textnormal{Dir}}$ in~\eqref{circlemethod}.
\end{prop}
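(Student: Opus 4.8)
The plan is to expand the inner average over $J$ using the additive characters on $\mathbb{T}$ (the "circle method in function fields") and then apply the short exponential sum estimate of Theorem \ref{ShortExpSumFFComplex} to the variable $G$. First I would open up the sum $\mathbb{E}_{\deg{J} < H'}$: writing $1_{\deg{J} < H'} = \sum_{\deg{J}<H'} 1$, and using the orthogonality $\int_{\mathbb{T}} e_{\mathbb{F}}((F-F')\alpha)\,d\alpha = 1_{F=F'}$, one can detect the condition that two polynomials agree in their low-degree coefficients, or more directly one writes $f_1(G+J) = \int_{\mathbb{T}} \widehat{\phi}(\alpha) e_{\mathbb{F}}((G+J)\alpha)\,d\alpha$ — but cleaner is to introduce $\alpha$ so that the shift by $J$ becomes a phase: using that $\mathbb{E}_{\deg{J}<H'} e_{\mathbb{F}}(J\alpha) = 1_{\lla\alpha\rra \le q^{-H'}}$ (Lemma \ref{TYPEI}), one expands $f_1(G+J)f_2(G+J+PB)$ as an integral over $\alpha,\beta \in \mathbb{T}$ of $e_{\mathbb{F}}(\cdots)$ times the relevant sums over short intervals of length $H'$. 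The point is that the $J$-average localizes $G$ to a short interval of length $\approx H'$, so the whole expression becomes (an integral over a frequency $\alpha$ of) a short interval sum of $f_1$ times a short interval sum of $f_2$, twisted by $e_{\mathbb{F}}(\alpha\,\cdot)$.

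Next I would split the $G$-average over $\mathcal{M}_{\le N}$ dyadically (or rather by degree $m$), and for each degree note that $\mathbb{E}^{\log}_{G}$ distributes the mass roughly uniformly across degrees $m \le N$ with the logarithmic weight; since $H, H'$ are much smaller than $N$ (indeed $H \le \exp(\exp(10K))$ while $N\to\infty$), the short interval $I_{H'}(G)$ sits inside $\mathcal{M}_m$ for all but $O(H'/N)$ proportion of degrees, which is absorbed into the $\e^2$ error after choosing $K$ (hence $H$) slowly growing. On each $\mathcal{M}_m$ I apply Cauchy–Schwarz in $G$ to separate the $f_1$-sum from the $c_P\, f_2$-sum (the latter being 1-bounded), so that after Cauchy–Schwarz it suffices to bound the second moment over $G \in \mathcal{M}_m$ of $\big|\mathbb{E}_{P\in\mathcal{P}_H} c_P \mathbb{E}_{\deg{J}<H'} f_1(G+J)\,[\text{phase}]\big|$-type sums — and by the circle-method expansion this reduces, via $\sup_{\alpha\in\mathbb{T}}$, to the short exponential sum quantity controlled by Theorem \ref{ShortExpSumFFComplex} applied with $H'$ in place of $H$ and $N/H$ (roughly) in place of $N$. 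That theorem gives exactly the bound $\frac{\log H'}{H'} + (N/H)^{-1/(2000\log q)} + M e^{-M/100}$ with $M = 1 + M_{\text{Hayes}}(f_1; N/H, H')$; since $H = \exp(\exp(10K)) = N^{o(1)}$, the term $(N/H)^{-1/(2000\log q)} \ll N^{-1/(100\log q)}$ for $N$ large, and $\frac{\log H'}{H'}\ll \frac{\log H}{H}$, matching the claimed right-hand side.

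The main obstacle I expect is bookkeeping the circle-method expansion cleanly: one must be careful that the average $\mathbb{E}_{\deg{J}<H'}$ of $f_1(G+J)f_2(G+J+PB)$ genuinely decomposes as a single integral $\int_{\mathbb{T}}(\cdots)d\alpha$ in which the $f_1$ and $f_2$ pieces both appear as $e_{\mathbb{F}}(\alpha\,\cdot)$-twisted short interval sums with a common frequency $\alpha$ — this is where one uses that $\deg{B} < H'$ so that the shift by $PB$ does not move $G+J$ out of the relevant interval structure, together with Lemma \ref{TYPEI} to collapse the $J$-sum. A secondary technical point is the loss of $\e^{-8}$: it comes from applying Cauchy–Schwarz (squaring, hence $\e \to \e^2$ in the exponent twice, or a single Cauchy–Schwarz combined with the discretization/rounding at scale $\e$ used to make the $P$-sum and $\alpha$-integral manageable), together with the fact that Theorem \ref{ShortExpSumFFComplex} controls a first moment in $G$ of the supremum over $\alpha$, which after Cauchy–Schwarz and expanding the square costs powers of $\e$ in the denominator. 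Matching the exact exponent $\e^{-8}$ requires organizing the Cauchy–Schwarz steps and the trivial bound $|c_P|\le 1$ carefully, but each step is routine; the $+\e^2$ term absorbs the $O(H'/N)$ degree-truncation error and the rounding errors once $H = H(N)$ is taken to grow slowly enough (which is permitted since Proposition \ref{Prop1} only asserts existence of some $H \in [K,\exp(\exp(10K))]$, and we are free to also let $K = K(N)\to\infty$ slowly).
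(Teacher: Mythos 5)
Your overall skeleton (Fourier-analyze the average over $J$, then feed the resulting short-interval exponential sums of $f_1$ into Theorem \ref{ShortExpSumFFComplex}, splitting the $G$-average by degree and treating degrees $\leq N/H$ trivially) is the same as the paper's, but the proposal is missing the key middle step, and the substitute you offer does not work. After expanding $J\mapsto f_j(G+J)$ in frequencies (the paper does this discretely modulo $t^{H'}$, getting $\mc{T}_G=\sum_{\xi \pmod{t^{H'}}}\hat{x}_1(\xi)\overline{\hat{x}_2(\xi)}\,\mb{E}_{P\in\mc{P}_H}c_Pe_{\mb{F}}(-\xi PB/t^{H'})$), one still has to deal with the prime average carrying \emph{arbitrary} bounded weights $c_P$: no cancellation in $P$ can be extracted, so the proof must split frequencies according to whether $|\mb{E}_{P\in\mc{P}_H}c_Pe_{\mb{F}}(-\xi PB/t^{H'})|\geq \e^2$ or not. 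The small-spectrum frequencies give the $+\e^2$ term (Cauchy--Schwarz and Plancherel in $\xi$); on the large spectrum one bounds $|\hat{x}_2(\xi)|\leq 1$ trivially, and the whole game is that the large spectrum $\Xi_H$ has cardinality $\ll \e^{-8}$, proved by a fourth-moment computation over $\xi$ which rests on the additive energy bound for irreducibles, Corollary \ref{PRIM4TUP} ($\ll q^{3H}/H^4$ quadruples with $P_1+P_2=P_3+P_4$). That restriction-type estimate is precisely where the factor $\e^{-8}$ in the statement comes from, and since $\Xi_H$ depends only on the $c_P$, $B$, $H'$ (not on $G$), one can then apply Theorem \ref{ShortExpSumFFComplex} to $\mb{E}^{\log}_G|\hat{x}_1(\xi)|$ for each fixed $\xi\in\Xi_H$. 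Your attribution of $\e^{-8}$ to "Cauchy--Schwarz plus discretization at scale $\e$" and of $\e^2$ to truncation/rounding errors misses this mechanism entirely.

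Moreover, the replacement you propose --- Cauchy--Schwarz in $G$ to "separate the $f_1$-sum from the $c_P f_2$-sum" and then bound a second moment of $|\mb{E}_{P}c_P\mb{E}_{J}f_1(G+J)[\text{phase}]|$ --- fails for two reasons. First, before the Fourier expansion the $J$-average does not factor: $f_1(G+J)$ and $f_2(G+J+PB)$ share the variable $J$, so there is no product structure to separate; the factorization only appears frequency-by-frequency as $\hat{x}_1(\xi)\overline{\hat{x}_2(\xi)}$, and then summing $|\hat{x}_1(\xi)|$ over all $q^{H'}$ frequencies (after bounding $|\hat{x}_2|\leq 1$) is far too lossy without first restricting to a set of $O(\e^{-8})$ frequencies. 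Second, if instead you square the full $P$-averaged expression in $G$, the expansion produces correlations of the shape $f_1(G+J_1)\bar{f_1}(G+J_2)f_2(G+J_1+P_1B)\bar{f_2}(G+J_2+P_2B)$, which involve $f_2$ --- a function on which the proposition imposes \emph{no} non-pretentiousness hypothesis --- and cannot be controlled by any of the available tools. So the large-spectrum/energy argument is not optional bookkeeping; without it the proof does not close.
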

\begin{proof}
Let $\mc{T}$ denote the expression on the left-hand side in~\eqref{circlemethod}. Fix $G \in \mc{M}_{\leq N}$ for the time being. For each $j = 1,2$, define the sequence $x_{j,J} := f_j(G+J)$ for all $\deg{J} < H + \deg{B}$. For each $G \in \mc{M}_{\leq N}$, consider the double sum
$$
\mc{T}_G := \mb{E}_{P \in \mc{P}_H} c_P \mb{E}_{\deg{J} < H'} x_{1,J}\bar{x}_{2,J+PB},
$$
noting that $\mc{T} = \mb{E}_{G \in \mc{M}_{\leq N}}^{\log} |\mc{T}_G|$. We may view the set of polynomials $J$ with $\deg{J} < H'$ as the representatives of residue classes modulo $t^{H'}$, and thus extending the sequences $\{x_{1,J}\}_J$ and $\{x_{2,J}\}_J$ periodically modulo $t^{H'}$, we can consider them as maps on $\mb{F}_q[t]/(t^{H'}\mb{F}_q[t])$. We may thus expand these sequences in the corresponding Fourier basis, giving in the inner sum over $J$:
\begin{align*}
&\mb{E}_{\deg{J} < H'} x_{1,J}\bar{x}_{2,J+PB}\\
&= \sum_{\xi_1,\xi_2 \bmod{t^{H'}}} \hat{x}_1(\xi_1)\bar{\hat{x}_2(\xi_2)} e_{\mb{F}}(-\xi_2 PB/t^{H'})\mb{E}_{J \bmod{t^{H'}}}e_{\mb{F}}\Big(\frac{J}{t^{H'}}(\xi_1-\xi_2)\Big)\\
&= \sum_{\xi \bmod{t^{H'}}} \hat{x}_1(\xi)\bar{\hat{x}_2(\xi)}e_{\mb{F}}(-\xi PB/t^{H'}),
\end{align*}
where we have defined
$$
\hat{x}_j(\xi) := q^{-H'} \sum_{J \bmod{t^{H'}}} x_{j,J}e_{\mb{F}}(-J\xi/t^{H'}).
$$
Inserting this into the definition of $\mc{T}_G$ thus gives
$$
\mc{T}_G = \sum_{\xi \bmod{t^{H'}}} \hat{x}_1(\xi)\bar{\hat{x}_2(\xi)} \cdot \mb{E}_{P \in \mc{P}_H} c_Pe_{\mb{F}}(-\xi PB/t^{H'}).
$$
Now, define the large spectrum set
$$
\Xi_H := \{\xi \bmod{t^{H'}}  \colon  \Big|\mb{E}_{P \in \mc{P}_H} c_P e_{\mb{F}}(-\xi PB/t^{H'})\Big| \geq \e^2\}.
$$
We decompose $\mc{T}_G = \mc{T}_{G,s} + \mc{T}_{G,l}$, where
\begin{align*}
\mc{T}_{G,s} &:= \sum_{\xi \notin \Xi_H} \hat{x}_1(\xi)\bar{\hat{x}_2(\xi)} \cdot \mb{E}_{P \in \mc{P}_H} c_P e_{\mb{F}}(-\xi PB/t^{H'})\\
\mc{T}_{G,l} &:= \sum_{\xi \in \Xi_H} \hat{x}_1(\xi)\bar{\hat{x}_2(\xi)} \cdot \mb{E}_{P \in \mc{P}_H} c_P e_{\mb{F}}(-\xi PB/t^{H'}).
\end{align*}
If $\xi \notin \Xi_H$ then we can bound the inner sum over $P$ by $\e^2$. It follows from the Cauchy--Schwarz inequality and Plancherel's theorem that 
\begin{align*}
|\mc{T}_{G,s}| &\ll \e^2 \sum_{\xi \notin \Xi_H}|\hat{x}_1(\xi)||\hat{x}_2(\xi)| \leq \e^2 \prod_{j = 1}^2 \Big(\sum_{\xi \bmod{t^{H'}}} |\hat{x}_j(\xi)|^2\Big)^{\frac{1}{2}} \\
&= \e^2 \prod_{j = 1}^2 \Big(q^{-H'}\sum_{J \bmod{t^{H'}}} |x_{j,J}|^2\Big)^{\frac{1}{2}} \ll \e^2.
\end{align*}
It remains to consider the case $\xi \in \Xi_H$. In this case, bounding the exponential sum in $P$ trivially, this contribution is
$$
|\mc{T}_{G,l}|\ll \sum_{\xi \in \Xi_H}|\hat{x}_1(\xi)||\hat{x}_2(\xi)|.
$$
Note that $\|\hat{x}_j\|_{\infty} \leq 1$ for $j = 1,2$. Averaging over $G \in \mc{M}_{\leq N}$ yields
\begin{align*}
\mc{T} &\ll \e^2 + \sum_{\xi \in \Xi_H} \mb{E}^{\log}_{G \in \mc{M}_{\leq N}} q^{-H'}\Big|\sum_{\deg{J} < H'} f_{1}(G+J) e_{\mb{F}}(-\xi J/t^{H'})\Big| \\
&\leq \e^2 +|\Xi_H|\max_{\alpha \in \mb{T}} \mb{E}^{\log}_{G_0 \in \mc{M}_{\leq N}} \Big|q^{-H'} \sum_{\substack{G \in \mc{M}_{\deg{G_0}} \\ G \in I_{H'}(G_0)}} f_{1}(G)e_{\mb{F}}(G\alpha)\Big|.
\end{align*}
To estimate $|\Xi_H|$, we use a 4th moment estimate. Indeed,
\begin{align*}
|\Xi_H| &\leq \frac{\e^{-8}}{|\mc{P}_H|^4}\sum_{\xi \bmod{t^{H'}}} \Big|\sum_{P \in \mc{P}_H} c_Pe(-PB\xi/t^{H'})\Big|^4 \\
&\ll \e^{-8}H^4q^{-4H}\Big|\sum_{P_1,P_2,P_3,P_4 \in \mc{P}_H} c_{P_1}c_{P_2}\bar{c_{P_3}c_{P_4}}\sum_{\xi \bmod{t^{H'}}} e_{\mb{F}}(-B(P_1+P_2-P_3-P_4)\xi/t^{H'})\Big| \\
&\ll \e^{-8} H^4 q^{-3H}\sum_{\substack{P_1,P_2,P_3,P_4 \in \mc{P}_H \\ P_1+P_2= P_3+P_4}} 1,
\end{align*}
since $\deg{B(P_1+P_2-P_3-P_4)} < H'$. By Lemma~\ref{PRIM4TUP}, the sum over tuples $(P_1,P_2,P_3,P_4)$ above is bounded by $O(q^{3H}/H^4)$, and hence $|\Xi_H| \ll \e^{-8}.$ Splitting the average in $G_0 \in \mc{M}_{\leq N}$ according to degree, we get
$$
\mc{T} \ll \e^2 + \e^{-8} \frac{1}{N}\sum_{k \leq N} \max_{\alpha \in \mb{T}} q^{-k}\sum_{G_0 \in \mc{M}_k} \Big|q^{-H'} \sum_{\substack{G \in \mc{M}_{k} \\ G\in I_{H'}(G_0)}} f_{j_0}(G)e_{\mb{F}}(G\alpha)\Big|.
$$The inner sum is trivially bounded as $\ll 1$ for $1 \leq k \leq N/H$, which contributes a term of size $\ll H^{-1}$. Since $H < N^{1/4}$, say, for each $N/H < k \leq N$ we may apply Theorem~\ref{ShortExpSumFFComplex} to get
$$
q^{-k}\sum_{G_0 \in \mc{M}_k}  \Big|q^{-H'}\sum_{\substack{G \in \mc{M}_k \\ G \in I_{H'}(G_0)}} f_{j_0}(G)e_{\mb{F}}(G\alpha)\Big| \ll (\log H)H^{-1} + N^{-1/(2000\log q)} + e^{-M_{\text{Hayes}}(f_{1}; N/H,H')/100}
$$
in this range. Averaging this estimate over $N/H < k \leq N$ gives
$$
\mc{T} \ll \e^2 + \e^{-8}\Big((\log H)H^{-1}+N^{-1/(2000\log q)} + e^{-M_{\text{Hayes}}(f_{1};N/H,H')/100} \Big).
$$
This implies the first claim.

The second claim is proved in an identical manner, except that at the end we appeal to Theorem~\ref{ShortExpSumFF}.

\end{proof}
\begin{proof}[Proof of Theorem~\ref{LogEllFF1}]
Let $W$ be fixed but large, and let $K=W/100$. Set
$$
\e := \min\{e^{-M_{\text{Hayes}}(f_{j_0};N/H,H+\deg{B})/1600}, K^{-0.1}\},
$$
where $H$ is chosen as in Proposition~\ref{Prop1}. Combining Propositions~\ref{Prop1} and~\ref{Prop2}, we find
\begin{align*}
\Big|\frac{1}{N}\sum_{G \in \mc{M}_{\leq N}} q^{-\deg{G}} f_1(G)f_2(G+B)\Big| &\ll K^{0.8} \Big(N^{-1/(2000\log q)} + (\log H)H^{-1}\Big) \\
&+ e^{-M_{\text{Hayes}}(f_{1};N/H,H+\deg{B})/200} + K^{-0.1},
\end{align*}
where $H \in [K,\exp(\exp(10 K)))]$ is chosen as in Proposition~\ref{Prop1}. Since $f_{1}$ is Hayes non-pretentious to level $W$ and $H + \deg{B} \leq 2H < (\log N)/(2\log q) \leq \log N$, it follows that $M_{\text{Hayes}}(f_{1};N/H,H+\deg{B}) \to \infty$ as $N \to \infty$. Since $H \geq K\geq W/100$ the above is $o_{W\to \infty}(1)$ as $N \to \infty$, and letting $W$ tend to infinity very slowly in terms of $N$, the first part of Theorem~\ref{LogEllFF1} follows.

Consider then the second part of the theorem, where $f_1$ is real-valued and $q$ is odd. Applying the same argument as before, save that $M_{\text{Hayes}}$ is replaced in every instance by $M_{\textnormal{Dir}}$, we see that~\eqref{correlation2} holds unless there exists an infinite sequence $N_j\to \infty$, Dirichlet characters $\psi_j \bmod{M_j}$ with $\deg{M_j}=O(1)$, and $\theta_j\in [0,1]$ such that 
\begin{align}\label{distance}
\mathbb{D}(f_1,\psi_j e_{\theta_j};N_j)= O(1).
\end{align}
If~\eqref{distance} holds, then by the pretentious triangle inequality also
\begin{align*}
\mathbb{D}(f_1^2,\psi_j^2 e_{2\theta_j};N_j)= O(1).
\end{align*}
By pigeonholing, we may assume that $\psi_j=\psi$ is independent of $j$. Moreover, by passing to a subsequence, we may assume that $\theta_j$ converges to some $\theta\in [0,1]$. 
Then
\begin{align}\label{distance3}
\mathbb{D}(f_1^2,\psi^2 e_{2\theta};N_j)= O(1),
\end{align}
since by~\eqref{distance} and the triangle inequality we  have
\begin{align*}
\mathbb{D}(e_{\theta},e_{\theta_j};N_j)=\limsup_{k\to \infty}\mathbb{D}(e_{\theta_{j+k}},e_{\theta_j};N_j)\leq \mathbb{D}(f_1\overline{\psi},e_{\theta_j};N_j)+\limsup_{k\to \infty}\mathbb{D}(f_1\overline{\psi},e_{\theta_{j+k}};N_{j+k})=O(1).     
\end{align*}

Assume first that $\mathbb{D}(1,f_1^2;\infty)<\infty$. Then by another application of the pretentious triangle inequality, we deduce that
\begin{align}\label{distance2}
 \mathbb{D}(1,\psi^2 e_{2\theta};N_j)= O(1).   
\end{align}
By Lemma~\ref{lem_hayesdist}, this implies that $\psi^2$ is principal, so we may assume that $\psi^2\equiv 1$ in~\eqref{distance2}. Then arguing as in~\cite[p. 15]{GrHaSoFF} we have
$$
\mb{D}(1,e_{2\theta};N_j)^2 = \log N_j - \sum_{n \leq N_j} \frac{\cos(4\pi \theta n)}{n} + O(1) = \log(\max\{N_j\|2\theta\|,1\}) + O(1),
$$
which in view of~\eqref{distance2} implies that $2\theta \equiv 0 \bmod{1}$. But this contradicts~\eqref{nonpret-b}. Hence, we must have $\mathbb{D}(1,f_1^2;\infty)=\infty$. But as $f_1^2$ is nonnegative, we have
\begin{align*}
\mathbb{D}(f_1^2,\psi^2e_{2\theta};N_j)^2 \geq \sum_{P \in \mc{P}_{\leq N_j}} \frac{1-|f_1(P)\bar{\psi}(P)e_{-\theta}(P)|^2}{q^{\deg{P}}} \geq \sum_{P \in \mc{P}_{\leq N_j}} \frac{1-f_1(P)^2}{q^{\deg{P}}} = \mathbb{D}(f_1^2,1;N_j)^2,
\end{align*}
so upon letting $j\to \infty$ this contradicts~\eqref{distance3}. The claim follows. 
\end{proof}

\section{A Conjecture of K\'atai in Function Fields} \label{KataiSec}

In this section, we establish Theorem~\ref{thm_katai} as an application of our two-point Elliott conjecture result (Theorem~\ref{LogEllFF1}). Since short interval characters and Archimedean characters satisfy
\begin{align}\label{shortinterval}
\xi(QG+1)=\xi(QG)=\xi(Q)\xi(G)\quad \textnormal{and}\quad e_{\theta}(QG+1)=e_{\theta}(Q)e_{\theta}(G)    
\end{align}
whenever $\deg{QG}$ is sufficiently large relative to $\text{len}(\xi)$, the function $f=\xi e_{\theta}$ clearly obeys~\eqref{katai} for suitably chosen $z\in S^1$. Thus, the essence of Theorem~\ref{thm_katai} lies in showing that there are no other such functions.

Before beginning with the proof of Theorem~\ref{thm_katai}, we state the following useful proposition.
\begin{prop}[Concentration inequality for multiplicative functions] \label{prop_concentration}
Let $f\colon\mc{M} \to \mathbb{U}$ be a multiplicative for which $\mb{D}(f,1;N) \ll 1$ as $N \ra \infty$, and let $\e > 0$. Then there is an infinite increasing sequence $\{M_j\}_{j\geq 1} \subset [1,\infty)$, depending only on $f$ and $\e$, for which the following holds: \\
Let $W\in \mc{M}$ satisfy $P\mid W$ for all $P\in \mc{P}_{\leq M_j}$. Then for any $B$ coprime to $W$ and of degree $<\deg{W}$, and for $N := M_{j+j'}$ with $j'$ large enough as a function of $M_j$ and $\e$, we have
\begin{align*}
\sum_{G\in \mc{M}_{\leq N}}|f(WG+B)-1|\ll q^N\left( \e + \mathbb{D}(f,1;M_j,\infty)+o_{j\to \infty}(1)\right). 
\end{align*}
\end{prop}

To prove this proposition we begin with the following general lemma. In the sequel, given scales $1 \leq A < B$ we define
$$
\mf{I}_f(A,B) := \sum_{\ss{P \in \mc{P} \\ A < \text{deg}(P) \leq B}} \frac{\text{Im}(f(P))}{q^{\deg{P}}}.
$$
\begin{lem}\label{lem_concentration} Let $N\geq M \geq 1$ and let $f \colon \mc{M}\to \mathbb{U}$ be multiplicative. Let $W\in \mc{M}$ satisfy $P\mid W$ for all $P\in \mc{P}_{\leq M}$. Then for any $B$ coprime to $W$ and of degree $<\deg{W}$, and for $N$ large enough as a function of $M$, we have
\begin{align*}
\sum_{G\in \mc{M}_{\leq N}}|f(WG+B)-e^{\mf{I}_f(M,N)}|\ll q^N\max_{\beta \in \{1,2\}} \mathbb{D}(f,1;M,\infty)^\beta+o_{M\to \infty}(q^N). 
\end{align*}
\end{lem}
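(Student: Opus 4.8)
\textbf{Proof plan for Lemma \ref{lem_concentration}.}

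The plan is to reduce the estimate to a mean-value bound for the multiplicative function $g(G) := f(WG+B)$ — or rather to a Lipschitz/stability statement that is controlled by the pretentious distance $\mathbb{D}(f,1;M,\infty)$. First I would observe that $|f(WG+B)-1| \leq 2|1 - \mathrm{Re}(f(WG+B))|^{1/2}$ by the elementary inequality $|z-1| \leq \sqrt{2}\,|1-\mathrm{Re}(z)|^{1/2}$ valid for $z \in \mathbb{U}$, so by Cauchy--Schwarz it suffices to show
\begin{align*}
\frac{1}{q^N}\sum_{G \in \mc{M}_{\leq N}} \big(1 - \mathrm{Re}(f(WG+B))\big) \ll \mathbb{D}(f,1;M,\infty)^2 + o_{M\to\infty}(1).
\end{align*}
Expanding $1 - \mathrm{Re}(f(WG+B))$, the main term is $1$ and the task becomes to show that $\frac{1}{q^N}\sum_{G \in \mc{M}_{\leq N}} f(WG+B)$ is close to $1$, quantitatively in terms of how close $f$ is to $1$ on primes of degree $> M$. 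The key structural point is that any monic polynomial $F$ with $F \equiv B \pmod W$ and $F$ of degree $\geq \deg W$ has \emph{no} irreducible factors of degree $\leq M$, since $(F,W)=1$ forces $(F,P)=1$ for every $P \in \mc{P}_{\leq M}$. Thus the polynomials in the progression $WG+B$ are ``$M$-rough'', and on such polynomials $f$ agrees with the multiplicative function $f^{(M)}$ defined by $f^{(M)}(P^k) := f(P^k)$ for $\deg P > M$ and $f^{(M)}(P^k):=1$ for $\deg P \leq M$; moreover $\mathbb{D}(f^{(M)},1;N) = \mathbb{D}(f,1;M,N) \leq \mathbb{D}(f,1;M,\infty)$.

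Next I would pass from the arithmetic progression $WG+B$ (with $G \in \mc{M}_{\leq N}$, hence $WG+B$ ranging over a reduced residue class modulo $W$ among polynomials of degree up to $N + \deg W$) to a full sum. Using the orthogonality of Dirichlet characters modulo $W$, write
\begin{align*}
\frac{1}{\phi(W)}\sum_{F \in \mc{M}_{\leq N'},\ (F,W)=1} f(F) \;=\; \frac{1}{\phi(W)}\sum_{\psi \bmod W} \bar\psi(B)\sum_{F \in \mc{M}_{\leq N'}} f(F)\psi(F) \;+\; \text{(main term)},
\end{align*}
where $N' = N + \deg W$ and the principal character $\psi_0$ contributes the bulk. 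The point is that for $N$ large relative to $M$ (hence relative to $\deg W$, which is $O_M(1)$ since $W$ can be taken to be the product of $\mc{P}_{\leq M}$), the number of polynomials in each reduced residue class modulo $W$ of degree $\leq N'$ is $q^{N'}/\phi(W) + O(1)$, and the sum over $G \in \mc{M}_{\leq N}$ of $f(WG+B)$ differs from $\frac{1}{\phi(W)}\sum_{F \leq N',(F,W)=1} f(F)$ only by an error $o_{M\to\infty}(q^N)$ coming from boundary effects and the terms of degree $< \deg W$. For the non-principal $\psi$, one invokes Halász's theorem in function fields (Theorem \ref{HalThmFF}): since $\psi$ is non-principal, $\mathcal{D}_{f\bar\psi}(N') \gg \log N' - O_M(1) \to \infty$, so $\frac{1}{q^{N'}}\sum_{F \in \mc{M}_{\leq N'}} f(F)\psi(F) = o_{M\to\infty}(1)$ uniformly. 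Hence only $\psi_0$ survives, and
\begin{align*}
\frac{1}{q^N}\sum_{G \in \mc{M}_{\leq N}} f(WG+B) = \frac{q^{\deg W}}{\phi(W)}\cdot\frac{1}{q^{N'}}\sum_{\substack{F \in \mc{M}_{\leq N'}\\ (F,W)=1}} f(F) + o_{M\to\infty}(1).
\end{align*}

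Finally I would evaluate $\frac{1}{q^{N'}}\sum_{F \leq N',(F,W)=1} f(F)$. Since this sum is over $M$-rough polynomials, it equals (up to the normalizing factor $\prod_{\deg P \leq M}(1 - q^{-\deg P})^{-1}$ built into the Euler product, which cancels the $\phi(W)/q^{\deg W}$ prefactor) the mean value of $f^{(M)}$; more precisely $\frac{q^{\deg W}}{\phi(W)} \cdot \frac{1}{q^{N'}}\sum_{F \leq N', (F,W)=1} f(F)$ is, up to $o(1)$, the mean value of $f^{(M)}$ over $\mc{M}_{\leq N'}$. One then checks, using the Euler product expansion $L(s,f^{(M)}) = \prod_{\deg P > M}\sum_{k\geq 0}f^{(M)}(P^k)q^{-k\deg P s}$ near $s=1$, a partial summation / Wiener--Ikehara-type argument, and the Granville--Harper--Soundararajan toolkit, that this mean value is $\prod_{\deg P > M}(1-q^{-\deg P})\sum_{k\geq 0}f(P^k)q^{-k\deg P} + o_{M\to\infty}(1)$, and the standard inequality $|1 - \prod_P (\ldots)| \ll \mathbb{D}(f,1;M,\infty) + o(1)$ (valid because $1 - \mathrm{Re}(f(P)) q^{-\deg P}$-type terms sum to $\mathbb{D}(f,1;M,\infty)^2$) closes the argument. \textbf{The main obstacle} I anticipate is making the last step fully rigorous and uniform in $M$: controlling the Euler product $\prod_{\deg P > M}(\cdots)$ and its deviation from $1$ purely in terms of $\mathbb{D}(f,1;M,\infty)$, while absorbing contributions from prime powers $P^k$ with $k \geq 2$ and handling the $o_{M\to\infty}$ uniformity across all the error terms simultaneously — this is where one must be careful that no implicit constant secretly depends on $f$ in a bad way. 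An alternative cleaner route for this last step is to bound $|1 - f^{(M)}$-mean$|$ directly by $1 - \frac{1}{q^{N'}}\sum |f^{(M)}(F)|$-type quantities is not available since $|f|=1$; instead one genuinely needs the Lipschitz estimate ``mean value of $f^{(M)}$ is within $O(\mathbb{D})$ of $1$'', which can be extracted from \cite{klurman2017mean} or proved by the same triangle-inequality manipulation with $\mathbb{D}$ used repeatedly in Section \ref{sec_prelim1}.
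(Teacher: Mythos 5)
Your route is genuinely different from the paper's. The paper disposes of the lemma in a few lines: it writes $f(WG+B)=\exp(h(WG+B))+O\big(\sum_{P^{\alpha}\mid\mid WG+B}|h(P^{\alpha})|^2\big)$ for the additive function $h(P^{\alpha})=f(P^{\alpha})-1$ and then quotes the function-field Tur\'an--Kubilius inequality for the linear form $G\mapsto WG+B$; no characters, no Hal\'asz, no mean values in progressions appear. Your skeleton (Cauchy--Schwarz to reduce to $\sum_G(1-\Re(f(WG+B)))$, orthogonality of characters modulo $W$, Hal\'asz for the non-principal twists, Euler-product evaluation of the principal term) could in principle serve as an alternative, but already the middle steps need repairs you do not state: one must first reduce to the case $\mathbb{D}(f,1;M,\infty)\ll 1$ (otherwise the assertion that non-principal $\psi$ forces $\mc{D}_{f\bar{\psi}}(N')\gg \log N'$ is false, e.g.\ for $f=\psi$), and one must have $\deg{W}$ small compared to $N$ -- your parenthetical ``$W$ can be taken to be the product of $\mc{P}_{\leq M}$'' misreads the statement, since $W$ is given, not chosen, and the character/Hal\'asz argument says nothing once $q^{\deg{W}}$ is comparable to $q^{N}$, a constraint the paper's Tur\'an--Kubilius route does not face.

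The decisive problem, however, is the final step, which you yourself flagged as the main obstacle: the claimed bound ``mean value of $f^{(M)}$ is within $O(\mathbb{D}(f,1;M,\infty))+o_{M\to\infty}(1)$ of $1$'' is not a Lipschitz fact extractable from \cite{klurman2017mean} or from triangle-inequality manipulations with $\mathbb{D}$; it is false. The mean value in question is governed by $\exp\big(\sum_{M<\deg{P}\leq N'}(f(P)-1)q^{-\deg{P}}\big)$, and $\mathbb{D}(f,1;M,\infty)$ controls only the real part of the exponent, namely $\sum(1-\Re(f(P)))q^{-\deg{P}}$; the imaginary part $\sum \textnormal{Im}(f(P))q^{-\deg{P}}$ can be of size $\mathbb{D}\cdot(\log(N/M))^{1/2}$, and this loss genuinely occurs. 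For instance, take $f$ completely multiplicative with $f(P)=e^{i\epsilon_d}$ for $\deg{P}=d$ and $\epsilon_d=1/\log d$: then $\mathbb{D}(f,1;M,\infty)^2\asymp 1/\log M$, while the argument of the Euler product is $\asymp \sum_{M\leq d\leq N}\tfrac{1}{d\log d}$, which tends to infinity with $N$ for fixed $M$. For such $f$, the values $f(WG+B)$ concentrate near a rotating unimodular point $e^{i\mu_N}$ rather than near $1$, so $\sum_G(1-\Re(f(WG+B)))$ is not $\ll q^N(\mathbb{D}^2+o_{M\to\infty}(1))$ and your chain of reductions cannot close. Note that this phase sum is precisely the Tur\'an--Kubilius mean of $h$ in the paper's own argument, so you have correctly located the heart of the lemma; but your proposal neither imports extra control on $\sum_{\deg{P}\geq M}|1-f(P)|q^{-\deg{P}}$ from the context of application nor reformulates the conclusion as concentration around a suitable unimodular constant, and without one of these the last step fails.
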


\begin{proof}
Let $h  \colon  \mc{M}\to \mathbb{C}$ be the additive function given by $h(P^{\alpha})=f(P^{\alpha})-1$. Note that $\text{Re}(h(P^{\alpha})) \leq 0$, so that $e^{h(P^\alpha)} \in \mb{U}$ for all $P$ and $\alpha \geq 1$. 

We apply the Taylor approximation 
$$
z=e^{z-1}+O(|z-1|^2), \text{ for } |z|\leq 1
$$ 
with $z = f(P^\alpha) = 1+h(P^\alpha)$ for $P \in \mc{P}$ and $\alpha \geq 1$, together with the simple inequality
$$
|z_1\cdots z_k - w_1\cdots w_k| \leq \sum_{1 \leq j \leq k} |z_j-w_j|,
$$
valid whenever $z_j,w_j \in \mb{U}$ for all $1 \leq j \leq k$ (with $z_j$ and $w_j$ respectively playing the roles of $e^{h(P^\alpha)}$ and $f(P^\alpha)$ here). Ultimately, this yields
\begin{align*}
f(WG+B)
= e^{h(WG+B)} + O\left(\sum_{P^\alpha \mid \mid WG+B} |h(P^\alpha)|^2\right).
\end{align*}
Since $(B,W) = 1$, note that $P^{\alpha} \mid \mid WG+B \Rightarrow \text{deg}(P) > M$ and $P \nmid W$. Summing over $G \in \mc{M}_{\leq N}$ thus leads to
\begin{align}\label{eq:ftohBd}
\sum_{G \in \mc{M}_{\leq N}} \Big|f(WG+B) - e^{h(WG+B)}\Big| \ll q^N\sum_{\ss{P \in \mc{P} \\ \text{deg}(P) > M \\ \text{deg}(P^\alpha) \leq N}} |h(P^\alpha)|^2 q^{-\text{deg}(P^\alpha)}.
\end{align}
Next, set
$$
A_h(Y,X) := \sum_{\ss{P \in \mc{P} \\ Y < \text{deg}(P^\alpha) \leq X}} h(P^\alpha)q^{-\deg{P^\alpha}}(1-q^{-\deg{P}}), \quad X > Y \geq 1.
$$
Since $\text{Re}(h(M)) \leq 0$ for all $M \in \mc{M}$, we have $\text{Re}(A_h(M,N)) \leq 0$ as well, thus
$$
|e^{h(WG+B)} - e^{A_h(M,N)}| \ll |h(WG+B)-A_h(M,N)|.
$$
Summing this expression over $G \in \mc{M}_{\leq N}$, then applying the Cauchy-Schwarz inequality followed by the Tur\'{a}n-Kubilius inequality for $h$ (see~\cite[Lemma 7]{darbar} for the function field version of this\footnote{In~\cite{darbar}, the Tur\'an--Kubilius inequality was stated for the linear forms $G\mapsto G+B$, but the same proof works for any linear forms $G\mapsto WG+B$.}), we obtain
\begin{align*}
\sum_{G \in \mc{M}_{\leq N}} |e^{h(WG+B)} - e^{A_h(M,N)}| &\ll q^{N/2}\left(\sum_{G \in \mc{M}_{\leq N}} |h(WG+B)-A_h(M,N)|^2\right)^{1/2} \\
&\ll q^N\left(\sum_{\ss{P \in \mc{P} \\ \text{deg}(P) > M \\ \text{deg}(P^\alpha) \leq N}} |h(P^\alpha)|^2 q^{-\text{deg}(P^\alpha)}\right)^{1/2}.
\end{align*}
We note that
$$
\sum_{\ss{P \in \mc{P} \\ \text{deg}(P) > M \\ \text{deg}(P^\alpha) \leq N}} |h(P^\alpha)|^2 q^{-\text{deg}(P^\alpha)} = \sum_{\ss{P \in \mc{P} \\ M< \text{deg}(P) \leq N}} |1-f(P^\alpha)|^2 q^{-\text{deg}(P^\alpha)} + O(M^{-1/2}),
$$
and this simplifies to $2\mb{D}(f,1;M,N)^2 + O(M^{-1/2})$.
Combining this with~\eqref{eq:ftohBd}, we thus find that
\begin{align}\label{eq:concwithAh}
\sum_{G \in \mc{M}_{\leq N}} |f(WG+B) - e^{A_h(M,N)}| 
&\ll q^N\left(\mb{D}(f,1;M,\infty) + \mb{D}(f,1;M,\infty)^2 + M^{-1/4}\right).
\end{align}
Now, observe that
\begin{align*}
A_h(M,N) &= \sum_{\ss{P \in \mc{P} \\ M < \text{deg}(P) \leq N}} \frac{\text{Re}(f(P))-1}{q^{\text{deg}(P)}} + i\sum_{\ss{P \in \mc{P} \\ M < \text{deg}(P) \leq N}} \frac{\text{Im}(f(P))}{q^{\text{deg}(P)}} + O(M^{-1/2})  \\
&= -\mb{D}(f,1;M,N)^2 + i\mf{I}_f(M,N) + O(M^{-1/2}).
\end{align*}
When $M$ is large enough we thus have 
$$
e^{A_h(M,N)} = e^{i\mf{I}_f(M,N)} + O(\mb{D}(f,1;M,\infty)^2 + M^{-1/2}),
$$ 
which, when combined with~\eqref{eq:concwithAh} yields the claim.
\end{proof}

The following result allows us to pick suitable scales $M$ and $N$ in order to control the distribution of $\mf{I}_f(M,N) \bmod{2\pi}$, and therefore the direction of $e^{i\mf{I}_f(M,N)}$.  Below, as usual we write $\|t\| := \min_{m \in \mb{Z}} |t-m|$ for $t \in \mb{R}$.
\begin{lem}\label{lem:ctrlIF}
Let $\eta > 0$. Then there is an infinite increasing sequence $\{M_j\}_{j\geq 1} \subset [1,\infty)$ such that 
$$
\|\mf{I}_f(M_j,M_{j+j'})/2\pi \| < \eta
$$ 
for any choice of $j, j'$ sufficiently large relative to $\eta$.
\end{lem}
\begin{proof}
A proof of this claim appears implicitly in the proof of~\cite[Lemma 2.11]{klu_man_orb} in the integer setting. We give here a shorter proof in the function field setting that would also be applicable (without change) over the integers. 

Write $\mf{I}_f(T) := \mf{I}_f(1,T)$ for $T \geq 1$, and define $\mf{I}_f(\infty) := \lim_{T \ra \infty} \mf{I}_f(T)$. 

As $[0,1]$ is compact, the sequence $\{\mf{I}_f(n)/2\pi \bmod{1}\}_n$ has a limit point, say $\alpha$. We select $\{M_j\}_{j\geq 1}$ to be a sequence for which $\tfrac{\mf{I}_f(M_j)}{2\pi} \bmod{1} \ra \alpha$. Let $j' \geq 1$. By the triangle inequality, we then have
$$
\|\mf{I}_f(M_j,M_{j+j'})/2\pi\| = \|\tfrac{\mf{I}_f(M_{j+j'})}{2\pi} - \tfrac{\mf{I}_f(M_j)}{2\pi}\| \leq \|\tfrac{\mf{I}_f(M_j)}{2\pi}-\alpha\| + \|\alpha - \tfrac{\mf{I}_f(M_{j+j'})}{2\pi}\| < \eta,
$$
provided $j$ is chosen large enough. 
\end{proof}

\begin{proof}[Proof of Proposition~\ref{prop_concentration}]
Let $\e > 0$. Applying Lemma~\ref{lem:ctrlIF} with $\eta = \e$, we may choose an infinite increasing sequence $\{M_j\}_{j \geq 1}$ such that, if $j,j'$ are large then upon setting $M := M_j$ and $N := M_{j+j'}$ we find that
$$
\Big|e^{i\mf{I}_f(M,N)} -1\Big| \ll \|\mf{I}_f(M,N)\| \ll \e.
$$
Combining this with Lemma~\ref{lem_concentration} and the condition $\mb{D}(f,1;\infty) < \infty$, we deduce that
$$
\sum_{G \in \mc{M}_{\leq N}} |f(WG+B)-1| \ll q^N\left(\e + \mb{D}(f,1;M,\infty) + o_{M \ra \infty}(1)\right),
$$
which implies the claim.
\end{proof}

\begin{proof}[Proof of Theorem~\ref{thm_katai}] By partial summation, if $(S_n)$ is a non-negative sequence for which $q^{-N}\sum_{n\leq N}S_n=o(1)$, then $\frac{1}{N}\sum_{n\leq N}S_n/q^n=o(1)$. Thus,~\eqref{katai} implies 
\begin{align*}
\sum_{G\in \mc{M}_{\leq N}}|f(QG+1)+zf(G)|/q^{\deg{G}}=o(N).      
\end{align*}
Since $|f(QG+1)+zf(G)|\leq 2$, this further gives
\begin{align*}
\sum_{G\in \mc{M}_{\leq N}}|f(QG+1)+zf(G)|^2/q^{\deg{G}}=o(N),   
\end{align*}
so that expanding the modulus squared and recalling that $f$ is unimodular, we find
\begin{equation}\label{eq:expsqcorrel}
\sum_{G \in \mc{M}_{\leq N}} (1+\text{Re}(zf(G)\bar{f}(QG+1)))q^{-\deg{G}} = o(N),
\end{equation}
We will use this in two ways as follows. First, since the summands are all $\in [0,2]$, for a logarithmic proportion $1-o(1)$ of $G \in \mc{M}_{\leq N}$ we have
\begin{equation} \label{eq:fullLogMeas}
\text{Re}(z f(G)\bar{f}(QG+1)) = -1 + o(1), \text{ i.e., } f(G)\bar{f}(QG+1) = -\bar{z} + o(1),
\end{equation}
by unimodularity. This will be applied shortly. 

Secondly, from~\eqref{eq:expsqcorrel} and the triangle inequality we deduce that
\begin{align}\label{correlation0}
1 + o(1) \leq \frac{1}{N}\Big|\text{Re}\Big(z \sum_{G \in \mc{M}_{\leq N}} f(G)\bar{f}(QG+1)q^{-\deg{G}}\Big)\Big| \leq \frac{1}{N}\Big|\sum_{G \in \mc{M}_{\leq N}} f(G)\bar{f}(QG+1)q^{-\deg{G}}\Big|.
\end{align}
By Theorem~\ref{LogEllFF1},~\eqref{correlation0} implies that for every $N\geq 1$ there exists a Dirichlet character $\chi_N$ of bounded conductor, a short interval character $\xi_N$ of bounded length and an angle $\theta_N\in [0,1]$ such that
\begin{align*}
\mathbb{D}(f,\chi_N\xi_N e_{\theta_N};N)\ll 1.     
\end{align*}
By pigeonholing, we may assume that $\chi_N=\chi$ and $\xi_N=\xi$ for some fixed Dirichlet character $\chi$, short interval character $\xi$ and for an infinite sequence of integers $N$. Since the interval $[0,1]$ is compact, we may find an infinite strictly increasing subsequence $(N_j)$ and a fixed $\theta\in [0,1)$ such that $\lim_{j\to \infty}\theta_{N_j}=\theta$ exists and
\begin{align}\label{pret}
\mathbb{D}(f,\chi\xi e_{\theta_{N_j}};N_j)\ll 1.
\end{align}
By the triangle inequality and the fact that $N_{j}<N_{j+k}$, from~\eqref{pret} we see that
\begin{align*}
 \mathbb{D}(e_{\theta_{N_j}},e_{\theta_{N_{j+k}}};N_j)\ll 1
\end{align*}
uniformly for $k\geq 1$. Letting $k\to \infty$ yields
\begin{align*}
 \mathbb{D}(e_{\theta_{N_j}},e_{\theta};N_j)\ll 1,   
\end{align*}
and hence
\begin{align*}
\mathbb{D}(f,\chi\xi e_{\theta};N_j)\leq \mathbb{D}(f,\chi\xi e_{\theta_{N_j}};N_j)+ \mathbb{D}(e_{\theta},e_{\theta_{N_j}};N_j)\ll 1.   
\end{align*}
Since every $N$ belongs to some interval $[N_j,N_{j+1})$, we finally see that
\begin{align*}
\mathbb{D}(f,\chi\xi e_{\theta};N)\ll 1    
\end{align*}
uniformly in $N$.

Let us now write
\begin{align}\label{factorization}
f(G)=\chi_1(G)\xi(G)f_1(G),    
\end{align}
where $\chi_1$ is the completely multiplicative function given at irreducibles $P$ by $\chi_1(P)=\chi(P)$ if $P\nmid \textnormal{cond}(\chi)$ and $\chi_1(P)=1$ otherwise, and where $f_1$ satisfies $\mathbb{D}(f_1,1;N)\ll 1$.

Recalling~\eqref{shortinterval},~\eqref{eq:fullLogMeas} gives 
\begin{align}\label{func1}
\chi_1f_1(G)\overline{\chi_1f_1}(QG+1)=z'+o(1).
\end{align}
for logarithmic proportion $1-o(1)$ of $G\in \mc{M}$ and some complex number $z'\in S^1$. 

Suppose first that $\{P\in \mc{P} \colon \,\, f_1(P)\neq z'\chi_1f_1(Q+1)\}$ is infinite. Let $P_0$ be an element of this set of degree $>\max\{\textnormal{cond}(\chi),\deg{Q}\}$, and let $\eta>0$ be such that $|f_1(P_0)-z'\chi_1f_1(Q+1)|>\eta$; since this condition becomes less stringent as $\eta$ decreases, we may assume that $\eta$ is smaller than any fixed constant. Let $w$ be a large integer to be chosen shortly, subject in particular to the condition $w>\deg{Q}\textnormal{cond}(\chi)$. Consider the infinite sets
\begin{align*}
\mathcal{A} &:=\{G\in \mc{M} \colon \,\, G \equiv 1\bmod{\prod_{P\in \mc{P}_{\leq w}\setminus\{P_0\}}P^{w}},\,\, G\equiv P_0 \bmod{P_0^2}\}, \\
\mathcal{B} &:= \{(QG+1)/(Q+1) \colon G \in \mc{A}\}. 
\end{align*}
By the Chinese remainder theorem, the elements of $\mc{A}$ may be parametrized by an arithmetic progression $P_0(WG + B)$, where $W \in \mc{M}$ is divisible by all $P \in \mc{P}_{\leq w}$, and $B = B(P_0)$ is some residue class modulo $W$, necessarily coprime to $W$. Moreover, as $G-1$ is divisible by a $P^{\deg{Q}}$ for every $P|(Q+1)$, it follows that whenever $G \in \mc{A}$,
$$
\frac{QG+1}{Q+1} = G - \frac{G-1}{Q+1} \in \mc{M}.
$$
Hence, $\mc{B} \subset \mc{M}$, and as $\deg{P_0} > \deg{Q+1}$ the set $\mc{B}$ may similarly be parametrized as $P_0WG' + D$, for $D = D(P_0)$ coprime to $P_0$. 

For $G \in \mc{A}$ we have $\chi_1(G)=1$, $\chi_1(QG+1)=\chi_1(Q+1)$, and $G/P_0, (QG+1)/(Q+1)$ are both coprime to $\prod_{P\in \mc{P}_{\leq w}}P$. We apply Proposition~\ref{prop_concentration} with $f = f_1$ and $\e = \eta^2$, say, along both $\mc{A}$ and $\mc{B}$: thus, we may find a common choice of parameters $M,N$ (depending only on $f_1$ and $\eta$) such that, upon taking $w = M$, along both $\mc{A}$ and $\mc{B}$ a proportion $1-o_{w\to \infty}(1)$ of $G\in \mc{M}_{\leq N}$ satisfy
\begin{align*}
|f_1(G)-f_1(P_0)|< \eta/10, \quad |f_1(QG+1)-f_1(Q+1)|<\eta/10.    
\end{align*}
Combined with~\eqref{func1} restricted to $\mc{A}$, we see that  $|f_1(P_0)-z'\chi_1f_1(Q+1)|<\eta$. However, this is a contradiction to our assumption, so $\{P\in \mc{P} \colon \,\, f_1(P)\neq z'\chi_1f_1(Q+1)\}$ must be finite. Now, since $f_1$ pretends to be $1$, we must have $z'\chi_1f_1(Q+1)=1$.   

Now, let $N_0$ be such that $f_1(P)=1$ whenever $P\in \mathcal{P}$, $\deg{P}\geq N_0$. Let $M_0$ be the modulus of $\chi$. Let $w'$ be large enough in terms of the aforementioned quantities, and set $W'=\prod_{P\in \mathcal{P}_{\leq w'}}P^{\max\{1,v_P(M_0)\}}$. Let $C$ be arbitrary, subject to $(C,W')=1$. By the Chinese remainder theorem, there exists a residue class $G_0\bmod{W'}$ such that $G\equiv G_0\bmod{W'}$ implies $G\equiv C\bmod M_0$ and $G\equiv 1\bmod{W'/M_0}$. Thus if $G=W'F$, for any $F\equiv G_0\bmod{W'}$ then $f_1(F) = f_1(QG+1) = 1$, and thus
\begin{align*}
\chi_1(G)f_1(G)=\chi_1(W')f_1(W')\chi(C),\quad \chi_1(QG+1)f_1(QG+1)=f_1(QG+1)=1.
\end{align*}
By~\eqref{func1} restricted to such $G$, we conclude that
\begin{align*}
\chi_1(W')f_1(W')\chi(C)=z'+o(1).    
\end{align*}
But this implies that $\chi$ is constant on residue classes coprime to $M_0$, so $\chi$ is principal.

Now,~\eqref{func1} simplifies to 
\begin{align*}
f_1(G)=z'f_1(QG+1)+o(1)
\end{align*}
for logarithmic proportion $1-o(1)$ of $G$. Let us restrict to polynomials $G$ of the form $G=W'F$, where $W'$ is as above (in particular, $P\mid W'$ for $P\in \mc{P}_{\leq N_0}$). Since $(QW'F+1,W')=1$, we deduce that $f_1(W')f_1(F)=z'+o(1)$ for logarithmic proportion $1-o(1)$ of $F\in \mc{M}$. Since $f_1(W') = \prod_{P \in \mc{P}_{\leq N_0}} f_1s(P)^{\max\{1,\nu_P(M)\}}$, which is independent of $w$, there exists a constant $c$ such that $f_1(F)=c+o(1)$ log-almost everywhere. But now if $P_0\in \mc{P}$ is arbitrary, we can find an infinite sequence of polynomials $G$ for which $f_1(P_0G)=f_1(G)+o(1)$, so $f_1(P_0)=1+o(1)$, which means that $f_1(P_0)=1$. Thus $f_1\equiv 1$, and so $f=\xi e_{\theta}$.
\end{proof}

\bibliography{FFChowla}

\begin{thebibliography}{10}

\bibitem{billingsley}
P.~Billingsley.
\newblock {\em Probability and Measure, 3rd edition}.
\newblock Wiley Series in Probability and Statistics, New York, 1995.

\bibitem{bourgain-sarnak-ziegler}
J.~Bourgain, P.~Sarnak, and T.~Ziegler.
\newblock Disjointness of {M}oebius from horocycle flows.
\newblock In {\em From {F}ourier analysis and number theory to {R}adon
  transforms and geometry}, volume~28 of {\em Dev. Math.}, pages 67--83.
  Springer, New York, 2013.

\bibitem{Chi}
J.~{Chinis}.
\newblock {Siegel Zeros and Sarnak's Conjecture}.
\newblock {\em arXiv e-prints}, page arXiv:2105.14653, May 2021.

\bibitem{darbar}
P.~{Darbar} and A.~{Mukhopadhyay}.
\newblock {Correlation of multiplicative functions over function fields}.
\newblock {\em arXiv e-prints}, page arXiv:1905.09303, May 2019.

\bibitem{erd_part}
P.~Erd{\H{o}}s.
\newblock On an elementary proof of some asymptotic formulae in the theory of
  partitions.
\newblock {\em Ann. of Math.}, 43:437--450, 1942.

\bibitem{FaifRud}
D.~Faifman and Z.~Rudnick.
\newblock Statistics for zeros of hyperelliptic zeta functions.
\newblock {\em Compositio Math.}, 146:81--101, 2010.

\bibitem{Gor}
O.~Gorodetsky.
\newblock Mean values of arithmetic functions in short intervals and in
  arithmetic progressions in the large-degree limit.
\newblock {\em Mathematika}, 66(2):373--394, 2020.

\bibitem{GorSaw}
O.~Gorodetsky and W.~Sawin.
\newblock Correlation of arithmetic functions over $\mb{F}_q[t]$.
\newblock {\em Math. Ann.}, 376:1059--1106, 2020.

\bibitem{GrHaSoFF}
A~Granville, A.~J. Harper, and K.~Soundararajan.
\newblock Mean values of multiplicative functions over function fields.
\newblock {\em Res. Number Theory}, 1:Paper No. 25, 18, 2015.

\bibitem{HalRic}
H.~Halberstam and H.-E. Richert.
\newblock On a result of {R}. {R}. {H}all.
\newblock {\em J. Number Theory}, 11(1):76--89, 1979.

\bibitem{harper-sound}
A.~J. Harper and K.~Soundararajan.
\newblock Lower bounds for the variance of sequences in arithmetic
  progressions: primes and divisor functions.
\newblock {\em Q. J. Math.}, 68(1):97--123, 2017.

\bibitem{Hayes}
D.R. Hayes.
\newblock The distribution of irreducibles in {GF}$[q,x]$.
\newblock {\em Trans. Amer. Math. Soc.}, 117:101--127, 1965.

\bibitem{hoeffding}
W.~Hoeffding.
\newblock Probability inequalities for sums of bounded random variables.
\newblock {\em J. Amer. Statist. Assoc.}, 58:13--30, 1963.

\bibitem{hooley}
C.~Hooley.
\newblock On the {B}arban-{D}avenport-{H}alberstam theorem. {IX}.
\newblock {\em Acta Arith.}, 83(1):17--30, 1998.

\bibitem{hsu}
C.-N. Hsu.
\newblock Estimates for coefficients of {L}-functions fields.
\newblock {\em Finite Fields and their Applications}, 5(1):76--88, 1999.

\bibitem{katai}
I.~K\'{a}tai.
\newblock Some problems in number theory.
\newblock {\em Studia Sci. Math. Hungar.}, 16(3-4):289--295, 1983.

\bibitem{kp}
I.~K\'{a}tai and B.~M. Phong.
\newblock On the pairs of completely multiplicative functions satisfying some
  relation.
\newblock {\em Acta Sci. Math. (Szeged)}, 85(1-2):139--145, 2019.

\bibitem{kea_rud}
J.~Keating and Z.~Rudnick.
\newblock Squarefree polynomials and {M}\"{o}bius values in short intervals and
  arithmetic progressions.
\newblock {\em Algebra Number Theory}, 10(2):375--420, 2016.

\bibitem{kea_rud2}
J.~P. Keating and Z.~Rudnick.
\newblock The variance of the number of prime polynomials in short intervals
  and in residue classes.
\newblock {\em Int. Math. Res. Not. IMRN}, (1):259--288, 2014.

\bibitem{klurman}
O.~Klurman.
\newblock Correlations of multiplicative functions and applications.
\newblock {\em Compos. Math.}, 153(8):1622--1657, 2017.

\bibitem{klurman2017mean}
O.~Klurman.
\newblock {\em Mean Values and Correlations of Multiplicative Functions: The
  ``pretentious" approach}.
\newblock Math{\'e}matiques (Ph. D.). Universit{\'e} de Montr{\'e}al, 2017.

\bibitem{klu_man_orb}
O.~Klurman and A.~Mangerel.
\newblock On the orbits of multiplicative pairs.
\newblock {\em Algebra and Number Theory}, 14(1):155--189, 2020.

\bibitem{klu_man_ter}
O.~{Klurman}, A.~P. {Mangerel}, and J.~{Ter{\"a}v{\"a}inen}.
\newblock {Multiplicative functions in short arithmetic progressions}.
\newblock {\em arXiv e-prints}, page arXiv:1909.12280, September 2019.

\bibitem{kmt-edp}
O.~{Klurman}, A.~P. {Mangerel}, and J.~{Ter{\"a}v{\"a}inen}.
\newblock {Beyond the Erd{\H{o}}s discrepancy problem in function fields}.
\newblock {\em arXiv e-prints}, page arXiv:2202.10370, February 2022.

\bibitem{kub}
R.M. Kubota.
\newblock {\em Waring's Problem in $F_q[x]$}.
\newblock PhD thesis, University of Michigan, 1971.

\bibitem{wooley-liu}
Y.-R. Liu and T.~D. Wooley.
\newblock Waring's problem in function fields.
\newblock {\em J. Reine Angew. Math.}, 638:1--67, 2010.

\bibitem{mr-annals}
K.~Matom\"aki and M.~Radziwi\l\l.
\newblock Multiplicative functions in short intervals.
\newblock {\em Ann. of Math. (2)}, 183(3):1015--1056, 2016.

\bibitem{mrII}
K.~{Matom{\"a}ki} and M.~{Radziwi{\l}{\l}}.
\newblock {Multiplicative functions in short intervals II}.
\newblock {\em arXiv e-prints}, page arXiv:2007.04290, July 2020.

\bibitem{mrt}
K.~Matom\"{a}ki, M.~Radziwi\l\l, and T.~Tao.
\newblock An averaged form of {C}howla's conjecture.
\newblock {\em Algebra and Number Theory}, 9:2167--2196, 2015.

\bibitem{Rhin}
G.~Rhin.
\newblock R\'{e}partition modulo 1 dans un corps de s\'{e}ries formelles sur un
  corps fini.
\newblock {\em Dissertationes Math.}, 95, 1972.

\bibitem{sawin-shusterman}
W.~Sawin and M.~Shusterman.
\newblock On the {C}howla and twin primes conjectures over {$\Bbb{F}_q[T]$}.
\newblock {\em Ann. of Math. (2)}, 196(2):457--506, 2022.

\bibitem{TaoEDP}
T.~Tao.
\newblock The {E}rd{\H{o}}s discrepancy problem.
\newblock {\em Discrete Anal.}, 1:29 pp, 2016.

\bibitem{tao}
T.~Tao.
\newblock The logarithmically averaged {C}howla and {E}lliott conjectures for
  two-point correlations.
\newblock {\em Forum Math. Pi}, 4e8:36pp., 2016.

\bibitem{tao-ter}
T.~Tao and J.~Ter\"{a}v\"{a}inen.
\newblock The structure of logarithmically averaged correlations of
  multiplicative functions, with applications to the {C}howla and {E}lliott
  conjectures.
\newblock {\em Duke Math. J.}, 168(11):1977--2027, 2019.

\bibitem{Ten}
G.~Tenenbaum.
\newblock {\em Introduction to Analytic and Probabilistic Number Theory}.
\newblock Graduate Studies in Mathematics vol. 163, AMS Publications,
  Providence, RI, 2015.

\bibitem{teravainen}
J.~Ter\"{a}v\"{a}inen.
\newblock On binary correlations of multiplicative functions.
\newblock {\em Forum Math. Sigma}, 6:e10, 41, 2018.

\bibitem{warlimont}
R.~Warlimont.
\newblock Arithmetical semigroups. {II}. {S}ieving by large and small prime
  elements. {S}ets of multiples.
\newblock {\em Manuscripta Math.}, 71(2):197--221, 1991.

\bibitem{webb}
W.~A. Webb.
\newblock Sieve methods for polynomial rings over finite fields.
\newblock {\em Journal of Number Theory}, 16(3):343--355, 1983.

\bibitem{Wei}
A.~Weil.
\newblock {\em Basic number theory, 3rd edition}.
\newblock Springer-Verlag, New York-Berlin, 1974.

\end{thebibliography}
\bibliographystyle{plain}
\end{document}